\documentclass[10pt]{amsart}
\pdfoutput=1
\usepackage{amsmath, amsthm, amssymb,slashed,stmaryrd}
\usepackage{tikz}
\usetikzlibrary{matrix,arrows,decorations.pathmorphing,backgrounds,patterns,shapes.geometric,calc}

\usepackage[pdftex,plainpages=false,hypertexnames=false,pdfpagelabels]{hyperref}
 \setlength\topmargin{0in}
\setlength\headheight{0in}
\setlength\headsep{.2in}
\setlength\textheight{8.7in}
\addtolength{\hoffset}{-0.25in} 
\addtolength{\textwidth}{.5in} 
\setlength\parindent{0.25in} 

 \theoremstyle{plain}
 \newtheorem{thm}{Theorem}[section]

 \newtheorem{cor}[thm]{Corollary}
 \newtheorem{lem}[thm]{Lemma}
  
 \newtheorem{prop}[thm]{Proposition}
 \theoremstyle{definition}
 \newtheorem{defn}[thm]{Definition}

 \newtheorem{ex}[thm]{Example}
 \theoremstyle{remark}
 \newtheorem{rmk}[thm]{Remark}


\def\beq{\begin{eqnarray}}
\def\eeq{\end{eqnarray}}

\DeclareSymbolFont{bbold}{U}{bbold}{m}{n}
\DeclareSymbolFontAlphabet{\mathbbold}{bbold}

 \newcommand{\bp}{\begin{proof}[Proof]}
 \newcommand{\ep}{\end{proof}}
\DeclareMathOperator{\SM}{\underline{\sf SMfld}}

\def\RG{{\rm RG}}
\def\nd{{\rm nd}}

\def\pt{\rm pt}
\def\MF{{\rm MF}}

\def\H{{{\mathbb{H}}}}
\def\MString{{\rm MString}}
\def\KMF{{\rm K}_{\rm MF}}

\def\SL{{\rm SL}}

\def\CS{\rm CS}

\def\ev{{\rm ev}}
\def\odd{{\rm odd}}
\def\cl{{\rm cl}}

\newcommand{\sq}{\mathord{/\!\!/}}

\DeclareMathOperator{\Cl}{{{\C{\rm l}}}}
\DeclareMathOperator{\Fer}{{{\rm Fer}}}
\DeclareMathOperator{\Rep}{{{\rm Rep}}}

\def\R{{\mathbb{R}}}
\def\Rge{{\R{}^{1|1}_{\ge 0}}}
\def\A{{\mathbb{A}}}
\def\F{{\mathcal{F}}}
\def\N{{\mathbb{N}}}
\def\id{{{\rm id}}}

\def\im{{\rm{im}}}
\def\vol{{\rm{vol}}}
\def\Tate{{\rm{Tate}}}

\def\K{{\rm {K}}}
\def\C{{\mathbb{C}}}
\def\CC{{\sf{C}}}
\def\DD{{\sf{D}}}

\def\Z{{\mathbb{Z}}}
\def\E{{\mathbb{E}}} 

\def\Fun{{\sf Fun}}

\def\Vect{{\sf Vect}} 
\def\pt{{\rm pt}}
\def\hs{\H}
\def\End{{\sf End}}

\def\Hom{\mathop{\sf Hom}}

\def\TMF{{\rm TMF}}

\def\SM{ {\underline{\sf SMfld}}}
\def\EFT{ \hbox{-{\sf EFT}}}

\def\Path{ {{\sf sP}_0}}
\def\path{ {{\mathfrak{p}}_0}}
\def\Ann{ {{\sf sAnn}_0}}
\def\Rot{ {{\sf Rot}}}
\def\sRot{ {{\sf sRot}}}

\def\Spin{ \hbox{\rm Spin}}
\def\GL{ \hbox{\rm GL}}
\def\EBord{ \hbox{-{\sf EBord}}}

\newcommand{\toto}{\rightrightarrows}

\def\twocommute{\ensuremath{\rotatebox[origin=c]{30}{$\Rightarrow$}}}

\newcommand{\op}{{\sf{op}}}   

\def\twocommute{\ensuremath{\rotatebox[origin=c]{30}{$\Rightarrow$}}}

\vfuzz2pt 
\hfuzz2pt 

\begin{document}

\title[{Topological $q$-expansion and the supersymmetric sigma model}]{Topological $q$-expansion\\ and the supersymmetric sigma model}

\def\obfuscate#1#2{\rlap{\hphantom{#2}@#1}#2\hphantom{@#1}}
\author{Daniel Berwick-Evans}
\address{Department of Mathematics, University of Illinois at Urbana-Champaign}
\email{\obfuscate{illinois.edu}{danbe}}
\begin{abstract} 
The Hamiltonian and Lagrangian formalisms offer two perspectives on quantum field theory. This paper sets up a framework to compare these approaches for the supersymmetric sigma model. The goal is to use techniques from physics to construct topological invariants. In brief, the Hamiltonian formalism studies positive energy representations of super annuli. This leads to a model for elliptic cohomology at the Tate curve over $\Z$. The Lagrangian approach studies sections of line bundles over a moduli stack of super tori. This leads to a model for ordinary cohomology valued in weak modular forms over $\C$. Compatibility between the two formalisms is a field theory version of the topological $q$-expansion principle. Combining these ingredients constructs a cohomology theory admitting an orientation for string manifolds that is closely related to Witten's Dirac operator on loop space. 
\end{abstract}

\maketitle
\setcounter{tocdepth}{1}
\tableofcontents

\section{Introduction}

Since Witten and Segal's groundbreaking papers~\cite{Witten_Elliptic,Witten_Dirac,Segal_Elliptic} there has been a tantalizing yet elusive connection between elliptic cohomology and 2-dimensional field theories. Just as the Dirac operator connects K-theory to supersymmetric quantum mechanics, the dream has been that a suitable geometric or analytic object would connect elliptic cohomology to 2-dimensional supersymmetric sigma models. This paper makes progress by providing a field-theoretic counterpart to Laures' topological $q$-expansion principle~\cite{LauresPhD}. The main construction is a differential cocycle model for elliptic cohomology at the Tate curve based on positive energy representations of a category of super Euclidean annuli. This is designed to be compatible with the differential cocycle model for $\TMF\otimes \C$ in~\cite{DBE_WG}. Indeed, constructions from physics (e.g., a cutoff version of the supersymmetric sigma model) produce differential cocycles in both theories that are suitably compatible. This corresponds to the compatibility between the Lagrangian and Hamiltonian points of view on 2-dimensional supersymmetric field theories. 

Throughout, $M$ is a smooth, compact manifold without boundary. The topological $q$-expansion principle is summarized by the commuting diagram
\begin{equation}
\begin{array}{c}
\begin{tikzpicture}[node distance=3.5cm,auto]
  \node (A) {${\rm TMF}(M)$};
  \node (B) [node distance= 4.5cm, right of=A] {$ {\rm K}_{\rm Tate}(M)$};
  \node (C) [node distance = 1.5cm, below of=A] {${\rm H}_\MF(M)$};
  \node (D) [node distance = 1.5cm, below of=B] {${\rm H}_\C(M)\llbracket q \rrbracket [q^{-1}].$};
  \draw[->] (A) to node {${\rm ev}_{\rm Tate}$} (B);
  \draw[->] (A) to node [swap] {$\otimes \C$} (C);
  \draw[->] (C) to node {$q{\rm -expand}$} (D);
  \draw[->] (B) to node {${\rm Ch}$} (D);
\end{tikzpicture}\end{array}\label{square1}
\end{equation}
The top horizontal map is Miller's elliptic character~\cite{Miller} that evaluates the universal elliptic cohomology theory of topological modular forms (TMF) in a formal punctured neighborhood of the Tate curve. The left vertical map is the Chern--Dold character of TMF from tensoring over~$\Z$ with~$\C$, followed by the identification $\TMF\otimes \C\cong {\rm H}_\MF$ with the ordinary cohomology theory with coefficients in weak modular forms over~$\C$. We can also identify elliptic cohomology at the Tate curve with ordinary K-theory with coefficients in powers of~$q$, ${\rm K}_{\rm Tate}(M)\cong {\rm K}(M)\llbracket q \rrbracket [q^{-1}]$. Then the right vertical arrow is induced by the Chern character in ${\rm K}$-theory, and the lower horizontal arrow is determined by the $q$-expansion of weak modular forms. 

This paper gives a field-theoretic counterpart to~\eqref{square1}, with differential cocycle models for all the cohomology theories with the exception of~$\TMF$. The players are (i) a category of positive energy representations of constant super annuli in $M$, denoted $\Rep(\Ann(M))$, (ii) a stack $\widetilde{\mathcal{L}}^{2|1}_0(M)$ of constant super annuli with the same source and target super circle in $M$; and (iii) a super double loop stack stack $\mathcal{L}^{2|1}_0(M)$ of constant super tori in~$M$. Then the analog of~\eqref{square2} is
\begin{equation}
\begin{array}{c}
\begin{tikzpicture}[node distance=3.5cm,auto]
  \node (A) {$\left\{\begin{array}{c} 2|1{\rm -dimensional \ field} \\ {\rm theories\ over\ } M {\rm ??}\end{array}\right\} $};
  \node (B) [node distance= 7cm, right of=A] {$\Rep(\Ann(M))$};
  \node (C) [node distance = 2.25cm, below of=A] {$\mathcal{O}(\mathcal{L}^{2|1}_0(M))$};
  \node (D) [node distance = 2.25cm, below of=B] {$\widehat{\mathcal{O}}(\widetilde{\mathcal{L}}^{2|1}_0(M)).$};
  \draw[->,dashed] (A) to node {${\rm time\ evolution}$} (B);
  \draw[->,dashed] (A) to node [swap] {$\begin{array}{l} {\rm partition} \\ {\rm function}\end{array}$} (C);
  \draw[->] (C) to node {${\rm forget}$} (D);
  \draw[->] (B) to node {${\rm character}$} (D);
\end{tikzpicture}\end{array}\label{square2}
\end{equation}
The appropriate definition of a $2|1$-dimensional field theory is still under active investigation, and we take the preliminary definition of Stolz and Teichner~\cite{ST11} as a guide; see~\S\ref{sec:ST} below. In brief, the top dashed arrow is the value of a field theory on a category of super annuli (the time-evolution operator), and the left dashed arrow is the value of a field theory on super tori (the partition function). The right vertical arrow is a character map for positive energy representations of super annuli. The lower horizontal arrow is induced by the functor $\widetilde{\mathcal{L}}^{2|1}_0(M)\to \mathcal{L}^{2|1}_0(M)$ that views a super annulus with the same source and target super circle as a super torus in $M$. This connects with the diagram~\eqref{square1} as follows: (i) the Grothendieck group of $\Rep(\Ann(M))$ is ${\rm K}_{\rm Tate}(M)$, (ii) elements of $\mathcal{O}(\mathcal{L}^{2|1}_0(M))$ define cocycles for ${\rm H}_\MF(M)$ and (iii) elements of $\widehat{\mathcal{O}}(\widetilde{\mathcal{L}}^{2|1}_0(M))$ define cocycles for~${\rm H}(M;\C)\llbracket q \rrbracket [q^{-1}]$. 

Connections between 2-dimensional field theories, representations of annuli, and elliptic cohomology at the Tate curve have been understood in varying degrees for awhile. Early on, Segal~\cite{SegalCFT,Segal_Elliptic} described a relationship between representations of annuli and Witten's construction of the Witten genus. Using related ideas, Stolz and Teichner sketched a map from their proposed elliptic objects to elliptic cohomology at the Tate curve~\cite[Theorem~1.0.2]{ST04}, though a complete definition of these elliptic objects hasn't yet been worked out. In his thesis, Pokman Cheung~\cite{PokmanPhD} constructed a space of annular supersymmetric field theories whose homotopy type is a representing space for~$\K_\Tate$. Our construction draws on the insights of these previous authors, particularly from Stolz and Teichner. We describe the connection between our framework with the Stolz--Teichner program in~\S\ref{sec:ST}. 

The main new feature below is a careful treatment of the character theory (or \emph{partition functions}) for smooth families of representations of super annuli. This builds on ideas of Fei Han~\cite{Han}, who related the partition function of $1|1$-Euclidean field theories with the Chern character of a vector bundle with connection. 
In the $2|1$-dimensional setting, this type of geometry gives an evident relationship between the Chern character of $\K_\Tate(M)$ and cocycles in~${\rm H}_\MF(M)$: both define functions on closely related moduli stacks of super tori over~$M$. Furthermore, suitably compatible cocycles in $\K_\Tate(M)$ and ${\rm H}_\MF(M)$ in~\eqref{square2} allow one to deduce integrality and modularity properties of the relevant topological invariants. This flavor of argument runs in complete parallel to a standard one in physics. To give a sketch, the integrality of the Witten genus can be seen by viewing it as a class in~$\K_\Tate(\pt)$ (which corresponds to the Hamiltonian perspective on the supersymmetric sigma model), and modularity follows from viewing it as an element of~${\rm H}_\MF(\pt)$ (which comes from the Lagrangian perspective). Playing these two points of view on quantum theory off each other is an age-old tool in physics, with considerable mathematical depth. For example, the physics proof of the Atiyah--Singer index theorem~\cite{Alvarez} is the assertion that the partition function in $1|1$-dimensional quantum mechanics is the same, whether one computes in either the Hamiltonian or the Lagrangian framework. We explore this further in~\S\ref{sec:physmot}

When studied in families, there is a rub in the $2|1$-dimensional case: partition functions from the Hamiltonian and Lagrangian perspectives need not be equal on the nose. In terms of the topology, for a family of string manifolds $\pi\colon X\to M$, we get classes~$[\sigma(X)]\in \K_\Tate(M)$ and $[\sigma_{\rm H}(X)]\in {\rm H}_\MF(M)$ from the Ando--Hopkins--Rezk string orientation of TMF~\cite{AHS,AHR} postcomposed with the maps ${\rm TMF}\to \K_\Tate$ and ${\rm TMF}\to {\rm H}_\MF$, respectively. In our geometric context, these classes can be refined to cocycles which have a field theoretic interpretation as in~\eqref{square2}. In this model, typically the Chern character of~$\sigma_\K(X)$ does not equal the $q$-expansion of~$\sigma_{\rm H}(X)$, e.g., as a differential form with values in $\C\llbracket q \rrbracket [q^{-1}]$. However, a choice of string structure on $\pi\colon X\to M$ specifies a smooth homotopy (or concordance) between these two cocycles. This is an example of an anomaly in physics, and the choice of string structure (which trivializes the anomaly) has homotopical meaning that can be understood in terms of the diagram~\eqref{square1}: it is the homotopy that witnesses the string orientation as a map in the homotopy pullback. We explain the physical picture in greater depth in~\S\ref{sec:physmot}.

These geometric ideas lead to a differential cocycle model for a cohomology theory denoted $\KMF$ defined by the homotopy pullback in spectra,
\begin{equation}
\begin{array}{c}
\begin{tikzpicture}[node distance=3.5cm,auto]
  \node (A) {$\KMF$};
  \node (B) [node distance= 4.5cm, right of=A] {$ {\rm K}_{\rm Tate}$};
  \node (C) [node distance = 1.5cm, below of=A] {${\rm H}_\MF$};
  \node (D) [node distance = 1.5cm, below of=B] {${\rm H}_\C\llbracket q \rrbracket [q^{-1}].$};
  \draw[->] (A) to (B);
  \draw[->] (A) to (C);
  \draw[->] (C) to (D);
  \draw[->] (B) to (D);
\end{tikzpicture}\end{array}\nonumber
\end{equation}
By the universal property, there is a map $\TMF\to \KMF$, but it is easy to verify that this is not an equivalence: the coefficients are different, with
$$
\KMF^\bullet(\pt)=\left\{ \begin{array}{ll} \MF_\Z^{2n} & \bullet=2n \\ \C\llbracket q \rrbracket [q^{-1}]/\Z\llbracket q \rrbracket [q^{-1}]+\MF^{2n}&\bullet=2n-1, \end{array}\right.
$$ 
i.e., in even degrees~$2n$ the coefficients are integral modular forms of weight $-n$ and in odd degrees they are quotients of $\C\llbracket q \rrbracket [q^{-1}]$ by the image of $\Z\llbracket q \rrbracket [q^{-1}]$ and (complex) weak modular forms under the $q$-expansion map (see~\S\ref{appen:MF} for our conventions regarding the grading on~$\MF$). The odd coefficients are well-known  receptacles for torsion invariants, e.g., Laures' $f$-invariant~\cite{LauresPhD} and Bunke and Naumann's secondary invariants of string manifolds~\cite{BunkeNaumann}. In~\S\ref{sec:examples} we sketch how the Bunke--Naumann invariants are primary invariants in $\KMF$, coming from the composition $\MString\to\TMF\to\KMF$ where the first map is the string orientation constructed by Ando--Hopkins--Strickland--Rezk~\cite{AHS,AHR}. The string orientation of~$\KMF$ can be thought of as a refinement of the string orientation of~$\K_\Tate$, as constructed by Witten~\cite{Witten_Dirac}. Notably, in this refinement to~$\KMF$, the underlying (Witten) genus is automatically an integral modular form. 

The picture from~\eqref{square2} gives a physical interpretation for invariants coming from ${\rm MString}\to \KMF$, as we explain in~\S\ref{sec:examples}. This both explains the modularity of the Witten genus within a mathematical framework, and gives a first indication of how the torsion in~TMF might be related to 2-dimensional quantum field theories. However, as it stands the physical interpretation doesn't have a great deal of mathematical content; instead its purpose is to begin development of a dictionary that connects the homotopical ideas to physical ones. The end goal is to continue to use the geometry of supersymmetric sigma models to refine the homotopical invariants further. Specifically, a robust understanding of the supersymmetric sigma model might give an analytic construction of the string orientation of~$\TMF$. 

As a warm-up example to this story, we construct differential K-theory from representations of constant super paths in a manifold. This basically constructs Klonoff's model~\cite{Klonoff} where cocycles are super vector bundles with super connection and an odd degree differential form. This easier case both highlights the formal similarities with the construction of~$\K_\Tate$ and allows us to translate operations on path categories into differential geometry. For example, we show that dilating super paths implements the Bismut--Quillen rescaling on super connections. In physics terminology, this dilation is the \emph{renormalization group flow}. This also makes sense for super annuli, giving a candidate generalization of Bismut--Quillen rescaling in the 2-dimensional case where the geometry is less familiar.

\subsection{Results I: Super annuli and elliptic cohomology at the Tate curve}

The main objects of study in this paper are super Euclidean annuli with maps to $M$
\begin{equation}
\begin{array}{c}
\begin{tikzpicture}[node distance=3.5cm,auto]
  \node (A) {$S^{1|1}_r$};
  \node (B) [node distance= 1.5cm, below of=A] {$S^{1|1}_r$};
  \node (C) [node distance= 3cm, right of=A] {$\null$};
  \node (D) [node distance = .75cm, below of=C] {$A^{2|1}_r$};
  \node (E) [node distance = 2cm, right of=D] {$M$};
  \draw[->,right hook-latex] (A) to node {in} (D);
  \draw[->,right hook-latex] (B) to node [swap] {out} (D);
  \draw[->] (D) to node {$\phi$} (E);
\end{tikzpicture}\end{array}\label{eq:annulus}
\end{equation}
where $S^{1|1}_r=\R^{1|1}/r\Z$ are incoming and outgoing super circles of circumference $r\in \R_{>0}$, and $A^{2|1}_r=\R^{2|1}/r\Z$ is an (infinite) super annulus with circumference $r$; see Figure~\ref{fig1}. By restricting to those super annuli whose maps to $M$ are invariant under the rotation action by the underlying (ordinary) annulus $\R^2/r\Z\subset \R^{2|1}/r\Z$ one can assemble the data~\eqref{eq:annulus} into the morphisms in a super Lie category denoted $\Ann(M)$ whose objects and morphisms form finite-dimensional super manifolds. Such categories have a good notion of a smooth representation (see~\S\ref{sec:Lierep}), with the data being a vector bundle over the objects and maps between vector bundles over morphisms. Motivated by unitary quantum field theories, an orientation-reversing map on annuli naturally leads to a version of \emph{unitary} representations of $\Ann(M)$. 

\begin{figure}\label{fig1}
\begin{center}
\begin{tikzpicture}[scale=1]

\node [draw, cylinder, shape aspect=4, minimum height=8cm, minimum 
width=2cm,dashed] (a) {};

\draw[thick] (-3.25,1) to (3.85,1);

\draw[thick] (-3.25,-1) to (3.85,-1);

\node [draw, thick, ellipse, minimum width=2cm,minimum height=.9cm, rotate=90] (b) at (-1,0){};

\node [draw, thick, ellipse, minimum width=2cm,minimum height=.9cm, rotate=90] (c) at (2,0){};

\node [draw, thick, ellipse, minimum width=2cm,minimum height=.9cm, rotate=90] (d) at (-1.5,-3){};

\node [draw, thick, ellipse, minimum width=2cm,minimum height=.9cm, rotate=90] (e) at (2.5,-3){};
\draw[->,left hook-latex] (2.5,-1.8) to node [right=.1in] {${\rm out}$} (2.2, -1.3);
\draw[->,right hook-latex] (-1.5,-1.8) to node [left=.1in] {${\rm in}$} (-1.2, -1.3);
\node (A) [node distance = 1cm, left of =d] {$S^{1|1}_r$};
\node (B) [node distance = 1cm, right of =e] {$S^{1|1}_r$};
\node (C) [node distance = 3.5cm, left of =b] {$A^{2|1}_r$};
\node (D) [node distance =7cm, right of=b] {$M$};
\draw[->] (5,0) to node [above] {$\phi$} (D);
\end{tikzpicture}
\end{center}
\caption{A rough picture of the infinite super annulus $A^{2|1}_r$ with a pair of embedded super circles~$S^{1|1}_r$ and a map to $M$.}
\end{figure}
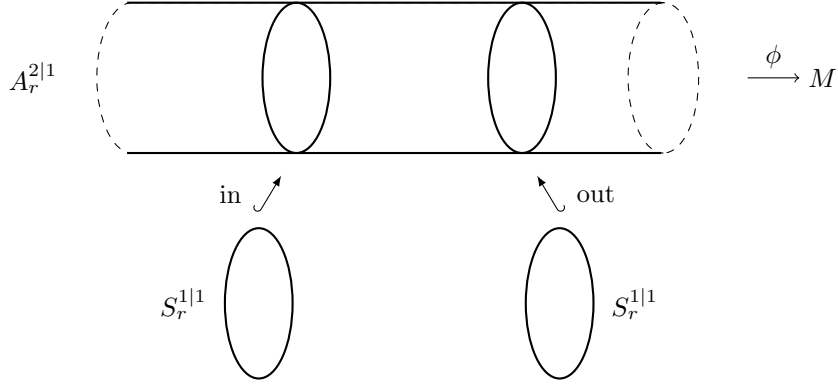

As is familiar in the case of loop group representations, it is important to restrict attention to positive energy representations of $\Ann(M)$. The definition relies on a subgroupoid $\Rot(M)\subset \Ann(M)$ of degenerate annuli that act on super circles by rotation. The action by this subgroupoid decomposes any unitary representation into a direct sum of weight spaces. A unitary representation then has \emph{positive energy} if the weight spaces of the $\Rot(M)$-action are finite-dimensional with weight bounded below. Let $\Rep(\Ann(M))$ denote this category of positive energy representations. 

\begin{thm} \label{thm:Tateeasy}
The Grothendieck group of $\Rep(\Ann(M))$ is $\K_\Tate(M)$, the elliptic cohomology at the Tate curve of~$M$. \end{thm}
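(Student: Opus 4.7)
The plan is to use the action of the rotation subgroupoid $\Rot(M) \subset \Ann(M)$ to decompose positive energy representations into a direct sum of weight bundles over $M$, and then recognize the resulting Grothendieck group as the ring $\K(M)\llbracket q \rrbracket[q^{-1}] \cong \K_\Tate(M)$.

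First I would unpack the definition: an object of $\Rep(\Ann(M))$ is a (super) vector bundle $V \to \Ob(\Ann(M))$ together with maps implementing the morphism action. The objects of $\Ann(M)$ are the super circles $S^{1|1}_r$ mapping to $M$, and restricting to the identity-on-radius, the object space is essentially the (free) loop space of $M$, but because we are only looking at constant super annuli, it reduces to copies of $M$ indexed by circumference $r$. The rotation subgroupoid $\Rot(M)$ gives a circle action on the fibers, and unitarity together with smoothness in $r$ forces each fiber to decompose into a Hilbert sum of $\Z$-weight spaces. The positive energy hypothesis guarantees that this weight decomposition yields vector bundles $V_n \to M$ that are finite-dimensional and vanish for $n \ll 0$, so that $V \cong \bigoplus_{n \geq n_0} V_n$ as a weighted super vector bundle over $M$.

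Next I would check that the remaining annular morphisms (the non-degenerate super annuli) impose no additional constraints on the underlying weight bundles beyond compatibility with the rotation action. The key point is that a non-degenerate super annulus of modulus $\tau$ acts on the weight-$n$ fiber by $e^{-n\tau}$ times an isomorphism determined by the underlying map to $M$; positive energy makes this convergent, and specifying the weight bundles $\{V_n\}_{n \geq n_0}$ together with the evident $M$-structure freely extends to a representation. Direct sums and short exact sequences on the representation side correspond to direct sums and short exact sequences of the weight bundles separately in each degree. Thus the Grothendieck group of $\Rep(\Ann(M))$ is the direct sum $\bigoplus_{n \geq n_0} \K(M)$, ranging over bounded-below sequences, which is precisely $\K(M)\llbracket q \rrbracket[q^{-1}]$ where $q$ records the weight. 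Invoking the standard identification $\K_\Tate(M) \cong \K(M)\llbracket q \rrbracket[q^{-1}]$ finishes the argument.

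The main obstacle is the middle step: making precise that positive energy representations of the full category $\Ann(M)$ are rigidly determined by their weight-bundle data. One has to argue that the semigroup structure on the non-degenerate annuli (parameterized by modulus $\tau$ with positive imaginary part) acts on weight-$n$ vectors essentially through $q^n = e^{2\pi i n \tau}$ scaling up to isomorphisms arising only from the $M$-dependence, and that no additional cocycle data appears. This requires analyzing the super-annular structure carefully, in particular using that constancy of the map $\phi\colon A^{2|1}_r \to M$ along the underlying annulus kills off any non-trivial holonomy that could otherwise obstruct freely choosing the weight bundles. Once this rigidity statement is established, the identification with $\K_\Tate(M)$ is a formal consequence of the fact that $\K$-theory is the Grothendieck group of (super) vector bundles.
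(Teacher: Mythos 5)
Your overall strategy is the same as the paper's: decompose a positive energy representation under $\Rot(M)$ into finite-dimensional weight bundles $V_k\to M$ bounded below, identify the action of the nondegenerate annuli on each weight space, and read off $\K(M)\llbracket q \rrbracket[q^{-1}]\cong\K_\Tate(M)$. The step you flag as the main obstacle is exactly the content of Theorem~\ref{thm:geocharof21rep}, which the paper establishes first and then quotes.

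There is, however, a genuine gap in your middle step. It is not true that the weight bundles $\{V_k\}$ ``freely extend'' to a representation with ``no additional cocycle data'': the odd direction of the super annulus forces each weight space to carry an odd operator $\A_k$, and compatibility with the $\Omega^\bullet(M)$-module structure (the Leibniz-type condition in Lemma~\ref{lem:21modulemaps}) forces $\A_k$ to be a unitary super connection, with the even-time evolution then pinned down as $q^{k/r}e^{-2\im(\tau)\A_k^2+\theta\A_k}$. So an object of $\Rep(\Ann(M))$ is a sequence of bundles \emph{with super connections}, not just bundles. The reason this extra data disappears from the Grothendieck group is that the group is built on \emph{concordance} classes (Definition~\ref{defn:diffGG}), and the space of unitary super connections on a fixed bundle is affine, hence any two choices are concordant. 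Your argument never invokes concordance, and without it the claim that a representation is determined by its weight bundles is false; with it, the argument closes. Two smaller points: the paper's Grothendieck group is defined by the relations $\rho+\rho'-(\rho\oplus\rho')$ and $\rho\oplus\Pi\rho$, not by short exact sequences (these agree here, but you should use the stated relations); and the answer is the set of bounded-below sequences in $\K(M)$, i.e.\ $\K(M)\llbracket q \rrbracket[q^{-1}]$, which is a restricted product rather than the direct sum $\bigoplus_{k}\K(M)$ your notation suggests (the direct sum would only give Laurent \emph{polynomials}).
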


\begin{rmk} The quotient defining the Grothendieck group of $\Rep(\Ann(M))$ can be identified with representations that tend to zero under the renormalization group flow; see~\S\ref{sec:Kthymot}. \end{rmk}

\begin{rmk} Viewing the Tate curve as an infinite annulus, the category~$\Ann(M)$ comes from cutting (a super-version) of this curve into pieces and mapping these pieces to~$M$. \end{rmk}

To promote this construction to a \emph{differential} cocycle model, we require a differential form-valued Chern character. The character of a representation of $\Ann(M)$ is a function on super annuli over $M$ whose source and target super circles coincide, i.e., super annuli that determine super tori in~$M$. A rescaling of this character (akin to the rescaled super connections of Quillen~\cite{Quillensuper} and Bismut~\cite{Bismutindex}) yields a function that is (1) invariant under global dilations of the super annulus, (2) invariant under the super translation action of the associated torus on itself, and (3) depends holomorphically on the modulus $q=e^{2\pi i\tau}$ of the super annulus. For finite-dimensional representations, this gives a \emph{rescaled partition function}
$$
Z\colon \Rep_{\rm fd}(\Ann(M))\to \Omega^\ev_\cl(M)\otimes \C[q,q^{-1}]\hookrightarrow \mathcal{O}(\widetilde{\mathcal{L}}^{2|1}_0(M))
$$
where $\widetilde{\mathcal{L}}^{2|1}_0(M)$ is a stack whose objects are super tori in $M$ with a choice of embedded super circle, and whose morphisms are global dilations and super translations of these tori. Characters of arbitrary positive energy representations need not define a function on $\widetilde{\mathcal{L}}^{2|1}_0(M)$, but instead define a formal sum of such functions, giving a \emph{formal rescaled partition function}
\beq
Z\colon \Rep(\Ann(M))\to \Omega^\ev_\cl(M)\otimes \C\llbracket q \rrbracket [q^{-1}]=\widehat{\mathcal{O}}(\widetilde{\mathcal{L}}^{2|1}_0(M)). 
\label{eq:charintro}
\eeq
This furnishes the required differential form-valued Chern character to define a differential cohomology theory. In~\S\ref{sec:groth} we give a general construction of \emph{differential Grothendieck groups} of the representation category of a super Lie category equipped with a specified character map, which in this case is~$Z$. 

\begin{thm}\label{thm:Tate}
The differential Grothendieck group of $\Rep(\Ann(M))$ with respect to the character map~\eqref{eq:charintro} is the differential elliptic cohomology of $M$ at the Tate curve. 
\end{thm}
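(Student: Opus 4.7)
The plan is to apply the general differential Grothendieck group construction of~\S\ref{sec:groth} to the pair $(\Rep(\Ann(M)),Z)$ and verify that the output matches the standard pullback description of differential elliptic cohomology at the Tate curve. Recall that a differential refinement of a generalized cohomology theory $E$ equipped with a Chern character $E\to {\rm H}(-;V)$ is determined by a homotopy pullback of $E$ with the de Rham cocycle model $\Omega^{\geq\bullet}_\cl(-)\otimes V$ over ${\rm H}(-;V)$. The construction of~\S\ref{sec:groth} produces, by design, such a pullback out of a super Lie category together with a closed-differential-form-valued character, so Theorem~\ref{thm:Tate} reduces to three compatibility statements.

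First, the underlying cohomology theory of the differential Grothendieck group is $\K_\Tate(M)$, which is Theorem~\ref{thm:Tateeasy}. Second, $\widehat{\mathcal{O}}(\widetilde{\mathcal{L}}^{2|1}_0(M))=\Omega^\ev_\cl(M)\otimes \C\llbracket q\rrbracket[q^{-1}]$ together with the de Rham differential is a cocycle model for ${\rm H}(M;\C)\llbracket q\rrbracket[q^{-1}]$, the target of the Chern character on $\K_\Tate(M)\cong \K(M)\llbracket q\rrbracket[q^{-1}]$. Third---and this is the essential content---the formal rescaled partition function $Z$ of~\eqref{eq:charintro} descends on Grothendieck groups to the Chern character of $\K_\Tate$. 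I would prove the third point in two stages: for a finite-dimensional representation, $Z$ is a Quillen--Bismut rescaled supertrace decorated by the $q$-weights of the $\Rot(M)$-action, which by Han's $1|1$-dimensional computation~\cite{Han} recovers, weight by weight, the K-theoretic Chern character of the corresponding super vector bundle with super connection; summing over weights reproduces the $q$-expansion formula for the Chern character on $\K(M)\llbracket q\rrbracket[q^{-1}]$. For a general positive energy representation, the decomposition into finite-dimensional weight spaces reduces to the previous case term by term, with the formal series in $q$ converging in $\widehat{\mathcal{O}}(\widetilde{\mathcal{L}}^{2|1}_0(M))$ by the positive energy condition.

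The main obstacle is the passage from finite-dimensional representations to arbitrary positive energy ones. The subtle point is that, although each weight space is finite-dimensional, the weight-by-weight computation must patch smoothly in $M$ to produce a well-defined closed differential form of the correct cohomology class, and the bookkeeping of super translations and global dilations encoded by $\widetilde{\mathcal{L}}^{2|1}_0(M)$ must be consistent with the standard $q$-expansion. I would address this by using the $\Rot(M)$-invariance together with holomorphic dependence on the modulus $q$ to reduce the computation to the degenerate sub-stack coming from $\Rot(M)\subset \Ann(M)$, where the character is literally a $q$-weighted supertrace of a rotation action on a $\Z$-graded super vector bundle with super connection, and Han's theorem applies in each weight separately.

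Finally, I would identify the equivalence relation used to form the differential Grothendieck group of~\S\ref{sec:groth} with the standard one on differential cocycles. Unitary equivalences and the renormalization group degeneracies of representations (cf.\ the remark after Theorem~\ref{thm:Tateeasy}), together with homotopies between characters, correspond precisely to pairs (topological class, differential form lift) modulo the image of $d$ and the Chern character, giving the expected odd-degree transgression data. Assembling these steps yields the pullback presentation of differential $\K_\Tate(M)$, completing the identification.
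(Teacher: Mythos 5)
Your broad strategy is the one the paper takes: decompose a positive energy representation into its weight spaces for the $\Rot(M)$-action, observe that each weight space contributes a super vector bundle with super connection, and then reduce the $2|1$-dimensional computation to the $1|1$-dimensional one degree-by-degree in~$q$. But there is a gap in the first stage of your plan. You assert that the differential Grothendieck group construction of~\S\ref{sec:groth} produces "by design" a homotopy pullback in the Hopkins--Singer sense. That is not true by design: Definition~\ref{defn:Diffgroth} is a hands-on, set-level construction (free abelian group on differential concordance classes modulo explicit relations), and identifying it with a differential cohomology theory is precisely the content that has to be proven. The paper achieves this identification by reducing, via the weight decomposition of Theorem~\ref{thm:geocharof21rep}, to the $1|1$-dimensional case (Theorem~\ref{prop:diffKthy}), and the proof of Theorem~\ref{prop:diffKthy} carries the real work: it computes differential concordance classes of pairs $(\A,\alpha)$ using Stokes' theorem and Lemma~\ref{lem:relconc}, shows the concordance relation is governed by Chern--Simons forms $\CS(\A,\A')$, and matches the resulting presentation, relation for relation, with Klonoff's explicit model $(V,\A,\beta)$ for $\widehat{\K}(M)$.

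Your proposed substitute for that step is Han's theorem, but that result only addresses the character-theoretic side (that the rescaled partition function computes the Chern character of a super connection); it does not, by itself, identify the equivalence relation of differential concordance with Klonoff's equivalence $(V,\A,\beta)\sim(V',\A',\beta')$ when $\CS(\A,\A')+\beta'=\beta$, nor does it verify that the quotient by the subgroup generated by $(\rho\oplus\Pi\rho,0)$ and the direct-sum relations lands on the right group. Your final paragraph gestures at matching the equivalence relations ("modulo the image of $d$ and the Chern character") but is exactly where the vagueness lands: the differential concordance relation involves two separate kinds of deformation---concordance of the representation and concordance of the form $\alpha$ rel boundary---and one must check (as the paper does via Equation~\eqref{Kdiffconc}) that these interact to produce the Chern--Simons correction and nothing more. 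To close the gap you should replace the appeal to Han's theorem and to an abstract homotopy pullback by an explicit reduction to Theorem~\ref{prop:diffKthy}, applied to the sequence of super connections $\A_k$ obtained from Theorem~\ref{thm:geocharof21rep}, carried out separately in each $q$-degree.
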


\begin{rmk} A differential cocycle is a positive energy representation with an extra datum: a smooth homotopy (or better, concordance) of its rescaled partition function. Physically, this can be interpreted as a correction to the partition function from ``higher energy" modes that have been integrated out. In examples from geometry this correction is constructed by a Cheeger--Chern--Simons form that interpolates between the character of a representation where the eigenspaces of $\Rot(M)$ might be infinite-dimensional and a cutoff version where the eigenspaces are finite-dimensional. We explain this in~\S\ref{sec:Kthymot} and~\S\ref{sec:examples}.
\end{rmk}

\subsection{Results II: Field theories and topological $q$-expansion}

The remaining results in the paper compare the above differential cocycle model for $\K_\Tate$ with the one for $\TMF\otimes \C$ developed in~\cite{DBE_WG}. The latter model has as cocycles holomorphic sections $\mathcal{O}(\mathcal{L}^{2|1}_0(M);\omega^{\otimes n/2})$ where $\omega^{\otimes n/2}$ is the $n$th tensor power of a line bundle closely related to the square root of the Hodge bundle on the moduli stack of elliptic curves, and $\mathcal{L}^{2|1}_0(M)$ is a stack whose objects are super tori with a constant map, $\phi\colon \R^{2|1}/\Lambda \to M$. \emph{Constant} means the map is invariant under the precomposition action of $\E^2/\Lambda$ by translations. Morphisms between these objects are super translations and dilations of super tori compatible with~$\phi$. This differs from~$\widetilde{\mathcal{L}}^{2|1}_0(M)$ in that a specified meridian super circle isn't part of the data. As such there is a map that forgets this super circle, $u\colon \widetilde{\mathcal{L}}^{2|1}_0(M)\to {\mathcal{L}}^{2|1}_0(M)$. 
Let $\widetilde{\omega}^{\otimes n/2}$ denote the pullback $u^*\omega^{\otimes n/2}$ to $\widetilde{\mathcal{L}}^{2|1}_0(M)$. 

We obtain rescaled partition functions with values in $\widetilde\omega^{\otimes n/2}$ from positive energy representations of $\Ann(M)$ valued in a category of modules over an algebra called the \emph{$n$-free fermions}, denoted $\Fer_n$. These are a generalization of the Clifford algebras, and enjoy many analogous properties. We form a category denoted $\Rep^n(\Ann(M))$ of such $\Fer_n$-linear representations. The category of \emph{trace class} positive energy representations, denoted $\Rep_{\rm TC}^n(\Ann(M))$ is the full subcategory of $\Rep^n(\Ann(M))$ for which the rescaled partition function~\eqref{eq:charintro} takes values in sections of $\widetilde\omega^{\otimes n/2}$ (rather than formal sums of sections) over $\widetilde{\mathcal{L}}^{2|1}_0(M)$. For such representations, we can ask for a lift
\beq
\begin{array}{c}
\begin{tikzpicture}
  \node (A) {$\Rep_{\rm TC}^n(\Ann(M))$};
  \node (B) [node distance= 4cm, right of=A] {$\Gamma(\widetilde{\mathcal{L}}^{2|1}_0(M);\widetilde{\omega}^{\otimes n/2}).$};
  \node (C) [node distance = 1.5cm, above of=B] {$\Gamma(\mathcal{L}^{2|1}_0(M);\omega^{\otimes n/2})$};
  \draw[->] (A) to node [above] {$Z$} (B);
  \draw[->,dashed] (A) to (C);
  \draw[->] (C) to node [right] {$u^*$} (B);
\end{tikzpicture}\end{array}\nonumber
\eeq
If such a factorization exists it is unique and necessarily defines a holomorphic section $\mathcal{O}(\mathcal{L}^{2|1}_0(M);\omega^{\otimes n/2})\subset \Gamma(\mathcal{L}^{2|1}_0(M);\omega^{\otimes n/2})$. We define a category of differential cocycles, ${\widehat\Rep}{}^{n}_\MF(\Ann(M))$, gotten from $\Fer_n$-linear positive energy representations whose rescaled partition functions have such a (holomorphic) lift. 

\begin{thm}\label{thm:KMF}
The differential Grothendieck group of ${\widehat\Rep}{}^{2n}_\MF(\Ann(M))$ gives a model for $\widehat{\K}_\MF^{2n}(M)$, the degree~$2n$ differential $\KMF$ of $M$. 
\end{thm}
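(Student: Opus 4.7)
The plan is to leverage the universal property of the homotopy pullback defining $\KMF$ together with the differential cocycle models already established for the three remaining corners of the square. The theorem should follow by identifying the category $\widehat{\Rep}{}^{2n}_\MF(\Ann(M))$ as encoding exactly the data of (a) a differential $\K_\Tate$ cocycle, (b) a differential $\H_\MF$ cocycle, and (c) a concordance between their images in the differential refinement of $\H_\C\llbracket q \rrbracket[q^{-1}]$.

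First, I would unpack the two cocycles provided by an object $V\in \widehat{\Rep}{}^{2n}_\MF(\Ann(M))$. By the $\Fer_n$-linear analogue of Theorem~\ref{thm:Tate}, the underlying positive energy representation produces a differential cocycle in $\widehat{\K}_\Tate^{2n}(M)$ whose rescaled partition function $Z(V)\in \widehat{\mathcal{O}}(\widetilde{\mathcal{L}}^{2|1}_0(M))$ is the Chern character. By hypothesis, $Z(V)=u^*\widetilde Z(V)$ for a holomorphic section $\widetilde Z(V)\in \mathcal{O}(\mathcal{L}^{2|1}_0(M);\omega^{\otimes n/2})$, which is by \cite{DBE_WG} a cocycle model for a class in $\H_\MF^{2n}(M)\simeq (\TMF\otimes\C)^{2n}(M)$. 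The relation $Z(V)=u^*\widetilde Z(V)$ is the on-the-nose equality in $\widehat{\mathcal{O}}(\widetilde{\mathcal{L}}^{2|1}_0(M))$ expressing the compatibility that makes the pair $([V],\widetilde Z(V))$ into a lift through the homotopy pullback at the level of underlying cocycles.

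Second, I would compare the differential refinements. The Hopkins--Singer-type differential cohomology associated to the homotopy pullback defining $\KMF$ has cocycles consisting of a differential $\K_\Tate$ cocycle, a differential $\H_\MF$ cocycle, and a smooth homotopy (concordance) between the two induced cocycles in the differential refinement of $\H_\C\llbracket q \rrbracket[q^{-1}]$. On the representation-category side, Section~\ref{sec:groth} equips each object with precisely such a concordance datum: a smooth homotopy of its character. Since we have arranged $Z(V)=u^*\widetilde Z(V)$ as honest cocycles, the concordance datum of the differential Grothendieck group construction matches a concordance in $\widehat{\H}_\C\llbracket q \rrbracket[q^{-1}]$, which is exactly the coherence needed. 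Functoriality in $M$ and compatibility with pullback along smooth maps is immediate from the corresponding properties of $\Rep$, $\widetilde{\mathcal{L}}^{2|1}_0(-)$, and $\mathcal{L}^{2|1}_0(-)$.

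The main obstacle is surjectivity together with exactness at the level of the differential Grothendieck group. Given an arbitrary class in $\widehat{\K}_\MF^{2n}(M)$, represented by a pair $(\alpha,\beta)$ with $\alpha\in \widehat{\K}_\Tate^{2n}(M)$, $\beta\in \mathcal{O}(\mathcal{L}^{2|1}_0(M);\omega^{\otimes n/2})$, and a concordance $h$ in $\widehat{\H}_\C\llbracket q \rrbracket[q^{-1}]$ between their images, one must produce a representation $V\in \widehat{\Rep}{}^{2n}_\MF(\Ann(M))$ whose class is $(\alpha,\beta,h)$. Theorem~\ref{thm:Tate} realizes $\alpha$ by some positive energy representation $V_0$; however, the rescaled partition function $Z(V_0)$ will in general only agree with $u^*\beta$ up to a concordance. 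The key technical point is that one can absorb this discrepancy into a Cheeger--Chern--Simons correction, so that after modifying the differential datum attached to $V_0$ within its differential Grothendieck class one obtains a representation $V$ with $Z(V)=u^*\beta$ on the nose and with prescribed concordance $h$. The odd-degree coefficients $\C\llbracket q \rrbracket[q^{-1}]/(\Z\llbracket q \rrbracket[q^{-1}]+\MF^{2n})$ then arise exactly from equivalence classes of concordances modulo those trivialized by a representation, which is the expected coefficient calculation at a point.
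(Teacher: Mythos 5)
Your high-level strategy—identifying $\widehat{\Rep}{}^{2n}_\MF(\Ann(M))$ with the homotopy pullback data defining $\KMF$, using Theorem~\ref{thm:Tate} for the $\K_\Tate$ corner and the holomorphic factorization for the $\H_\MF$ corner—is fundamentally the same as the paper's, though the paper works with the concrete Definition~\ref{defn:KMFdiff} (a $\widehat{\K}_\Tate$ cocycle together with a concordance of its curvature to a modular-form-valued form) rather than literally invoking the universal property of the homotopy pullback. The differences are mostly bookkeeping: your three-piece decomposition carries a separate $\H_\MF$ cocycle that the paper folds into the target of the concordance.

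However, there is a genuine gap. You invoke ``the $\Fer_n$-linear analogue of Theorem~\ref{thm:Tate}'' as though it were immediate, but this analogue requires real work. Theorem~\ref{thm:Tate} concerns $\Fer_0$-linear representations; to pass from $\Fer_{2n}$-linear to $\Cl_{2n}$-linear representations you need the Morita bimodule of Lemma~\ref{lem:ST} and Corollary~\ref{cor:FertoCl}, and this introduces a factor of $\eta(q)^{-2n}q^{2n/24}=\Phi(q)^{-2n}$ into the rescaled partition function. The identity $\eta(q)q^{-1/24}=\prod_j(1-q^j)=\Phi(q)$ is what guarantees the partition function lands in power series in $q$ (no fractional powers) and hence that the underlying sequence of Clifford module bundles gives a well-defined $\widehat{\K}_\Tate^{2n}(M)$ cocycle in the sense of Definition~\ref{defn:KTatediff}. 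Without tracking this normalization, the identification with $\widehat{\K}_\Tate$ and the degree bookkeeping do not go through, and your $Z(V)$ would not be of the right form to compare against $q$-expansions of modular forms.

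One smaller imprecision: in your first step you assert ``$Z(V)=u^*\widetilde Z(V)$ is the on-the-nose equality''—but in Definition~\ref{defn:degreek} the modularity constraint is imposed on the rescaled partition function of the \emph{differential cocycle}, i.e., on $\widehat Z(\rho,\alpha)=i_1^*\alpha$, the target of the concordance, not on the character $Z(\rho)$ of the underlying representation. Your third paragraph effectively acknowledges this by discussing the Cheeger--Chern--Simons correction, so this may just be a slip in exposition, but as written the first and third paragraphs are in tension. The correct statement is that $\widehat Z$ (not $Z$) lies in the image of the $q$-expansion map.
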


\begin{rmk}
For $n$ odd, the $\Fer_n$-linear representations also provide cocycles. The map is surjective when $M=\pt$, but fails to be surjective generally. This is in complete analogy to how finite-dimensional Clifford module bundles map to $\K^n(M)$, but the map isn't surjective in general, e.g., there is no finite-dimensional Clifford module bundle representative for the generator of $\widetilde{\K}^1(S^1)\cong \Z$. 
\end{rmk}

Using similar ideas to Freed and Lott's~\cite{LottFreed} construction of a pushforward in differential K-theory, we construct a differential string orientation for $\KMF$. 

\begin{thm}\label{thm:diffpush}
A geometric family of rational string manifolds~$X\to M$ with fiber dimension~$2d$ determines a differential cocycle
$$
\widehat{\sigma}(X)\in \widehat{\K}_\MF^{-2d}(M).
$$
When $M=\pt$, $\widehat{\sigma}(X)\in \widehat{\K}_\MF^{-2d}(\pt)=\MF^{-2d}_\Z$ is the Witten genus of $X$ as an integral modular form. 
\end{thm}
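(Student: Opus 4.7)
The plan is to model the differential string orientation on Freed and Lott's construction of a pushforward in differential K-theory, with the sigma model on the fibers of $\pi\colon X\to M$ providing the $\Fer_{-2d}$-linear positive energy representation of~$\Ann(M)$ that serves as the differential cocycle representative of $\widehat{\sigma}(X)$. First, I would associate to the family $\pi$ a $\Fer_{-2d}$-linear representation $E(X)\in\Rep^{-2d}(\Ann(M))$ whose underlying super vector bundle over~$M$ is a completion of the fiberwise spinor bundle on the loop space of $X$, on which (constant) super annuli act by the sigma-model time evolution. The positive energy structure is determined by the chosen string structure on~$\pi$, and the degree shift by~$2d$ comes from the fiberwise Ramond-sector zero modes, which organize into a $\Fer_{-2d}$-module structure just as finite-dimensional fiber spinors produce a $\Cl_{-2d}$-module in the Freed--Lott setting.

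Second, I would verify that $E(X)$ lies in $\widehat{\Rep}{}^{-2d}_\MF(\Ann(M))$, i.e., that it is trace class and admits a holomorphic lift in the sense preceding Theorem~\ref{thm:KMF}. Concretely, the rescaled partition function of $E(X)$ is a fiber-integrated Bismut-style supertrace, which by a localization computation on loop space equals the Witten characteristic class of~$TX$ (a Chern--Weil form) paired with the Jacobi-theta factor determined by the super torus. This identifies $Z(E(X))$ with an honest (rather than formal) section of~$\widetilde{\omega}^{\otimes -d}$, and translation invariance of constant super tori shows this section is pulled back from a holomorphic section over $\mathcal{L}^{2|1}_0(M)$. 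Theorem~\ref{thm:KMF} then produces $\widehat{\sigma}(X):=[E(X)]\in\widehat{\K}_\MF^{-2d}(M)$.

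Third, for the identification at $M=\pt$, the coefficient formula gives $\widehat{\K}_\MF^{-2d}(\pt)=\MF_\Z^{-2d}$ (no odd-degree contribution in even total degree). The rescaled partition function of $E(X)$ over a point is Witten's sigma model formula, the Witten genus $\phi_W(X)\in\C\llbracket q\rrbracket [q^{-1}]$. That this lies in $\MF^{-2d}$ is the holomorphic lift to $\mathcal{O}(\mathcal{L}^{2|1}_0(\pt);\omega^{\otimes -d})$ obtained in the previous step (the Lagrangian side), while integrality $\phi_W(X)\in\Z\llbracket q\rrbracket [q^{-1}]$ is the image of $[E(X)]$ under the Grothendieck map of Theorem~\ref{thm:Tateeasy} (the Hamiltonian side). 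The intersection $\MF^{-2d}\cap\Z\llbracket q\rrbracket [q^{-1}]=\MF^{-2d}_\Z$ is precisely the statement that the Witten genus is an integral modular form.

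The main obstacle is the rigorous construction of $E(X)$ together with the verification that its rescaled partition function is a holomorphic (rather than merely formal) section of $\widetilde{\omega}^{\otimes -d}$. This requires careful analytic handling of the Dirac--Ramond operator along the fibers and an identification of the rescaled supertrace with a fiberwise Chern--Weil representative for the Witten class. The role of the string structure is to trivialize the anomaly that would otherwise obstruct well-definedness of this sigma model; encoding this trivialization as the concordance datum that upgrades $E(X)$ from an object of $\Rep^{-2d}(\Ann(M))$ to a differential cocycle in $\widehat{\Rep}{}^{-2d}_\MF(\Ann(M))$ is the technical heart of the construction.
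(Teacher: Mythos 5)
Your outline names the right ingredients (Freed--Lott cutoffs, the string structure as an anomaly trivialization) but the argument as written has two genuine gaps. First, the object $E(X)$ you propose --- the fiberwise spinor bundle on loop space with sigma-model time evolution --- is not a positive energy representation of $\Ann(M)$ in the sense of this paper: each weight space $\$\otimes R_k$ is an infinite-dimensional space of sections over the fibers of $X\to M$, whereas positive energy requires \emph{finite-dimensional} weight spaces. The actual construction must apply Lemma~\ref{lem:FreedLott} to each twisted Dirac operator $\slashed{D}_k$ (the coefficient of $q^k$ in $\slashed{D}\otimes\bigotimes_k S_{q^k}T_\C(X/M)$, normalized by $\Phi(q)^{d}$) to extract finite-dimensional bundles $V_k$ with super connections $\A_k$ and eta forms $\alpha_k$ satisfying $d\alpha_k={\rm Ch}(\slashed{D}_k)-{\rm Ch}(\A_k)$; the differential cocycle \emph{is} this cutoff data together with the $\alpha_k$. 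You name this step in your first sentence but never carry it out, and without it you have no object of $\Rep^{-2d}(\Ann(M))$ to which Theorem~\ref{thm:KMF} applies.

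Second, the claim that the rescaled partition function descends to a holomorphic section over $\mathcal{L}^{2|1}_0(M)$ ``by translation invariance'' is false. Translation invariance only gives closedness of the differential-form components, i.e., a section over $\widetilde{\mathcal{L}}^{2|1}_0(M)$. The curvature of the Tate-curve cocycle is $\int_{X/M}\exp\bigl(\sum_{k\ge1}E_{2k}(q)\,{\rm ph}_k(T_\C(X/M))/2k\bigr)$ (local index theorem plus Zagier's identity), and the $k=1$ term involves the quasi-modular Eisenstein series $E_2$; this is \emph{not} the $q$-expansion of a modular-form-valued class unless $p_1(T(X/M))$ vanishes as a form. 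The difference from the modular expression $\widehat{\sigma}_{\rm H}(X)$ (which starts the sum at $k\ge2$) is exactly $p_1(X/M)\wedge\Theta(X/M)(q)$, and the 3-form $H$ with $dH=p_1(X/M)$ supplies the explicit concordance $\int_{X/M}H\wedge\Theta$ bridging the two. So the string structure does not merely make the sigma model ``well-defined'': its 3-form is the concrete homotopy datum that places the cocycle in $\widehat{\K}_\MF$ rather than just $\widehat{\K}_\Tate$. Your $M=\pt$ argument (integrality from the Hamiltonian side, modularity from the Lagrangian side) is consistent with the paper's, but it only becomes a proof once the two gaps above are filled.
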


We interpret $\widehat{\sigma}(X)$ as a cutoff version of the supersymmetric sigma model. When~$M=\pt$, our methods also give an odd variant of this differential orientation where $\widehat{\sigma}(X)\in \widehat{\K}^\odd_\MF(\pt)$ is a version of the Bunke--Naumann secondary invariant of the Witten genus. 

\subsection{Motivation from physics}\label{sec:physmot}

There are two basic approaches in physics that construct a quantum theory out of a classical one. One follows the Lagrangian (or path integral) formalism, which computes expectation values of observables as a (usually ill-defined) integral over a space of fields. The second approach is the Hamiltonian (or canonical) formalism, which looks to construct a space of states with an action by various operators, e.g., a representation of the Poincar\'e group of the relevant dimension (which includes a time-evolution operator), and creation and annihilation operators associated to particles. In the physics literature these approaches are widely assumed to be equivalent, e.g., see Zee~\cite[I.8]{Zee} for some discussion. 

The local index theorem can be viewed as giving a mathematically precise relationship between these approaches in the context of $1|1$-dimensional quantum mechanics~\cite{susymorse,Alvarez,Witten_Dirac}. The fields for the classical theory are super paths $\gamma\colon \R^{1|1}\to X$ in a Riemannian manifold~$X$, and the action generalizes the usual energy of the path; e.g., see~\cite{5lectures}. One can then apply either the Hamiltonian or Lagrangian formalisms to obtain a quantum theory. 
An important quantum observable is the partition function, which counts the difference between the fermionic and bosonic states. In the Hamiltonian approach, the space of states consists of sections of the spinor bundle, and the time-evolution operator is $e^{-t\slashed{D}{}^2}$ where $\slashed{D}$ is the Dirac operator. In this approach, the partition function is the super trace of $e^{-t\slashed{D}{}^2}$. By the McKean--Singer formula, 
$$
{\rm sTr}(e^{-t\slashed{D}^2})={\rm Ind}(\slashed{D})
$$ 
this is the index of~$\slashed{D}$. In the Lagrangian approach, physical reasoning argues that the path integral that calculates the partition function can be reduced to a $\zeta$-regularized determinant for a family of operators over a moduli space of constant super loops $\R^{1|1}/\Z\to X$. This is a version of \emph{1-loop quantization}, because this determinant calculates the contribution from 1-loop Feynman diagrams in a perturbative expansion around these constant super loops. This constructs the $\hat{A}$-form of~$X$, which can be viewed as a function on the moduli space of constant super loops in~$X$. The assertion that the Lagrangian and Hamiltonian computations of the partition function agree is the local index theorem,
\beq
\Z \ni {\rm sTr}(e^{-t\slashed{D}^2})=(2\pi i)^{n/2}\int_X \hat{A}(X)\in \C.\label{eq:localindex}
\eeq
The Hamiltonian computation on the left hand side is necessarily an integer, whereas Lagrangian computation on the right hand side a priori is a complex number. The equality shows that the $\hat{A}$-genus of a spin manifold is in fact an integer. A version of this works in families, considering a kind of fiberwise sigma model for a proper submersion $\pi\colon X\to M$ with spin structures on the fibers. Then we get an equality of differential forms, coming from a (limit of) the Chern character of Bismut's super connection for a family of Dirac operators on the left hand side and the fiberwise integral of the $\hat{A}$-form on the right hand side; see~\S\ref{sec:FreedLott}. 

Witten generalized this story from physics to the $2|1$-dimensional sigma model. The fields in this classical field theory are super annuli $\gamma\colon \R^{2|1}/r\Z\to X$ in a Riemannian manifold~$X$. The classical action is a super-generalization of the one for which critical points are harmonic maps of ordinary annuli to~$M$. Very roughly, this is $1|1$-dimensional mechanics with target~$LM$, the free loop space of~$M$. Applying a version of Hamiltonian quantization, Witten constructed a space of states and a time-evolution operator. By analogy, he referred to these as the \emph{spinor bundle on loop space} $\$_{LX}$ and the \emph{Dirac operator on loop space}, $\slashed{D}_{LX}$, respectively. Decomposing the space of states according to the $S^1$-action on loop space by loop rotation, the partition function ${\rm sTr}(e^{-t\slashed{D}_{LX}^2})$ can be written as a power series in~$q$. The coefficient of $q^n$ counts the difference between fermionic and bosonic states in the space of states on which the $S^1$-action is by the $n^{\rm th}$ power of the basic representation. This power series with integer coefficients is the \emph{Witten genus of~$X$,} which (continuing the analogy) is the index of the Dirac operator on loop space. 

If we apply the Lagrangian formalism to the $2|1$-dimensional sigma model, physical reasoning again argues that the partition function comes from a $\zeta$-regularized determinant for a family of operators over a moduli space of constant super tori, $\R^{2|1}/\Lambda\to X$. However, in this case there is an \emph{anomaly}, meaning the analog of~$\hat{A}(X)$ is a twisted class that can't necessarily be integrated on this moduli stack~\cite{DBE_WG}. In a bit more detail, the $\zeta$-regularized determinant defines a section of a line bundle over the moduli stack of constant super tori in~$X$. Trivializing this line bundle requires a trivialization of~$p_1(TX)$, i.e., a rational string structure. With such a rational string structure set, the $\zeta$-regularized determinant can be identified with a function on the moduli stack of constant super tori in~$X$. The analog of~\eqref{eq:localindex} is the equality
\beq
\Z\llbracket q \rrbracket\ni {\rm sTr}(e^{-t\slashed{D}_{LX}^2})=(2\pi i)^{n/2}\int_X {\rm Wit}(X)\in \MF^{-d} \label{eq:localLindex}
\eeq
where $d={\rm dim}(X)$ and~${\rm Wit}(X)\in {\rm H}_\MF^0(X)$ is a characteristic class called the \emph{Witten class} of $X$ (see~\S\ref{sec:examples}). As before, the left hand side is the computation of the partition function in the Hamiltonian framework, whereas the right hand side is the computation in the Lagrangian framework. The left side is a power series in $q$ with integer coefficients, and the right hand side is a weak modular form over~$\C$. So for these to be equal, they both must be integral modular forms. This is the argument Witten employs in~\cite{Witten_Dirac} to assert the modularity of the Witten genus. To summarize, the Hamiltonian approach leads one to expect \emph{integrality}, whereas the Lagrangian approach leads to expect \emph{modularity}. This philosophy is now old-hat in the string theory literature; e.g., see~\cite[\S1.1]{Dijkgraaf} for an overview. 

However, relatively unstudied in the physics literature is the failure of the families version of~\eqref{eq:localLindex}. In this case, we compare a Chern character of a Bismut super connection on the left hand side with a fiberwise integral of the Witten class on the right hand side. if the first Pontryagin class of the vertical tangent bundle is zero, the difference between the two sides is an exact form, but is not necessarily. A choice of 3-form $H$ with $dH=p_1(T(X/M))$ presents the difference as~$d$ of a specific form. This is the extra data that is required to construct the string orientation of~$\KMF$ analytically. Its physical relevance is the choice of trivializing anomalies in families. 

Broadly stated, the goal of this paper is to put the above physical ideas in direct contact with algebraic topology. This requires we understand \emph{families} of supersymmetric sigma models parametrized by a smooth manifold in the Lagrangian and Hamiltonian frameworks. The Lagrangian picture has been worked out in~\cite{DBE_WG,DBE_MQ}. In brief, the constructions take place over a stack of \emph{constant super tori} in~$M$. These are maps, 
\beq
\R^{2|1}/\Lambda\to (\R^{2|1}/\Lambda)/\E^2\cong \R^{0|1}\to M\label{eq:constanttori}
\eeq
that are invariant under the translation action of $\E^2$ on a super torus, or equivalently, maps that factor through a super point. These can also be viewed as energy zero maps, i.e., the parametrizing space for perturbative constructions such as the 1-loop partition function described above.

This paper starts work on the Hamiltonian picture. For a family of supersymmetric sigma models parametrized by~$M$, the Hamiltonian formalism can be expected to produce an $M$-family of representations of super annuli. This is a geometric (or bordism) description of the time-evolution operator in 2-dimensional field theories due to Segal~\cite{SegalCFT}. It can be made mathematically precise in several distinct ways. The most complete definition to date takes the annular subcategory of Stolz and Teichner's $2|1$-dimensional Euclidean bordism category~\cite{ST11}. We follow a somewhat different approach, incorporating a few simplifications suggested both directly and indirectly by Witten's picture. Before explaining these ingredients, we overview Stolz and Teichner's framework. 

\subsection{Relation to the Stolz--Teichner program}\label{sec:ST}
For a smooth manifold~$M$, Stolz and Teichner have defined a bordism category denoted $2|1\EBord(M)$ whose objects are closed, collared, $1|1$-dimensional super manifolds with a map to~$M$, and whose morphisms are compact, collared, $2|1$-dimensional super Euclidean manifolds with a map to~$M$. Disjoint union of bordisms gives a symmetric monoidal structure. To incorporate isometries of super manifolds, both the objects and morphisms of $2|1\EBord(M)$ are regarded as symmetric monoidal stacks on the site of super manifolds. 

For a target category $\Vect$ of topological vector spaces, they form a category of $2|1$-dimensional super Euclidean field theories, 
$$
2|1\EFT(M):=\Fun^\otimes(2|1\EBord(M),\Vect).
$$
To incorporate a notion of degree, there is a version of the above functors with values in modules over the free fermion algebra~$\Fer_n$ (see~\S\ref{sec:ferdef}). Call this category of field theories~$2|1\EFT^n(M)$. Stolz and Teichner's main main conjecture is the existence of a higher-categorical refinement of $2|1\EFT^n(M)$ incorporating a fully-extended bordism category and a delooping of~$\Vect$ such that there is a natural ring isomorphism
\beq
{\rm TMF}^n(M)\cong 2|1\EFT^n(M) \quad\quad\quad {(\rm conjectural}).\label{eq:STconj}
\eeq
The 1-categorical version of $2|1\EBord(M)$ is already a very intricate object~\cite{ST11}. Surely some level of intricacy is necessary if one wishes to recover a deep object like~TMF. However, the complexity of $2|1\EFT^n(M)$ has been difficult to characterize in terms of standard geometric or topological objects, which has made progress on~\eqref{eq:STconj} difficult. With an eye toward the topological $q$-expansion principle~(\ref{square1}), we extract simpler pieces from~$2|1\EFT^n(X)$ that are characterized using mild elaborations of standard geometric tools. Our choices in these simplifications are informed by the ingredients that go into Witten's construction of the Witten genus. The hope is a simplified model will make it easier to nail down a higher-categorical refinement leading to a cocycle model for~TMF as in~\eqref{eq:STconj}. 

The first simplification restricts the maps of bordisms (in our case, super annuli) to~$M$. Indeed, all of Witten's analysis uses a stand-in for the free loop space consisting of an infinite-dimensional normal bundle over the \emph{finite-dimensional} space of constant loops. See also the related construction of Bott and Taubes~\cite{BottTaubes}. This suggests we consider infinite-dimensional representations of a suitable finite-dimensional category of \emph{constant} super annuli. With the pre-existing construction of $\TMF\otimes \C$~\cite{DBE_WG} in terms of the constant maps~\eqref{eq:constanttori}, we take the simplest possible option, namely maps from super annuli to $M$
$$
\R^{2|1}/\Z\to (\R^{2|1}/\Z)/\E^2\cong \R^{0|1}\to M
$$
that are invariant under precomposition with the translational $\E^2$-action on $\R^{2|1}/\Z$. As desired, this moduli space of annuli over~$M$ is finite-dimensional. A more technical (but equally important) point is that gluing constant super annuli can be arranged without choosing collars. So, for example, the objects we consider are constant super circles in~$M$, whereas in the Stolz--Teichner framework objects are super circles \emph{together with} the germ of a super annulus in~$M$. Following the usual story of bordism categories, gluing collared manifolds is only defined up to isomorphism, resulting in a non-strict composition law. However, in the category of constant super annuli composition is strict. 

The second simplification follows Witten's suggestion in the second paragraph of the introduction to~\cite{Witten_Dirac}:
\begin{quote}
the topological conjecture in question would follow from certain simple (conjectured) properties of the supersymmetric nonlinear sigma model\dots A cutoff version of the nonlinear sigma model would be adequate.
\end{quote}
An example of a cutoff in index theory is the passage from a family of Dirac operators (acting on infinite-dimensional vector spaces) to a (finite-dimensional) index bundle; see~\S\ref{sec:Kthymot}. For the $2|1$-dimensional supersymmetric sigma model, a cutoff sigma model extracts the analog of an index bundle for each weight space of the $S^1$-action that rotates annuli. We explain this in greater detail in~\S\ref{sec:examples}. The important point is that the resulting representation of super annuli is a (finite-type) positive energy representation: weight spaces for the $S^1$-action are finite-dimensional with weight bounded below. This exactly mimics the definition of positive energy loop group representations (which are expected to appear in equivariant refinements of $\K_\Tate$~\cite{Andopower}). We restrict attention to representations of super annuli of this form. 

In families these cutoffs initially introduce a couple issues, but these turn out to be features rather than bugs. First, it is important to identify the theories resulting from different choices of cutoff. This identification is a version of the Grothendieck construction, as we explain in~\S\ref{sec:Kthymot}. Second, a choice of cutoff in a family can affect the value of the partition function. In our formalism it is important to remember this change in the partition function. A positive energy representation of super annuli together with the data of a modification to the partition function is precisely a Hopkins--Singer style differential cocycle. 

Now we explain how our approach compares to Stolz and Teichner's. To guarantee the existence of cutoffs, we restrict to even degree (see Lemma~\ref{lem:FreedLott}); in Remark~\ref{rmk:odd} we describe some approaches to the odd case. We obtain a version of (\ref{square1}),
\begin{equation}
\begin{array}{c}
\begin{tikzpicture}[decoration=snake]
  \node (A) {$2|1\EFT^{2n}(M)$};
  \node (B) [node distance= 6.9cm, right of=A] {$\Rep^{2n}(\Ann(M))$};
  \node (C) [node distance = 2cm, below of=A] {$\mathcal{O}(\mathcal{L}^{2|1}_0(M);\omega^{\otimes 2n/2})$};
  \node (D) [node distance = 2cm, below of=B] {$\mathcal{O}(\widetilde{\mathcal{L}}^{2|1}_0(M)).$};
  \node (E) [node distance=3cm, right of=A] {$\null$};
  \node (F) [node distance=.75cm, below of=E] {$\twocommute \ \eta$};
  \draw[->,decorate] (A) to node [above] {${\rm restrict+cutoff}$}  (B);
  \draw[->] (A) to node [left] {$\begin{array}{c}{\rm restrict}\\ + \\ {\rm rescale}\end{array}$} (C);
  \draw[->] (C) to node [below] {$({\rm forget})^*$} (D);
  \draw[->] (B) to node [right] {$\begin{array}{c}{\rm rescaled} \\ {\rm partition}\\ {\rm function}\end{array}$} (D);
\end{tikzpicture}\end{array}\label{squareST}
\end{equation}
The left vertical arrow restricts a Stolz--Teichner field theory to a subcategory of super tori (viewed as bordisms from the empty set to itself) and rescales these partition functions in the same manner as in~\eqref{eq:charintro}. This gives a section of a line bundle over the moduli stack of super tori. The upper horizontal arrow restricts to a subcategory of annuli and chooses a cutoff, extracting a positive energy representation of super annuli. The vertical arrow on the right is the rescaled partition function~\eqref{eq:charintro}. The lower horizontal map is the pullback along the forgetful functor $\widetilde{\mathcal{L}}^{2|1}_0(M)\to \mathcal{L}^{2|1}_0(M)$ that forgets a chosen embedded super circle in a super torus. 

The squiggly arrow in~\eqref{squareST} is intended to emphasize that there are choices of cutoff involved in extracting a (finite-type) positive energy representation from a field theory. This choice means that we cannot expect the diagram to commute strictly; instead, there is a concordance~$\eta$ between the character of the positive energy representation of super annuli and the partition function of the input field theory. However, we sketch in~\S\ref{sec:Kthymot} how the induced map to the Grothendieck group of the representation category is independent of these choices. 


\begin{rmk}
A more straightforward comparison between the definitions in this paper and Stolz--Teichner field theories is the warm-up example concerning super paths and K-theory. A bordism category $1|1\EBord(M)$ analogous to $2|1\EBord(M)$ as sketched above gives a definition of $1|1$-Euclidean field theories over $M$, denoted~$1|1\EFT(M)$. There is an elegant relationship between a classifying space of $1|1$-Euclidean field theories over $M=\pt$ and the representing space~$BO\times \Z$ for real K-theory~\cite{HST}. However, as yet concordance classes of the category $1|1\EFT(M)$ of $1|1$-Euclidean field theories over~$M$ have not been calculated. Some good evidence for a relationship to K-theory is Dumitrescu's super parallel transport map~\cite{florin} from super vector bundles with super connection to $1|1\EFT(M)$. However, issues regarding collars have made a complete characterization difficult. In our setup, the category of representations of constant super paths in~$M$ are super vector bundles with super connection on the nose. From this category it is straightforward to build $\K(M)$, e.g., the usual Grothendieck group applied to finite-dimensional representations. A downside of these constant super paths is that they don't connect with extended objects in $M$ coming from K-theory, such as the Bismut--Chern character. Fei Han~\cite{Han} has related this loop space lift of the Chern character to partition functions for~$1|1\EFT(M)$. 
\end{rmk}

\begin{rmk}\label{rmk:odd}
There are several approaches to the odd cohomological degree in~\eqref{squareST}. One is to abandon cutoffs and work with infinite-dimensional objects. Although more technical, this doesn't seem unreasonable, especially if we insist on a finite-dimensional category of (constant) super annuli in~$M$. An option involving finite-dimensional representations of super annuli is to study degree $2n$ field theories over $M\times \R$ with compact support in the $\R$-direction. Then the suspension isomorphisms in $\K_\Tate$ and $\TMF\otimes \C$ identify such an object with a class of degree~$2n-1$. Yet another possibility is to work with families of automorphisms of representations of annuli, which is analogous to studying odd K-theory in terms of (smooth) maps into the (infinite) unitary group. In this paper our focus is on the even degree case, though our methods also lead to partial results in the odd case. 
\end{rmk} 

\subsection{Terminology} 

Throughout, $M$ will denote a smooth, compact, oriented manifold. Unless stated otherwise, all vector spaces and vector bundles are ``super," though sometimes we include this adjective for emphasis. We use $\otimes$ to denote the graded tensor product. We frequently use the functor of points when dealing with super manifolds, and reserve the letter~$S$ for a test super manifold. We refer to Appendix~\ref{appenA} for a bit more background on these (and other) ingredients. 

\subsection{Outline}

The next section,~\S\ref{sec:lie}, introduces some minor twists on standard background material. This develops the framework in which we can make sense out of smooth representations of super paths and super annuli. Sections~\ref{sec:Quillenconn} and~\ref{sec:per} construct the (non-differential) models for K-theory and elliptic cohomology at the Tate curve, respectively, out of categories of these representations. These constructions are a bit more straightforward than the differential versions and can be read with~\S\ref{sec:lie} alone as background. Sections~\ref{sec:diffKmain} and~\ref{sec:diffellmain} construct the differential refinements~$\widehat\K$ and~$\widehat\K_\Tate$, which amounts to understanding the appropriate character theory for representations of constant super paths and constant super annuli. Section~\ref{sec:freeferKMF} constructs the differential version of~$\KMF$, which refines this character theory for super annuli even further to take values in a differential model for~${\rm H}_\MF$. Finally, in~\S\ref{sec:examples} we discuss the string orientation of~$\KMF$ and its relation to the supersymmetric sigma model. The parts of~\S\ref{sec:examples} regarding the orientation (but not its physical interpretation) can be read independent of the rest of the paper. 

\subsection{Acknowledgements}
I thank Ralph Cohen, Kevin Costello, Chris Douglas, Mike Freedman, Owen Gwilliam, Vesna Stojanoska, Stephan Stolz, Peter Teichner, and Arnav Tripathy for helpful conversations.  Lastly, I am appreciative of the well-timed encouragement from Matt Ando and Gerd Laures. 

\section{Super Lie categories, representations, and Grothendieck groups}\label{sec:lie}

This section overviews the framework in which we define the main objects of study. First we introduce super Lie categories and their unitary representations. These are modest generalizations of standard ideas in the theory of Lie groupoids for which~\cite{Mackenzie} is a reference. Second, for super Lie categories natural in manifold parameter, we define a differential Grothendieck group of the representation category with respect to a character map. This definition is intended to mimic Hopkins--Singer differential cocycles~\cite{HopSing}. 

\subsection{Internal categories and super Lie categories}\label{sec:internal}

We follow~\cite[MII.1]{MacLane} as a reference for internal categories. Our thinking is also deeply influenced by Stolz and Teichner~\cite[\S2.2]{ST11} in their approach to smooth field theories. 

\begin{defn} A \emph{category object in ${\sf E}$} or \emph{an internal category} denoted $\CC=\{\CC_1\toto\CC_0\}$ is a pair of objects $\CC_0$ and $\CC_1$ in ${\sf E}$ and arrows in ${\sf E}$ 
$$
s,t\colon\CC_1\to \CC_0\quad u\colon \CC_0\to \CC_1\quad c\colon \CC_1\times_{\CC_0}\CC_1\to \CC_1
$$
with the notation standing for source, target, unit and composition, respectively. We further require that these data satisfy the usual axioms for a category, namely, the equalities
$$
s\circ u=\id_{\CC_0}=t\circ u
$$
specify the source and target of identity arrows, the commutative diagram
\begin{equation}
\begin{array}{c}
\begin{tikzpicture}
  \node (A) {$\CC_0\times_{\CC_0}\CC_1$};
  \node (B) [node distance= 3.5cm, right of=A] {$\CC_1\times_{\CC_0}\CC_1$};
  \node (C) [node distance = 3.5cm, right of=B] {$\CC_1\times_{\CC_0}\CC_0$};
  \node (D) [node distance = 1.25cm, below of=B] {$\CC_1$};
  \draw[->] (A) to node [above] {$u\times \id_{\CC_1}$} (B);
  \draw[->] (A) to node [below] {$p_2$} (D);
  \draw[->] (C) to node [above] {$\id_{\CC_1}\times u$} (B);
  \draw[->] (B) to node [right] {$c$} (D);
    \draw[->] (C) to node [below] {$p_1$} (D);
\end{tikzpicture}\end{array}\nonumber
\end{equation}
specifies the source and target of the composition, and the commutative diagrams
\begin{equation}
\begin{array}{c}
\begin{tikzpicture}
  \node (A) {$\CC_1$};
  \node (B) [node distance= 2cm, right of=A] {$\CC_1\times_{\CC_0}\CC_1$};
  \node (C) [node distance = 2cm, right of=B] {$\CC_1$};
  \node (D) [node distance = 1.5cm, below of=A] {$\CC_0$};
  \node (E) [node distance = 1.5cm, below of=B] {$\CC_1$};
  \node (F) [node distance = 1.5cm, below of=C] {$\CC_0$};
  \draw[->] (B) to node [above] {$p_1$} (A);
  \draw[->] (B) to node [above] {$p_2$} (C);
  \draw[->] (A) to node [left] {$t$} (D);
  \draw[->] (B) to node [right] {$c$} (E);
  \draw[->] (C) to node [right] {$s$} (F);
  \draw[->] (E) to node [below] {$t$} (D);
  \draw[->] (E) to node [below] {$s$} (F);
\end{tikzpicture}\end{array}\nonumber\qquad
\begin{array}{c}
\begin{tikzpicture}
  \node (A) {$\CC_1\times_{\CC_0}\CC_1\times_{\CC_0}\CC_1$};
  \node (B) [node distance= 4cm, right of=A] {$\CC_1\times_{\CC_0}\CC_1$};
  \node (C) [node distance = 1.5cm, below of=A] {$\CC_1\times_{\CC_0}\CC_1$};
  \node (D) [node distance = 1.5cm, below of=B] {$\CC_1$};
  \draw[->] (A) to node [above] {$c\times \id_{\CC_1}$} (B);
  \draw[->] (A) to node [left] {$\id_{\CC_1}\times c$} (C);
  \draw[->] (B) to node [right] {$c$} (D);
  \draw[->] (C) to node [below] {$c$} (D);
\end{tikzpicture}\end{array}\nonumber
\end{equation}
require that identity arrows act as the identity and that composition is associative. 
\end{defn}

In the above definition we require that the fibered products exist in~${\sf E}$. This can be relaxed using categories internal to presheaves on ${\sf E}$, as we discuss in~\S\ref{sec:genLie}. 

\begin{defn}
An \emph{internal functor} $F\colon \CC\to\DD$ between internal categories consists of morphisms $F_0\colon\CC_0\to \DD_0$ and $F_1\colon \CC_1\to \DD_1$ in ${\sf E}$ such that the diagrams commute:
\begin{equation}
\begin{array}{c}
\begin{tikzpicture}
  \node (A) {$\CC_1$};
  \node (B) [node distance= 2cm, right of=A] {$\CC_0$};
  \node (C) [node distance = 2cm, right of=B] {$\CC_1$};
  \node (D) [node distance = 1.5cm, below of=A] {$\DD_1$};
  \node (E) [node distance = 1.5cm, below of=B] {$\DD_0$};
  \node (F) [node distance = 1.5cm, below of=C] {$\DD_1$};
    \draw[->] (A) to node [above] {$s$} (B);
  \draw[->] (B) to node [above] {$u$} (C);
    \draw[->] (D) to node [below] {$s$} (E);
  \draw[->] (E) to node [below] {$u$} (F);
  \draw[->] (A) to node [right] {$F_1$} (D);
  \draw[->] (B) to node [right] {$F_0$} (E);
  \draw[->] (C) to node [right] {$F_1$} (F);
\end{tikzpicture}\end{array}
\nonumber\qquad 
\begin{array}{c}
\begin{tikzpicture}
  \node (A) {$\CC_1$};
  \node (B) [node distance= 2cm, right of=A] {$\CC_0$};
  \node (C) [node distance = 2cm, right of=B] {$\CC_1$};
  \node (D) [node distance = 1.5cm, below of=A] {$\DD_1$};
  \node (E) [node distance = 1.5cm, below of=B] {$\DD_0$};
  \node (F) [node distance = 1.5cm, below of=C] {$\DD_1$};
  \draw[->] (A) to node [above] {$t$} (B);
  \draw[->] (B) to node [above] {$u$} (C);
  \draw[->] (D) to node [below] {$t$} (E);
  \draw[->] (E) to node [below] {$u$} (F);
  \draw[->] (A) to node [right] {$F_1$} (D);
  \draw[->] (B) to node [right] {$F_0$} (E);
  \draw[->] (C) to node [right] {$F_1$} (F);
\end{tikzpicture}\end{array}
\end{equation}
\begin{equation}
\begin{array}{c}
\begin{tikzpicture}
  \node (A) {$\CC_1\times_{\CC_0}\CC_1$};
  \node (B) [node distance= 3cm, right of=A] {$\DD_1\times_{\DD_0}\DD_1$};
  \node (C) [node distance = 1.5cm, below of=A] {$\CC_1$};
  \node (D) [node distance = 1.5cm, below of=B] {$\DD_1.$};
  \draw[->] (A) to node [above] {$F_1\times F_1$} (B);
  \draw[->] (B) to node [right] {$c_\DD$} (D);
  \draw[->] (C) to node [below] {$F_1$} (D);
  \draw[->] (A) to node [left] {$c_\CC$} (C);
\end{tikzpicture}\end{array}\nonumber
\end{equation}
\end{defn}

\begin{defn}
An \emph{internal natural transformation} $\eta \colon F\Rightarrow G$ between internal functors is a morphism in ${\sf E}$, $\eta \colon \CC_0\to \DD_1$ satisfying 
\begin{equation}
\begin{array}{c}
\begin{tikzpicture}
  \node (A) {$\CC_0$};
  \node (B) [node distance= 1.5cm, below of=A] {$\DD_1$};
  \node (C) [node distance = 2cm, right of=B] {$\DD_0$};
  \node (D) [node distance = 2cm, left of=B] {$\DD_0$};
  \draw[->] (A) to node [left] {$\eta$} (B);
  \draw[->] (B) to node [above] {$s$} (C);
  \draw[->] (B) to node [above] {$t$} (D);
  \draw[->] (A) to node [above=.1cm] {$F_0$} (C);
  \draw[->] (A) to node [above=.1cm] {$G_0$} (D);
\end{tikzpicture}\end{array}\nonumber\qquad
\begin{array}{c}
\begin{tikzpicture}
  \node (A) {$\CC_1$};
  \node (B) [node distance= 4cm, right of=A] {$\DD_1\times_{\DD_0} \DD_1$};
  \node (C) [node distance = 1.5cm, below of=A] {$\DD_1\times_{\DD_0}\DD_1$};
  \node (D) [node distance = 1.5cm, below of=B] {$\DD_1.$};
  \draw[->] (A) to node [above] {$G_1\times \eta\circ s$} (B);
  \draw[->] (A) to node [left] {$\eta\circ t \times F_1$} (C);
  \draw[->] (B) to node [right] {$c_\DD$} (D);
  \draw[->] (C) to node [below] {$c_\DD$} (D);
\end{tikzpicture}\end{array}\nonumber
\end{equation}

\end{defn}

\begin{defn}
A \emph{Lie category} is a category internal to manifolds, and a \emph{super Lie category} is a category internal to super manifolds.
\end{defn}

This generalizes the standard notion of a (super) \emph{Lie groupoid}, which is a category internal to (super) manifolds in which all arrows are invertible, with a smooth inversion map included as part of the data. 

\begin{ex} A (super) Lie category with a single object is a unital (super) semigroup. \end{ex}

\begin{ex}
Consider a manifold $M$ with an $\R$-action. This determines an action groupoid $M\sq \R= \{M\times \R\toto M\}$ whose source map is the projection and target map is the action map. Composition comes from the group structure on~$\R$. Restricting to $M\times \R_{\ge 0}\subset M\times \R$ the action by a semigroup defines a Lie category, $\{M\times \R_{\ge 0}\toto M\}$. More generally, for a Lie group $G$ acting on a manifold~$M$, we get a Lie groupoid $M\sq G$. Restriction of the action to a submanifold of $G$ containing the identity element and that is closed under multiplication (but not necessarily inversion) defines a Lie category. 
\end{ex}

\begin{defn} A \emph{smooth functor} is an internal functor between Lie categories or super Lie categories. \end{defn}

\begin{ex} Let $G$ and $H$ be semigroups, $M$ a manifold with a $G$-action and $N$ a manifold with an $H$-action. Then a smooth functor $M\sq G\to N\sq H$ is a homomorphism $G\to H$ of semigroups and an equivariant map $M\to N$ with respect to this homomorphism. \end{ex}

\begin{ex}\label{ex:parityfunctor} Every super manifold has a canonical $\Z/2$-action generated by the parity involution on its sheaf of functions, acting by $+1$ on even functions and $-1$ on odd functions. Furthermore, any map between super manifolds is equivariant for this $\Z/2$-action. This gives a parity endofunctor ${\sf P}_\CC\colon \CC\to \CC$ for any super Lie category $\CC$ that applies the parity functor to source, target, unit, and composition of~$\CC$. \end{ex}

\begin{defn} A \emph{smooth natural transformation} is an internal natural transformation between smooth functors. \end{defn}

\begin{rmk} There is a 2-functor from the strict 2-category of super Lie categories, smooth functors and smooth natural transformations to a bicategory defined by Stolz and Teichner whose objects are weak category objects internal to stacks on the site of super manifolds~\cite{ST11}. On objects, this 2-functor identifies a super Lie category with its corresponding category internal to smooth stacks, which amounts to viewing the object and morphism super manifolds as (representable) stacks. Using this 2-functor, the super Lie categories considered below of constant super paths and super annuli in a manifold can be viewed as smooth subcategories of Stolz and Teichner's super Euclidean bordism categories. \end{rmk}

\subsection{Generalized, reduced, and conjugate Lie categories}\label{sec:genLie}

For objects $S,C\in {\sf E}$, let $C(S)$ denote the set of maps~$S\to C$, i.e., identify $C$ with the (representable) presheaf $C\colon {\sf E}^\op\to {\sf Set}$. Similarly, for a category $\CC$ internal to ${\sf E}$, let $\CC(S)$ denote the small category whose objects are $\CC_0(S)$ and morphisms are $\CC_1(S)$. Hence, an internal category gives a category object in presheaves on ${\sf E}$. By the usual Yoneda argument, $\CC$ is completely determined by this category object in presheaves. Similarly, smooth functors between super Lie categories coincide with functors between the category objects in presheaves. 

The functor of points is a standard way of performing constructions in super manifolds, so we often use this perspective when analyzing super Lie categories and smooth functors. In some cases we will encounter category objects in presheaves on super manifolds that are not representable. These presheaves are sometimes called \emph{generalized} objects. 

\begin{defn}\label{defn:genLiecat} A \emph{generalized super Lie category} is a category object in presheaves on super manifolds. \end{defn}

There is a faithful functor from manifolds to super manifolds, where we take the usual smooth functions on a manifold as the structure sheaf (purely in even degree). We often abuse notation, letting $M$ denote both a manifold and the associated super manifold. There is also a functor from super manifolds to manifolds called the \emph{reduced manifold} functor that on objects takes the quotient of the structure sheaf of a super manifold by the ideal generated by nilpotent elements. For a super manifold $S$, let $S_{\rm red}$ denote this reduced manifold, and observe that the quotient map gives a morphism of super manifolds $S_{\rm red}\hookrightarrow S$, where we have identified the manifold $S_{\rm red}$ with a super manifold. 

These functors can also be applied to super Lie categories.

\begin{defn} The \emph{super Lie category associated to a Lie category $\CC$} regards the data defining $\CC$ as super manifolds and maps of super manifolds; in particular, it views the object and morphism manifolds of ~$\CC$ as super manifolds. The \emph{reduced Lie category of a super Lie category $\CC$}, denoted $\CC_{\rm red}$ applies the reduced manifold functor to a Lie category. In particular, it has as objects $(\CC_0)_{\rm red}$ and as morphisms $(\CC_1)_{\rm red}$. \end{defn}

Finally, there is a complex conjugation endofunctor on the category of super manifolds $N\mapsto \overline{N}$ that reverses the complex structure on sheaves of functions, i.e., $\overline{N}$ has the same real sheaf of functions but complex numbers act by precomposing with complex conjugation. 

\begin{defn} A \emph{real structure} on $N$ is an isomorphism of super manifolds $r_N\colon N\to \overline{N}$ such that $r_N\circ \overline{r}_N=\id_{\overline{N}}$ and $\overline{r}_N\circ r_N=\id_N$. \end{defn}

\begin{ex} For a super manifold~$N$ whose sheaf of $\Z/2$-graded algebras is the exterior bundle of a complex vector bundle $E$, i.e., $C^\infty(N)=\Gamma(N_{\rm red},\Lambda^\bullet E^*)$, a real structure on~$E$ induces a real structure on $N$. \end{ex}

\begin{ex}
In particular, an ordinary manifold regarded as a super manifold has a real structure coming from complex conjugation on its complex valued functions. 
\end{ex}

See~\cite{DM} pages 92-94 and~\cite{HST} page~37 for more background on real super manifolds. These ideas carry over to super Lie categories immediately.

\begin{defn} For a super Lie category $\CC$, let $\overline{\CC}$ denote the super Lie category gotten by applying the conjugation functor to~$\CC$. In particular, the object and morphism super manifolds are $\overline{\CC}_0$ and $\overline{\CC}_1$. A \emph{real structure} on a super Lie category is a functor ${\sf r}_\CC\colon \CC\to \overline{\CC}$ and natural isomorphisms ${\sf r}_\CC\circ \overline{\sf r}_\CC\cong \id_\CC$.  \end{defn}

\begin{ex} Any super Lie category that comes from an ordinary Lie category has a canonical real structure. \end{ex}

\subsection{Representations of super Lie categories}\label{sec:Lierep}
There is an evident super manifold generalization of the standard definition of a representation of a Lie groupoid~\cite{Mackenzie}. For $V\to N$ a super vector bundle over a super manifold, consider the super vector bundle of invertible maps, $\Hom(p_1^*V,p_2^*V)^\times\to N\times N$, between fibers for $p_1,p_2\colon N\times N\to N$ the projections. The frame groupoid has $N$ as its super manifold of objects, and (roughly) for a pair of points $x,y\in N$ the morphisms are invertible linear maps $V_x\to V_y$ between the fibers of~$V$. The source of such a morphism is $x$ and the target is~$y$. We'll make this precise after a technical remark on vector bundles over super manifolds. 

\begin{rmk}\label{rmk:frame} In the category of super manifolds there are some subtleties regarding vector bundles and maps between vector bundles. The basic point is that a vector bundle over a super manifold is a module over is structure sheaf, and this module is typically different than a would-be set of maps from the base super manifold into a candidate total space of the vector bundle. The usual intuition applies, however, provided we work with the functor of points: an $S$-point of a vector bundle $V$ over $N$ is a map $f\colon S\to N$ and an element of the pullback module associated to~$V$. We continue to use much of the standard notation and terminology for vector bundles. For example, $\Gamma(V)$ denotes the $C^\infty(N)$-module associated with a vector bundle~$V$ over~$N$, and $V\to N$ denotes the natural transformation between the $S$-points of $V$ and the $S$-points of $N$. With this in mind, we make the following definition. 
\end{rmk}

\begin{defn}\label{defn:GLV} For $V\to N$ a super vector bundle, the \emph{frame groupoid} is a generalized super Lie category,
$$
{\sf GL}(V):=\left(\begin{array}{c} \Hom(p_1^*V,p_2^*V)^\times \\ \pi_1 \downarrow \downarrow \pi_2 \\ N\end{array}\right)
$$
where $\Hom(p_1^*V,p_2^*V)^\times$ are invertible vector bundle maps, and $\pi_1,\pi_2$ are the two compositions $\Hom(p_1^*V,p_2^*V)^\times\to N\times N\toto N$ of projections. Composition in ${\sf GL}(V)$ is composition of maps of vector bundles. 
\end{defn}

\begin{defn} 
A \emph{representation} of a super Lie groupoid ${\sf G}$ is a smooth functor ${\sf G}\to {\sf GL}(V)$ for $V\to {\sf G}_0$ a super vector bundle over the objects of ${\sf G}$. 
\end{defn}

\begin{ex}
When $M=\pt$ and $V$ is a trivial bundle, ${\sf GL}(V)$ is the one-object groupoid associated to the group $\GL(V)$. For a Lie group~$G$, a representation of the associated single-object Lie groupoid ${\sf G}=\{G\toto \pt\}$ on ${\sf GL}(V)$ is the same as a homomorphism $G\to {\rm GL}(V)$, recovering the usual definition of a representation. 
\end{ex}

There is an obvious generalization to a representation of a super Lie category taking values in \emph{arbitrary} linear maps between fibers. 

\begin{defn} For $V\to M$ a super vector bundle, define a generalized Lie category called the \emph{frame category} by
$$
\End(V):=\left(\begin{array}{c} \Hom(p_1^*V,p_2^*V) \\ \pi_1 \downarrow \downarrow \pi_2 \\ M\end{array}\right)
$$
where $\pi_1$ and $\pi_2$ are the projection  $\Hom(p_1^*V,p_2^*V)\to M\times M$ postcomposed with the projections~$p_1$ or~$p_2$ to~$M$. Composition comes from composition of vector bundle maps. 
\end{defn}

\begin{defn} A representation of a super Lie category ${\sf C}$ is a super vector bundle $V\to {\sf C}_0$ and a smooth functor ${\sf C}\to \End(V).$ An \emph{isomorphism} between representations $\rho$ and $\rho'$ is an isomorphism of vector bundles $V\to V'$ over ${\sf C}_0$ that induces an isomorphism of super Lie categories $\End(V)\to \End(V')$ and makes the following diagram commute 
\begin{equation}
\begin{array}{c}
\begin{tikzpicture}[node distance=3.5cm,auto]
  \node (A) {${\sf C}$};
  \node (B) [node distance= 3cm, right of=A] {$\null$};
  \node (C) [node distance = .75cm, below of=B] {$\End(V').$};
  \node (D) [node distance = .75cm, above of=B] {$\End(V)$};
  \draw[->] (A) to node [swap] {$\rho'$} (C);
  \draw[->] (A) to node {$\rho$} (D);
  \draw[->] (D) to (C);
\end{tikzpicture}\end{array}\nonumber
\end{equation}
\end{defn}

\subsection{Super adjoints}\label{sec:superadjoint}

To phase the definition of a unitary representation, we require a super-generalization of the adjoint of a linear map between Hilbert spaces. 
We now attend to the various sign issues involved, replicating pages~89-91 of~\cite{strings1}. 
 
 A \emph{$*$-structure} on a super algebra $A$ over $\C$ is an involutive $\C$-antilinear isomorphism $(-)^*\colon A\to A^\op$ satisfying
\beq
(ab)^*=(-1)^{p(a)p(b)}b^*a^*. \label{eq:adjointantiho}
\eeq
A \emph{super hermitian form} on a complex super vector space $H$ is a map $\langle-,-\rangle \colon H\otimes H\to \C$ of real super vector spaces that is $\C$-antilinear in the first variable, $\C$ linear in the second, and
\beq
\langle x,y\rangle =(-1)^{p(x)p(y)} \langle y,x\rangle.\label{eq:superherm}
\eeq
It follows that $\langle x,x\rangle$ is real if $x$ is even, purely imaginary if $x$ is odd, and $\langle x,y\rangle=0$ when $x$ and $y$ are of different parities. We call $\langle-,-\rangle$ \emph{positive} if $\langle x,x\rangle >0$ for $x$ even and $-i\langle x,x\rangle>0$ for $x$ odd. From this we extract an ordinary positive-definite inner product $(-,-)\colon H\otimes H\to \C$ first by writing $H=H_0\oplus H_1$ as a direct sum of even and odd subspaces, and then setting $(x,y)=0$ if $x$ and $y$ are of opposite parity, $(x,y)=\langle x,y\rangle$ for $x$ and $y$ both even, and $(x,y)=-i\langle x,y\rangle$ for $x$ and $y$ both odd. We call $(H,\langle-,-\rangle)$ a \emph{super Hilbert space} if $H\cong H_0\oplus H_1$ is complete with respect to the (ordinary) inner product $(-,-)$ inherited  from $\langle-,-\rangle$. 

As the above discussion shows, the difference between super hermitian forms and ordinary hermitian forms on $\Z/2$-graded vector spaces is slight. As such, we adopt the following terminology. 

\begin{defn} A \emph{super hermitian vector space} is a super vector space equipped with a super hermitian pairing $\langle-,-\rangle$ as in~\eqref{eq:superherm}. A \emph{$\Z/2$-graded hermitian vector space} is a hermitian pairing $(-,-)$ on a $\Z/2$-graded vector space so that the odd and even subspaces are orthogonal. \end{defn}

The super adjoint of a hermitian form preserving linear map $a\colon H\to H'$ is defined by 
\beq
\langle x,a y\rangle_{H'}=(-1)^{p(x)p(y)} \langle a^*x,y\rangle_{H}. \label{eq:supersa}
\eeq
In finite dimensions this completely determines $a^*$. There are the usual caveats in infinite dimensions which can be addressed in the standard way by translating back to usual Hilbert spaces as above. Indeed, under this translation the super adjoint compares with the usual adjoint $(-)^\dagger$ on the inner product spaces $(H,(-,-)_H)$ and $(H',(-,-)_{H'})$ as
\beq
a^*=\left\{\begin{array}{cc} a^\dagger & a \ {\rm even}\\ ia^\dagger & a \ {\rm odd.}\end{array}\right.\label{eq:superadjointnormaladjoint}
\eeq
For $a,b\in \End(H)$, we have $(ab)^*=(-1)^{p(a)p(b)}b^*a^*$, so that $(-)^*$ defines a star structure on the super algebra~$\End(H)$. We also get a contravariant functor from the category of super Hilbert spaces to itself. Note that the endofunctor $((-)^*)^*$ is the parity involution on $\End(H)$ that is the identity on even operators and $-1$ on odd operators. 

Super adjoints have a straightforward generalization to the frame category of a hermitian super vector bundle $V\to N$. Let ${\sf End}(\overline{V})$ denote the frame category with the conjugate complex structure on the base super manifold and on the module defining~$V$. 

\begin{defn} 
Let $V\to N$ be a hermitian super vector bundle. Define the \emph{super adjoint} $(-)^*\colon {\sf End}(V)\to {\sf End}(\overline{V})^\op$ as the fiberwise super adjoint on morphisms,
$$
(-)^*\colon \Hom(p_1^*V,p_2^*V) \to \Hom(p_2^*\overline{V},p_1^*\overline{V}),
$$
using the pullback hermitian form on $p_1^*V$ and $p_2^*V$. By inspection, this restricts to a smooth functor $(-)^*\colon {\sf GL}(V)\to {\sf GL}(\overline{V})^\op$ on the frame groupoid. 
\end{defn}

\subsection{Unitary representations of super Lie categories}\label{sec:unitaryreps}

A unitary representation of a Lie group~$G$ is a homomorphism $\rho \colon G\to \GL(V)$ such that for all $g\in G$, $\rho(g)^*=\rho(g^{-1})$. Repackaged, this is a commutative diagram 
\begin{equation}
\begin{array}{c}
\begin{tikzpicture}
  \node (A) {$G$};
  \node (B) [node distance= 2.5cm, right of=A] {$\GL(V)$};
  \node (C) [node distance = 1.5cm, below of=A] {$G^\op$};
  \node (D) [node distance = 1.5cm, below of=B] {$\GL(V)^\op$};
  \draw[->] (A) to node [above] {$\rho$} (B);
  \draw[->] (A) to node [left] {$(-)^{-1}$} (C);
  \draw[->] (C) to node [below] {$\rho^\op$} (D);
  \draw[->] (B) to node [right] {$(-)^*$} (D);
\end{tikzpicture}\end{array}\nonumber
\end{equation}
where $G^\op$ denotes the opposite super Lie group. This standard definition is for a \emph{real} Lie group, so below for super manifolds we'll need to use care when reversing complex structures. With this in mind, there is a straightforward generalization to super Lie groupoids, using the super adjoint on the frame groupoid defined above. 

\begin{defn} Let ${\sf G}=\{G_1\toto G_0\}$ be a super Lie groupoid with real structure ${\sf r}_{\sf G}\colon {\sf G}\to \overline{{\sf G}}$ and $V\to G_0$ be a vector bundle with hermitian metric. A \emph{unitary representation of a super Lie groupoid} is a functor $\rho\colon {\sf G}\to {\sf GL}(V)$ that makes the diagram commute
\begin{equation}
\begin{array}{c}
\begin{tikzpicture}
  \node (A) {${\sf G}$};
  \node (B) [node distance= 2.5cm, right of=A] {${\sf GL}(V)$};
  \node (C) [node distance = 1.5cm, below of=A] {$\overline{{\sf G}}^\op$};
  \node (D) [node distance = 1.5cm, below of=B] {${\sf GL}(\overline{V})^\op$};
  \draw[->] (A) to node [above] {$\rho$} (B);
  \draw[->] (A) to node [left] {$ (-)^{-1}\circ r_G$} (C);
  \draw[->] (C) to node [below] {$\overline{\rho}^\op$} (D);
  \draw[->] (B) to node [right] {$(-)^*$} (D);
\end{tikzpicture}\end{array}\nonumber
\end{equation}
where $(-)^{-1}$ is the inversion on the Lie groupoid. 
\end{defn}

Super Lie categories do not have the data of an inversion functor, so to define a notion of unitary representation we require a choice of functor $\sigma\colon \CC\to \overline{\CC}^\op$. 

\begin{defn} A \emph{anti-involution} of a super Lie category is a functor $\sigma\colon \CC\to \overline{\CC}^\op$ such that $\sigma\circ\bar\sigma^\op= {\sf P}_{\overline{\CC}}$ and $\bar\sigma^\op\circ \sigma= {\sf P}_\CC$ for ${\sf P}$ the parity automorphism on super Lie categories from Example~\ref{ex:parityfunctor}. \end{defn}

\begin{defn}\label{defn:unitaryrep} Let $\CC=\{\CC_1\toto \CC_0\}$ be a super Lie category and $V\to \CC_0$ be a vector bundle with hermitian metric, and $\sigma$ an anti-involution of $\CC$. A \emph{unitary representation} on $\End(V)$ has as data functors $\sigma\colon \CC\to \overline{\CC}^\op$ and $\rho\colon \CC\to \End(V)$ that make the diagram commute
\begin{equation}
\begin{array}{c}
\begin{tikzpicture}
  \node (A) {$\CC$};
  \node (B) [node distance= 2.5cm, right of=A] {$\End(V)$};
  \node (C) [node distance = 1.5cm, below of=A] {$\overline{\CC}^\op$};
  \node (D) [node distance = 1.5cm, below of=B] {$\End(\overline{V})^\op$};
  \draw[->] (A) to node [above] {$\rho$} (B);
  \draw[->] (A) to node [left] {$\sigma$} (C);
  \draw[->] (C) to node [below] {$\overline{\rho}^\op$} (D);
  \draw[->] (B) to node [right] {$(-)^*$} (D);
\end{tikzpicture}\end{array}\nonumber
\end{equation}
When $\sigma$ is fixed, we use the notation $\Rep({\sf C})$ to denote the category of unitary representations of $\CC$ and their isomorphisms. The direct sum and tensor product of vector bundles endow $\Rep({\sf C})$ with a pair of monoidal structures, denoted~$\oplus$ and~$\otimes$. 

\end{defn}

\subsection{Representations valued in $A$-modules}\label{sec:Amod}

For a Lie category $\CC$, a \emph{bundle of algebras} ${\sf A}\to \CC$ is an algebra bundle (in the usual sense) $A\to \CC_0$ and an isomorphism of algebra bundles, $s^*A\to t^*A$ over $\CC_1$ compatible with composition and units. 

\begin{rmk} A more general (and conceivably more interesting) theory arises from asking for Morita equivalences between $s^*A$ and $t^*A$ over $\CC_1$ rather than algebra isomorphisms. However, in our examples of interest the above suffices. \end{rmk}

\begin{defn} Fix an algebra bundle ${\sf A}\to \CC$. A \emph{representation of $\CC$ in ${\sf A}$-modules} is a smooth functor $\CC\to \End(V)$ together with a fiberwise $A$-action on $V\to \CC_0$ such that the maps $s^*V\to t^*V$ are $A$-linear using the specified isomorphism $s^*A\to t^*A$. Such a representation in ${\sf A}$-modules is \emph{unitary} with respect to a super hermitian form on $V$ when the underlying representation $\CC\to \End(V)$ is unitary, and the $A$-action is \emph{self-adjoint}: any section of the bundle $A$ over $\CC_0$ defines a super self-adjoint endomorphism of $V$ in the sense of~\eqref{eq:supersa}. 
\end{defn}

Let $\Rep^{\sf A}(\CC)$ denote the category of unitary representations of $\CC$ in ${\sf A}$-modules. 

\subsection{Characters of representations of super Lie categories}\label{sec:chartheory}

Let $\CC_1^\times\subset \CC_1$ denote the sub-presheaf of invertible morphisms in $\CC$. Define the \emph{inertia groupoid} $\Lambda({\sf C})$ of a super Lie category ${\sf C}$ as a generalized super Lie groupoid whose objects are endomorphisms of objects in ${\sf C}$, i.e., the equalizer of the source and target maps,
$$
\Lambda({\sf C})_0:={\rm Eq}\left(s,t\colon {\sf C}_1\toto {\sf C}_0\right)
$$
and whose morphisms are the fibered product 
$$
\Lambda({\sf C})_1:={\rm Eq}\left(s,t\colon {\sf C}_1^\times\toto {\sf C}_0\right)\times_{{\sf C}_0} {\rm Eq}\left(s,t\colon {\sf C}_1\toto {\sf C}_0\right)
$$
consisting of an automorphism and an endomorphism of the same object. The source map is the obvious projection, and the target map conjugates the endomorphism by the automorphism. 

\begin{rmk}
In the examples of interest below, the equalizers and fibered products above turn out to be representable so that $\Lambda(\CC)$ is an honest super Lie groupoid rather than just a generalized one. Explicitly, the inertia groupoid will consist of closed super paths (i.e., super loops) and closed super annuli (i.e., super tori). 
\end{rmk}

\begin{rmk}
A bit more succinctly (but less concretely) we have
$$
\Lambda({\sf C}):=\underline{\sf Fun}^\times(\pt\sq \N,{\sf C})
$$
where $\pt\sq \N$ is the free Lie category on a single object and non-identity morphism, and $\underline{\sf Fun}^\times(\pt\sq \N,{\sf C})$ denotes the (generalized) super Lie groupoid whose objects are smooth functors and morphisms are smooth natural isomorphisms.
\end{rmk}

There is a variant of the inertia category that we'll need below that restricts further to non-identity endomorphisms in $\CC$. This shows up in our examples of interest as a restriction to super loops and super tori with strictly positive volume. With this in mind, we define the \emph{nondegenerate inertia groupoid} $\Lambda^\nd(\CC)\subset \Lambda(\CC)$ as the full subcategory with objects
$$
\Lambda^\nd({\sf C})_0:={\rm Eq}\left(s,t\colon ({\sf C}_1\setminus \CC_1^\times) \toto {\sf C}_0\right)
$$
where $\nd$ stands for \emph{nondegenerate} and $\CC_1\setminus \CC_1^\times$ consists of the sub presheaf of non-invertible morphisms. We again regard $\Lambda^\nd({\sf C})$ as a generalized super Lie category: its objects and morphisms need not be representable. 

A representation of ${\sf C}$ defines sections of the endomorphism bundles
\beq
\rho|_{\Lambda(\CC)_0} &\in& \Gamma(\Lambda(\CC)_0,{\rm Hom}(s^*V,t^*V)\cong \Gamma(\Lambda(\CC)_0,{\rm End}(s^*V))\nonumber
\eeq
where $s$ and $t$ are the restriction of the source and target of $\CC$ to $\Lambda(\CC)_0$, and~$V\to {\sf C}_0$ is the vector bundle defining the representation. Here we emphasize that ${\rm End}(V)$ denotes the usual endomorphism bundle of a vector bundle (not the frame category). 

\begin{defn}\label{defn:Liechar}
When it is defined, the \emph{character} of a representation of a super Lie category is the super trace ${\rm sTr}(\rho|_{\Lambda(\CC)_0})$
of the section of the endomorphism bundle determined by the restriction of $\rho$ to $\Lambda(\CC)_0$. By the cyclic property of the trace, this is invariant under the conjugation action by automorphisms in $\CC$, so descends to a function on the inertia groupoid
$$
{\rm sTr}(\rho|_{\Lambda(\CC)_0})\in C^\infty(\Lambda(\CC)).
$$
Furthermore, characters of isomorphic representations are equal, so the character is a functor
$$
\Rep({\sf C})\to C^\infty(\Lambda(\CC))
$$
viewing the target as a discrete category. When the context is clear, we will also call the restriction of ${\rm sTr}(\rho|_{\Lambda(\CC)_0})$ to the nondegenerate inertia groupoid the character of the representation. 
\end{defn}

\begin{ex} When ${\sf C}=\{G\toto \pt\}$ is the Lie groupoid associated to a Lie group, a representation of ${\sf C}$ is the same as a representation of $G$. The inertia groupoid is the adjoint quotient, $\Lambda(\pt\sq G)\cong G\sq G$, and the character of a representation $\rho$ of ${\sf C}$ reduces to the usual character as a smooth function on~$G$ invariant under conjugation. Note in this case $\Lambda^\nd(\pt\sq G)$ is the empty groupoid. 
\end{ex}

The restriction of an $A$-linear representation to $(\Lambda\CC)_0$ gives a section of the bundle of $A$-linear endomorphisms, ${\rm End}_A(V)$. To obtain a character, we need a \emph{super trace} on this bundle, meaning a bundle map from ${\rm End}_A(V)$ to a line bundle $\omega$ over~$\Lambda\CC$ that vanishes on commutators. 

\begin{defn} For a choice of super trace on ${\rm End}_A(V)$, the character of a representation of $\CC$ in $A$-modules is the supertrace of its restriction to $(\Lambda \CC)_0$. By the definition of a super trace, this descends to a section of a line bundle $\omega$ on~$\Lambda \CC$. When the context is clear, we will often call the restriction of this section to $\Lambda^\nd(\CC)$ the character. 
\end{defn}

In our applications, characters will often take values in a subalgebra of~$C^\infty(\Lambda^\nd({\sf C}))$. Typically this comes from characters taking values in functions on a groupoid ${\sf G}$ with the same objects as $\Lambda^\nd({\sf C})$, but more morphisms. 

\begin{defn}\label{defn:charfactor}
Fix a super Lie category ${\sf C}$ and an algebra $\Omega$ with an injective map $i\colon \Omega\hookrightarrow C^\infty(\Lambda^\nd {\sf C})$. Representations of ${\sf C}$ in ${\sf A}$-modules have \emph{$\Omega$-valued characters} if there is a map $\chi$ making the diagram commute
\beq
\begin{array}{c}
\begin{tikzpicture}
  \node (A) {$\Rep^{\sf A}(\CC)$};
  \node (B) [node distance= 4cm, right of=A] {$\Gamma(\Lambda^\nd(\CC);\omega)$};
  \node (C) [node distance = 1.5cm, above of=B] {$\Omega$};
  \draw[->] (A) to  (B);
  \draw[->,dashed] (A) to node [above] {$\chi$} (C);
  \draw[->,left hook-latex] (C) to node [right] {$i$} (B);
\end{tikzpicture}\end{array}\nonumber
\eeq
where the lower horizontal arrow is the character map on $\Rep^{\sf A}({\sf C})$. In this case, we also say that $\Rep^{\sf A}(\CC)$ has \emph{an $\Omega$-valued character theory}.
\end{defn}

\subsection{Super Lie categories that are natural in a manifold parameter}

Let $\mathcal{C}\colon {\sf Mfld}\to {\sf LieCat}$ be a functor from manifolds to (the 1-category of) super Lie categories, i.e., for each smooth manifold~$M$, $\mathcal{C}(M)$ is a super Lie category, a smooth map $f\colon M\to M'$ determines a smooth functor $\mathcal{C}(f)\colon \mathcal{C}(M)\to \mathcal{C}(M')$ and these compose strictly, $\mathcal{C}(f\circ g)=\mathcal{C}(f)\circ \mathcal{C}(g)$. On categories of representations, this yields pullback functors
$$
f^*\colon \Rep(\mathcal{C}(M'))\to \Rep(\mathcal{C}(M)).
$$
Hence, the assignment $M\mapsto \Rep(\mathcal{C}(M))$ (viewing the target as a groupoid) is a prestack. Furthermore, using that $f\colon M\to M'$ induces a smooth functor $\mathcal{C}(f)$, we obtain smooth functors between inertia categories
$$
\Lambda(\mathcal{C}(M))\to \Lambda(\mathcal{C}(M'))
$$
and hence the character map $\chi\colon \Rep(\mathcal{C}(M))\to C^\infty(\Lambda(\mathcal{C}(M)))$ can be promoted to a morphism of prestacks where the target is a discrete prestack, i.e., a presheaf. 

We observe that an algebra bundle on $\mathcal{C}(\pt)$ can be pulled back to an algebra bundle over~$\mathcal{C}(M)$, resulting in a prestack $M\mapsto \Rep^A(\mathcal{C}(M))$ of representations valued in $A$-modules. A choice of super trace over $\mathcal{C}(\pt)$ valued in a line~$\omega$ over $\Lambda(\mathcal{C}(\pt))$ results in a super trace valued in the pullback of $\omega$ to $\Lambda(\mathcal{C}(M))$. 

In our examples of interest, the categories $\Rep^A(\mathcal{C}(M))$ turn out to have a character theory with more symmetry that is also natural in $M$ in the sense of a presheaf of algebras, leading to the following refinement of Definition~\ref{defn:charfactor}. In the following, suppose we have fixed a bundle of algebras ${\sf A}\to \mathcal{C}(\pt)$ with a super trace on $A$-modules (as described above).

\begin{defn}\label{defn:charfactor2}
Let $\Omega\colon {\sf Mfld}^\op\to {\sf Alg}$ be a presheaf of algebras, $\mathcal{C}$ a prestack of Lie categories, and $i\colon \Omega\to C^\infty(\Lambda{\sf C}(-))$ a morphism of presheaves of algebras. Representations of $\mathcal{C}(M)$ in $A$-modules have \emph{$\Omega$-valued characters} if there is a morphism $\chi$ of prestacks making the diagram commute
\beq
\begin{array}{c}
\begin{tikzpicture}
  \node (A) {$\Rep^{\sf A}(\mathcal{C}(M))$};
  \node (B) [node distance= 4cm, right of=A] {$\Gamma(\Lambda^\nd(\mathcal{C}(M));\omega)$};
  \node (C) [node distance = 1.5cm, above of=B] {$\Omega(M)$};
  \draw[->] (A) to  (B);
  \draw[->,dashed] (A) to node [above] {$\chi$} (C);
  \draw[->,left hook-latex] (C) to node [right] {$i$} (B);
\end{tikzpicture}\end{array}\nonumber
\eeq
where the lower horizontal arrow is the character map on $\Rep^{\sf A}(\mathcal{C}(M))$.
\end{defn}

\subsection{Grothendieck groups of representations}\label{sec:groth}

When the assignment $M\mapsto \Rep(\mathcal{C}(M))$ is a stack, concordance classes of representations provide a set-valued topological invariant of~$M$ (see~\S\ref{appen:conc}). The operations of direct sum~$\oplus$ and parity reversal~$\Pi$ can be used to turn this into a group-valued invariant via a Grothendieck group. 

\begin{defn}[Grothendieck group] \label{defn:diffGG}
Let $\mathcal{F}(\Rep(\mathcal{C}(M))$ denote the free abelian group on concordance classes of representations of $\mathcal{C}(M)$. Define $\mathcal{Z}(\Rep(\mathcal{C}(M)))$ to be the subgroup generated by elements of the form
\beq
\rho+ \rho'-(\rho\oplus\rho')\qquad \rho\oplus \Pi \rho.\label{eq:Grothequiv}
\eeq
Then the \emph{Grothendieck group} of $\Rep(\mathcal{C}(M))$ is 
$$
\K(\Rep(\mathcal{C}(M)):=\mathcal{F}(\Rep(\mathcal{C}(M))/\mathcal{Z}(\Rep(\mathcal{C}(M)).
$$
The tensor product of representations endows this Grothendieck group with a ring structure. 
\end{defn}

For representations with an $\Omega$-valued character theory, there is a differential refinement of~$\K(\Rep(\mathcal{C}(M)))$ that mimics the definition of Hopkins--Singer differential cocycles~\cite{HopSing}. This requires one fix a natural character map
\beq
\chi\colon \Rep(\mathcal{C}(-)) \to \Omega(-).\label{eq:genchar}
\eeq

\begin{defn}[Differential cocycles and concordance]\label{defn:diffconc}
Given a character map $\chi$ as in~\eqref{eq:genchar}, the prestack of \emph{differential cocycles} is
$$
\widehat{\Rep}(\mathcal{C})(M):=\{\rho\in \Rep(\mathcal{C}(M)), \alpha\in \Omega(M\times \R), \mid i_0^*\alpha=\chi(\rho)\}.
$$
Define the \emph{character} of a differential cocycle to be $\alpha_1=i^*_1\alpha$ (the target of the concordance $\alpha$), and let $\widehat{\chi}\colon \widehat{\Rep}(\mathcal{C})(M)\to \Omega(M)$ be the character map.

A \emph{differential concordance} is a concordance $(\widetilde\rho,\widetilde\alpha)\in \widehat{\Rep}(\mathcal{C})(M\times \R)$ for $\widetilde{\rho}\in \Rep(\mathcal{C}(M\times \R))$ and $\widetilde{\alpha}\in \Omega(M\times\R^2)$ with the property that that $i_1^*\widetilde\alpha=p^*\alpha_1$ is the constant concordance for $i_1\colon M\times \R\times\{1\}\hookrightarrow M\times \R^2$ the inclusion, $p\colon M\times \R\to M$ the projection, and $\alpha_1\in \Omega(M)$.
\end{defn}

Note that the source and target of a differential concordance are differential cocycles with the same character. Using direct sum $\oplus$ and parity reversal $\Pi$, we define a differential Grothendieck group. 

\begin{defn}[Differential Grothendieck group] \label{defn:Diffgroth}
Suppose $\Rep(\mathcal{C}(-))$ and $\Omega(-)$ are stacks, and we are given a character map~\eqref{eq:genchar}. Let $\mathcal{F}(\widehat{\Rep}(\mathcal{C}(M)))$ denote the free abelian group on differential concordance classes. Define $\mathcal{Z}(\widehat{\Rep}(\mathcal{C}(M))$ to be the subgroup generated by elements of the form
$$
(\rho,\alpha)+(\rho',\alpha')-(\rho\oplus\rho',\alpha+\alpha')\qquad (\rho\oplus \Pi \rho,0).
$$
Then the \emph{differential Grothendieck group} of $\Rep(\mathcal{C}(M))$ is 
$$
\K(\widehat{\Rep}(\mathcal{C}(M))):=\mathcal{F}(\widehat{\Rep}(\mathcal{C}(M)))/{\mathcal{Z}}(\widehat{\Rep}(\mathcal{C}(M))).
$$
If the character map~$\chi$ sends tensor products of representations to products of characters, this differential Grothendieck group has the structure of a ring with product $(\rho,\alpha)\cdot (\rho',\alpha')=(\rho\otimes \rho',\alpha\alpha').$
\end{defn}

We can extend the character map $\widehat\chi$ additively to $\mathcal{F}(\widehat{\Rep}(\mathcal{C}(M)))$. By basic properties of the super trace, $ {\mathcal{Z}}(\widehat{\Rep}(\mathcal{C}(M)))$ contains differential cocycles in the kernel of $\chi$. Hence, we get a well-defined map
$$
\widehat{\chi}\colon \K(\widehat{\Rep}(\mathcal{C}(M)))\to \Omega(M) \qquad [\rho,\alpha]\mapsto i^*_1\alpha.
$$

\section{Super path representations, Quillen super connections and K-theory}\label{sec:Quillenconn}

In this section, we study a super Lie category $\Path(M)$ of constant super Euclidean paths in~$M$. Orientation reversal on super paths leads to a definition of a unitary representation of~$\Path(M)$. The key computation is the following. 

\begin{thm}\label{thm:11unitaryreps} The category $\Rep^{\Cl_n}(\Path(M))$ of finite-dimensional $\Cl_n$-linear unitary representations of $\Path(M)$ is equivalent to hermitian super vector bundles with self-adjoint fiberwise $\Cl_n$-action and $\Cl_n$-linear unitary Quillen super connection on~$M$. This equivalence is natural in~$M$. 
\end{thm}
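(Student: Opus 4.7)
My plan is to construct explicit functors in both directions between $\Rep^{\Cl_n}(\Path(M))$ and the category of hermitian super vector bundles with self-adjoint $\Cl_n$-action and $\Cl_n$-linear unitary Quillen super connection, and check they are mutually inverse equivalences, with naturality in $M$ evident from the construction. The key principle is that constant super paths are parametrized by lengths $(t,\theta)\in \R^{1|1}_{\ge 0}$, and a smooth functor out of $\Path(M)$ is determined by its infinitesimal behavior in these two directions, which exactly recovers the degree-zero/positive-degree and odd-degree pieces of a super connection.

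First I would unpack $\Path(M)$ explicitly: the objects form a finite-dimensional super manifold built from $M$, while the morphisms are modeled on $(\Path(M))_0 \times \R^{1|1}_{\ge 0}$, with source and target encoding start/end of a constant super path of length $(t,\theta)$, and composition given by the super semigroup structure on $\R^{1|1}_{\ge 0}$. The anti-involution $\sigma$ defining unitarity is orientation-reversal, which on lengths acts by the involution fixing $t$ and reversing the sign of $\theta$ appropriately. With this setup, a representation $\rho\colon \Path(M)\to \End(V)$ provides a super vector bundle $V$ over the objects together with a smooth, composition-respecting assignment of an endomorphism to each length-$(t,\theta)$ path.

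To produce a super connection from $\rho$, I would differentiate. Restricting $\rho$ to paths with $t=0$ yields an action of the odd direction, which upon differentiating in $\theta$ produces an odd endomorphism-valued form $A\in\Omega^{\rm odd}(M;\End(V))$; together with the underlying vector-bundle connection $\nabla$ obtained from the $\R$-action in the even direction, this assembles into a Quillen super connection $\A=\nabla+A$. The compatibility $\A^2 = $ (the generator of the even semigroup) is forced by the super-commutator relation $[\partial_\theta,\partial_\theta] = 2\partial_t$ coming from the super Lie algebra of $\R^{1|1}$. Conversely, given $\A$, I would define $\rho$ on a length-$(t,\theta)$ path over $m\in M$ by the super parallel transport $e^{-t\A^2 + \theta \A}$, acting fiberwise on $V_m$; the functoriality of this assignment reduces to the identity $e^{-(t_1+t_2)\A^2 + (\theta_1+\theta_2)\A}$, which is legitimate because $\A^2$ commutes (as a graded commutator) with $\A$. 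The unitary condition translates, via $\sigma$ and the super adjoint $(-)^*$, into $\A^*=\A$, i.e., Quillen's notion of a unitary super connection, and imposing a self-adjoint fiberwise $\Cl_n$-action that commutes with all of $\rho$ corresponds, after differentiating, to a self-adjoint $\Cl_n$-action on $V$ commuting with $\A$, i.e., $\Cl_n$-linearity of the super connection.

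The step I expect to be the main obstacle is showing that the two functors are strictly mutually inverse, which amounts to proving that every smooth functor $\Path(M)\to\End(V)$ is recovered from its infinitesimal data by the explicit formula $e^{-t\A^2+\theta\A}$. This is a super-manifold integration-of-Lie-algebra-action statement; it is straightforward in the $\theta$-direction because $\theta^2=0$ forces polynomial dependence of degree $\le 1$, but requires care in the $t$-direction to confirm (using the semigroup law and smoothness of $\rho$) that the $t$-flow is genuinely the exponential of $-\A^2$ with no additional data, and that the interaction between the two directions is precisely governed by the graded commutator. Once this is established, compatibility with morphisms of representations (vector bundle isomorphisms intertwining the parallel transports) is automatic, and naturality in $M$ follows because $M\mapsto\Path(M)$ is a functor and both super connections and $\Cl_n$-actions pull back strictly along smooth maps.
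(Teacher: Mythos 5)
There is a genuine gap in the step where you extract the super connection from a representation. You propose to obtain the ordinary connection $\nabla$ by differentiating the $\R$-action in the even ($t$-)direction and the endomorphism-valued forms $A$ from the odd ($\theta$-)direction, assembling $\A=\nabla+A$. This is backwards and cannot work: the objects of $\Path(M)$ are $\SM(\R^{0|1},M)\cong \Pi TM$, so the bundle underlying a representation has sections $\Omega^\bullet(M;V)$, and on the even sub-semigroup (where $\theta=0$) the source and target maps of $\Path(M)$ coincide. Consequently the even flow is by $\Omega^\bullet(M)$-\emph{linear} (tensorial) endomorphisms, and differentiating it can only produce an element $H\in\Omega^\bullet(M;\End(V))$ --- which turns out to be the curvature $\A^2$ --- never a connection. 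The entire super connection $\A$, including its degree-one piece $\nabla$, is carried by the odd direction, and its Leibniz rule is forced by the fact that the target map $(t,\theta,x,\psi)\mapsto(x+\theta\psi,\psi)$ twists the $\Omega^\bullet(M)$-module structure on $t^*V$ by $f\mapsto f+\theta\,df$. The paper isolates exactly this mechanism (Lemma~\ref{lem:semigroup}) before solving the semigroup law (Lemma~\ref{lem:superconnect}); your proposal never identifies it, so the derivation property of $\A$ --- the whole content of ``super connection'' as opposed to ``endomorphism-valued form'' --- is not established. Your observation that $\A^2=H$ is forced by the relation $(0,\theta)\cdot(0,\theta')=(\theta\theta',\theta+\theta')$ is correct and matches the paper, but it sits on top of the missing foundation.

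A secondary imprecision: the unitarity condition is not simply $\A^*=\A$ for the super adjoint. Because the anti-involution $\sigma$ acts on morphisms by $(t,\theta)\mapsto(t,-i\theta)$ and on objects by $\psi\mapsto -i\psi$ (hence by $i^{\deg}$ on forms), the condition one actually obtains is $i\,(i^{\deg}\A)=\A^*$, which unwinds degree by degree to $\A(j)^\dagger=\A(j)$ for $j\equiv 0,3\pmod 4$ and $\A(j)^\dagger=-\A(j)$ for $j\equiv 1,2\pmod 4$ --- the Bismut--Getzler--Vergne definition of a unitary super connection. Writing ``$\A^*=\A$'' without tracking these degree-dependent factors of $i$ would not reproduce the correct signs. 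The remaining parts of your outline (recovering $\rho=e^{-t\A^2+\theta\A}$, the Clifford-linearity translation, and naturality in $M$) agree with the paper's route once the module-structure issue is repaired.
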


The correspondence is explicit, with the representation of $\Path(M)$ associated to a super connection $\A$ given by the super semigroup action 
$$
\rho\colon \R^{1|1}_{\ge 0}\times \Omega^\bullet(M;V)\to \Omega^\bullet(M;V)\qquad  \rho(t,\theta)=e^{-t\A^2+\theta\A}
$$
on $\Omega^\bullet(M;V)$. Unitarity of the super connection then corresponds to unitarity of~$\A$, and Clifford linearity of the representation corresponds to a Clifford action on~$V$ and Clifford linearity of~$\A$. The following is an easy consequence.

\begin{cor}\label{cor:Kthy}
 The Grothendieck group of $\Rep(\Path(M))$ is the K-theory of $M$. 
 \end{cor}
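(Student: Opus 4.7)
The plan is to reduce the statement to the classical description of $\K^0(M)$ as the Grothendieck group of concordance classes of $\Z/2$-graded hermitian vector bundles modulo the relation that $V \oplus \Pi V \sim 0$. The bridge between these two descriptions is Theorem~\ref{thm:11unitaryreps} specialized to $n = 0$, which furnishes a natural equivalence of categories
$$
\Rep_{\rm fd}(\Path(M)) \;\simeq\; \left\{\begin{array}{c} \text{hermitian super vector bundles on $M$} \\ \text{with unitary Quillen super connection} \end{array}\right\}.
$$
First I would restrict attention to finite-dimensional representations, since these are the ones for which the equivalence above applies and which give rise to K-theory classes; infinite-dimensional representations can be addressed separately or argued to coincide after passage to the Grothendieck group via a standard cofinality argument.

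Next I would analyze the concordance relation under this equivalence. A concordance of representations over $M$ is, via naturality, a representation of $\Path(M \times \R)$, which under Theorem~\ref{thm:11unitaryreps} corresponds to a hermitian super vector bundle with unitary super connection on $M \times \R$. Pulling back to the endpoints $M \times \{0\}$ and $M \times \{1\}$ and using the homotopy invariance of super vector bundles on a compact manifold $M$, concordance of representations corresponds to isomorphism of the underlying super vector bundles on $M$; the super connection is forgotten because the space of super connections on a fixed bundle is affine (hence contractible), so any two super connections on isomorphic underlying bundles are joined by a concordance via the straight-line interpolation.

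Then I would match the remaining relations. The relation $\rho + \rho' - (\rho \oplus \rho')$ in Definition~\ref{defn:diffGG} is literally the defining Grothendieck relation for direct sums, while $\rho \oplus \Pi\rho \sim 0$ matches the standard presentation of $\K^0(M)$ in terms of super vector bundles: a super vector bundle is K-theoretically trivial if and only if it is concordant to one of the form $V \oplus \Pi V$. Putting the pieces together yields a natural ring isomorphism $\K(\Rep_{\rm fd}(\Path(M))) \cong \K^0(M)$.

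The only step I expect to be genuinely subtle is the handling of the concordance relation in the infinite-dimensional case (if it is to be included in $\Rep(\Path(M))$ without the ``fd'' decoration) and the verification that every finite rank super vector bundle with unitary super connection on $M$ is in the essential image of the functor from Theorem~\ref{thm:11unitaryreps} in a way compatible with the $\oplus$ and $\Pi$ structures used to form the Grothendieck group. The first point is addressed by observing that an infinite-dimensional Hilbert bundle with unitary super connection is concordant (via scaling the super connection by a cutoff parameter) to a finite-dimensional summand, or by restricting the Grothendieck construction to the finite-dimensional subcategory; the second is immediate from the naturality and monoidal structure of the equivalence in Theorem~\ref{thm:11unitaryreps}, since direct sum of representations corresponds to direct sum of super vector bundles with super connection, and the parity involution~$\Pi$ corresponds to the parity shift on the bundle side.
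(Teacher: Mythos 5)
Your proposal follows essentially the same route as the paper: invoke Theorem~\ref{thm:11unitaryreps} to translate representations into super vector bundles with unitary super connection, use the affineness of the space of super connections to collapse concordance classes to isomorphism classes of bundles, and observe that the Grothendieck relations~\eqref{eq:Grothequiv} then give exactly $\K^0(M)$. The only divergence is your extra concern over infinite-dimensional representations; this is moot here, since the representation categories of $\Path(M)$ considered in this section are finite-dimensional by construction (Theorem~\ref{thm:11unitaryreps} is stated for the finite-dimensional category, and no separate infinite-dimensional notion is ever introduced for super paths), so the ``cofinality'' and ``cutoff'' hand-waving can be dropped rather than justified.
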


\subsection{Motivation: effective supersymmetric mechanics and K-theory}\label{sec:Kthymot}

Before jumping into our model for K-theory, we explain how one might arrive at these constructions from applying methods of effective field theory to $1|1$-dimensional supersymmetric quantum mechanics. The main ideas are: (1) choosing a cutoff in quantum mechanics leads to a finite-dimensional space of states, and (2) underlying a supersymmetric quantum mechanical system there is a K-theory class that can be computed in terms of a choice of cutoff, but is independent of the choice. This follows ideas of Kitaev~\cite{Kitaev}, while also leveraging the standard supersymmetric cancellation argument.

For now we will be vague about our precise notion of $1|1$-dimensional field theory, and instead work with a prototypical example: an even-dimensional compact spin manifold determines a quantum mechanical system whose space of states is sections of the spinor bundle $\Gamma(\$^+\oplus\$^-)$, and whose time evolution operator is~$\exp(-t\slashed{D}{}^2)$ for $\slashed{D}$ the self-adjoint, odd Dirac operator. 

The $\lambda$-eigenspace of $\slashed{D}{}^2$ consists of the \emph{energy $\lambda$ states}, $V_\lambda\subset \Gamma(\$^+\oplus\$^-)$. A \emph{cutoff} theory considers (the linear span of) states with energy less than a chosen~$\lambda\in \R_{>0}$, giving a subspace~$V_{<\lambda}\subset \Gamma(\$^+\oplus\$^-)$. Since $M$ is compact, this subspace of states is finite-dimensional. The restriction of the time-evolution operator to~$V_{<\lambda}$ gives a finite-dimensional quantum system, which is a \emph{cutoff theory}. 

The difference between spaces of states for cutoffs~$\lambda$ and~$\lambda'$ is the addition of a finite-dimensional vector space $V_{[\lambda,\lambda')}\subset\Gamma(\$^+\oplus\$^-)$ on which~$\slashed{D}$ is \emph{invertible}. Since $\slashed{D}$ is odd, its invertibility gives an isomorphism $V_{[\lambda,\lambda')}^\ev\to V_{[\lambda,\lambda')}^\odd$ between the even and odd subspaces of $V_{[\lambda,\lambda')}$. In particular, varying the cutoff amounts to taking a direct sum $V_{<\lambda}\oplus V_{[\lambda,\lambda')}\cong V_{<\lambda'}$ with a vector space of super dimension zero. Using the standard identification between $\Z/2$-graded vector bundles and virtual vector bundles, this operation is the familiar stabilization in K-theory. Therefore, the original quantum theory determines an underlying K-theory class that can be represented by \emph{any} cutoff theory, and varying the cutoff ranges through these representatives. 

The above ideas can also be applied in families. For example, from a bundle of spin manifolds with base~$M$, the fiberwise spinor bundle and Dirac operator determine an $M$-family of state spaces and time-evolution operators. If there is a $\lambda>0$ not in the spectrum of the square of any Dirac operator in the family, then~$\lambda$ gives a cutoff and an associated family of effective field theories. Different cutoffs will differ by a vector bundle on which the Dirac operators are invertible, and hence there is a single underlying K-theory class for any family of effective theories extracted from the initial data. In fact, (see Lemma~\ref{lem:FreedLott}) \emph{any} family of Dirac operators has a finite-dimensional subbundle of the fiberwise spinors containing the kernel of the family of Dirac operators. We think of this subbundle as defining a family of cutoff theories over~$M$. Going the other way, if $V=W\oplus \Pi W$ then the identity map from $W$ to $\pi W$ defines an invertible odd linear map that can be incorporated as the degree zero part of a super connection, allowing us to regard this subspace (possible after a deformation) as coming from a different choice of cutoff. 

The ethos of effective field theory is that one never need work directly with the infinite-dimensional objects. Instead we analyze finite-dimensional cutoff versions and relations between different cutoffs that modify the space of states by stabilization. 
We formalize these ideas via representations of super paths in~$M$. Such a representation determines a vector bundle over~$M$, and we think of the fiber at each point in~$M$ as a (finite-dimensional) state space of a quantum system. The super path representation also determines a (super) time-evolution operator on each vector space, which is the analog of the cutoff of $\exp(-t\slashed{D}^2)$ from above. The reason to take super paths rather than all paths is exactly the existence of the odd square root of the generator of time-evolution, which is crucial for identifying different choices of cutoff as different representatives of the same class in K-theory. 

These ideas will carry over directly to the $2|1$-dimensional case, where at each weight of an $S^1$-action from rotating annuli we choose a cutoff.

\subsection{Super Euclidean paths}

Define the \emph{$1|1$-dimensional super (Euclidean) translation group} $\E^{1|1}$ to be the super manifold $\R^{1|1}$ endowed with the multiplication
\beq
(t,\theta)\cdot (t',\theta')=(t+t'+\theta\theta',\theta+\theta'), \quad (t,\theta),(t',\theta')\in \R^{1|1}(S),\label{eq:E11}
\eeq
There is an obvious left action of super translations $\E^{1|1}$ on $\R^{1|1}$. 

An \emph{$S$-family of super Euclidean paths in $M$} is an $S$-point $(t,\theta)\in \Rge(S)$ and a map $\gamma\colon S\times \R^{1|1}\to M$. The \emph{source super point} of this super path is the composition
$$
S\times \R^{0|1}\stackrel{\iota}{\to} S\times \R^{1|1}\stackrel{\gamma}{\to} M
$$
where $\iota$ is determined by the standard inclusion $\R^{0|1}\hookrightarrow \R^{1|1}$. The \emph{target super point} of the super path is
$$
S\times \R^{0|1}\stackrel{\iota}{\to} S\times \R^{1|1}\stackrel{T_{t,\theta}}{\longrightarrow} S\times \R^{1|1}\stackrel{\gamma}{\to} M
$$
where $T_{t,\theta}$ is the translation action on $S\times \R^{1|1}$ by the given $S$-point $(t,\theta)\in \R^{1|1}_{\ge 0}(S)$. A super path is \emph{constant} if the map $\gamma$ is invariant under the precomposition action of $\E^1<\E^{1|1}$. 

\begin{defn} The \emph{presheaf of super paths in $M$,} denoted ${\rm sP}(M)$, is the presheaf whose value at~$S$ is the set of pairs $(t,\theta)\in \Rge(S)$ and $\gamma\colon S\times \R^{1|1}\to M$. The \emph{presheaf of constant super paths in $M$}, denoted ${\rm sP}_0(M)$, is the sub-presheaf where $\gamma$ is a constant super path.  \end{defn}

The source and target super points of a super path give morphisms of presheaves $s,t\colon {\rm sP}(M)\toto \SM(\R^{0|1},M)$ and $s,t\colon {\rm sP}_0(M)\toto \SM(\R^{0|1},M)$. 

\begin{lem}\label{lem:constantsuperpath}
The presheaf of constant super paths in $M$ is representable, ${\rm sP}_0(M)\stackrel{\sim}{\to} \Rge\times \SM(\R^{0|1},M)$, with the isomorphism determined by the super length map ${\rm sP}_0(M)\to \Rge$ and the source super point map, $s\colon {\rm sP}_0(M)\to \SM(\R^{0|1},M)$.
\end{lem}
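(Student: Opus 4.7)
The plan is to unwind the definition of ``constant super path'' in terms of the $\E^1$-action on $\R^{1|1}$, and check that the source super point map recovers the underlying factored map.

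First I would set up the identification explicitly. The subgroup $\E^1 < \E^{1|1}$ corresponds to points of the form $(t,0)$ under the multiplication~\eqref{eq:E11}, so the precomposition action on $\R^{1|1}$ restricts to ordinary translation $(s,\sigma)\mapsto (s+t,\sigma)$ in the even direction. The quotient map $\R^{1|1}\to \R^{1|1}/\E^1$ is therefore the projection $\pi\colon \R^{1|1}\to \R^{0|1}$, $(s,\sigma)\mapsto \sigma$, and the standard inclusion $\iota\colon \R^{0|1}\hookrightarrow \R^{1|1}$ splits this projection on the nose: $\pi\circ\iota=\id_{\R^{0|1}}$.

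Next I would apply this at the level of $S$-points. An $S$-family of constant super paths consists of $(t,\theta)\in \Rge(S)$ together with a map $\gamma\colon S\times \R^{1|1}\to M$ invariant under the $\E^1$-action in the second factor. Invariance is equivalent (by the universal property of the quotient in the category of super manifolds, applied to the free $\E^1$-action in the $S$-relative sense) to $\gamma$ factoring uniquely as
$$
\gamma\colon S\times \R^{1|1}\xrightarrow{\id_S\times \pi} S\times \R^{0|1}\xrightarrow{\bar\gamma} M.
$$
Thus the data $(t,\theta,\gamma)$ is equivalent to the data $(t,\theta,\bar\gamma)\in \Rge(S)\times\SM(\R^{0|1},M)(S)$, and this assignment is natural in~$S$, establishing the claimed isomorphism of presheaves. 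Finally, one checks that $\bar\gamma$ is recovered from $\gamma$ by precomposing with $\id_S\times \iota$, i.e., $\bar\gamma=\gamma\circ(\id_S\times \iota)=s(\gamma)$, so the isomorphism is indeed implemented by the super length map together with the source super point map.

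The only point requiring care is the ``universal property of the quotient'' step: the $\E^1$-action on $\R^{1|1}$ is free with quotient $\R^{0|1}$, so that invariant maps out of $S\times\R^{1|1}$ descend uniquely to maps out of $S\times \R^{0|1}$. This is straightforward here because the projection $\pi$ admits the splitting $\iota$, which gives an explicit inverse $\bar\gamma:=\gamma\circ(\id_S\times\iota)$ to the pullback $\pi^*$; one then verifies directly that $\pi^*\bar\gamma=\gamma$ using invariance of $\gamma$ under the $\E^1$-action. I expect this verification to be the main (minor) obstacle, but it reduces to a routine calculation at the level of the structure sheaf of $S\times\R^{1|1}$, using that every function on $S\times\R^{1|1}$ invariant under translation in the even coordinate depends only on the $S$- and $\R^{0|1}$-coordinates.
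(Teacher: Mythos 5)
Your proof is correct and follows essentially the same route as the paper's: both identify an $\E^1$-invariant map $\gamma\colon S\times\R^{1|1}\to M$ with a factorization through the projection $S\times\R^{1|1}\to S\times\R^{0|1}$, and both use that the standard inclusion $\iota$ splits this projection to identify $\bar\gamma$ with the source super point. Your extra care about the ``universal property of the quotient'' step is a reasonable elaboration of what the paper treats as immediate, but it is the same argument.
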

\bp
An $\E^1$-invariant map $\gamma\colon S\times \R^{1|1}\to M$ can be identified with the composition
$$
\gamma\colon S\times \R^{1|1}\stackrel{\rm pr}{\to} S\times \R^{1|1}/\E^1\cong S\times \R^{0|1}\stackrel{\gamma_0}{\to} M. 
$$
Hence, we can identity a constant super path with an $S$-point $(t,\theta)\in \Rge$ and an $S$-point $\gamma_0\in \SM(\R^{0|1},M)(S)$. Since ${\rm pr}\circ \iota=\id_{S\times \R^{0|1}}$, $\gamma_0$ is the source super point as claimed. 
\ep

\begin{rmk} Below we will denote an $S$-point of $\SM(\R^{0|1},M)$ by the pair $(x,\psi)$. Somewhat loosely, $x\colon S\to M$ is an ordinary $S$-point of $M$, and $\psi\in \Gamma(S,x^*\Pi TM)$ is a section of the pullback odd tangent bundle. More precisely, $x$ corresponds to a super algebra map $x^*\colon C^\infty(M)\to C^\infty(S)$ and $\psi$ is an odd derivation $\psi^*\colon C^\infty(M)\to C^\infty(S)$ with respect to~$x^*$. Then $x^*+\theta\psi^*\colon C^\infty(M)\to C^\infty(S)[\theta]\cong C^\infty(S\times \R^{0|1})$ determines a map of super manifolds $x+\theta\psi\colon S\times \R^{0|1}\to M$. \end{rmk}

\subsection{An orientation-reversing automorphism}
There is an orientation-reversing automorphism of~$\R^{1|1}$ given by
$$
{\sf or}\colon \R^{1|1}\to \R^{1|1} \qquad (t,\theta)\mapsto (-t,i\theta)\qquad (t,\theta)\in \R^{1|1}(S). 
$$
We explain how to promote ${\sf or}$ to an orientation-reversing action on super paths that exchanges the source and target super points. Suppose we are given an input super path determined by $(t,\theta)\in \Rge(S)$ and $\gamma\colon S\times \R^{1|1}\to M$. Applying ${\sf or}$ to $S\times \R^{1|1}$, we get a new pair of inclusions 
\begin{equation}
\begin{array}{c}
\begin{tikzpicture}
  \node (A) {$S\times \R^{0|1}$};
\node (B) [node distance=4cm, right of=A] {$S\times \R^{1|1}$};
\node (C) [node distance=3cm, right of=B] {$M.$};
\draw[->] (A) to[bend left=10] node [above] {$\iota$} (B);
\draw[->] (A) to[bend right=10] node [below] {$T_{-t,i\theta}\circ \iota$} (B);
\draw[->] (B) to node [above] {$\gamma\circ {\sf or}^{-1}$} (C);
\end{tikzpicture}
\end{array}\nonumber
\end{equation}
To turn this data into a super path in the sense of the previous subsection, we translate $S\times \R^{1|1}$ by $T_{t,-i\theta}$ and get
\begin{equation}
\begin{array}{c}
\begin{tikzpicture}
  \node (A) {$S\times \R^{0|1}$};
\node (B) [node distance=4cm, right of=A] {$S\times \R^{1|1}$};
\node (C) [node distance=4cm, right of=B] {$M$};
\draw[->] (A) to[bend left=10] node [above] {$T_{t,-i\theta}\circ \iota$} (B);
\draw[->] (A) to[bend right=10] node [below] {$\iota$} (B);
\draw[->] (B) to node [above] {$\gamma\circ {\sf or}^{-1}\circ T^{-1}_{t,-i\theta}$} (C);
\end{tikzpicture}
\end{array}\nonumber
\end{equation}
We observe the super length of this new super path is $(t,-i\theta)\in \Rge(S)$. 

\begin{defn} Define the \emph{time-reversal} automorphism ${\rm sP}(M)\to {\rm sP}(M)$ by the map on $S$-points
$$
(t,\theta,\gamma)\mapsto (t,-i\theta,\gamma\circ {\sf or}^{-1}\circ T^{-1}_{t,-i\theta}),\qquad (t,\theta)\in \Rge(S), \ \ \gamma\colon S\times \R^{1|1}\to M.
$$
\end{defn}

The following is a simple calculation. 

\begin{lem}\label{lem:11sigma} The restriction of the time-reversal automorphism to ${\rm sP}_0(M)$ is determined by the formula
$$
(t,\theta,x,\psi)\mapsto (t,-i\theta,x+\theta\psi,-i\psi) \qquad (t,\theta)\in \Rge(S), \ \ (x,\psi)\in \SM(\R^{0|1},M)(S). 
$$
under the isomorphism ${\rm sP}_0(M)\cong \Rge\times \SM(\R^{0|1},M)$.
\end{lem}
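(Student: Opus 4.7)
The strategy is to use the isomorphism ${\rm sP}_0(M) \cong \Rge \times \SM(\R^{0|1}, M)$ from Lemma~\ref{lem:constantsuperpath} to reduce the computation to two tasks: (i) record the super length of the time-reversed path, which is $(t, -i\theta)$ by construction in the definition of the time-reversal automorphism; and (ii) identify the source super point of the new underlying map $\gamma' := \gamma \circ {\sf or}^{-1} \circ T^{-1}_{t,-i\theta}$ as an $S$-point of $\SM(\R^{0|1}, M)$. Before doing (ii), I would first verify that $\gamma'$ remains a constant super path so that Lemma~\ref{lem:constantsuperpath} applies; this follows because ${\sf or}$ and super translations normalize the $\E^1$-subgroup of $\E^{1|1}$, so composing an $\E^1$-invariant $\gamma$ with these operations preserves $\E^1$-invariance.

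Next I would compute $s(\gamma') = \gamma' \circ \iota$ on an $S$-point $(s, \theta')$ of $S \times \R^{0|1}$ by tracing through the three maps. Using the super group law $(a, \alpha)\cdot(b, \beta) = (a + b + \alpha\beta, \alpha + \beta)$ of~\eqref{eq:E11} and the fact that $\theta^2 = 0$, one finds
\[
T^{-1}_{t,-i\theta}(s, 0, \theta') = (s,\, -t + i\theta\theta',\, \theta' + i\theta),
\]
and then applying ${\sf or}^{-1}\colon (t', \theta') \mapsto (-t', -i\theta')$ produces $(s,\, t - i\theta\theta',\, \theta - i\theta')$. Because $\gamma$ factors through the projection $S \times \R^{1|1} \to S \times \R^{0|1}$ via the map $\gamma_0$ corresponding to $(x, \psi)$, the even coordinate drops out and we are left with the map $(s, \theta') \mapsto \gamma_0(s,\, \theta - i\theta')$.

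Finally, I would translate this expression into the algebra-map form that identifies an $S$-point of $\SM(\R^{0|1}, M)$. The pair $(x, \psi)$ encodes the pullback $f \mapsto x^*f + \eta\,\psi^* f$ for $\eta$ the odd coordinate on $\R^{0|1}$. Substituting $\eta = \theta - i\theta'$ gives
\[
f \;\longmapsto\; x^*f + \theta\,\psi^*f + \theta'\bigl(-i\,\psi^*f\bigr),
\]
which is precisely the pullback associated to the $S$-point $(x + \theta\psi,\, -i\psi) \in \SM(\R^{0|1}, M)(S)$. Combined with the super length $(t, -i\theta)$ computed above, this yields the claimed formula.

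The only real subtlety is the sign bookkeeping arising from the interaction of ${\sf or}^{-1}$ with the super group law, together with using $\theta^2 = 0$ and $\theta\theta' = -\theta'\theta$ correctly to suppress nilpotent cross terms; I do not anticipate any serious obstacle beyond a careful calculation.
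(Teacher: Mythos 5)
Your computation is correct: tracing $\iota$ through $T^{-1}_{t,-i\theta}=T_{-t,i\theta}$ and ${\sf or}^{-1}\colon (t',\theta')\mapsto(-t',-i\theta')$ sends the odd coordinate to $\theta-i\theta'$, and substituting into $x^*+\eta\psi^*$ gives exactly $(x+\theta\psi,-i\psi)$, with the super length $(t,-i\theta)$ read off from the definition of time-reversal. The paper omits this proof entirely (it is flagged as ``a simple calculation''), and your argument, including the preliminary check that $\gamma\circ{\sf or}^{-1}\circ T^{-1}_{t,-i\theta}$ remains $\E^1$-invariant so that Lemma~\ref{lem:constantsuperpath} applies, is precisely the intended verification.
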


\subsection{The super Lie category of constant super paths}\label{sec:11cat}

Consider a pair of super paths 
\begin{equation}
\begin{array}{c}
\begin{tikzpicture}
  \node (A) {$S\times \R^{0|1}$};
  \node (B) [node distance= 4cm, right of=A] {$S\times \R^{1|1}$};
  \node (C) [node distance = 3cm, right of=B] {$M$};
  \node (D) [node distance = 1.5cm, below of=A] {$S\times \R^{0|1}$};
  \node (E) [node distance = 1.5cm, below of=B] {$S\times \R^{1|1}$};
  \node (F) [node distance = 1.5cm, below of=C] {$M$};
  \draw[->] (A) to[bend right=10] node [below] {$T_{t,\theta}\circ \iota$} (B);
    \draw[->] (A) to[bend left=10] node [above] {$\iota$} (B);
  \draw[->] (B) to node [above] {$\gamma$} (C);
  \draw[->] (D) to[bend right=10] node [below] {$T_{t',\theta'}\circ \iota$} (E);
    \draw[->] (D) to[bend left=10] node [above] {$\iota$} (E);
  \draw[->] (E) to node [below] {$\gamma'$} (F);
\end{tikzpicture}\end{array}\nonumber
\end{equation}
for which the target super point of the first super path is  the same as the source super point of the second, i.e.,~$\gamma\circ T_{t,\theta}\circ \iota=\gamma'\circ \iota$, or equivalently, $\gamma\circ \iota=T_{t,\theta}^{-1}\circ \gamma'\circ \iota$. For such paths to have a smooth concatenation, we require an equality of maps $S\times \R^{1|1}\to M$ in a neighborhood of these candidate gluing points. Explicitly, we translate the second super path by $T_{t,\theta}$ to get
\begin{equation}
\begin{array}{c}
\begin{tikzpicture}
  \node (A) {$S\times \R^{0|1}$};
  \node (B) [node distance= 4cm, right of=A] {$S\times \R^{1|1}$};
  \node (C) [node distance = 3cm, right of=B] {$M$};
  \draw[->] (A) to[bend right=10] node [below] {$T_{t,\theta}\circ T_{t',\theta'}\circ \iota$} (B);
    \draw[->] (A) to[bend left=10] node [above] {$T_{t,\theta}\circ \iota$} (B);
  \draw[->] (B) to node [above] {$ \gamma'\circ T_{t,\theta}^{-1}$} (C);
\end{tikzpicture}\end{array}\nonumber
\end{equation}
and we require that $\gamma$ and $\gamma'\circ T^{-1}_{t,\theta}$ agree when restricted to $S\times (t-\epsilon,t+\epsilon)^{1|1}\subset S\times \R^{1|1}$ where $(t-\epsilon,t+\epsilon)^{1|1}$ is the restriction of the structure sheaf of~$\R^{1|1}$ to $(t-\epsilon,t+\epsilon)\subset \R$. If this is the case, the concatenation is 
\begin{equation}
\begin{array}{c}
\begin{tikzpicture}
  \node (A) {$S\times \R^{0|1}$};
  \node (B) [node distance= 4cm, right of=A] {$S\times \R^{1|1}$};
  \node (C) [node distance = 3cm, right of=B] {$M$};
  \draw[->] (A) to[bend right=10] node [below] {$T_{t+t'+\theta\theta',\theta+\theta'}\circ \iota$} (B);
    \draw[->] (A) to[bend left=10] node [above] {$\iota$} (B);
  \draw[->] (B) to node [above] {$\gamma'*\gamma$} (C);
\end{tikzpicture}\end{array}\nonumber
\end{equation}
where $T_{t,\theta}\circ T_{t',\theta'}=T_{t+t'+\theta\theta',\theta+\theta'}$ and $\gamma'*\gamma$ is the map whose restriction to $(-\infty,t+\epsilon)^{1|1}$ is $\gamma$ and whose restriction to $(t-\epsilon,\infty)^{1|1}$ is $\gamma'\circ T^{-1}_{t,\theta}$. 

Concatenation is only defined for super paths that agree in a neighborhood of a gluing point, so this only determines a partially defined composition on $\{{\rm sP}(M)\toto \SM(\R^{0|1},M)\}$. It takes some work to promote this to an honest category of super paths, e.g., the use of collars in~\cite{ST11}. However, for constant super paths concatenation is always defined. Indeed, when $\gamma$ and $\gamma'\circ T^{-1}_{t,\theta}$ factor through $S\times \R^{0|1}$, if $\gamma\circ \iota=T_{t,\theta}^{-1}\circ \gamma'\circ \iota\colon S\times \R^{0|1}\to M$ then $\gamma=\gamma'\circ T^{-1}_{t,\theta}\colon S\times \R^{1|1}\to M$. In particular, these super paths agree on $S\times (t-\epsilon,t+\epsilon)^{1|1}$, and we have that $\gamma'*\gamma=\gamma$.

\begin{defn} 
Define the category of \emph{constant super paths in $M$} as
$$
\Path(M):=\left(\begin{array}{c}{\rm sP}_0(M) \\ \downarrow \downarrow \\ \SM(\R^{0|1},M)\end{array}\right)\cong \left(\begin{array}{c} \Rge\times \SM(\R^{0|1},M) \\ \downarrow \downarrow \\ \SM(\R^{0|1},M)\end{array}\right),
$$
where objects are $S$-families of super points in~$M$, and morphisms are constant super paths in $M$. The source and target maps take the source and target super point of a constant super path. Explicitly, the source map is the projection and the target map on $S$-points is
$$
{\rm target}(t,\theta,x,\psi)=(x+\theta\psi,\psi)\qquad (t,\theta)\in \R^{1|1}_{\ge 0}(S), \ (x,\psi)\in \SM(\R^{0|1},M)(S). 
$$ 
The unit section picks out the constant super path associated with $(0,0)\in \Rge$. Composition is concatenation of constant super paths, which is determined by the restriction of the group structure~\eqref{eq:E11} to $\R^{1|1}_{\ge 0}\subset \R^{1|1}$. 
\end{defn}

\begin{lem}\label{lem:subgrpd1} The time-reversal map extends to a smooth functor ${\sf or}\colon \Path(M)\to \Path(M)^\op$ whose map on morphisms is the one from Lemma~\ref{lem:11sigma}, and whose map on objects is 
\beq
&&\SM(\R^{0|1},M)\to \SM(\R^{0|1},M)\quad (x,\psi)\mapsto (x,-i\psi),\ (x,\psi)\in \SM(\R^{0|1},M)(S). \label{eq:idegmap}
\eeq
\end{lem}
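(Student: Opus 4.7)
The plan is to work entirely with the functor-of-points description of $\Path(M)$. The formula from Lemma~\ref{lem:11sigma} already gives a candidate map on morphisms, and the formula $(x,\psi)\mapsto (x,-i\psi)$ gives a candidate map on objects; the task is to check that these assemble into a (smooth) functor to the opposite category. Smoothness of both assignments is immediate from the explicit polynomial formulas, so the content is in the categorical compatibilities: source/target, units, and composition. I would verify each at the level of $S$-points.

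For source and target, the subtlety is that targets in $\Path(M)$ must go to sources in $\Path(M)^{\op}$ and vice versa. Applied to ${\sf or}(t,\theta,x,\psi)=(t,-i\theta,x+\theta\psi,-i\psi)$, the source in $\Path(M)$ is $(x+\theta\psi,-i\psi)$ and the target in $\Path(M)$ is obtained by adding $(-i\theta)(-i\psi)=-\theta\psi$ to the base point, giving $(x,-i\psi)$. These respectively match ${\sf or}(x+\theta\psi,\psi)$ (the image of the target in $\Path(M)$) and ${\sf or}(x,\psi)$ (the image of the source in $\Path(M)$), so the source/target axioms for a functor to $\Path(M)^{\op}$ hold. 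Units at $(x,\psi)$ are given by $(0,0,x,\psi)$, and ${\sf or}(0,0,x,\psi)=(0,0,x,-i\psi)$ is the unit at ${\sf or}(x,\psi)$.

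The main obstacle, and the step most vulnerable to a sign error, is compatibility with composition. For composable morphisms $f=(t_1,\theta_1,x_1,\psi_1)$ and $g=(t_2,\theta_2,x_2,\psi_2)$ in $\Path(M)$, one must show that
\begin{equation*}
{\sf or}(g\circ f) \;=\; {\sf or}(f)\circ {\sf or}(g)
\end{equation*}
in $\Path(M)$. Using the group law from~\eqref{eq:E11}, the left side has length parameter $(t_1+t_2+\theta_1\theta_2, -i(\theta_1+\theta_2))$ and base point $(x_1+(\theta_1+\theta_2)\psi_1,-i\psi_1)$. For the right side, one first checks composability (the target of ${\sf or}(g)$ in $\Path(M)$ computed as above equals the source of ${\sf or}(f)$, both being $(x_1+\theta_1\psi_1,-i\psi_1)$), and then computes the length using the group law with the twisted parameters: the crucial identity is $(-i\theta_2)(-i\theta_1) = -\theta_2\theta_1 = \theta_1\theta_2$, which converts the anti-order that one might naively expect back into the correct sign. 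Combined with the observation $(\theta_1+\theta_2)\psi_1=\theta_1\psi_1+\theta_2\psi_1$ for the base point, both the length and the base point of the two sides coincide.

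Finally, the condition ${\sf or}\circ\overline{{\sf or}}^{\op}={\sf P}_{\overline{\Path(M)}}$ (and its counterpart) required by the anti-involution framework reduces to verifying that applying ${\sf or}$ twice yields the parity involution. On objects $(x,\psi)\mapsto (x,-i\psi)\mapsto (x,-\psi)$ and on morphisms a short computation gives $(t,\theta,x,\psi)\mapsto (t,-\theta,x,-\psi)$, which is exactly the parity endofunctor described in Example~\ref{ex:parityfunctor}; I would record this as a routine consequence of the formulas once the functoriality above is established.
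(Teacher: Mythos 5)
Your proof is correct and follows essentially the same route as the paper: the source/target and unit checks are identical, and your sign computation $(-i\theta_2)(-i\theta_1)=-\theta_2\theta_1=\theta_1\theta_2$ for composition is exactly the content of the paper's one-line observation that $(t,\theta)\mapsto(t,-i\theta)$ is a homomorphism of super semigroups $\R^{1|1}_{\ge 0}\to (\R^{1|1}_{\ge 0})^{\op}$. The final paragraph on ${\sf or}$ squaring to the parity involution is not needed for this lemma (it belongs to the later discussion of the anti-involution $\sigma$), but it is computed correctly.
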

\bp 
The definition of ${\sf or}_0$ along with Lemma~\ref{lem:11sigma} shows that ${\sf or}$ is compatible with source and target maps, since
\beq
{\rm source}(t,-i\theta,x+\theta\psi,-i\psi)&=&(x+\theta\psi),\nonumber\\
 {\rm target}(t,-i\theta,x+\theta\psi,-i\psi)&=&(x+\theta\psi+(-i\theta)(-i\psi),-i\psi)=(x,-i\psi). \nonumber
\eeq
We observe that $\Rge\to (\Rge)^\op$ given by $(t,\theta)\mapsto (t,-i\theta)$ is a homomorphism of super semigroups. This implies that ${\sf or}$ is compatible with units and composition, and so defines a functor. 
\ep

\subsection{Unitary and Clifford linear representations of $\Path(M)$}

Complex conjugation on $C^\infty(\SM(\R^{0|1},M))$ and $C^\infty(\R^{1|1}_{\ge 0})$ come from natural isomorphisms
\beq
C^\infty(\SM(\R^{0|1},M))&\cong& C^\infty(\Pi TM)\cong \Omega^\bullet(M)=\Omega^\bullet(M;\C)\cong \Omega^\bullet(M;\R)\otimes_\R \C.\nonumber\\
C^\infty(\Rge)&\cong& C^\infty(\R_{\ge 0})[\theta]\cong (C^\infty(\R_{\ge 0};\R)\otimes_\R \C)[\theta].\nonumber
\eeq
This defines a real structure ${\sf r}\colon \Path(M)\to \overline{\Path(M)}$, and we consider the composite
\beq
\sigma\colon \Path(M)\stackrel{{\sf or}}{\to} \Path(M)^\op\stackrel{{\sf r}}{\to} \overline{\Path(M)}^{\op}.\nonumber\label{eq:11unitaryinv}
\eeq
By inspection, $\sigma$ defines an anti-involution of $\Path(M)$. 

\begin{defn} \label{defn:11unitary} A \emph{unitary representation} of $\Path(M)$ is a unitary representation with respect to the functor $\sigma$ in the sense of Definition~\ref{defn:unitaryrep}. 
\end{defn}

Now we turn to Clifford linear representations. 

\begin{defn} 
Let ${\sf Cl}_n\to \pt={\rm Ob}(\Path(\pt))$ be the trivial algebra bundle with fiber $\Cl_n$, and take the identity algebra isomorphism $s^*\Cl_n\cong t^*\Cl_n$ over ${\rm Mor}(\Path(\pt))\cong \R^{1|1}_{\ge 0}$, where we use that $s=t$ is the projection. We also let ${\sf Cl}_n$ denote the pullback of this algebra bundle and algebra isomorphisms to $\Path(M)$. 
\end{defn}


We observe that the notion of a self-adjoint Clifford module (e.g.,~\cite{BGV}) coincides with the notion of a self-adjoint ${\sf Cl}_n$-module from~\S\ref{sec:Amod}: when translating the condition to ordinary inner product spaces, the action by the generators of the Clifford algebra is through skew-adjoint operators. 

\begin{defn} A \emph{Clifford-linear representation} of $\Path(M)$ is a unitary representation in self-adjoint ${\sf Cl}_n$-modules, $\Hom(\null_{{\sf Cl}_n}V,\null_{{\sf Cl}_n}V)$ using the (identity) isomorphisms between algebra bundles $s^*{\Cl}_n\to t^*{\Cl}_n$ specified above. Let $\Rep^{\Cl_n}(\Path(M))=:\Rep^n(\Path(M))$ denote the category of $\Cl_n$-linear representations. 
\end{defn}

The Morita equivalences $\Cl(2n)\simeq \C$ and $\Cl(2n+1)\simeq \Cl(1)$ give equivalences of categories, 
$$
\Rep^{2n}(\Path(M))\simeq \Rep^0(\Path(M))=\Rep(\Path(M))\qquad \Rep^{2n+1}(\Path(M))\simeq \Rep^1(\Path(M)),
$$
with explicit functors gotten by tensoring a vector bundle over~$M$ on which a representation is defined with a bimodule implementing the Morita equivalence.

\subsection{The proof of Theorem~\ref{thm:11unitaryreps}}
In the next few lemmas we characterize unitary representations of $\Path(M)$. 

\begin{lem}\label{lem:semigroup}
A representation of $\Path(M)$ is determined by a super semigroup representation $\rho(t,\theta)\colon \R^{1|1}_{\ge 0}\to {\rm End}(\Omega^\bullet(M;V))$ for $V\to M$ a super vector bundle, with the additional condition that
$$
\rho(t,\theta)(f\cdot v)=(f+\theta df)\rho(t,\theta)(v)\qquad v\in \Omega^\bullet(M;V), \ f\in \Omega^\bullet(M), 
$$ 
for the $\Omega^\bullet(M)$-module structure on $\Omega^\bullet(M;V)$. An isomorphism between representations of $\Path(M)$ is determined by an element of $\Omega^\bullet(M;{\rm Hom}(V,W))^\times$ that intertwines the semigroup representations of $\Rge$. 
\end{lem}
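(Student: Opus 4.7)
The plan is to unpack a smooth functor $\rho\colon \Path(M)\to \End(V)$ by using the identifications $\Path(M)_0 = \SM(\R^{0|1},M)\cong \Pi TM$ and $\Path(M)_1 \cong \Rge\times \SM(\R^{0|1},M)$ from Lemma~\ref{lem:constantsuperpath}. First I would record the standard fact that, since $\Pi TM$ is a split super manifold with structure sheaf $\Omega^\bullet(M)$, a super vector bundle over $\SM(\R^{0|1},M)$ is (up to isomorphism) the pullback of a super vector bundle $V\to M$ along the projection $\Pi TM\to M$, with $\Omega^\bullet(M)$-module of sections identified with $\Omega^\bullet(M;V)$. Since the source map $s\colon \Rge\times \SM(\R^{0|1},M)\to \SM(\R^{0|1},M)$ is the projection, the data of $\rho$ on morphisms is a smooth $\Rge$-family of $\C$-linear endomorphisms of $\Omega^\bullet(M;V)$, which is precisely a super semigroup representation $\rho(t,\theta)\colon \Rge\to\End(\Omega^\bullet(M;V))$.

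The key step is to identify how the target map $t(t,\theta,x,\psi) = (x+\theta\psi,\psi)$ pulls back the $\Omega^\bullet(M)$-module structure. A Taylor expansion in the odd parameter $\theta$ (using $\theta^2=0$) gives, for $f\in \Omega^\bullet(M)$ viewed as a function on $\Pi TM$,
$$
(t^*f)(t,\theta,x,\psi) = f(x+\theta\psi,\psi) = f(x,\psi) + \theta\cdot(df)(x,\psi),
$$
where the last equality uses the standard identification of the de Rham differential with the odd derivation on $\Omega^\bullet(M)\cong C^\infty(\Pi TM)$ coming from infinitesimal translation in the $\theta$-direction of $\R^{1|1}$. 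Consequently the pullback module $t^*\Omega^\bullet(M;V)$ agrees with $\Omega^\bullet(M;V)[\theta]$ as a sheaf but with the $\Omega^\bullet(M)$-action twisted by $f\cdot_{t^*} v = (f+\theta\, df)\cdot v$. Requiring $\rho(t,\theta)\colon s^*V\to t^*V$ to be a morphism of $\Omega^\bullet(M)$-modules, i.e., to be $C^\infty(\SM(\R^{0|1},M))$-linear in the fiber direction, translates exactly into the stated commutation relation $\rho(t,\theta)(f\cdot v) = (f+\theta\, df)\rho(t,\theta)(v)$.

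Finally, the axioms of an internal functor become the unit and semigroup properties of $\rho(t,\theta)$ under the group law~\eqref{eq:E11} restricted to $\Rge$. For the characterization of isomorphisms, the same split super manifold argument shows that a vector bundle isomorphism over $\SM(\R^{0|1},M)$ between the bundles corresponding to $V,W\to M$ is an invertible element of $\Omega^\bullet(M;\Hom(V,W))$, and the compatibility diagram in the definition of an isomorphism of representations becomes the statement that this element intertwines the semigroup representations on $\Omega^\bullet(M;V)$ and $\Omega^\bullet(M;W)$. The one non-routine point is the identification of the target pullback as the $(f+\theta\, df)$-twist; this is the step that encodes the de Rham differential into the categorical data, and is ultimately what allows super connections to appear in the subsequent theorem.
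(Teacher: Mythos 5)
Your proposal follows essentially the same route as the paper's proof: trivialize/split the vector bundle over $\SM(\R^{0|1},M)\cong \Pi TM$ as the pullback of a bundle $V\to M$ with sections $\Omega^\bullet(M;V)$, observe that the source map being the projection turns $\rho$ into an $\Rge$-family of endomorphisms, compute that the target map twists the $\Omega^\bullet(M)$-module structure by $f\mapsto f+\theta\,df$, and read off the semigroup and module-map conditions from the internal-functor axioms. Your explicit Taylor-expansion derivation $f(x+\theta\psi,\psi)=f(x,\psi)+\theta\,(df)(x,\psi)$ of the twist is a nice touch — the paper simply asserts that automorphism — but the argument is substantively identical.
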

\bp
Let $V'\to \SM(\R^{0|1},M)$ be a super vector bundle and $\rho$ be a representation on $\End(V')$. Trivializing this bundle along the odd fibers of the projection $\SM(\R^{0|1},M)\cong \Pi TM\to M$ is an isomorphism
$$
\Gamma(\SM(\R^{0|1},M);V')\cong \Omega^\bullet(M;i^*V').
$$
for $i\colon M\hookrightarrow \SM(\R^{0|1},M)$ the canonical inclusion of the reduced manifold. Let $V=i^*V'$ denote the resulting super vector bundle over~$M$. 
The pullback along the source map $s=p_2\colon \Rge\times \SM(\R^{0|1},M)\to \SM(\R^{0|1},M)$ specifies an isomorphism
$$
\Gamma({\rm Mor}(\Path(M));s^*V')=\Gamma(\Rge\times \SM(\R^{0|1},M),s^*V')\cong C^\infty(\Rge)\otimes \Omega^\bullet(M;V),
$$
where the right hand side has the obvious $C^\infty({\rm Mor}(\Path(M))\cong C^\infty(\Rge)\otimes \Omega^\bullet(M)$-module structure. 
We similarly obtain an isomorphism
$$
\Gamma({\rm Mor}(\Path(M));t^*V')=\Gamma(\Rge\times \SM(\R^{0|1},M);t^*V')\cong  C^\infty(\Rge)\otimes \Omega^\bullet(M;V),
$$
as vector spaces, but the $C^\infty(\Rge)\otimes \Omega^\bullet(M)$-module structure is twisted by the algebra automorphism 
$$
C^\infty(\Rge)\otimes \Omega^\bullet(M)\to C^\infty(\Rge)\otimes \Omega^\bullet(M),\quad g(t,\theta)\mapsto g(t,\theta), \quad f\mapsto f+\theta df,
$$
for $g\in C^\infty(\Rge)$ and $f\in \Omega^\bullet(M)\cong C^\infty(\SM(\R^{0|1},M))$. 

With the above identifications in place, a representation $\rho$ is a map of $C^\infty(\Rge)\otimes \Omega^\bullet(M)$-modules,
$$
C^\infty(\Rge)\otimes \Omega^\bullet(M;V) \to C^\infty(\Rge)\otimes \Omega^\bullet(M;V),
$$
where the target has the aforementioned twisted $\Omega^\bullet(M)$-module structure. Explicitly, $v\in C^\infty(\Rge)\otimes \Omega^\bullet(M;V)$, $g\in C^\infty(\Rge)$, and $f\in \Omega^\bullet(M)$, we have
\beq
\rho(g\cdot v)=g\rho(v),\quad \rho(f\cdot v)=(f+\theta df)\rho(v). \label{eq:R11linearmod}
\eeq
The crucial point is that $\rho$ is linear over~$C^\infty(\Rge)$, so determines a map between (trivial) vector bundles on~$\Rge$ with fiber $\Omega^\bullet(M;V)$. In this repackaging,~$\rho$ is a function $\rho(t,\theta)$ on~$\Rge$ with values in module maps $\Omega^\bullet(M;V)\to \Omega^\bullet(M;V)$. This function must also satisfy the second condition in~\eqref{eq:R11linearmod} and be compatible with composition,
$$
\rho(t,\theta)\circ \rho(t',\theta')=\rho(t+t'+\theta\theta',\theta+\theta'). 
$$
Compatibility with composition shows we get a semigroup representation $\rho(t,\theta)\colon \R^{1|1}_{\ge 0}\to {\rm End}(\Omega^\bullet(M;V))$. The second condition in~\eqref{eq:R11linearmod} is the additional condition in the statement of the lemma. 

As for isomorphisms of representations, by definition these are isomorphisms of super vector bundles $V'\to W'$ over $\SM(\R^{0|1},M)$ compatible with the representations. With our identifications on modules of sections in place, this is an isomorphism of $\Omega^\bullet(M)$-modules
$$
\Omega^\bullet(M;V)\to \Omega^\bullet(M;W)
$$
for $W=i^*W'$. But being $\Omega^\bullet(M)$-linear means this is equivalent data to a section of the bundle of fiberwise maps, i.e., an invertible element of $\Omega^\bullet(M;{\rm Hom}(V,W))$ as claimed. 
\ep

\begin{lem} \label{lem:superconnect}
The semigroup representation from Lemma~\ref{lem:semigroup} (and hence a representation of $\Path(M)$) is determined by the formula 
$$\rho(t,\theta)=e^{-t\A^2+\theta\A}$$
where $\A$ is a super connection viewed as an odd derivation, $\A\colon \Omega^\bullet(M;V)\to \Omega^\bullet(M;V)$ over $\Omega^\bullet(M)$. 
Isomorphisms between representations of $\Path(M)$ are in bijection with super connection preserving isomorphisms of super vector bundles. 
 \label{lem:11geo} \end{lem}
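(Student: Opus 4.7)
The strategy is to extract the infinitesimal generators of the super semigroup representation and identify them with a super connection. First I would expand a smooth semigroup homomorphism $\rho\colon \Rge\to \End(\Omega^\bullet(M;V))$ in the nilpotent odd coordinate as $\rho(t,\theta)=P(t)+\theta Q(t)$, with $P$ even and $Q$ odd operator-valued. Substituting into the composition law $(t,\theta)(t',\theta')=(t+t'+\theta\theta',\theta+\theta')$ and matching the coefficients of $1,\theta,\theta',\theta\theta'$ (using $\theta Q(t)\theta'=-\theta\theta' Q(t)$ since both $\theta'$ and $Q(t)$ are odd) yields $P(t+t')=P(t)P(t')$, $P(t)Q(t')=Q(t)P(t')=Q(t+t')$, and $P'(t+t')=-Q(t)Q(t')$. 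Smoothness together with the first identity gives $P(t)=e^{-tH}$ for some even operator $H$; the second identity at $t=0$ gives $Q(t)=\A e^{-tH}$ with $\A:=Q(0)$ commuting with $H$; and the third at $(0,0)$ forces $H=\A^2$. Since $\theta^2=0$ and $\A$ commutes with $\A^2$, this assembles into
$$\rho(t,\theta)=e^{-t\A^2}(1+\theta\A)=e^{-t\A^2+\theta\A}.$$

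Next I would verify that the module-compatibility condition from Lemma~\ref{lem:semigroup}, namely $\rho(t,\theta)(fv)=(f+\theta df)\rho(t,\theta)v$, is equivalent to $\A$ being a super connection. Differentiating in $\theta$ at the origin, the left side yields $\A(fv)$ and the right side yields $(df)v+(-1)^{|f|}f\,\A v$, the sign arising from commuting the odd parameter $\theta$ past a form $f\in\Omega^\bullet(M)$. This is exactly the super Leibniz rule characterizing a super connection on $V\to M$ as an odd derivation of $\Omega^\bullet(M;V)$ over $\Omega^\bullet(M)$. Conversely, for any super connection $\A$, the formula $\rho(t,\theta)=e^{-t\A^2+\theta\A}$ is well-defined by the smoothing properties of the heat semigroup on compact $M$ and, by reversing the above computation, defines a smooth representation satisfying the module condition.

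For the isomorphism claim, Lemma~\ref{lem:semigroup} identifies an isomorphism of representations with an invertible $\phi\in\Omega^\bullet(M;\Hom(V,W))^\times$ satisfying $\phi\circ\rho(t,\theta)=\rho'(t,\theta)\circ\phi$ for all $(t,\theta)$. Differentiating in $\theta$ at the origin gives $\phi\A=\A'\phi$, which is exactly super-connection preservation; conversely, iterating this relation produces $\phi\A^k=(\A')^k\phi$ and hence $\phi\,e^{-t\A^2+\theta\A}=e^{-t(\A')^2+\theta\A'}\phi$, so the two notions coincide. The main obstacle throughout is consistent bookkeeping of super-algebra signs when manipulating the nilpotent odd parameter alongside form-valued operators; handling these signs correctly is precisely what translates the super-semigroup law on $\Rge$ into the exponential form of $\rho$ and into the supersymmetry relation $\A^2=H$.
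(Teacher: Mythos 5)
Your argument is correct and runs essentially parallel to the paper's: the paper first identifies $\rho(t,0)=e^{-tH}$ as a one-parameter semigroup of $\Omega^\bullet(M)$-module maps, then inserts the odd generator $\A$ via the factorization $(t,\theta)=(t,0)\cdot(0,\theta)$ and reads $H=\A^2$ off the composition law, whereas you expand $\rho(t,\theta)=P(t)+\theta Q(t)$ and match Taylor coefficients in the nilpotent variables; both are the same computation dressed slightly differently, and your extraction of the super Leibniz rule and of the intertwining condition for isomorphisms matches the paper's. One small imprecision: no ``smoothing properties of the heat semigroup'' are needed to make sense of $e^{-t\A^2}$, since $\A^2\in\Omega^\bullet(M;\operatorname{End}(V))$ is a section of a bundle of finite-dimensional algebras (the degree-$0$ piece is a bounded bundle endomorphism and the higher-degree pieces are nilpotent), so the exponential is elementary.
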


\begin{proof}[Proof of~Lemma~\ref{lem:11geo}]  

We start by restricting $\rho$ to the subspace
$$
(\Lambda(\Path(M)))_0=\R_{\ge 0}\times \SM(\R^{0|1},M)\subset \Rge\times \SM(\R^{0|1},M)=(\Path(M))_0
$$ 
on which source and target maps are both the projection to $\SM(\R^{0|1},M)$. By the previous lemma, this restriction of~$\rho$ is determined by a semigroup representation of $\R_{\ge 0}$ on $\Omega^\bullet(M;V)$ that is $\Omega^\bullet(M)$-linear, i.e., a section of the endomorphism bundle, $\Omega(M;{\rm End}(V))$. By the existence and uniqueness of solutions to ordinary differential equations we obtain a generator~$H$,
$$
\rho(t,0)= e^{-tH},\quad H\in \Omega^\bullet(M,{\rm End}(V)), \quad t\in \R_{\ge 0}.
$$
Returning to the representation on the whole of $\Rge\times \SM(\R^{0|1},M)$, we have
$$
\rho(t,\theta)=e^{-tH}(1+\theta \A),
$$
where~$\A$ is an odd linear map, and we have used that $(t,\theta)=(t,0)\cdot (0,\theta)$ in the semigroup $\Rge$. Compatibility with composition further demands that 
$$
e^{-tH}(1+\theta \A)e^{-t'H}(1+\theta'\A)=e^{-(t+t'+\theta\theta')H}(1+(\theta+\theta')\A)
$$
which is equivalent to~$\A^2=H$. Therefore~$\A$ completely determines the representation by the formula 
$$
\exp(-t\A^2+\theta\A)\in \Gamma(\Rge\times \SM(\R^{0|1},M),\Hom(s^*V,t^*V )).
$$
By the previous lemma and the fact that $e^{-tH}=e^{-t\A^2}$ is a section of the endomorphism bundle, we have
$$
(1+\theta \A)  e^{-t\A^2}(fv)=e^{-t\A^2+\theta \A}(fv)=(f+\theta df )e^{-t\A^2+\theta \A}v=f(1+\theta \A)e^{-t\A^2}v+\theta df e^{-t\A^2}v. 
$$
where the middle equality is required by the definition of a representation, and the outer equalities use $e^{-t\A^2+\theta \A}=(1+\theta \A)e^{-t\A^2}$ and $\theta^2=0$. Setting $t=0$ and reading off the equation associated with the component of $\theta$, we find that a representation requires~$\A$ satisfy a graded Leibiniz rule. Hence a representation both determines and is determined by a super connection~$\A$. 

Finally, we observe a super vector bundle isomorphism~$\varphi\in \Omega^\bullet(M;\Hom(V,W))^\times$ is compatible with the representations $\rho$ and $\rho'$ associated with super connections $\A$ and $\A'$ if and only if $\varphi(\A)=\A'$. 
\ep

\begin{proof}[Proof of Theorem~\ref{thm:11unitaryreps}]
It remains to identify unitary representations with unitary super connections. So promote the super vector bundle $V\to M$ of the previous lemmas to a super hermitian vector bundle; we will identify this with its associated $\Z/2$-graded vector bundle with (ordinary) hermitian form; see~\S\ref{sec:superadjoint} for this translation. 

From Definition~\ref{defn:11unitary}, a representation $\rho$ is unitary if the composition $\overline{\rho}^\op\circ \sigma$ is equal to the adjoint representation~$(-)^*\circ \rho$. Let $i^{\deg}$ denote the operator on differential forms $f\mapsto i^{\deg}(f)=i^kf$ for $f\in \Omega^k(M)\subset C^\infty(\SM(\R^{0|1},M))$. Using the characterization of representations afforded by Lemma~\ref{lem:superconnect}, a unitary representation satisfies 
$$
i^{\deg}(e^{-t\A^2+i\theta \A})=e^{-ti^{\deg}(\A^2)+i\theta i^{\deg}(\A)}=(e^{-t\A^2+\theta \A})^*=e^{t(\A^*)^2+\theta \A^* }
$$
This uses Lemma~\ref{lem:11sigma} and properties of the super adjoint~\eqref{eq:adjointantiho}. This equality holds if and only if the super connection satisfies 
\beq
i (i^{\deg}(\A))=\A^*.\label{eq:11unitary}
\eeq
as an equality of odd linear maps $\A\colon \Omega^\bullet(M;V)\to \Omega^\bullet(M;V)$. We can write a super connection as
\beq
\A=\sum_j \A(j),\qquad \A(j)\colon \Omega^\bullet(M;V)\to \Omega^{\bullet+k}(M;V)\label{eq:superconnZ}
\eeq
where the terms in the sum have differential form degree~$j$. Then we calculate
\beq
i(i^{\deg}(\A))&=&i(\A(0)+i\A(1)+i^2 \A(2)+i^3\A(3)+i^4 \A(4)+\dots)\nonumber \\
&=&i\A(0)- \A(1)-i\A(2)+ \A(3)+i\A(4)+\dots \nonumber
\eeq
By the characterization of the super adjoint~\eqref{eq:superadjointnormaladjoint} in terms of ordinary adjoints, we have
$$
\A^*=i\A(0)^\dagger+ \A(1)^\dagger+ i\A(2)^\dagger+ \A(3)^\dagger+ i\A(4)^\dagger+\dots
$$
Then for~\eqref{eq:11unitary} to hold, we require 
\beq
\A(j)^\dagger=\A(j)\ \  j=0,3\mod 4\quad\quad  \A(j)^\dagger=-\A(j), \ \ j=1,2\mod 4.\label{eq:11unitaryconcl}
\eeq
But~\eqref{eq:11unitaryconcl} is the definition of a unitary super connection~\cite{BGV}. 
\end{proof}

\begin{prop}\label{prop:Cliffordlineargeo} A Clifford linear unitary representation of $\Path(M)$ is a bundle of $\Cl_n$-modules over $M$ together with a $\Cl_n$-linear unitary super connection. 
\end{prop}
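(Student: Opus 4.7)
The plan is to deduce this proposition directly from Theorem~\ref{thm:11unitaryreps} by carefully tracking how the $\Cl_n$-linearity condition translates through the equivalence established there. Recall that Theorem~\ref{thm:11unitaryreps} furnishes a bijection between unitary representations $\rho$ of $\Path(M)$ and pairs $(V,\A)$ consisting of a hermitian super vector bundle with unitary super connection, with the explicit formula $\rho(t,\theta)=e^{-t\A^2+\theta\A}$. So what remains is to identify the extra datum of $\Cl_n$-linearity on both sides of this equivalence.

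First, I would unpack the $\Cl_n$-linear side of the representation theory. By Definition of $\Cl_n$-linearity together with the discussion in \S\ref{sec:Amod}, a $\Cl_n$-linear unitary representation is a unitary representation $\rho$ together with a fiberwise $\Cl_n$-action on $V\to \CC_0=\SM(\R^{0|1},M)$ such that (i) the action is self-adjoint in the super hermitian sense, and (ii) the maps $s^*V\to t^*V$ induced by $\rho$ are $\Cl_n$-linear with respect to the identity isomorphism $s^*\Cl_n\cong t^*\Cl_n$. After pulling back along $i\colon M\hookrightarrow \SM(\R^{0|1},M)$ and trivializing along the odd fibers exactly as in the proof of Lemma~\ref{lem:semigroup}, the $\Cl_n$-module structure on $V$ over $\SM(\R^{0|1},M)$ becomes a $\Cl_n$-module structure on the pullback bundle $i^*V\to M$; self-adjointness transfers to the statement that Clifford generators act as skew-adjoint operators on $i^*V$, matching the standard notion of a self-adjoint Clifford module bundle.

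Second, I would translate the $\Cl_n$-linearity condition on $\rho$ into a condition on the super connection $\A$. Since $\rho(t,\theta)=e^{-t\A^2+\theta\A}$ and both multiplication by Clifford elements and exponentiation of $\A$ act on $\Omega^\bullet(M;V)$, the $\Cl_n$-linearity of $\rho$ on all $S$-points $(t,\theta)\in \Rge(S)$ is equivalent, by differentiating in $\theta$ at the identity (the $(0,0)$-point), to the equation
\[
\A(c\cdot v)=c\cdot \A(v),\qquad c\in \Gamma(M,\Cl_n), \ v\in \Omega^\bullet(M;V),
\]
i.e., to $\Cl_n$-linearity of $\A$ as an odd derivation on $\Omega^\bullet(M;V)$. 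Conversely, $\Cl_n$-linearity of $\A$ implies $\Cl_n$-linearity of every polynomial in $\A$, hence of $e^{-t\A^2+\theta\A}$ interpreted as a power series in $t$ and $\theta$, giving the reverse direction. The only subtlety is the graded commutativity: one must use that Clifford generators are odd and that $\A$ is odd, so $\Cl_n$-linearity means $\A(c v)=(-1)^{|c|}c\,\A(v)+\A(c) v$ in the derivation sense, and the ``$\Cl_n$-linearity'' of a super connection is understood in this graded sense with $\A(c)=0$ for $c$ in the pullback of $\Cl_n$ from a point. This is the main conceptual step and where I would be most careful.

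Combining these translations, the pair $(V,\A)$ with $\Cl_n$-module structure and $\Cl_n$-linear unitary super connection is exactly the data of a $\Cl_n$-linear unitary representation of $\Path(M)$ under the equivalence of Theorem~\ref{thm:11unitaryreps}. Naturality in $M$ and compatibility with isomorphisms is inherited from the analogous properties in Theorem~\ref{thm:11unitaryreps}, once one observes that isomorphisms of representations respect the Clifford action if and only if the underlying bundle isomorphism $V\to W$ is $\Cl_n$-equivariant. The main obstacle, as noted, is being careful with signs when promoting $\Cl_n$-linearity from a statement about the representation $\rho$ (multiplication operators) to a statement about the derivation $\A$, but this is a straightforward graded Leibniz computation.
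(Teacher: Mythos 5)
Your proof takes the same route as the paper's: apply Theorem~\ref{thm:11unitaryreps} to the underlying representation, then translate $\Cl_n$-linearity of $\rho(t,\theta)=e^{-t\A^2+\theta\A}$ into a commutation condition on $\A$, which the paper writes as $e^{-t\A^2+\theta\A}{\rm cl}_v={\rm cl}_v\,e^{-t\A^2+\theta\A}$ and you obtain by extracting the $\theta$-coefficient. Your attention to signs is warranted: the first displayed formula $\A(c\cdot v)=c\cdot\A(v)$ is not the right condition when $c$ is odd, whereas your corrected $\A(cv)=(-1)^{|c|}c\,\A(v)$ is. The cleanest formulation is that $\A$ supercommutes with Clifford multiplication, $\A\,{\rm cl}_v+{\rm cl}_v\,\A=0$ for $v$ an odd generator; this arises from the $\theta$-coefficient of the paper's commutation equation after transposing the odd function $\theta\in C^\infty(\Rge)$ past the odd operator ${\rm cl}_v$ inside $C^\infty(\Rge)\otimes\End(\Omega^\bullet(M;V))$, producing a sign the paper leaves implicit. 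Your ``Leibniz rule with $\A(c)=0$'' phrasing lands on the right condition, but since $c$ acts by an odd bundle endomorphism rather than by multiplication by a form, the super-connection Leibniz rule does not literally apply to it, and stating the condition as vanishing of the supercommutator $\A\,{\rm cl}_v+{\rm cl}_v\,\A$ avoids that ambiguity.
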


\bp
A Clifford-linear representation determines an ordinary representation on $V$ by forgetting the $\Cl_n$-action, so by Theorem~\ref{thm:11unitaryreps}, this representation can be described as a super vector bundle with unitary super connection. Then being $\Cl_n$-linear requires that $V$ carry a fiberwise Clifford action and
$$
e^{-t\A^2+\theta \A}{\rm cl}_v={\rm cl}_v e^{-t\A^2+\theta \A}
$$
for ${\rm cl}_v$ the action by a generator $v\in \Cl_n$ of the Clifford algebra. But this is equivalent to demanding that $\A$ be Clifford linear. 
\ep

\subsection{Grothendieck groups of representations}

The following is an immediate corollary to Theorem~\ref{thm:11unitaryreps}; it implies that concordance of representations is an equivalence relation. 

\begin{cor} The prestack $M\mapsto \Rep^n(\Path(M))$ is a stack. 
\end{cor}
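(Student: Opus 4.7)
The plan is to transport the corollary to a classical descent statement using the natural equivalence of Theorem~\ref{thm:11unitaryreps} together with Proposition~\ref{prop:Cliffordlineargeo}. Let $\mathcal{B}^n(M)$ denote the groupoid whose objects are hermitian super vector bundles over $M$ equipped with a self-adjoint fiberwise $\Cl_n$-action and a $\Cl_n$-linear unitary super connection, and whose morphisms are the isomorphisms of super vector bundles that preserve all of this structure. The composite of Theorem~\ref{thm:11unitaryreps} and Proposition~\ref{prop:Cliffordlineargeo} provides an equivalence of categories $\Rep^n(\Path(M))\simeq \mathcal{B}^n(M)$, and the explicit formula $\rho(t,\theta)=e^{-t\A^2+\theta \A}$ shows that this equivalence is natural in $M$: pulling back a super connection along a smooth map $f\colon M\to M'$ commutes with the pullback of its associated representation along the induced smooth functor $\Path(M)\to \Path(M')$. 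Hence we have an equivalence of prestacks on the site of smooth manifolds.

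Next I would verify that $M\mapsto \mathcal{B}^n(M)$ is a stack, which is an essentially standard fact. Given an open cover $\{U_i\to M\}$ and a descent datum $\{(V_i,\langle-,-\rangle_i,\A_i)\in\mathcal{B}^n(U_i),\ \varphi_{ij}\}$ satisfying the cocycle condition on triple overlaps, the underlying super vector bundles glue to a super vector bundle $V\to M$ since vector bundles satisfy descent. The fiberwise hermitian metrics, Clifford actions, and super connections are each local data that the transition isomorphisms~$\varphi_{ij}$ are required to preserve; therefore each such structure glues on $V$, producing a global object of $\mathcal{B}^n(M)$ restricting to $(V_i,\A_i)$ on $U_i$. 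Descent for morphisms is likewise local: a morphism in $\mathcal{B}^n(M)$ is a section of the (sub)bundle of structure-preserving fiberwise isomorphisms, and sections of a bundle glue under open covers. The same argument goes through for any cover in the site of manifolds with a notion of local functions, so $\mathcal{B}^n$ is a stack.

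Finally, since an equivalence of prestacks transfers the stack property (a sieve-theoretic descent condition is invariant under objectwise equivalences compatible with restriction), the natural equivalence $\Rep^n(\Path(-))\simeq \mathcal{B}^n(-)$ implies that $M\mapsto \Rep^n(\Path(M))$ is a stack. The only step requiring a small amount of care is checking that the equivalence of Theorem~\ref{thm:11unitaryreps} commutes with restriction to open subsets, but this is immediate from the construction since both the passage from a representation to its associated super connection and the inverse passage are defined pointwise in the base and respect pullback of differential forms. This is the main formal obstacle, and it is resolved by the naturality clause already recorded in Theorem~\ref{thm:11unitaryreps}.
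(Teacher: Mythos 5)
Your argument is correct and is essentially the one the paper has in mind: the paper simply declares the corollary "immediate" from Theorem~\ref{thm:11unitaryreps}, which is exactly the transfer-of-descent-along-a-natural-equivalence step you carry out. The added detail about gluing metrics, Clifford actions, and super connections is the standard filling-in of that word "immediate" and is fine.
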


Now we compute Grothendieck groups of representations of super paths.

\begin{proof}[Proof of Corollary~\ref{cor:Kthy}]
By Theorem~\ref{thm:11unitaryreps}, a concordance class of a unitary representation of $\Path(M)$ is the concordance class of a unitary super connection on a super vector bundle. Since the space of unitary super connections is affine, the set of concordance classes is the same as isomorphism classes of super vector bundles. The quotient of the free abelian group on super vector bundles by the subgroup generated by~\eqref{eq:Grothequiv} is exactly the K-theory of~$M$. 
\ep

\begin{prop} There is a natural map from $\Rep^n(\Path(M))$ to $\K^n(M)$ that is surjective when~$n$ is even or $M=\pt$. \end{prop}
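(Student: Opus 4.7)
The plan is to construct the map by sending a $\Cl_n$-linear unitary representation of $\Path(M)$ to the class in $\K^n(M)$ of its underlying $\Cl_n$-module bundle, using the Atiyah--Bott--Shapiro (ABS) model in which $\K^n(M)$ is the Grothendieck group of $\Z/2$-graded $\Cl_n$-module bundles modulo those admitting a $\Cl_{n+1}$-extension. By Proposition~\ref{prop:Cliffordlineargeo} such a representation is precisely a Clifford module bundle $V\to M$ together with a $\Cl_n$-linear unitary super connection~$\A$, so this rule is well defined on objects and visibly respects direct sums.

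Next I would show the rule descends to $\K(\Rep^n(\Path(M)))$. Concordance invariance follows by the argument of Corollary~\ref{cor:Kthy}: the space of $\Cl_n$-linear unitary super connections on a fixed $\Cl_n$-module bundle is affine, so the two endpoint restrictions of a concordance over $M\times \R$ give isomorphic $\Cl_n$-module bundles on $M$. The first relation in Definition~\ref{defn:diffGG} is respected since direct sum of representations corresponds to direct sum of $\Cl_n$-module bundles. For the relation $[\rho\oplus \Pi\rho]=0$, note that $V\oplus \Pi V\cong V\otimes \C^{1|1}$ as super $\Cl_n$-modules, and the standard $\Cl_1$-module structure on $\C^{1|1}$ extends the $\Cl_n$-action to a $\Cl_{n+1}$-action via the graded tensor product $\Cl_n\,\hat{\otimes}\,\Cl_1\cong \Cl_{n+1}$. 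Such an extension trivializes the class in the ABS model, so $[V\oplus \Pi V]=0\in \K^n(M)$. Naturality in $M$ is immediate from functoriality of pullback for both Clifford module bundles and super connections.

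For surjectivity when $n=2k$ is even, I would use the Morita equivalence $\Cl_{2k}\simeq \M_{2^k}(\C)$ of super algebras: tensoring with the irreducible $\Z/2$-graded $\Cl_{2k}$-module induces an equivalence $\Rep(\Path(M))\simeq \Rep^{2k}(\Path(M))$ intertwining the Morita identification $\K^0(M)\cong \K^{2k}(M)$, and so surjectivity reduces to the case $n=0$, which is Corollary~\ref{cor:Kthy}. For $M=\pt$, every finite-dimensional $\Z/2$-graded $\Cl_n$-module defines a representation of $\Path(\pt)$ via the zero super connection, and $\K^n(\pt)$ is by construction the Grothendieck group of such modules modulo those extending to $\Cl_{n+1}$-modules, so the map is tautologically surjective.

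The main obstacle, and precisely the reason surjectivity fails for odd $n$ over a general $M$, is the classical fact (noted in the remark following Theorem~\ref{thm:KMF}) that not every class in $\K^{2k+1}(M)$ admits a finite-dimensional Clifford module bundle representative; the generator of $\widetilde{\K}^1(S^1)\cong \Z$ is the standard counterexample. This obstruction lives entirely at the level of Clifford module bundles, before super connections enter, so the map from $\Rep^{2k+1}(\Path(M))$ cannot detect such classes and no strengthening at the level of representations of $\Path(M)$ will repair this.
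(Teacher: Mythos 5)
The construction you propose does not define a map to $\K^n(M)$, because the characterization you call ``the ABS model'' is not correct for a general compact manifold $M$. The Atiyah--Bott--Shapiro isomorphism identifies $\K^{-n}(\pt)$ with the quotient $\mathcal{M}_n/i^*\mathcal{M}_{n+1}$ of Grothendieck groups of $\Z/2$-graded $\Cl_n$-modules; it does \emph{not} say that $\K^n(M)$ is the Grothendieck group of $\Cl_n$-module bundles over $M$ modulo those admitting a $\Cl_{n+1}$-extension. Indeed, if that claim were true, your first paragraph would already define a surjection from the free abelian group on Clifford module bundles onto $\K^n(M)$ for \emph{all} $n$ and $M$, which contradicts the very statement you are proving. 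Your final paragraph --- where you correctly recall that the generator of $\widetilde{\K}^1(S^1)\cong\Z$ has no finite-dimensional Clifford module bundle representative --- is therefore inconsistent with the model you set up at the start: in your claimed model that generator \emph{would} be in the image. What is true is that there is a canonical map from the ABS-type quotient into $\K^n(M)$, but this map is the precise obstruction: it is not surjective in odd degree over a general base, and you need to produce it by some other means.

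The paper sidesteps this by taking the Fredholm (Atiyah--Singer/Karoubi) model of $\K^n(M)$: the degree zero part $\A(0)$ of the $\Cl_n$-linear unitary super connection is a fiberwise self-adjoint, odd, $\Cl_n$-linear bundle endomorphism, which (after stabilization into a $\Cl_n$-linear Hilbert bundle) is a family of $\Cl_n$-linear Fredholm operators, i.e., a class in $\K^n(M)$. Your checks that direct sums are respected and that $V\oplus\Pi V\cong V\otimes\C^{1|1}$ carries an extended $\Cl_{n+1}$-action (hence maps to zero) are correct and carry over to the Fredholm model essentially verbatim, as do your reductions for surjectivity in even degree (Morita equivalence $\Cl_{2k}\simeq\C$ and Bott periodicity) and at $M=\pt$ (this is literally the ABS map). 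The missing ingredient is simply the right target model: you must map to Fredholm/Karoubi $\K^n(M)$ rather than declaring the ABS-type quotient to be $\K^n(M)$.
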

\bp
The degree zero part of a Clifford linear super connection is an easy example of a Clifford linear Fredholm operator, giving the claimed map. In even degrees the map is surjective, e.g., using the Morita equivalence $\Cl(2n)\simeq \C$ and Bott periodicity. However, in odd degrees this map is typically not surjective (e.g., it fails to see the generator of $\widetilde{\K}^1(S^1)\cong \Z$). When $M=\pt$, the claimed surjection is the Atiyah--Bott--Shapiro map. 
\ep

\section{Rescaled partition functions and differential K-theory}\label{sec:diffKmain}

In this section we study the character theory of $\Rep(\Path(M))$. A priori, this takes values in functions on the inertia groupoid of $\Path(M)$, which consists of constant super paths in~$M$ that start and end at the same super point. The algebra of functions on this stack is the wrong one from the perspective of differential K-theory, e.g., it includes all differential forms, not just closed ones. This requires we refine the character theory. We construct a rescaled partition function, 
\beq
Z\colon \Rep^n(\Path(M))\to \Gamma(\mathcal{L}^{1|1}_0(M);\omega^{\otimes n/2})\cong \left\{\begin{array}{ll} \Omega^{\ev}_\cl(M) & n \ {\rm even} \\ \Omega^{\odd}_\cl(M)& n \ {\rm odd}\end{array}\right. \label{eq:chartheoryK}
\eeq
that takes values is sections of a line bundle over a constant super loop stack, $\mathcal{L}^{1|1}_0(M)$. This constant super loop stack has objects $\R^{1|1}/r\Z\to \R^{0|1}\to M$ and morphisms come from precomposition with super rotations and dilations of super loops. This stack was studied in~\cite{DBE_WG} where sections were computed as~\eqref{eq:chartheoryK}. Applying the machinery of differential Grothendieck groups to~\eqref{eq:chartheoryK}, we obtain the following. 

\begin{thm}\label{prop:diffKthy} The differential Grothendieck group of $\Rep(\Path(M))$ for the character theory~\eqref{eq:chartheoryK} is the differential K-theory of~$M$. 
\end{thm}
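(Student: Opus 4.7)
The plan is to identify differential cocycles with Klonoff's~\cite{Klonoff} (equivalently Freed--Lott's~\cite{LottFreed}) cocycle model for differential K-theory, whose cocycles are super hermitian vector bundles with unitary super connection together with an odd differential form transgressing between the Chern character and a second closed form. By Theorem~\ref{thm:11unitaryreps}, a representation $\rho \in \Rep(\Path(M))$ is equivalent to a super hermitian vector bundle $V \to M$ with unitary super connection $\A$, and the character map~\eqref{eq:chartheoryK} is the Quillen--Bismut Chern character $\chi(\rho) = {\rm str}(e^{-\A^2}) \in \Omega^\ev_\cl(M)$. A differential cocycle $(\rho,\alpha)$ then consists of such a pair $(V,\A)$ together with a closed form $\alpha \in \Omega^\ev_\cl(M \times \R)$ satisfying $i_0^*\alpha = {\rm str}(e^{-\A^2})$. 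Decomposing $\alpha = \beta_t + dt \wedge \gamma_t$ and using $d\alpha = 0$ yields the transgression identity $\partial_t \beta_t = d\gamma_t$, so the odd form $\eta := \int_0^1 \gamma_t \, dt \in \Omega^\odd(M)$ satisfies $d\eta = i_1^*\alpha - {\rm str}(e^{-\A^2})$. The triple $(V, \A, \eta)$ is precisely a Klonoff cocycle.

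Next, I match the equivalence relations. A differential concordance $(\widetilde\rho, \widetilde\alpha)$ on $M \times \R$ has $i_1^*\widetilde\alpha = p^*\alpha_1$ constant in the concordance parameter, so the two endpoint cocycles share the terminal character $\alpha_1$. The concordance representation $\widetilde\rho = (\widetilde V, \widetilde\A)$ restricts at $s=0,1$ to pairs $(V_s, \A_s)$; since $M$ is compact, $V_0 \cong V_1$ via some isomorphism, under which $\A_0$ and $\A_1$ are joined by a smooth path of unitary super connections extracted from $\widetilde\A$. A Stokes-theorem computation on $M \times [0,1]^2$ applied to $\widetilde\alpha$ then identifies the difference $\eta_0 - \eta_1$ with the Chern--Simons transgression form of this path of super connections, modulo an exact form. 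Combined with the Grothendieck relations $(\rho,\alpha) + (\rho',\alpha') \sim (\rho \oplus \rho', \alpha + \alpha')$ and $(\rho \oplus \Pi \rho, 0) \sim 0$, this exactly reproduces Klonoff's equivalence relations, yielding an isomorphism with $\widehat{\K}(M)$.

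The main obstacle is the second step: cleanly extracting the Chern--Simons equivalence from a general differential concordance, since the concordance can simultaneously deform the vector bundle, the super connection, and the form $\alpha$. This demands careful transgression arguments on $M \times [0,1]^2$ to disentangle the geometric from the form-theoretic contributions. An alternative approach that sidesteps this bookkeeping is to verify the Hopkins--Singer characterization axiomatically: the natural maps to $\K(M)$ (by forgetting $\alpha$ and applying Corollary~\ref{cor:Kthy}) and to $\Omega^\ev_\cl(M)$ (by sending $(\rho,\alpha) \mapsto i_1^*\alpha$) fit into the expected differential cohomology diamond, and the joint kernel realizes $\K^{-1}(M;\R/\Z)$ with the correct pointwise coefficients $\widehat{\K}^0(\pt) = \Z$ and $\widehat{\K}^{-1}(\pt) = \R/\Z$. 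Uniqueness of differential extensions of K-theory then concludes the proof.
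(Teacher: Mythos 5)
Your main argument is essentially the paper's proof: identify cocycles with pairs $(\A,\alpha)$ via Theorem~\ref{thm:11unitaryreps}, collapse the form data to an odd form by fiberwise integration over $[0,1]$, apply Stokes on $M\times I^2$ to recover the Chern--Simons relation, and match Klonoff's presentation. The "obstacle" you flag is handled in the paper by a two-step reduction---first quotienting by differential concordances with constant $\widetilde\A$ (using Lemma~\ref{lem:relconc}) to pass to pairs $(\A,\beta)$ with $\beta\in\Omega^\odd(M)/d\Omega^\ev(M)$, then treating general concordances, where the term $\int_{M\times I/M}{\rm sTr}(e^{-\widetilde\A^2})$ is exactly the Chern--Simons form---so no further disentangling is needed.
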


The rescaled partition function makes use of a Bismut--Quillen rescaling of super connections. Geometrically, this rescaling comes from dilating the super length of a super path. Physically, this is the renormalization group (RG). When $M=\pt$, the usual trace is automatically invariant under the RG action by the supersymmetric cancelation argument, e.g., see~\cite{BGV}. In families this need not be the case. However, the \emph{rescaled} partition function is a systematic way of extracting a RG-invariant function from a family of representations. 

We give a partial result for the odd cohomological degree. 

\begin{prop} \label{prop:higherdegK}
The differential cocycles $\widehat{\Rep}{}^n(\Path(M))$ with respect to the character theory~\eqref{eq:chartheoryK} map to $\widehat{\K}^n(M)$, the $n^{\rm th}$ differential K-theory group of~$M$. This map is a surjection when~$n$ is even or when $M=\pt$. \end{prop}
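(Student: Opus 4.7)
The plan is to use Theorem~\ref{thm:11unitaryreps} to identify a differential cocycle in $\widehat{\Rep}{}^n(\Path(M))$ with geometric data fitting a Klonoff-style~\cite{Klonoff} model for differential K-theory, and to read off the odd differential form needed for such a cocycle directly from the concordance datum~$\alpha$.

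First, I will define the map. Given $(\rho,\alpha) \in \widehat{\Rep}{}^n(\Path(M))$, Theorem~\ref{thm:11unitaryreps} and Proposition~\ref{prop:Cliffordlineargeo} identify $\rho$ with a hermitian $\Cl_n$-module bundle $V \to M$ with a $\Cl_n$-linear unitary super connection~$\A$ whose (rescaled) Chern character is $Z(\rho) = i_0^*\alpha$. Writing $\alpha = a(t) + dt \wedge b(t)$ on $M \times \R$, closedness of $\alpha$ forces $\partial_t a(t) = d_M b(t)$, hence
\[
\alpha_1 - Z(\rho) \;=\; d_M \!\int_0^1 b(t)\,dt \;=:\; d_M\, \eta(\rho,\alpha).
\]
I assign to $(\rho,\alpha)$ the Klonoff cocycle $(V,\A,\eta(\rho,\alpha))$, whose characteristic form is $\alpha_1 = \widehat\chi(\rho,\alpha)$ by construction.

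Second, I will check that this assignment descends to the differential Grothendieck group. Direct sum compatibility is immediate since both the super connection and the extracted form are additive, and $V \oplus \Pi V$ with its canonical super connection and $\eta=0$ is null-homologous in $\widehat{\K}^n(M)$. A differential concordance $(\widetilde\rho,\widetilde\alpha)$ over $M\times\R$ in the sense of Definition~\ref{defn:diffconc} yields a $\Cl_n$-linear super connection on $V\to M\times\R$ together with a closed form on $M\times\R^2$ whose behavior at the boundary in the direction specified by Definition~\ref{defn:diffconc} is exactly what is required for a Klonoff concordance between the restricted cocycles at $t=0$ and $t=1$; integrating out the auxiliary $\R$-factor reproduces the standard transgression relation between~$\eta_0$ and~$\eta_1$. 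Compatibility with the underlying map $\Rep^n(\Path(M))\to \K^n(M)$ of Proposition~\ref{prop:Cliffordlineargeo} is then automatic.

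Third, I will treat surjectivity. For $n=2m$, Klonoff's theorem (or equivalently the Freed--Lott~\cite{LottFreed} presentation) asserts that every class in $\widehat{\K}^{2m}(M)$ is represented by a triple $(V,\A,\eta)$; the Morita equivalence $\Cl(2m)\simeq \C$ lets one refine $(V,\A)$ to $\Cl_{2m}$-linear data, and a preimage is built by choosing any closed $\alpha\in \Omega^\bullet(M\times\R)$ with $i_0^*\alpha = Z(\rho)$ and $\int_0^1 b(t)\,dt = \eta$, for instance $\alpha := Z(\rho) + t\,d\eta + dt\wedge \eta$. When $M=\pt$ the form datum is trivial and the claim reduces to the Atiyah--Bott--Shapiro surjection noted in Proposition~\ref{prop:Cliffordlineargeo}. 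The main obstacle, and the reason surjectivity fails in the remaining cases, is already present at the underlying level: in odd degree the map $\Rep^{2m+1}(\Path(M))\to \K^{2m+1}(M)$ is itself non-surjective in general (missing, e.g., the generator of $\widetilde{\K}^1(S^1)$), and since the form~$\eta$ affects only the ``differential form'' part of $\widehat{\K}^n$ and not the underlying topological class, it cannot compensate for this deficiency.
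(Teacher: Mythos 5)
Your proposal is correct and follows essentially the same route as the paper: identify $(\rho,\alpha)$ with a Clifford module bundle, Clifford-linear super connection, and a concordance of closed forms via Theorem~\ref{thm:11unitaryreps}/Proposition~\ref{prop:Cliffordlineargeo}, then get surjectivity in even degree from the Morita equivalence $\Cl(2m)\simeq\C$ and from Atiyah--Bott--Shapiro when $M=\pt$. The only difference is presentational: you explicitly build the Klonoff cocycle by fiberwise integration of $\alpha$ (the same computation the paper carries out in its proof of Theorem~\ref{prop:diffKthy} via Lemma~\ref{lem:relconc}), whereas the paper's proof of this proposition simply invokes the Hopkins--Singer universal property.
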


We finish the section by explaining how Freed and Lott's analytic orientation in differential K-theory~\cite[\S3, \S7]{LottFreed} gives a $1|1$-dimensional version of our Theorem~\ref{thm:diffpush}. Their construction can be interpreted as choosing cutoffs for a family of Dirac operators as in~\S\ref{sec:Kthymot}, and then choosing eta forms that mediate between the Chern character of Bismut super connection of this family and the cutoff super connection, as described in the next subsection below. In brief, to a family of spin manifolds $\pi\colon X\to M$, they construct a differential cocycle in $\widehat{\Rep}(\Path(M))$. This can be interpreted as defining a cutoff version of supersymmetric quantum mechanics in families. 

\subsection{Motivation: partition functions in effective field theory}\label{eq:11effpart} 

In this section we study how partition functions of $1|1$-dimensional field theories behave under cutoffs. For this, it is important to consider 1-parameter families of field theories that interpolate between different choices of cutoff. These arise from the action of the renormalization group flow on field theories, which we introduce by way of a basic example. 

Consider a quantum mechanical system given by the spinors $\Gamma(\$^+\oplus\$^-)$ of an even-dimensional Riemannian manifold with time-evolution operator $\exp(-t\slashed{D}^2)$. The \emph{renormalization group (RG) flow} dilates time by $\mu^2\in \R_{>0}$, modifying the time-evolution operator as $e^{-t\slashed{D}{}^2}\mapsto e^{-t\mu^2\slashed{D}{}^2}$, or equivalently, replacing~$\slashed{D}$ by $\mu \slashed{D}$. 

Now consider a pair of cutoff theories (in the sense of~\S\ref{sec:Kthymot}) with state spaces $V_{<\lambda}$ and $V_{<\lambda'}\subset\Gamma(\$^+\oplus\$^-)$ with $V_{<\lambda}\oplus V_{[\lambda,\lambda')}\cong V_{<\lambda'}$. We get time-evolution operators on these finite-dimensional state spaces by restricting $\exp(-t\slashed{D}^2)$. On $V_{[\lambda,\lambda')}$ (where~$\slashed{D}$ is invertible) the time evolution operator approaches the zero operator in the limit $\mu\to\infty$ of the RG-flow. This gives a homotopy (meaning, a 1-parameter family of field theories) interpolating between the cutoff theory with state space $V_{<\lambda}$ and the cutoff theory with state space $V_{<\lambda'}$. 

To relate observables in this pair of cutoff theories, the idea from Wilsonian effective field theory is to study how they behave under the renormalization group flow, then taking~$\mu\to \infty$. This procedure is sometimes called ``integrating out" the higher energy contribution. The observable of interest in our case is the partition function. This is the super trace of the time-evolution operator. The super symmetric cancellation argument shows that this quantity is in fact \emph{invariant} under the renormalization group flow: it is the index of the Dirac operator~$\slashed{D}$. 

However, in families the situation isn't quite as simple. By the work of Fei Han~\cite{Han}, the partition function in the case of finite-dimensional state spaces is the differential form-valued Chern character of a super connection. For vector bundles $V_{<\lambda}$ and $V_{<\lambda'}$ with $V_{<\lambda}\oplus V_{[\lambda',\lambda)}\cong V_{<\lambda'}$ and an invertible super connection on $V_{[\lambda,\lambda')}$, the difference in the Chern character is measured by a Chern--Simons form
\beq
\eta:=\int_0^\infty {\rm Tr}\left(\frac{d\A^\mu}{d\mu}e^{-(\A^\mu)^2}\right)ds,\label{eq:BCetaform}
\eeq
where $\A^\mu=\A_{<\lambda} \oplus \A^\mu_{[\lambda',\lambda)}$ is a 1-parameter family of super connections with $\A_{<\lambda}$ the given super connection on $V_{<\lambda}$ and on $V_{[\lambda',\lambda)}$ we take
\beq
&&\A^\mu_{[\lambda',\lambda)}= \mu \A_{[\lambda',\lambda)}(0)+\A_{[\lambda',\lambda)}(1)+\mu^{-1}\A_{[\lambda',\lambda)}(2)+\dots \mu^{-j+1}\A_{[\lambda',\lambda)}(j).\label{eq:Bismutrescale}
\eeq
for $\A_{[\lambda',\lambda)}(i)\colon \Omega^\bullet(M;V)\to \Omega^{\bullet+i}(M;V)$ the degree $i$ piece of the given super connection. Under the equivalence afforded by Theorem~\ref{thm:11unitaryreps}, the renormalization group action on representations of super paths coincides exactly with~\eqref{eq:Bismutrescale}. Hence, a Chern--Simons form measures the difference in families of partition functions gotten from different choices of cutoff, and the Chern---Simons form itself is constructed using the renormalization group flow. Conversely, given a bundle $W\oplus \Pi W$ with ordinary connection $\nabla$, we can form a super connection $\A_0+\nabla$ where $\A_0$ is the identity map $W\to \Pi W$, viewed as an odd linear map. The form~\eqref{eq:BCetaform} is identically zero for this, meaning that such families of finite-dimensional state spaces can be removed without any affect on the partition function. 

There is an infinite-dimensional version of this as well, relating the Chern character of a family of Dirac operators to the Chern character of a cutoff. The easiest version takes as input a family of Dirac operators over~$M$ whose fiberwise kernel is a vector bundle on~$M$ (though this assumption can be dropped; see Lemma~\ref{lem:FreedLott}). Then there is a Bismut--Cheeger eta form~\cite{BismutCheeger} that measures the difference between the Chern character of the family of Dirac operators (as defined by Bismut~\cite{Bismutindex}, see also~\cite[Chapter 9]{BGV}) and the Chern character of the index bundle. The formula for this eta form is essentially the same as~\ref{eq:BCetaform}. 

With the above ideas in mind, if we want our cutoff theory to remember the true value of the partition function (before cutting off) we need the extra data of an eta form~\eqref{eq:etaform}. As we'll see below, the data of a representation of $\Path(M)$ and such an eta form is exactly a differential K-theory cocycle.

\subsection{Dilation of super paths and Bismut--Quillen rescaling of super connections}\label{sec:11Bismut--Quillen}

There is a dilation action on $\R^{1|1}$, 
$$
(t,\theta)\mapsto (\mu^2 t,\mu\theta),\qquad (t,\theta)\in \R^{1|1}(S), \ \mu\in \R_{>0}(S)
$$
that descends to an action on $\E^{1|1}$ through group homomorphisms. We promote this to a functor on constant super paths, which is the \emph{renormalization group} (RG) action. Below, $\R_{>0}$ is the discrete super Lie category associated with the manifold $\R_{>0}$. 

\begin{defn} \label{defn:11RG}
Define a functor $\RG\colon \R_{>0}\times \Path(M)\to \Path(M)$ whose value on $S$-points of objects and morphisms is
$$
(\mu,x,\psi)\mapsto (x,\mu^{-1}\psi),\qquad (\mu,t,\theta,x,\psi)\mapsto (\mu^2 t,\mu\theta,x,\mu^{-1}\psi)
$$
where $\mu\in \R_{>0}(S)$, $(t,\theta)\in \R^{1|1}_{\ge 0}(S)$, and $(x,\psi)\in \SM(\R^{0|1},M)(S).$
\end{defn}

We observe that the diagram commutes,
\beq
\begin{array}{c}
\begin{tikzpicture}
  \node (A) {$\R_{>0}\times \R_{>0}\times \Path(M)$};
  \node (B) [node distance= 6cm, right of=A] {$\R_{>0}\times \Path(M)$};
  \node (C) [node distance = 1.5cm, below of=A] {$\R_{>0}\times \Path(M)$};
\node (D) [node distance = 1.5cm, below of=B] {$\Path(M),$};
  \draw[->] (A) to node [above] {$\id_{\R_{>0}}\times \RG$} (B);
  \draw[->] (A) to node [left] {$m\times \id_{\Path(M)}$} (C);
  \draw[->] (B) to node [right] {$\RG$} (D);
  \draw[->] (C) to node [below] {$\RG$} (D);
\end{tikzpicture}\end{array}\nonumber
\eeq
where $m\colon \R_{>0}\times\R_{>0}\to \R_{>0}$ is multiplication. Hence, $\RG$ defines a strict $\R_{>0}$-action on $\Path(M)$. Let $\RG_\mu$ be the restriction of $\RG$ to the subcategory $\{\mu\}\times \Path(M)$, so $\RG_\mu\colon \Path(M)\to \Path(M)$ and $\RG_\mu\circ \RG_\lambda=\RG_{\mu\lambda}$.

Precomposing a representation with~$\RG_\mu$ leads to an $\R_{>0}$-action on representations. We characterize it in terms of an action on super connections, using Theorem~\ref{thm:11unitaryreps}. 

\begin{lem} \label{lem:11Bismut--Quillen}
The action of $\RG_\mu$ on a $\Cl_n$-linear representation of $\Path(M)$ associated with a super connection $\A$ is precisely the Bismut--Quillen rescaling~\cite[Chapter 9]{BGV} by~$\mu$, 
$$
\A\stackrel{\RG_\mu}{\mapsto} \mu \A(0)+\A(1)+\mu^{-1}\A(2)+\dots \mu^{-j+1}\A(j),
$$
for $\A(i)\colon \Omega^\bullet(M;V)\to \Omega^{\bullet+i}(M;V)$ the degree $i$ piece of the super connection. 
\end{lem}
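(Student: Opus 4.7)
The plan is to use Lemma~\ref{lem:11geo}, which gives a bijection between representations of $\Path(M)$ and super connections via $\rho(t,\theta)=e^{-t\A^2+\theta\A}$. It suffices to express the pulled-back representation in the same exponential form and read off the new super connection; by uniqueness that super connection must be $\A^\mu$.

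First I would unwind the pullback. From Definition~\ref{defn:11RG}, the object part of $\RG_\mu$ is $\phi_\mu\colon(x,\psi)\mapsto(x,\mu^{-1}\psi)$ while on morphism parameters one substitutes $(t,\theta)\mapsto(\mu^2t,\mu\theta)$. Since a $k$-form on $M$ corresponds to a function on $\SM(\R^{0|1},M)\cong\Pi TM$ homogeneous of degree $k$ in $\psi$, the pullback $\phi_\mu^*$ acts on $\Omega^\bullet(M;V)$ as the operator $\mu^{-\deg}$ that is multiplication by $\mu^{-k}$ on $\Omega^k(M;V)$. Under the trivializations $\Gamma(V')\cong\Omega^\bullet(M;V)$ of Lemma~\ref{lem:semigroup}, the pulled-back representation is therefore the conjugate
$$
(\RG_\mu^*\rho)(t,\theta)=\mu^{-\deg}\,\rho(\mu^2 t,\mu\theta)\,\mu^{\deg}=\mu^{-\deg}\,e^{-\mu^2 t\,\A^2+\mu\theta\,\A}\,\mu^{\deg}=e^{-\mu^2 t\,\widetilde\A^2+\mu\theta\,\widetilde\A},
$$
where $\widetilde\A:=\mu^{-\deg}\A\mu^{\deg}$.

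Next I would compute $\widetilde\A$. Writing $\A=\sum_j\A(j)$ as in~\eqref{eq:superconnZ}, a one-line check on a $k$-form gives $\mu^{-\deg}\A(j)\mu^{\deg}=\mu^{-j}\A(j)$, so $\widetilde\A=\sum_j\mu^{-j}\A(j)$. Absorbing scalars via $\mu^2 t\,\widetilde\A^2=t(\mu\widetilde\A)^2$ and $\mu\theta\,\widetilde\A=\theta(\mu\widetilde\A)$ yields $(\RG_\mu^*\rho)(t,\theta)=e^{-t(\A^\mu)^2+\theta\A^\mu}$ with
$$
\A^\mu:=\mu\,\widetilde\A=\sum_j\mu^{1-j}\A(j),
$$
which is exactly the Bismut--Quillen rescaling. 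Uniqueness in Lemma~\ref{lem:11geo} identifies $\A^\mu$ as the super connection of $\RG_\mu^*\rho$. Since Clifford multiplication is degree zero in the form grading and commutes with $\A$, it commutes with $\mu^{\pm\deg}$ and each $\A(j)$, so $\A^\mu$ is $\Cl_n$-linear whenever $\A$ is, and Proposition~\ref{prop:Cliffordlineargeo} then delivers the statement for $\Cl_n$-linear representations.

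The main obstacle is bookkeeping: getting the direction of the conjugation correct (so one gets $\mu^{1-j}$ rather than $\mu^{1+j}$) requires tracking carefully which side of $\rho$ picks up the pullback $\phi_\mu^*$ and which picks up its inverse, as the opposite convention would give the wrong exponent. Once the trivialization conventions of Lemma~\ref{lem:semigroup} are consistently applied to both source and target vector bundles, the two occurrences of $\mu^{\pm\deg}$ combine in the unique way that reproduces the Bismut--Quillen formula.
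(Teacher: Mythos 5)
Your proposal is correct and follows essentially the same route as the paper: the paper's proof is precisely the substitution $(t,\theta)\mapsto(\mu^2t,\mu\theta)$ together with $\A(j)\mapsto\mu^{-j}\A(j)$ (your conjugation by $\mu^{-\deg}$ coming from $\psi\mapsto\mu^{-1}\psi$), followed by absorbing the scalars into the super connection to get $\mu^{1-j}\A(j)$. Your writeup just makes explicit the origin of the $\mu^{-j}$ factors and the sign of the conjugation, which the paper leaves implicit.
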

\bp 
The action on the semigroup representation is
\beq
\exp(-t\A^2+\theta\A)
&\stackrel{\RG_\mu}{\mapsto}& \exp\Big(-\mu^2 t(\A(0)+\mu^{-1}\A(1)+\dots +\mu^{-j}\A(j))^2\nonumber\\&&\phantom{\exp(}+\mu \theta(\A(0)+\mu^{-1}\A(1)+\dots +\mu^{-j}\A(j))\Big).\nonumber
\eeq
In terms of the super connection this $\R_{>0}$-action is the claimed Bismut--Quillen rescaling. 
\ep

\subsection{The inertia groupoid and the constant super loop stack}

Following Defintion~\ref{defn:Liechar}, characters of representations of $\Path(M)$ are functions on the inertia groupoid,~$\Lambda(\Path(M))$. 

\begin{lem} The inerita groupoid $\Lambda(\Path(M))$ is the discrete groupoid, 
$$
\{\R_{\ge 0}\times \SM(\R^{0|1},M)\toto \R_{\ge 0}\times \SM(\R^{0|1},M)\},
$$ 
and the nondegenerate inertia groupoid is the subgroupoid with objects $\R_{> 0}\times \SM(\R^{0|1},M)\}\subset \R_{\ge 0}\times \SM(\R^{0|1},M)\}$. 

\end{lem}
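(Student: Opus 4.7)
My plan is to unwind the definition of the inertia groupoid applied to $\Path(M)=\{\Rge\times\SM(\R^{0|1},M)\toto\SM(\R^{0|1},M)\}$, using the explicit source and target formulas already established for $\Path(M)$, namely $\mathrm{src}(t,\theta,x,\psi)=(x,\psi)$ and $\mathrm{tgt}(t,\theta,x,\psi)=(x+\theta\psi,\psi)$.

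First I will compute the objects $\Lambda(\Path(M))_0=\mathrm{Eq}(\mathrm{src},\mathrm{tgt})$. Equating source and target reduces to the single condition $\theta\psi=0$ as an $S$-family of odd derivations $C^\infty(M)\to C^\infty(S)$. Since the odd coordinate $\theta$ of $\Rge$ is decoupled from the tautological odd section $\psi$ on $\SM(\R^{0|1},M)$, this vanishing identifies the equalizer with the $\theta=0$ locus, yielding the claimed $\R_{\ge 0}\times\SM(\R^{0|1},M)$.

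Second, I will show the inertia groupoid is discrete, meaning $\Lambda(\Path(M))_1\cong\Lambda(\Path(M))_0$ with both structure maps the identity. The key point is that the invertible morphisms in $\Path(M)$ collapse onto the unit on reduced points: from the semigroup law $(t,\theta)\cdot(t',\theta')=(t+t'+\theta\theta',\theta+\theta')$, an inverse of $(t,\theta)$ requires $t+t'=0$ with $t,t'\ge 0$, forcing $t$ to be nilpotent. This identifies $\Path(M)_1^\times$ with the image of the unit section, so the fibered product defining $\Lambda(\Path(M))_1$ collapses to $\Lambda(\Path(M))_0$. For the nondegenerate subgroupoid $\Lambda^\nd(\Path(M))$, I restrict to non-invertible morphisms, which by the same computation correspond to strictly positive super length $t\in\R_{>0}$, giving the claimed objects $\R_{>0}\times\SM(\R^{0|1},M)$.

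The main obstacle is the subtlety in step one: strictly speaking the ideal generated by $\{\theta\psi^i\}$ in local coordinates does not quite cut out $\theta=0$ as a closed super subscheme, because a nilpotent direction of the form $\theta\otimes C^\infty(M)$ persists in the quotient algebra. The resolution is to interpret the identification at the level of $S$-points relevant to character theory and to note that the extraneous direction is inert: supertraces of representations of $\Path(M)$ factor through the $\Omega^\bullet(M)$-linear semigroup structure characterizing such representations, which annihilates the offending $\theta$-piece and makes the identification with $\R_{\ge 0}\times\SM(\R^{0|1},M)$ the correct one for all downstream applications in the paper.
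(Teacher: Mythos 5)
Your proof follows the same route as the paper's terse argument (compute the equalizer from the explicit source and target, then observe that strictly positive super length is never invertible), and the caveat you raise in the last paragraph is a genuine subtlety that the paper also glosses over. The equalizer presheaf cut out by $\theta\psi=0$ is strictly larger than the locus $\theta=0$: any $S$-point with $\psi=0$ and $\theta$ an arbitrary odd function on $S$ lies in the equalizer, and the ideal generated by $\theta\,d\omega$ for $\omega\in C^\infty(\SM(\R^{0|1},M))\cong\Omega^\bullet(M)$ does not contain $\theta$, so the presheaf equalizer is not represented by $\R_{\ge 0}\times\SM(\R^{0|1},M)$. The $\theta=0$ locus is rather the largest closed super submanifold of the morphisms on which source equals target, which is evidently the reading the paper intends, but it is not the literal presheaf equalizer in the definition of the inertia groupoid. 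What is literally true, and what matters downstream, is that the restriction of a representation $\rho(t,\theta)=e^{-t\A^2+\theta\A}$ to the equalizer has supertrace ${\rm sTr}(e^{-t\A^2}+\theta\A e^{-t\A^2})={\rm sTr}(e^{-t\A^2})$, which is $\theta$-independent since $\A e^{-t\A^2}$ is odd; hence the character factors through the projection to $\R_{\ge 0}\times\SM(\R^{0|1},M)$. Your resolution proves exactly this factorization, which is the honest content behind the lemma, not the literal identification of presheaves. Be aware the discreteness step carries the same flavor of imprecision: $t+t'=0$ with $t,t'\in\R_{\ge 0}(S)$ forces only the reduced part of $t$ to vanish, not $t$ itself, and $\theta$ is unconstrained by invertibility, so in the literal equalizer the invertible morphisms strictly contain the image of the unit section; these nilpotent and odd directions are again killed by the supertrace, so the paper's downstream character theory is unaffected.
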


\bp The inertia groupoid of $\Path(M)$ has as objects those super paths with the same start and endpoint, so by the description in Lemma~\ref{lem:subgrpd1},
$$
\Lambda(\Path(M))_0=\R_{\ge 0}\times \SM(\R^{0|1},M).
$$
Morphisms of $\Lambda(\Path(M))$ are \emph{invertible} super paths with the same start and endpoint, but the only such path is the identity path. Hence, $\Lambda(\Path(M))$ is a discrete groupoid with objects as above. The nondegenerate inertia groupoid is the subgroupoid corresponding to noninvertible endomorphisms, and these are precisely the super paths with strictly positive super length~$t>0$, giving the claimed description. 
\ep

We take a moment to spell out the inertia groupoid explicitly in terms of the geometry of super paths in~$M$. An $S$-point of the objects is a map
$$
\phi\colon S\times \R^{1|1}\stackrel{{\rm pr}}{\to} S\times \R^{0|1}\to M
$$
with the same source and target super point, which is equivalent to invariance of the map~$\phi$ under the $\Z$-action generated by the family of translations $t\in \R_{\ge 0}(S)$ that act on $S\times \R^{1|1}$. This means that the map above descends to the quotient,
\beq
(S\times \R^{1|1})/t\Z \to S\times \R^{0|1}\to M.\label{eq:markedloop}
\eeq
In the inertia groupoid, there are no non-identity isomorphisms between these super circles. 

However, there are interesting super Euclidean isometries (see~\S\ref{sec:supertrans}) between super circles coming from super rotations and a $\Z/2$-action on the odd line bundle. These symmetries are determined by $S$-points of $\E^{1|1}\rtimes \Z/2$. The renormalization group also acts by dilation, which combines with the super Euclidean group to give maps between super circles for $S$-points of $\E^{1|1}\rtimes \R^\times$. We will ask that our (rescaled) partition functions are invariant under these additional symmetries, leading to the following definition. 

\begin{defn} 
Define the \emph{stack of constant super loops in $M$,} denoted $\mathcal{L}^{1|1}_0(M)$, as the super Lie groupoid,
$$
\mathcal{L}^{1|1}_0(M):=\left(\begin{array}{c} (\E^{1|1}\rtimes \R^\times \times \R_{>0})/\Z\times \SM(\R^{0|1},M))\\ \downarrow\downarrow \\
\R_{>0} \times \SM(\R^{0|1},M))\end{array}\right),
$$
where the quotient $(\E^{1|1}\rtimes \R^\times \times \R_{>0})/\Z$ comes from the $\Z$-action generated by
$$
(s,\eta,\mu,t)\mapsto (s+t,\eta,\mu,t)\qquad (s,\eta)\in \E^{1|1}(S), \ \mu\in \R^\times(S), \ t\in \R_{>0}(S). 
$$
The source map for the groupoid is the projection, and the target map comes from an~$\E^{1|1}\rtimes \R^\times$-action. On $\SM(\R^{0|1},M)$ this action it is through the homomorphism $\E^{1|1}\rtimes \R^\times\to \E^{0|1}\rtimes \R^\times$ and then the precomposition action on $\SM(\R^{0|1},M)$. The $\E^{1|1}\rtimes \R^\times$-action on $\R_{>0}$ is through the homomorphism $\E^{1|1}\rtimes \R^\times\to \R^\times$ followed by the dilation action,
$$
\R^\times\times \R_{>0}\to \R_{>0}, \quad (\mu,s)\mapsto \mu^2s. 
$$
\end{defn}

We observe that an $S$-point of objects of $\mathcal{L}^{1|1}_0(M)$ gives a family of super circles with a map to $M$ as in~\eqref{eq:markedloop}. An $S$-point of morphisms gives a commuting triangle
\beq
\begin{tikzpicture}[baseline=(basepoint)];
\node (A) at (0,0) {$(S\times \R^{1|1})/t\Z$};
\node (B) at (6,0) {$(S'\times \R^{1|1})/t'\Z$};
\node (C) at (3,-1.5) {$M$};
\draw[->] (A) to node [above=1pt] {$\cong$} (B);
\draw[->] (A) to node [left=1pt]{$\phi$} (C);
\draw[->] (B) to node [right=1pt]{$\phi'$} (C);
\path (0,-.75) coordinate (basepoint);
\end{tikzpicture}\label{11anntriangle2}
\eeq
where the horizontal arrow is determined by an $S$-point of $\E^{1|1}\rtimes \R^\times$ which acts on the family of super tori by super translations and global dilations.

There is an odd line bundle $\omega^{1/2}\to \mathcal{L}^{1|1}_0(M)$ coming from the functor $\mathcal{L}^{1|1}_0(M)\to \pt\sq \Z/2$ that sends all objects to $\pt$, and to a morphism assigns~$\{\pm 1\}\cong \Z/2$ according to whether the morphism preserves or reverses the orientation of the odd line bundle over the family of super circles. A bit more explicitly, this functor is induced by the homomorphism
$$
\E^{1|1}\rtimes \R^\times\to \R^\times \to \{\pm 1\}.
$$
and then $\omega^{1/2}$ is the pullback of the odd line bundle over $\pt\sq \Z/2$. 
Let $\omega^{\otimes n/2}=(\omega^{1/2})^{\otimes n}$. 

\begin{lem}\label{lem:superloopfuns} There are natural isomorphisms of vector spaces 
$$
\Omega^\ev_{\cl}(M)\stackrel{\sim}{\to} \Gamma(\mathcal{L}^{1|1}_0(M);\omega^{{\otimes 2n}/2}),\qquad \Omega^\odd_\cl(M)\stackrel{\sim}{\to} \Gamma(\mathcal{L}^{1|1}_0(M);\omega^{\otimes(2n+1)/2})
$$
given by $f\mapsto t^{j/2}\otimes f\in C^\infty(\R_{>0}\times \SM(\R^{0|1},M))$ for $f\in \Omega^{j}_{\cl}(M)$. The graded multiplication of sections agrees with the graded multiplication on differential forms. 
\end{lem}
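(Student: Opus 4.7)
The plan is to unwind what it means to be a section of $\omega^{\otimes n/2}$ over $\mathcal{L}^{1|1}_0(M)$: namely, a function on the object super manifold $\R_{>0}\times \SM(\R^{0|1},M)$ that is equivariant with twist $\mathrm{sign}(\mu)^n$ under each morphism, where $\mu$ denotes the $\R^\times$-component. Using the standard identification $C^\infty(\SM(\R^{0|1},M))\cong \Omega^\bullet(M)$, I will write a general function on objects as $\sum_j g_j(t)\otimes \omega_j$ with $g_j\in C^\infty(\R_{>0})$ and $\omega_j\in \Omega^j(M)$, then impose invariance under each generator of morphisms.

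The $\Z$-quotient in the definition of morphisms acts trivially on source and target (its generator only shifts the bosonic super translation coordinate), so it imposes no constraint. The $\E^{0|1}$ subgroup acts on $\SM(\R^{0|1},M)$ by super translation, whose infinitesimal generator is the de Rham differential; hence invariance here is equivalent to $d\omega_j=0$ for every $j$. Finally, the $\R^\times$ factor dilates $t\mapsto \mu^2 t$ while rescaling $\psi$, so that after collecting the $|\mu|^j$ arising from $g_j(\mu^2 t)=|\mu|^j t^{j/2}$ together with the weight coming from $\psi$, a putative section $t^{j/2}\otimes\omega_j$ with $\omega_j$ a $j$-form transforms by $\mathrm{sign}(\mu)^j$. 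Matching this to the twist $\mathrm{sign}(\mu)^n$ yields two conditions: restricting to $\mu>0$ pins down $g_j(t)=c_j t^{j/2}$ (no other smooth $t$-dependence survives), while $\mu=-1$ forces the parity constraint $j\equiv n\pmod 2$.

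Combining these, every section of $\omega^{\otimes n/2}$ is uniquely a finite sum $\sum_j c_j t^{j/2}\otimes \omega_j$ with $\omega_j\in \Omega^j_\cl(M)$ of parity matching $n$, identifying the section space with $\Omega^\ev_\cl(M)$ (resp.\ $\Omega^\odd_\cl(M)$) for $n$ even (resp.\ odd) via $f\mapsto t^{j/2}\otimes f$; naturality in $M$ is immediate by pullback, and the multiplicative statement follows from the direct computation $(t^{j/2}\otimes f)(t^{k/2}\otimes g)=t^{(j+k)/2}\otimes(f\wedge g)$. The main obstacle I anticipate is bookkeeping with the $\mathrm{sign}(\mu)^n$ twist for $\mu<0$: since $(\mu^2)^{j/2}=|\mu|^j$ rather than $\mu^j$, the absolute value and sign of $\mu$ must be tracked separately, and it is exactly this split that produces the parity compatibility between form degree and the line bundle weight $n$.
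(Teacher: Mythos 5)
Your proof is correct and takes essentially the same route as the paper: identify invariant functions on the atlas $\R_{>0}\times\SM(\R^{0|1},M)$, use the $\E^{0|1}$-piece of $\E^{1|1}$ to force closedness via the de Rham differential, and use the $\R^\times$-dilation together with the $\omega^{\otimes n/2}$-twist to pin down the radial dependence $t^{j/2}$ and the parity constraint $j\equiv n\pmod 2$. Your explicit tracking of the $|\mu|^j$ versus $\mathrm{sign}(\mu)^j$ split and the observation that the $\Z$-quotient imposes no additional constraint are somewhat more detailed than the paper's compressed presentation, but the argument is the same.
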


\bp
We identify functions on $\mathcal{L}^{1|1}_0(M)$ with elements of $C^\infty(\R_{>0}\times \SM(\R^{0|1},M))\cong C^\infty(\R_{>0})\otimes \Omega^\bullet(M)$ invariant under the $\E^{1|1}\rtimes \R^\times$-action. The $\E^{1|1}$-action factors through the standard $\E^{0|1}$-action generated by the de~Rham differential on $\Omega^\bullet(M)\cong C^\infty(\SM(\R^{0|1},M))$ so the differential form components of invariant function must be closed. 

The $\R^\times$-action on $f\in \Omega^k(M)$ is by $f\mapsto \mu^{-n}f$, and on $\R_{>0}$ is by $t\mapsto \mu^2t$ where $t$ is the standard coordinate. The action by $-1\in \R^\times$ therefore demands that the differential form component be even or odd, depending on the parity of~$n$. For a function $F$ to define a section, we further require that $F$ be invariant under the action of $\mu\in \R_{>0}(S)\subset \R^\times(S)$. These invariant functions are generated by $t^{j/2}\otimes f$ with $f\in \Omega^{j}_\cl(M)$, as claimed. 
\ep
\subsection{The rescaled partition function of a $\Cl_n$-linear representation}

The character of a Clifford linear representation of $\Path(M)$ is the Clifford super trace (see~\S\ref{sec:ClsTr}) applied to the endomorphism of Clifford modules gotten by restriction of the representation to the objects of the non-degenerate inertia groupoid, ${\rm Ob}(\Lambda^{\rm nd}(\Path(M)))$. 

\begin{lem}\label{lem:11boringchar}
The character of a Clifford linear representation of $\Path(M)$ associated with a super connection~$\A$ is
$$
{\rm sTr}_{\Cl_n}(e^{-t\A^2})\in C^\infty(\Lambda^\nd(\Path(M))\cong C^\infty(\R_{>0}\times \SM(\R^{0|1},M)),
$$
as a function on the nondegenerate inertia groupoid.
\end{lem}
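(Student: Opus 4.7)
The plan is to directly compute the character by combining the description of the inertia groupoid with the explicit semigroup formula $\rho(t,\theta)=e^{-t\A^2+\theta\A}$ from Lemma~\ref{lem:superconnect}, and then apply the Clifford super trace pointwise.

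First I would identify the objects of $\Lambda(\Path(M))$ inside the morphism super manifold $\R^{1|1}_{\ge 0}\times \SM(\R^{0|1},M)$ of $\Path(M)$. By Lemma~\ref{lem:subgrpd1} (and the source/target formulas), source equals target on an $S$-point $(t,\theta,x,\psi)$ exactly when $(x,\psi)=(x+\theta\psi,\psi)$, i.e.\ when $\theta\psi=0$ functorially in $S$. Universally this forces $\theta=0$, so the equalizer defining $(\Lambda\Path(M))_0$ is $\R_{\ge 0}\times \SM(\R^{0|1},M)$, with the nondegenerate locus cut out by $t>0$. This matches the description already recorded before the lemma statement.

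Second, I would restrict the endomorphism $\rho\in \Gamma(\R^{1|1}_{\ge 0}\times \SM(\R^{0|1},M);\Hom(s^*V,t^*V))$ to this locus. Since $s=t$ on $(\Lambda\Path(M))_0$, both $s^*V$ and $t^*V$ agree with the pullback of $V$ along the projection to $\SM(\R^{0|1},M)$, so $\rho|_{\Lambda(\Path(M))_0}$ becomes a section of the endomorphism bundle of this pulled-back vector bundle. Setting $\theta=0$ in the formula from Lemma~\ref{lem:superconnect} gives
\[
\rho|_{\Lambda(\Path(M))_0}=e^{-t\A^2}\in \Gamma\bigl(\R_{\ge 0}\times \SM(\R^{0|1},M);\End(V)\bigr),
\]
where $\A^2$ is the even endomorphism of $\Omega^\bullet(M;V)$ determined by the super connection, and the exponential makes sense as a convergent power series since $t\A^2$ acts with degree shift compatible with the nilpotent direction in $\SM(\R^{0|1},M)$.

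Third, because the representation is $\Cl_n$-linear (Proposition~\ref{prop:Cliffordlineargeo}), $e^{-t\A^2}$ commutes with the fiberwise Clifford action, so it lies in the bundle $\End_{\Cl_n}(V)$ of Clifford-linear endomorphisms. Applying the Clifford super trace from~\S\ref{sec:ClsTr} yields
\[
{\rm sTr}_{\Cl_n}\bigl(e^{-t\A^2}\bigr)\in C^\infty(\R_{\ge 0}\times \SM(\R^{0|1},M)).
\]
By the cyclicity property built into Definition~\ref{defn:Liechar}, this is automatically invariant under the conjugation by identity automorphisms (of which there are none beyond the identity, since $\Lambda(\Path(M))$ is discrete by the previous lemma), so it descends to a function on the inertia groupoid; restricting to $t>0$ gives the function on the nondegenerate inertia groupoid claimed in the lemma. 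There is no real obstacle here — the only point requiring a little care is checking that $e^{-t\A^2+\theta\A}$ truly restricts to $e^{-t\A^2}$ at $\theta=0$, which is immediate from $e^{-t\A^2+\theta\A}=(1+\theta\A)e^{-t\A^2}$ observed in the proof of Lemma~\ref{lem:superconnect}.
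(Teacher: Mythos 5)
Your proof is correct and takes essentially the same route as the paper's one-line argument, which simply cites Lemma~\ref{lem:superconnect}, Proposition~\ref{prop:Cliffordlineargeo}, and the definition of the Clifford super trace; you have spelled out the details of that chain of references. One small caveat: the parenthetical claim that $\theta\psi=0$ ``universally forces $\theta=0$'' is not strictly true as an assertion about the equalizer presheaf (e.g.\ over $S=\R^{0|1}$ every $\theta$ satisfies $\theta\psi=0$ since $\theta\eta\cdot\psi\eta$ involves $\eta^2=0$), but you don't need it — you can simply cite the preceding lemma identifying $\Lambda(\Path(M))_0$, which is exactly what the paper does; and in any case the contribution of the $\theta$-direction to the super trace vanishes, so the formula is unaffected.
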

\bp This follows directly from Lemma~\ref{lem:superconnect}, Proposition~\ref{prop:Cliffordlineargeo}, and the definition of the Clifford super trace. \ep

The character ${\rm sTr}_{\Cl_n}(e^{-t\A^2})$ is automatically invariant under super loop rotation; this corresponds to the Chern character being a closed form. However, it is typically not automatically invariant under super loop dilation, so does not descend to a function on the stack $\mathcal{L}^{1|1}_0(M)$. This can be repaired by applying a rescaling of super loops from~\S\ref{sec:11Bismut--Quillen}, which translates into a Bismut--Quillen rescaling of the super connection. 

Consider the composition 
\beq
&&\R_{>0}\times \SM(\R^{0|1},M)\to \R_{>0}\times \R_{>0}\times \SM(\R^{0|1},M)\stackrel{\RG}{\to} \R_{>0}\times \SM(\R^{0|1},M)\label{eq:11rescale}
\eeq
where the first arrow is determined by $t\mapsto (1/t,t)$ for $t$ a coordinate on $\R_{>0}$, and $\RG$ denotes the restriction of the functor $\RG$ to the subset 
$$
\R_{>0}\times \SM(\R^{0|1},M)={\rm Ob}(\Lambda(\Path(M))) \subset {\rm Mor}(\Path(M))=\R^{1|1}_{\ge 0}\times \SM(\R^{0|1},M).
$$

\begin{defn}
For a representation $\rho$, consider the pullback of the section of the endomorphism bundle determined by $\rho$ along the composition~\eqref{eq:11rescale}. Define the \emph{rescaled partition function} $Z(\rho)$ as the Clifford super trace of this pullback,
$$
Z(\rho)\in C^\infty(\R_{>0}\times \SM(\R^{0|1},M)).
$$
At a fixed $t\in \R_{>0}$ we observe that the value of $Z(\rho)$ is ${\rm sTr}_{\Cl_n}((\rho\circ \RG_{1/t})(t)),$ i.e., it is the super trace on super paths of length~$t$ of the image of $\rho$ under the renormalization group flow by $1/t$.
\end{defn}

\begin{rmk} A rescaling of the Chern character is also built into Bismut's definition for a family of Dirac operators~\cite{Bismutindex}. See~\cite[Ch. 9]{BGV}, especially \S9.1 which treats the finite-dimensional case.
\end{rmk}

\begin{lem} \label{prop:11charthy} 
For a $\Cl_n$-linear representation~$\rho$, the function $Z(\rho)$ descends to a section of $\omega^{\otimes n/2}$ over the stack $\mathcal{L}^{1|1}_0(M)$ and so defines a map $Z\colon \Rep^n(\Path(M))\to \Gamma(\mathcal{L}^{1|1}_0(M);\omega^{\otimes n/2})$. \end{lem}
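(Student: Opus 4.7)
The plan is to translate $\rho$ into a super connection via Proposition~\ref{prop:Cliffordlineargeo}, rewrite $Z(\rho)$ explicitly in terms of the Bismut--Quillen rescaling, and then verify the three equivariance conditions that characterize sections of $\omega^{\otimes n/2}$ by Lemma~\ref{lem:superloopfuns}.

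First, using Proposition~\ref{prop:Cliffordlineargeo}, write $\rho$ as a $\Cl_n$-linear unitary super connection $\A$ on a hermitian super vector bundle $V\to M$. By Lemma~\ref{lem:11Bismut--Quillen}, $\rho\circ\RG_{1/t}$ is the representation associated with the Bismut--Quillen rescaled super connection $\A^{1/t}=t^{-1}\A(0)+\A(1)+t\A(2)+\cdots$, so by Lemma~\ref{lem:11boringchar} we have $Z(\rho)(t,x,\psi)={\rm sTr}_{\Cl_n}(e^{-t(\A^{1/t})^2})|_{(x,\psi)}$. An elementary change-of-variables computation shows that the degree-$K$ component of $Z(\rho)$ (in the differential form grading on $\SM(\R^{0|1},M)\cong\Pi TM$) equals $t^{K/2}\gamma_K(1/\sqrt t)$, where $\gamma_K(s)$ is the degree-$K$ component of the standard rescaled Bismut Chern character ${\rm sTr}_{\Cl_n}(e^{-\A_s^2})$ with $\A_s=\sum_j s^{1-j}\A(j)$.

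By Lemma~\ref{lem:superloopfuns}, $Z(\rho)$ descends to a section of $\omega^{\otimes n/2}$ exactly when it is equivariant for the three generating pieces of the $\E^{1|1}\rtimes\R^\times$-action defining $\mathcal{L}^{1|1}_0(M)$: invariance under $\E^{0|1}$ (closedness of each $\gamma_K$), invariance under $\{\pm 1\}\subset\R^\times$ (the parity $K\equiv n\pmod 2$), and invariance under $\R_{>0}\subset\R^\times$ (the $t^{K/2}$ scaling). Closedness follows from the standard Bianchi-type identity $d\,{\rm sTr}_{\Cl_n}(e^{-\A^2})=0$ using cyclicity of the Clifford super trace and the Leibniz rule for the super connection. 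The parity condition follows from vanishing of ${\rm sTr}_{\Cl_n}$ on operators of the wrong internal parity, a consequence of the self-adjoint Clifford action on $V$.

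The delicate step is the $\R_{>0}$-invariance, which by the form of $Z(\rho)$ amounts to showing $\gamma_K(s)$ is independent of $s$ as a form (not merely up to exact forms). Bismut's formula gives $\tfrac{d}{ds}{\rm sTr}_{\Cl_n}(e^{-\A_s^2})=-d\,{\rm sTr}_{\Cl_n}(\dot\A_s\,e^{-\A_s^2})$, so it suffices to verify that the right-hand side vanishes. The plan is to decompose $\dot\A_s=\sum_j(1-j)s^{-j}\A(j)$ by the internal operator parity of each $\A(j)$, observe that only operator-even contributions can survive the super trace, and note that the connection piece $\A(1)$ has coefficient $(1-1)=0$; a further supersymmetric-cancellation argument using $\Cl_n$-linearity and the unitarity constraints $\A(j)^\dagger=\pm\A(j)$ dispatches the remaining operator-even pieces coming from odd $j\ge 3$. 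Executing this final cancellation carefully is the main technical obstacle and is where I expect the bulk of the work to lie.
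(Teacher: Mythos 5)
Your overall skeleton --- translate $\rho$ into a unitary super connection via Proposition~\ref{prop:Cliffordlineargeo}, write $Z(\rho)$ through the Bismut--Quillen rescaling, and check the three invariances against Lemma~\ref{lem:superloopfuns} --- is the same as the paper's, and your closedness and parity steps are fine. The problem is the final step. The statement you reduce $\R_{>0}$-invariance to, namely that $\gamma_K(s)=[{\rm sTr}_{\Cl_n}(e^{-\A_s^2})]_{[K]}$ is independent of $s$ \emph{as a differential form}, is false in general, and no supersymmetric cancellation can rescue it. The transgression formula only shows that $\tfrac{d}{ds}\gamma_K(s)$ is exact, and the transgression form ${\rm sTr}_{\Cl_n}(\dot\A_s e^{-\A_s^2})$ is generically not closed. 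Concretely, take $n=0$ and $V=W\oplus \Pi W$ with $\A(0)$ an odd isomorphism and $\A(1)=\nabla_W\oplus\nabla_{\Pi W}$ a unitary connection with ${\rm sTr}(\nabla^2)\neq 0$ as a form: then $\gamma_2(0)=-{\rm sTr}(\nabla^2)\neq 0$, while $\gamma_2(s)\to 0$ as $s\to\infty$ because $\A(0)$ is invertible. Only the cohomology class of $\gamma_K(s)$ is constant in $s$, which is not enough to produce a section.

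The gap traces back to the normalization of the rescaling. Your formula $[Z(\rho)]_{[K]}=t^{K/2}\gamma_K(1/\sqrt{t})$ comes from reading the renormalization parameter as $1/t$; under that reading the lemma itself is false, by the example above. The rescaling that makes the lemma true (and the one the computation $\sum_j(1/t)^{(-j+1)/2}\A(j)$ actually uses) is by $\mu=t^{-1/2}$, which takes a super path of super length $t$ to one of super length $1$ --- recall $\RG_\mu$ scales $t$ by $\mu^2$. Then $-t\bigl(\A^{t^{-1/2}}\bigr)^2=-\bigl(\sum_j t^{j/2}\A(j)\bigr)^2$, so every monomial of form degree $K$ in the exponential carries exactly the factor $t^{K/2}$, and
$$
Z(\rho)=\sum_K t^{K/2}\otimes\bigl[{\rm sTr}(\Gamma e^{-\A^2})\bigr]_{[K]}
$$
with $t$-independent coefficients that are closed and of parity $n$. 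The $\R_{>0}$-invariance is then automatic term by term, and the ``delicate step'' you flag simply does not arise.
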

\bp
By Lemma~\ref{lem:11Bismut--Quillen} and the definition of the Clifford super trace (see~\S\ref{sec:ClsTr}), we have
$$
{\rm sTr}_{\Cl_n}\left(\exp\left(-t\left(\sum_j (1/t)^{(-j+1)/2}\A_k\right)^2\right)\right)={\rm sTr}\left(\Gamma\exp\left(-\left(\sum_j t^{j/2}\A_k)\right)^2\right)\right)
$$
as functions on $C^\infty(\R_{>0}\times \SM(\R^{0|1},M))$, where $\Gamma$ is the chirality operator on the Clifford algebra. When $n$ is even (respectively odd), $\Gamma$ is even (respectively odd). The super trace of an odd endomorphism is zero, and so the super trace above takes values in even or odd forms according to the parity of~$n$. 

By Lemma~\ref{lem:superloopfuns}, the factors of $t$ in the expression for the rescaled partition function are precisely the ones required so that $Z(\rho)$ descends to a section of $\omega^{\otimes n/2}$ on the constant super loop stack $\mathcal{L}^{1|1}_0(M)$. 
\ep

For the sake of being explicit, when $n=0$ we get 
$$
Z(\rho)={\rm sTr}\left(\exp\left(-\left(\sum_k t^{n/2}\A_k\right)^2\right)\right)\mapsto {\rm sTr}(\exp(-\A^2))\in \Omega^\ev_{\cl}(M). 
$$
where the map applies the isomorphism in Lemma~\ref{lem:superloopfuns} on functions. This is the usual Chern character of the super connection~$\A$. When $n=1$, we get 
$$
Z(\rho)={\rm sTr}\left(\Gamma\exp\left(-\left(\sum_k t^{n/2}\A_k\right)^2\right)\right)\mapsto {\rm sTr}(\Gamma\exp(-\A^2))\in \Omega^\odd_\cl(M)
$$
where the map applies the isomorphism in Lemma~\ref{lem:superloopfuns} for sections of $\omega^{1/2}$. This agrees with the differential form representative of the odd Chern character of a $\Cl_1$-linear super connection as constructed by Quillen~\cite[\S5]{Quillensuper}. Computations in the other cases can be reduced to the ones above. 

To promote the map $Z$ to a refinement of the character theory of $\Path(M)$, we make the following definition.

\begin{defn}
Define the injective algebra map $i\colon \Gamma(\mathcal{L}^{1|1}_0(M);\omega^{\otimes n/2}) \to C^\infty(\Lambda(\Path(M)))$ by
$$
t\otimes 1\mapsto t\otimes 1, \  \ \  1\otimes f\mapsto t^{(-k+1)/2}\otimes f,\quad t\in C^\infty(\R_{>0}), \ f\in \Omega^{k}_\cl(M).
$$
\end{defn}

By inspection, this makes the diagram commute,
\beq
\begin{array}{c}
\begin{tikzpicture}
  \node (A) {$\Rep^n(\Path(M))$};
  \node (B) [node distance= 4cm, right of=A] {$C^\infty(\Lambda(\Path(M)))$};
  \node (C) [node distance = 1.5cm, above of=B] {$\Gamma(\mathcal{L}^{1|1}_0(M);\omega^{\otimes n/2})$};
  \draw[->] (A) to  (B);
  \draw[->,dashed] (A) to node [above] {$Z$} (C);
  \draw[->,left hook-latex] (C) to node [right] {$i$} (B);
\end{tikzpicture}\end{array}\nonumber
\eeq
and hence representations of $\Path(M)$ have a $C^\infty(\mathcal{L}^{1|1}_0(M))$-valued character theory. 

\subsection{Differential grothendieck groups of representations}

\begin{proof}[Proof of Theorem~\ref{prop:diffKthy}]
We spell out differential concordance classes with respect to the $C^\infty(\mathcal{L}^{1|1}_0(M))$-characters in the sense of Definition~\ref{defn:diffconc}. We start with the collection of pairs
$$
{\widehat\Rep}{}(\Path(M))=\{\rho\in \Rep(\Path(M)),\ \alpha\in C^\infty(\mathcal{L}^{1|1}_0(M\times \R)) \mid i_0^*\alpha=Z(\rho)\}. 
$$
By Theorem~\ref{thm:11unitaryreps}, $\rho$ is equivalent to a unitary super connection $\A$, and by Lemma~\ref{lem:superloopfuns}, we identify $\alpha$ with a closed, even differential form $\alpha\in \Omega^\ev_{\cl}(M\times \R)$. By Proposition~\ref{prop:11charthy}, the restriction of $\alpha$ to $M\times \{0\}$ is the Chern form of $\A$, 
$$
i^*_0\alpha={\rm sTr}(e^{-\A^2})\in \Omega^\ev_{\cl}(M).
$$
So hereafter we make the identification
$$
{\widehat\Rep}{}(\Path(M))=\{\A {\rm \ unitary\ super\ connection,\ }  \alpha\in \Omega^\ev_\cl(M\times \R)\mid {\rm sTr}(e^{-\A^2})=i_0^*\alpha\}
$$

We similarly identify the data of a differential concordance $(\widetilde\rho,\widetilde\alpha)$ (see Definition~\ref{defn:diffconc}) with $(\widetilde\A,\widetilde\alpha)$ for a unitary super connection $\widetilde\A$ on a bundle over $M\times \R$, and $\widetilde\alpha\in \Omega^\bullet_\cl(M\times \R^2)$.
First we consider the quotient by differential concordances where $\widetilde\A$ is the constant concordance, meaning the bundle and super connection on $M\times \R$ pullback from~$M$. This restricted equivalence relation is precisely the one from Definition~\ref{defn:relconc}. We computed equivalence classes in Lemma~\ref{lem:relconc}, finding them to be determined by the integral
$$
\beta=\int_{M\times I/M} \alpha,\qquad [\beta]\in \Omega^\odd(M)/d\Omega^\ev(M). 
$$
As such, for this part of the equivalence relation we have~$(\A,\alpha)\sim (\A,\alpha')$ if $\alpha$ and $\alpha'$ define the same equivalence class $\beta\in \Omega^\odd(M)/d\Omega^\ev(M)$ using the formula above. This allows us to work with pairs~$(\A,\beta)$ for the remainder of the proof. 

For an arbitrary differential concordance (without a restriction on $\widetilde{\A}$), by Stokes theorem the fiberwise integral of $\widetilde{\alpha}\in\Omega^\ev_\cl(M\times \R^2)\cong C^\infty(\mathcal{L}^{1|1}_0(M\times\R^2))$ along $M\times I^2\to M$ satisfies
\beq
d\int_{M\times I^2/M} \widetilde\alpha=\int_{M\times I/M} \alpha'-\int_{M\times I/M}\alpha+\int_{M\times I/M}{\rm sTr}(e^{-\widetilde\A^2}). \label{Kdiffconc}
\eeq
The third term is exactly the Chern--Simons form for the super connection $\widetilde{\A}$, 
$$
d\int_{M\times I/M} {\rm sTr}(e^{-\widetilde\A^2})={\rm sTr}(e^{-\A^2_1})-{\rm sTr}(e^{-\A^2_0})=d\CS(\A_1,\A_0) 
$$
Using that the left hand side is exact and any pair of super connections are concordant, from Equation~\ref{Kdiffconc} we find that the relation of differential concordance is exactly 
$$
(\A,\beta)\sim (\A',\beta')\iff \CS(\A,\A')+\beta'=\beta\in \Omega^\odd(M)/d\Omega^\ev(M).
$$
Then the differential Grothendieck group (Definition~\ref{defn:diffGG}) is the free abelian group on these equivalence classes modulo subgroup generated by
$$
(\A,\beta)+(\A',\beta')-(\A\oplus \A',\beta+\beta'),\qquad (\A\oplus\Pi \A,0),
$$
where $\Pi \A$ is the super connection $\A$ on the parity reversed super vector bundle.
But this is precisely the presentation of differential K-theory given by Klonoff~\cite[Proposition~4.64]{Klonoff} (see~\S\ref{sec:backKthy}).

The Chern character is multiplicative, so we get a ring structure on the differential Grothendieck group from Definition~\ref{defn:Diffgroth}. By inspection, this agrees with the ring structure defined in~\cite[pg~49]{Klonoff}.
\ep

\begin{proof}[Proof of Proposition~\ref{prop:higherdegK}]

This is the differential version of Proposition~\ref{prop:Cliffordlineargeo}. Objects in $\widehat{\Rep}{}^n(\Path(M))$ are finite-dimensional Clifford module bundles over~$M$ with a Clifford linear super connection, together with a concordance of closed forms whose source is the Chern character of this Clifford-linear super connection. But these give maps to~$\K^n(M)$ and to $\Omega^{\ev/\odd}_\cl(M)$ with a compatible homotopy, and so define classes in $\widehat{\K}^n(M)$ (in the sense of Hopkins--Singer differential K-theory) by the universal property. That this map is a surjection in the even degrees follows from Theorem~\ref{prop:diffKthy} and the fact that $\Cl(2n)$ is Morita equivalent to~$\C$. 
\ep

\subsection{The Freed--Lott analytic orientation and a cutoff version of supersymmetric quantum mechanics}\label{sec:FreedLott}

We now explain how Freed and Lott's differential analytic pushforward~\cite{LottFreed} gives the K-theory variant of Theorem~\ref{thm:diffpush}, namely a differential cocycle in $\widehat{\Rep}(\Path(M))$ for a family $\pi\colon X\to M$ of spin manifolds. This requires a few background results in index theory, which we review first. The original reference is~\cite{Bismutindex}; see also~\cite[Chapter 9]{BGV} for a more expansive discussion or~\cite[\S3]{LottFreed} for a condensed one.

Let $\pi\colon X\to M$ be a proper submersion of relative dimension~$d$, or equivalently, $\pi \colon X\to M$ is a smooth fiber bundle with compact fibers of dimension~$d$. Let $T(X/M)={\rm ker}(d\pi)\subset TX$ denote the vertical tangent bundle. A \emph{spin structure} on $\pi$ is a spin structure on $T(X/M)$. A \emph{Riemannian structure} on~$\pi$ is a metric on $T(X/M)$ and a horizontal distribution~$H(X/M)$ on~$X$. This permits the construction of a Levi-Civita connection on the fibers of~$\pi$~\cite[Definition~1.6]{Bismutindex}. When $\pi$ has a spin and Riemannian structure, we call $\pi\colon X\to M$ a \emph{geometric family of spin manifolds}. 

Let $E\to X$ be a real vector bundle with metric and compatible connection $\nabla^E$ on a geometric family of spin manifolds. We can form the fiberwise Dirac operator on~$\pi$ twisted by~$E$, denoted~$\slashed{D}$. This is the degree zero part of the Bismut super connection,
$$
A_\mu:=\mu\slashed{D}+\nabla^H+\frac{1}{4\mu}c(T)
$$
where $\nabla^H$ is the unitary connection on the fiberwise spinors coming from the horizontal distribution $H(X/M)\subset TX$, and $c(T)$ is Clifford multiplication by the curvature 2-form of the horizontal distribution. Then Bismut showed
\beq
{\rm Ch}(\slashed{D}):=\lim_{\mu\to 0}  {\rm sTr}(e^{-A_\mu^2})=(2\pi i)^{d/2}\int_{X/M} \hat{A}(X/M)\wedge {\rm Ch}(\nabla^E)\in \Omega^\bullet_\cl(M)\label{eq:BismutChernchar}
\eeq
where $\hat{A}(X/M)$ is the $\hat{A}$-form of the vertical bundle. The above is a lift of the Chern character of $\pi_![E]\in \K^{-d}(X)$ to a differential form, where $\pi_!$ is the K-theory pushforward.\footnote{This is the complexification of the KO-pushforward.} 

To extract a finite-dimensional (or cutoff) version of the family of Dirac operators, naively one could take the kernel of $\slashed{D}$ itself. However, this kernel might not be a vector bundle on $M$: the dimension can jump. Freed and Lott use a lemma of Mischenko--Fomenko~\cite{MischenkoFomenko} to get around this problem; we summarize their construction as follows, combining their Lemma~7.11 with their Equations~7.23 and~7.24. 

\begin{lem}[Mischenko--Fomenko, Freed--Lott]\label{lem:FreedLott}
Given a proper family of geometric spin manifolds $X\to M$ over a compact base and a vector bundle $E\to X$ with connection, there is a finite-dimensional smooth subbundle~$V$ of the $E$-twisted fiberwise spinor bundle that contains the fiberwise kernel of the twisted Dirac operator. Moreoever,~$V$ has a super connection~$\A$ whose degree zero piece is the restriction of the fiberwise Dirac operator to~$V$ and there is a class $\eta\in \Omega^{\odd}(X)/d\Omega^{\ev}(X)$ with 
\beq
d\eta={\rm Ch}(\slashed{D})-{\rm Ch}(\A)\label{eq:etaform}
\eeq
where ${\rm Ch}(\slashed{D})$ is the Chern character of Bismut's super connection~\eqref{eq:BismutChernchar}.
\end{lem}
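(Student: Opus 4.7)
The plan is to invoke the Mischenko--Fomenko construction for the cutoff bundle and a Bismut--Cheeger style transgression argument for the eta form, as in Freed--Lott~\cite{LottFreed}; our role is to package these into the form needed for our differential cocycles.

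\textbf{Step 1 (Finite-rank cutoff bundle~$V$).} For each $m \in M$, the fiber Dirac operator $\slashed{D}_m$ is an unbounded self-adjoint elliptic operator with discrete spectrum, so some $\lambda_m>0$ avoids its spectrum. Upper semicontinuity of spectrum shows $\lambda_m$ remains a spectral gap over an open neighborhood $U_m$, where the spectral projection $P^{<\lambda_m}$ cuts out a smooth finite-rank subbundle of the (infinite-rank) fiberwise $L^2$-spinor bundle twisted by~$E$; this subbundle contains $\ker \slashed{D}$ fiberwise. Compactness of~$M$ gives a finite subcover $\{U_i\}$. Following Mischenko--Fomenko~\cite{MischenkoFomenko}, we combine these locally defined bundles by choosing a single sufficiently large $\lambda$ on a refinement so that the resulting spectral projections agree on overlaps, producing a global smooth finite-rank subbundle $V$ of the fiberwise spinors containing $\ker \slashed{D}$.

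\textbf{Step 2 (Super connection on $V$).} Let $P$ denote the smooth fiberwise orthogonal projection onto $V$. The Bismut super connection $A_\mu = \mu \slashed{D} + \nabla^H + \tfrac{1}{4\mu} c(T)$ need not preserve~$V$, but compressing its mixed-degree terms to the block-diagonal part with respect to the orthogonal splitting $\mathcal{E} = V \oplus V^\perp$ gives a super connection that restricts to~$V$. Normalizing the degree-zero piece by the Bismut--Quillen rescaling of Lemma~\ref{lem:11Bismut--Quillen} yields a super connection $\A$ on~$V$ whose degree-zero part is exactly $\slashed{D}|_V$, as required.

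\textbf{Step 3 (Transgression to produce $\eta$).} Construct $\eta$ via a two-stage transgression. First, interpolate between $A_\mu$ and its block-diagonalization $\A \oplus \A^\perp$ through a one-parameter family of super connections $\A_s$; the standard transgression identity
$$d \int_0^1 {\rm sTr}\Big(\tfrac{d \A_s}{ds}\, e^{-\A_s^2}\Big)\,ds = {\rm Ch}(A_\mu) - {\rm Ch}(\A) - {\rm Ch}(\A^\perp)$$
presents ${\rm Ch}(A_\mu) - {\rm Ch}(\A) - {\rm Ch}(\A^\perp)$ as exact. Since $\slashed{D}|_{V^\perp}$ is invertible with spectrum bounded below by $\lambda$, a second transgression via Bismut--Quillen rescaling on~$V^\perp$ (sending $\mu \to \infty$) together with the supersymmetric cancellation argument exhibits ${\rm Ch}(\A^\perp)$ as exact. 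Adding the two transgression forms and taking $\mu \to 0$ on the $A_\mu$-side, so that ${\rm Ch}(A_\mu)$ tends to ${\rm Ch}(\slashed{D})$ in~\eqref{eq:BismutChernchar}, produces a form~$\eta$ whose class in $\Omega^{\odd}/d\Omega^{\ev}$ satisfies $d\eta = {\rm Ch}(\slashed{D}) - {\rm Ch}(\A)$.

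\textbf{Main obstacle.} The principal difficulty is analytic rather than algebraic: the transgression integrals are over an infinite-rank Hilbert bundle with an unbounded $\slashed{D}$, so one must verify trace-class behavior of $e^{-\A_s^2}$ and convergence of the rescaling limits $\mu \to 0$ and $\mu \to \infty$. This is exactly the content of Bismut's heat-kernel estimates underpinning the local families index theorem, together with the Bismut--Cheeger analysis of the eta form, and is carried out in detail in~\cite{Bismutindex, BismutCheeger, LottFreed}. For us, the cleanest route is to quote these results directly rather than reprove them.
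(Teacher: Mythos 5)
The paper does not prove this lemma; it imports it wholesale from Freed--Lott (combining their Lemma~7.11 with their Equations~7.23 and~7.24), who in turn rely on Mischenko--Fomenko. So the relevant question is whether your reconstruction of their argument is sound, and Step~1 is not.

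Your gluing mechanism in Step~1 is circular: you propose to reconcile the locally defined spectral subbundles ``by choosing a single sufficiently large $\lambda$ on a refinement so that the resulting spectral projections agree on overlaps.'' If a single $\lambda>0$ avoided the spectrum of every fiberwise $\slashed{D}_m^2$, then $P^{<\lambda}$ would already be a globally defined smooth finite-rank subbundle and no patching (and no Mischenko--Fomenko) would be needed. The whole point of the lemma is that such a uniform spectral gap need not exist over a compact base: eigenvalue branches vary continuously and can sweep through any prescribed value of~$\lambda$, so the union over $m\in M$ of the spectra can be all of $[0,\infty)$. On an overlap $U_i\cap U_j$ with $\lambda_i\neq\lambda_j$ the projections $P^{<\lambda_i}$ and $P^{<\lambda_j}$ differ by the eigenspaces with eigenvalues in between, and in particular have different ranks, so they cannot be glued; taking $\lambda$ ``sufficiently large'' does not help since the spectrum is unbounded and crossings occur at every scale. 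The actual Mischenko--Fomenko argument avoids spectral projections entirely: one uses compactness and fiberwise Fredholmness to choose finitely many smooth sections whose span covers the cokernel of $\slashed{D}$ at every point, and the finite-rank subbundle $V$ is produced from this surjectivity data (the kernel of the correspondingly perturbed surjective family is automatically a smooth bundle containing $\ker\slashed{D}$). Your Steps~2 and~3 --- compressing the Bismut super connection to $V\oplus V^\perp$ and producing $\eta$ by a two-stage transgression, with the analytic convergence issues deferred to Bismut and Bismut--Cheeger --- are consistent in outline with what Freed--Lott do, but as written the proof does not get off the ground because the bundle $V$ is never correctly constructed.
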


Now, for a geometric family of spin manifolds $\pi\colon X\to M$ with even fiber dimension $2d$ and an associated family of Dirac operators~$\slashed{D}$, we apply Lemma~\ref{lem:FreedLott} to get
\beq
(\slashed{D},\$_{X/M})\rightsquigarrow  (\A,V,\eta).\label{eq:cutoffK}
\eeq
This yields a finite-dimensional vector bundle $V\to M$ with super connection $\A$ and $\eta\in \Omega^{\odd}(X)/d\Omega^{\ev}(X)$ such that
$$
d\eta={\rm Ch}(\slashed{D})-{\rm Ch}(\A).
$$
Identifying $\eta$ with its associated concordance, define
\beq
&&\widehat{\alpha}(X):=\Big(V,\A ,\eta\Big)\in \widehat{\Rep}(\Path(M))\to \widehat{\K}(M).\nonumber
\eeq
\begin{rmk}
We think of $\widehat{\alpha}(X)$ as cutoff version of supersymmetric quantum mechanics in the fibers $\pi\colon X\to M$. Indeed, Lemma~\ref{lem:FreedLott} finds a subbundle containing the energy zero states, so defines a cutoff energy~$\lambda$ that might vary with $M$. Then the form $\eta$ remembers the contribution to the partition function from the higher energy states. 
\end{rmk}

\section{Positive energy representations of super annuli}\label{sec:per}

In this section we introduce the super Lie category of constant super Euclidean annuli in~$M$, denoted~$\Ann(M)$. Orientation reversal of super annuli leads to a definition of unitary representations of this category. There is a subgroupoid $\Rot(M)\subset \Ann(M)$ consisting of ``thin" annuli, which act on super circles by rotation. Decomposing a unitary representation using this circle action leads to a definition of positive energy representations of~$\Ann(M)$. In parallel to the situation in K-theory, the first key computation in analyzing this category of representations is a characterization in terms of more familiar geometric quantities. This is the main result of the section. 

\begin{thm}\label{thm:geocharof21rep}\label{prop:Cliffordlineargeo21} The category of positive energy unitary representations of $\Ann(M)$ is equivalent to the category whose objects are $\Z$-graded (possibly infinite-dimensional) super hermitian vector bundles $V\to M$, and each homogenous piece for the $\Z$-grading is a finite-dimensional super vector bundle $V_k\to M$ with super connection~$\A_k$. Morphisms in the category are isomorphisms $(V,\A)\to (V',\A')$ of super vector bundles compatible with the $\Z$-gradings and super connections. 
\end{thm}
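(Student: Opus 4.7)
The plan is to parallel the analysis of super paths in Theorem~\ref{thm:11unitaryreps}, with the new ingredient being the $\Rot(M)$-subgroupoid, which produces a $\Z$-grading via Fourier decomposition. First I would spell out $\Ann(M)$ concretely: objects are constant super circles in $M$, parametrized by $(r,x,\psi)\in\R_{>0}\times\SM(\R^{0|1},M)$ (circumference $r$ together with a super point $(x,\psi)$ specifying the constant map), and morphisms are constant super annuli between equal-circumference circles. Such a morphism factors into commuting data: a ``height'' $(t,\theta)\in\R^{1|1}_{\ge 0}$ transverse to the super circle and a rotation in $\E^{1|1}/r\Z$ along the super circle. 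The degenerate subgroupoid $\Rot(M)\subset\Ann(M)$ is obtained by setting $(t,\theta)=(0,0)$, and its underlying ordinary $S^1$ of rotations will produce the eventual grading.

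Next I would repeat the argument of Lemmas~\ref{lem:semigroup} and~\ref{lem:superconnect} in this two-dimensional setting: a smooth representation of $\Ann(M)$ is equivalent to a super semigroup action on a module over $C^\infty(\R_{>0})\otimes\Omega^\bullet(M)$ that is twisted-linear in the sense of~\eqref{eq:R11linearmod} and in which the height and rotation actions commute. The $S^1$-rotation piece yields a Fourier decomposition $V=\bigoplus_{k\in\Z}V_k$, and the positive energy hypothesis becomes precisely the assertion that each $V_k$ is a finite-dimensional super vector bundle and that the set of $k$ with $V_k\ne 0$ is bounded below. Since rotations act trivially on the $\SM(\R^{0|1},M)$-direction of objects, each weight bundle $V_k$ descends to a super vector bundle over $M$ that does not depend on $r$: the $r$-dependence is entirely absorbed into the rotation weight.

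On each weight space $V_k$ the commuting height super-semigroup action is exactly a representation of $\Path(M)$ on $V_k$, so by Theorem~\ref{thm:11unitaryreps} this data is equivalent to a super connection $\A_k$ on $V_k$. Unitarity of the $\Ann(M)$-representation---defined via an anti-involution coming from orientation reversal of super annuli, analogous to Lemma~\ref{lem:subgrpd1}---restricts on each $V_k$ to the relation $i(i^{\deg}\A_k)=\A_k^*$, i.e., to unitarity of $\A_k$, by the same computation used in the proof of Theorem~\ref{thm:11unitaryreps}. Compatibility of a morphism of representations with the rotation action, together with $\Omega^\bullet(M)$-linearity, forces any such morphism to preserve the $\Z$-grading and intertwine the super connections. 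In the reverse direction, from a $\Z$-graded bundle with super connections $(V_k,\A_k)$ one assembles a representation by letting the rotation by angle $\phi$ act on $V_k$ by $e^{2\pi ik\phi/r}$ and the height act by $\exp(-t\A_k^2+\theta\A_k)$.

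The main obstacle I expect is checking that this reverse assembly genuinely defines a smooth representation of $\Ann(M)$, uniformly in the parameters $r$ and $M$. The finite-dimensionality of each $V_k$ together with lower-boundedness of the weights are precisely what is needed for convergence: for a super annulus of strictly positive height the combined rotation--height action on the weight-$k$ summand carries a factor $q^k$ with $|q|<1$, so the formal direct sum $\bigoplus_k\exp(-t\A_k^2+\theta\A_k)$ actually defines a bounded (even trace class) operator. A secondary subtlety is verifying that the weight decomposition is natural in $M$, which follows because both the $\Rot(M)$-action and the super connection correspondence of Theorem~\ref{thm:11unitaryreps} are natural in $M$.
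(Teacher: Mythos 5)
Your overall outline---decompose by $\Rot(M)$-weight, identify a super connection on each weight bundle via the $1|1$-dimensional theorem---has the right shape and arrives at the correct end result, but the central step, that morphisms of $\Ann(M)$ factor into commuting rotation and height data with the height acting as a $\Path(M)$-representation on each $V_k$, does not hold. The super semigroup $(\R_{>0}\times\H^{2|1})/\Z$ of super annulus moduli does \emph{not} split as a product of $\E/r\Z$ with $\Rge$: in the super translation group law~\eqref{eq:E21} the nilpotent correction $i\eta\eta'$ feeds into the $\bar z$-coordinate, so the odd vector field super-squares to $\partial_{\bar\tau}$, a mixture of the rotation direction $\partial_\tau+\partial_{\bar\tau}$ and the height direction. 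The slice $\{\tau+\bar\tau=0\}\subset\H^{2|1}$ is therefore not closed under the semigroup operation, and there is no height sub-semigroup whose action can be restricted to $V_k$ and identified with a $\Path(M)$-representation. This is also why the paper explicitly notes that the standard embedding $\R^{1|1}\hookrightarrow\R^{2|1}$ is \emph{not} a homomorphism $\E^{1|1}\to\E^{2|1}$. (A smaller slip: the rotation subgroupoid is built from $\E/r\Z$, purely even, not $\E^{1|1}/r\Z$ as you wrote; otherwise the parity count for $\H^{2|1}$ fails.)

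This is a substantive gap, not a cosmetic one. Your reverse assembly formula---$e^{2\pi ik\phi/r}\exp(-t\A_k^2+\theta\A_k)$ with $\phi$ the real rotation angle and $t$ proportional to $\im(\tau)$---does not satisfy the composition law of $\Ann(M)$: expanding both sides using the $\H^{2|1}$-group law and comparing the $\theta\theta'$-terms leaves an obstruction proportional to $2\A_k^2-\pi k/r$, which is nonzero for $k\ne 0$. The correct formula, which the paper derives, is $q^{k/r}\exp(-2\im(\tau)\A_k^2+\theta\A_k)$ with $q^{k/r}=e^{2\pi ik\tau/r}$ depending \emph{holomorphically} on $\tau$, not merely on its real part. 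The paper reaches this not by factoring the semigroup but by generator analysis: one writes $\rho=q^{L_0}\bar q^{\bar L_0}(1+\theta\A)$, observes that composition forces $\A^2$ to be proportional to $\bar L_0$ (and not to the height generator $L_0+\bar L_0$), and then uses the positive energy constraint $L_0-\bar L_0=k/r$ to eliminate $L_0$. Only after this derivation does the factor $e^{-2\im(\tau)\A^2+\theta\A}$ turn out to satisfy the $\Path(M)$-semigroup law in the variable $t=2\im(\tau)$, at which point Theorem~\ref{thm:11unitaryreps} can be invoked; the decomposition of $\rho$ into a holomorphic character times a $\Path(M)$-piece must be earned from the composition relations, not assumed as a geometric factorization of super annuli.
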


The correspondence is as explicit as in~\S\ref{sec:Quillenconn} for K-theory. Given geometric data as above, we obtain a representation of super annuli on the vector bundle $V:=\bigoplus_k V_k$ is determined by the semigroup representation
$$
\H^{2|1}\times \Omega^\bullet(M;V)\to \Omega^\bullet(M;V) \qquad \rho(\tau,\bar\tau,\theta)=\bigoplus_k e^{2\pi i k \tau}e^{-2\im(\tau)\A_k^2+\theta\A}
$$
where $\H^{2|1}\subset \R^{2|1}$ is the (super) closed upper half plane. As before, unitarity of the representation corresponds to unitarity of the connections. With this result in hand, identifying the Grothendieck group of $\Rep(\Ann(M))$ with $\K_\Tate(M)$ (proving Theorem~\ref{thm:Tateeasy}) follows directly. 

The motivation from field theories runs in parallel to~\S\ref{sec:Quillenconn}, with the new feature that we chose cutoffs for each weight space of the $\Rot(M)$-action. This corresponds to a choice of cutoff for each power in~$q$. As before, any choice of cutoff defines the same underlying class in $\K_\Tate(M)$ simply by applying the argument for K-theory to each power of~$q$. 

\subsection{Super Euclidean annuli in $M$} Define the \emph{$2|1$-dimensional super (Euclidean) translation group}, denoted $\E^{2|1}$, to be the super manifold $\R^{2|1}$ equipped with the multiplication
\beq
&&(z,\bar z,\eta)\cdot (z',\bar z',\eta')=(z+z',\bar z+\bar z'+i\eta\eta',\eta+\eta'), \quad (z,\bar z,\eta),(z',\bar z,\eta')\in \R^{2|1}(S).\label{eq:E21}
\eeq
See~\S\ref{sec:smfld} and~\S\ref{sec:supertrans} for an explanation of the complex coordinate notation above. 

Before defining super annuli, we observe some features of the $2|1$-dimensional super Euclidean model geometry. There is a standard inclusion $\iota\colon \R^{1|1}\hookrightarrow \R^{2|1}$ generalizing the inclusion $\R\subset \C\cong \R^2$. In terms of $S$-points, this is 
\beq
\R^{1|1}(S) \ni (t,\theta)\stackrel{\iota}{\mapsto} (t,\bar t,\theta)\in \R^{2|1}(S)\label{eq:11to21}
\eeq
where $\bar t$ uses the real structure on $\R^{1|1}$. Post-composition of $\iota$ by left translation by $(\tau,\bar \tau,\theta)\in \E^{2|1}(S)$ gives a different embedding
\beq
\R^{1|1}(S)\stackrel{\iota }{\hookrightarrow }\R^{2|1}(S)\stackrel{T_{\tau,\bar \tau,\theta}}{\longrightarrow} \R^{2|1}(S)\qquad (t,\theta)\mapsto (t+\tau,\bar t+\bar\tau+i\theta\eta,\theta+\eta). \label{eq:11to212}
\eeq
Below, we will restrict to those embeddings gotten by \emph{positive} translations $(\tau,\bar\tau,\theta)\in \H{}^{2|1}(S)$ where $\H{}^{2|1}\subset \R^{2|1}$ is the super manifold gotten by restriction of the structure sheaf of $\R^{2|1}$ to the closed upper half plane, $\H\subset \C\cong \R^2\subset \R^{2|1}$. We observe that this standard embedding~$\iota\colon \R^{1|1}\hookrightarrow \R^{2|1}$ does \emph{not} induce a homomorphism of super Lie groups from~$\E^{1|1}$ to~$\E^{2|1}$, in contrast to the non-super case. However,  restricting~\eqref{eq:11to21} to~$\eta=0$ gives a homomorphism $\E\hookrightarrow \E^{2|1}$. For a fixed choice of $r\in \R_{>0}(S)\subset \E(S)$, this homomorphism determines a $\Z$-action on $S\times \R^{2|1}$ that on $S$-points is 
\beq
(z,\bar z,\theta)\mapsto (z+r,\bar z+\bar r,\theta), \label{eq:Zactionforannuli}
\eeq
where $\bar r$ uses the real structure on $\R_{>0}$. 
\begin{defn}
For a choice of $r\in \R_{>0}(S)$, define the \emph{infinite super annulus with circumference $r$} as the quotient $A_r^{2|1}:=(S\times \R^{2|1})/r\Z$ for $\Z$-action generated by \eqref{eq:Zactionforannuli}. Let $S^{1|1}_r:=(S\times \R^{1|1})/r\Z$ be the \emph{super circle of circumference $r$}. 
\end{defn}
Since the $\E$-action \eqref{eq:Zactionforannuli} on $\R^{2|1}$ commutes with the $\E^{2|1}$-action by super translations, \eqref{eq:11to21} and \eqref{eq:11to212} descend to maps into the infinite super annulus with circumference $r$
$$
S^{1|1}_r\stackrel{\iota}{\hookrightarrow} A_r^{2|1} \stackrel{T_{\tau,\bar\tau,\theta}\circ\iota}{\hookleftarrow}S^{1|1}_r
$$
that we call the \emph{standard embedding of a super circle} and the \emph{embedding of a super circle at $(\tau,\bar\tau,\theta)$}. Because of the $\Z$-quotient, the translation $T_{\tau,\bar\tau,\theta}$ is determined by a section of the bundle $(S\times \H{}^{2|1})/r\Z\to S$ where the $\Z$-action is~\eqref{eq:Zactionforannuli}. Equivalently, we have $(r,\tau,\bar\tau,\theta)\in ((\R_{>0}\times \H{}^{2|1})/\Z)(S)$ for the $\Z$-action 
$$
(r,\tau,\bar\tau,\theta)\mapsto (r,\tau+nr,\bar\tau+n\bar r,\theta)\quad\quad r\in \R_{>0}(S),\ (\tau,\bar\tau,\theta)\in \H{}^{2|1}(S).
$$

\begin{defn} A \emph{super annulus in $M$} is the data $(r,\tau,\bar\tau,\theta,\gamma)$ where $(r,\tau,\bar\tau,\theta)\in ((\R_{>0}\times \H{}^{2|1})/\Z)(S)$ is called the \emph{super modulus} and determines $A_r^{2|1}=(S\times \R^{2|1})/r\Z$ together with a pair of embedded super circles, and 
$$
\gamma\colon A_r^{2|1}\to M
$$
is a map. The \emph{source} and \emph{target super circles in $M$} are the compositions
$$
S^{1|1}_r\stackrel{\iota}{\hookrightarrow} A_r^{2|1} \stackrel{\gamma}{\to} M\qquad S^{1|1}_r\stackrel{T_{\tau,\bar\tau,\theta}\circ \iota}{\hookrightarrow} A_r^{2|1} \stackrel{\gamma}{\to} M.
$$
A \emph{constant super annulus in $M$} is one for which the map $\gamma$ is invariant under the precomposition action by $S$-points of $\E^2<\E^{2|1}$ on $A_r^{2|1}$. 
\end{defn}

\begin{defn} The \emph{presheaf of super annuli in $M$, denoted ${\rm sAnn}(M)$} is the presheaf whose value at~$S$ is the set of pairs $(r,\tau,\bar \tau,\theta)\in ((\R_{>0}\times \H{}^{2|1})/\Z)(S)$ and $\gamma\colon A^{2|1}_r\to M$. The \emph{presheaf of constant super annuli in $M$}, denoted ${\rm sAnn}_0(M)$, is the sub-presheaf where $\gamma$ is a constant super annulus in~$M$.  \end{defn}

\begin{lem}\label{lem:constantsuperann}
The presheaf of constant super annuli in $M$ is represented by the super manifold ${\rm sAnn}_0(M)\cong (\R_{>0}\times \H{}^{2|1})/\Z\times \SM(\R^{0|1},M)$. The source and target super circles determine morphisms of presheaves ${\rm sAnn}_0(M)\toto \R_{>0}\times \SM(\R^{0|1},M)$.
\end{lem}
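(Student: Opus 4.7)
The plan is to mimic the proof of Lemma~\ref{lem:constantsuperpath}, working at the level of the functor of points. An $S$-point of ${\rm sAnn}_0(M)$ consists of a super modulus $(r,\tau,\bar\tau,\theta)\in ((\R_{>0}\times \H{}^{2|1})/\Z)(S)$ together with an $\E^2$-invariant map $\gamma\colon A^{2|1}_r\to M$, where $A^{2|1}_r=(S\times \R^{2|1})/r\Z$.

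The key observation is that since $r\Z\subset \E^2$, the quotient satisfies $A^{2|1}_r/\E^2\cong (S\times \R^{2|1})/\E^2\cong S\times \R^{0|1}$, collapsing the even directions and retaining only the odd line. Hence $\E^2$-invariance of $\gamma$ is equivalent to a unique factorization
$$
\gamma\colon A^{2|1}_r\stackrel{\rm pr}{\longrightarrow} S\times\R^{0|1}\stackrel{\gamma_0}{\longrightarrow} M,
$$
with $\gamma_0\in \SM(\R^{0|1},M)(S)$. This yields a natural bijection between $S$-points of ${\rm sAnn}_0(M)$ and $S$-points of $(\R_{>0}\times \H{}^{2|1})/\Z\times \SM(\R^{0|1},M)$, establishing representability with the stated form.

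Next I would verify the source and target morphisms. The source super circle $\gamma\circ\iota$ is automatically constant, factoring as $S^{1|1}_r\to S\times \R^{0|1}\stackrel{\gamma_0}{\to}M$; writing $\gamma_0=(x,\psi)$, this identifies the source morphism with the projection $(r,\tau,\bar\tau,\theta,x,\psi)\mapsto (r,x,\psi)$. For the target, chasing $S^{1|1}_r\stackrel{T_{\tau,\bar\tau,\theta}\circ\iota}{\longrightarrow} A^{2|1}_r\stackrel{\rm pr}{\longrightarrow} S\times \R^{0|1}$ and using the super-translation law~\eqref{eq:E21} shows that the odd coordinate transforms as $\theta'\mapsto \theta+\theta'$. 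Postcomposing with $\gamma_0$ then produces the super point $(x+\theta\psi,\psi)$, so the target morphism is $(r,\tau,\bar\tau,\theta,x,\psi)\mapsto (r,x+\theta\psi,\psi)$, in direct parallel to Lemma~\ref{lem:subgrpd1} in the super path setting.

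The main technical point is justifying the quotient $A^{2|1}_r/\E^2\cong S\times \R^{0|1}$ at the level of structure sheaves and confirming that $\E^2$-invariance of the pullback $\gamma^*\colon C^\infty(M)\to C^\infty(A^{2|1}_r)$ is precisely the condition that the image lies in the subalgebra $C^\infty(S\times \R^{0|1})\subset C^\infty(A^{2|1}_r)$ of $\E^2$-invariants. This is a routine super-manifold bookkeeping exercise, but it is the step that distinguishes the argument from a purely formal translation of Lemma~\ref{lem:constantsuperpath}.
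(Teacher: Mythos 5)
Your proposal is correct and follows the same route as the paper: identify an $\E^2$-invariant map $\gamma\colon A^{2|1}_r\to M$ with a factorization through ${\rm pr}\colon A^{2|1}_r\to S\times\R^{0|1}$, which yields the claimed product decomposition, and then observe that the source and target super circles factor through $S\times\R^{0|1}\to M$ and hence land in $\R_{>0}\times\SM(\R^{0|1},M)$. Your extra computation of the target formula $(r,\tau,\bar\tau,\theta,x,\psi)\mapsto(r,x+\theta\psi,\psi)$ goes slightly beyond what the paper records here (it is deferred to the definition of $\Ann(M)$), but it is consistent with the $1|1$-dimensional case.
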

\bp
An $\E^2$-invariant map $\gamma\colon A_r^{2|1}\to M$ can be identified with the composition
$$
\gamma\colon A_r^{2|1}= (S\times \R^{2|1})/r\Z\stackrel{\rm pr}{\to} S\times \R^{0|1}\stackrel{\gamma_0}{\to} M. 
$$
Hence, we can identity a constant super path with an $S$-point $(r,\tau,\bar\tau,\theta)\in (\R_{>0}\times \H{}^{2|1})/\Z$ and an $S$-point $\gamma_0\in \SM(\R^{0|1},M)(S)$.

The source and target super circles are given by $r\in \R_{>0}(S)$ and a map with a factorization,
$$
S^{1|1}_r \to A_r^{1|1} \to S\times \R^{0|1}\to M,
$$
and so are determined by $r\in \R_{>0}(S)$ and $S\times\R^{0|1}\to M$, as claimed. 
\ep

\subsection{An orientation-reversing map}
There is an anti-homomorphism
\beq
{\sf or}\colon \E^{2|1}\to (\overline{\E}^{2|1})^\op \qquad (z,\bar z,\eta)\mapsto (\bar z,z,-i\eta),\label{eq:antihomo}
\eeq
where $\overline\E^{2|1}$ has the conjugate group structure. 
This descends to a map of super annuli
$$
{\sf or}\colon A^{2|1}_r\to \bar{A}^{2|1}_r. 
$$
In a similar fashion to the $1|1$-dimensional case, we promote ${\sf or}$ to an orientation-reversing map on super annuli that exchanges the source and target super circles. Given a super annulus determined by $(r,\tau,\bar\tau,\theta)\in ((\R_{>0}\times \H{}^{2|1})/\Z)(S)$ and $\gamma\colon (S\times \R^{2|1})/r\Z\to M$, applying ${\sf or}$ gives a new pair of inclusions and a map to~$M$,
\begin{equation}
\begin{array}{c}
\begin{tikzpicture}
  \node (A) {$S^{1|1}_r$};
\node (B) [node distance=4cm, right of=A] {$\overline{A}^{2|1}_r$};
\node (C) [node distance=3cm, right of=B] {$M$};
\draw[->] (A) to[bend left=10] node [above] {$\iota$} (B);
\draw[->] (A) to[bend right=10] node [below] {$T_{\bar \tau,\tau,-i\theta}\circ \iota$} (B);
\draw[->] (B) to node [above] {$\bar\gamma\circ {\sf or}^{-1}$} (C);
\end{tikzpicture}
\end{array}\nonumber
\end{equation}
where $(r,\bar\tau,\tau,-i\theta)\in (\R_{>0}\times \overline{\E}{}^{2|1})/\Z(S)$, and we use real structures on $\R_{>0}$ and $\R^{1|1}$ to identify $\R_{>0}\cong \overline{\R}_{>0}$ and $S^{1|1}_r\cong \overline{S}^{1|1}_r$, respectively. To turn this data into a super annulus, we translate $(S\times \R^{2|1})/r\Z$ by $T_{-\bar \tau,-\tau,-i\theta}$, and we get
\begin{equation}
\begin{array}{c}
\begin{tikzpicture}
  \node (A) {$S^{1|1}_r$};
\node (B) [node distance=4cm, right of=A] {$\overline{A}^{2|1}_r$};
\node (C) [node distance=4cm, right of=B] {$M$};
\draw[->] (A) to[bend left=10] node [above] {$T_{-\bar \tau,-\tau,i\theta}\circ \iota$} (B);
\draw[->] (A) to[bend right=10] node [below] {$\iota$} (B);
\draw[->] (B) to node [above] {$\bar \gamma\circ {\sf or}^{-1}\circ T^{-1}_{-\bar \tau,-\tau,i\theta}$} (C);
\end{tikzpicture}
\end{array}\nonumber
\end{equation}
Then the new super annulus has modulus $(r,-\bar \tau,-\tau,i\theta)\in ((\R_{>0}\times \overline{\H}{}^{2|1})/\Z)(S)$. 

\begin{defn} Define the \emph{orientation-reversal} map ${\rm sAnn}(M)\to \overline{{\rm sAnn}(M)}$ that on $S$-points is 
$$
(r,\tau,\bar\tau,\theta,\gamma)\mapsto (r,-\bar \tau,-\tau,i\theta,\bar \gamma\circ {\sf or}^{-1}\circ T^{-1}_{-\bar\tau,-\tau,i\theta}),
$$
for $(r,\tau,\bar\tau,\theta)\in ((\R_{>0}\times \H{}^{2|1})/\Z)(S)$ and $\gamma\colon (S\times \R^{2|1})/r\Z\to M$.
\end{defn}

Specializing to constant super annuli, we observe the following. 

\begin{lem}\label{lem:21sigma1} The restriction of the time-reversal map to constant super annuli, ${\rm sAnn}_0(M)\to \overline{{\rm sAnn}_0(M)}$, is determined on $S$-points by the formula
\beq
&&(r,\tau,\bar\tau,\theta,x,\psi)\mapsto (r,-\bar\tau,-\tau,i\theta,\bar x+\theta\bar \psi,i\bar \psi) \label{eq:sigma211}
\eeq
for $(r,\tau,\bar\tau,\theta)\in ((\R_{>0}\times \H{}^{2|1})/\Z)(S)$ and $(x,\psi)\in \SM(\R^{0|1},M)(S)$, using the isomorphism ${\rm sAnn}_0(M)\cong (\R_{>0}\times \H{}^{2|1})/\Z\times \SM(\R^{0|1},M)$. 
\end{lem}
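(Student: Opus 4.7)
The plan is to mimic the proof of Lemma~\ref{lem:11sigma} from the $1|1$-dimensional case, which relies on a direct calculation in coordinates after restricting the time-reversal map to the subcategory of constant super annuli. By Lemma~\ref{lem:constantsuperann}, an $S$-point of ${\rm sAnn}_0(M)$ is the data $(r,\tau,\bar\tau,\theta)\in ((\R_{>0}\times \H{}^{2|1})/\Z)(S)$ together with $\gamma_0\colon S\times \R^{0|1}\to M$, and $\gamma\colon A^{2|1}_r\to M$ factors as $\gamma=\gamma_0\circ\mathrm{pr}$ where $\mathrm{pr}\colon A^{2|1}_r\to S\times\R^{0|1}$ is the projection taking $(z,\bar z,\eta)\mapsto \eta$. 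Since the modulus part of the formula~\eqref{eq:sigma211} is just the general time-reversal formula applied to the first four coordinates, the only thing to verify is that the new map to $M$ agrees with $\gamma_0'\circ\mathrm{pr}$, where $\gamma_0'$ corresponds to the $S$-point $(\bar x+\theta\bar\psi, i\bar\psi)\in \SM(\R^{0|1},\bar M)(S)$.

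First I would compute the composition $\mathrm{pr}\circ{\sf or}^{-1}\circ T^{-1}_{-\bar\tau,-\tau,i\theta}\colon \bar{A}{}^{2|1}_r\to S\times\R^{0|1}$ on $S$-points. Using the group law~\eqref{eq:E21}, the inverse of $(-\bar\tau,-\tau,i\theta)\in\E^{2|1}$ is $(\bar\tau,\tau,-i\theta)$, so $T^{-1}_{-\bar\tau,-\tau,i\theta}=T_{\bar\tau,\tau,-i\theta}$ acts by
\[ (z,\bar z,\eta)\mapsto (z+\bar\tau,\;\bar z+\tau+\theta\eta,\;\eta-i\theta), \]
using that $i(-i\theta)\eta=\theta\eta$. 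Then applying ${\sf or}^{-1}(w,\bar w,\zeta)=(\bar w,w,i\zeta)$ (which is the inverse of the anti-homomorphism~\eqref{eq:antihomo}) and projecting to the odd coordinate yields $i(\eta-i\theta)=\theta+i\eta$. Thus the composition $\bar\gamma\circ{\sf or}^{-1}\circ T^{-1}_{-\bar\tau,-\tau,i\theta}$ sends $(z,\bar z,\eta)$ to the value of $\bar\gamma_0$ on the odd coordinate $\theta+i\eta\in \R^{0|1}(S)$.

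Second I would translate this back into the $(x,\psi)$ parameterization of $\SM(\R^{0|1},M)(S)$ recorded in the Remark following Lemma~\ref{lem:constantsuperpath}. The pullback of functions along $\bar\gamma_0$ is $\bar x^*+\eta\bar\psi^*$; substituting $\theta+i\eta$ for $\eta$ gives
\[ \bar x^*+(\theta+i\eta)\bar\psi^*=(\bar x^*+\theta\bar\psi^*)+\eta(i\bar\psi^*), \]
which is precisely the pullback along the map $S\times\R^{0|1}\to\bar M$ determined by the $S$-point $(\bar x+\theta\bar\psi,\,i\bar\psi)$. Since this map factors through $S\times\R^{0|1}$ via the same projection $\mathrm{pr}$, it identifies the time-reversed constant super annulus with the claimed $S$-point of ${\rm sAnn}_0(M)$.

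I do not expect any serious obstacle: the argument is a coordinate computation entirely parallel to the $1|1$-dimensional Lemma~\ref{lem:11sigma}, and the only subtlety is bookkeeping the sign $i(-i\theta)=\theta$ in the action of $T_{\bar\tau,\tau,-i\theta}$ on the bar-coordinate of $\R^{2|1}$, together with keeping track of the fact that both ${\sf or}$ and the translation have been applied to the conjugate super manifold, which is why the final $S$-point involves $\bar x$ and $\bar\psi$ rather than $x$ and $\psi$.
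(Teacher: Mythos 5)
Your computation is correct and is exactly the verification the paper intends: the paper states this lemma without proof (as an "observation," parallel to Lemma~\ref{lem:11sigma}), and your coordinate calculation — inverting $(-\bar\tau,-\tau,i\theta)$ to $(\bar\tau,\tau,-i\theta)$, tracking the odd coordinate through ${\sf or}^{-1}\circ T_{\bar\tau,\tau,-i\theta}$ to get $\theta+i\eta$, and reading off $(\bar x+\theta\bar\psi,\,i\bar\psi)$ from the pullback $\bar x^*+(\theta+i\eta)\bar\psi^*$ — reproduces the stated formula and is consistent with the paper's conventions (the same method recovers Lemma~\ref{lem:11sigma}). No gaps.
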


\subsection{The super Lie category of constant super annuli} Concatenation of super annuli works in an identical fashion to super paths, as in~\S\ref{sec:11cat}. In brief, for a pair of super annuli in~$M$ of the same circumference $r\in \R_{>0}(S)$, 
\begin{equation}
\begin{array}{c}
\begin{tikzpicture}
  \node (A) {$S^{1|1}_r$}; 
  \node (B) [node distance= 4cm, right of=A] {$A_r^{2|1}$};
  \node (C) [node distance = 3cm, right of=B] {$M$};
  \node (D) [node distance = 1.5cm, below of=A] {$S^{1|1}_r$}; 
  \node (E) [node distance = 1.5cm, below of=B] {$A_r^{2|1}$};
  \node (F) [node distance = 1.5cm, below of=C] {$M$};
  \draw[->] (A) to[bend right=10] node [below] {$T_{\tau,\bar\tau,\theta}\circ \iota$} (B);
    \draw[->] (A) to[bend left=10] node [above] {$\iota$} (B);
  \draw[->] (B) to node [above] {$\gamma$} (C);
  \draw[->] (D) to[bend right=10] node [below] {$T_{\tau',\bar\tau',\theta'}\circ \iota$} (E);
    \draw[->] (D) to[bend left=10] node [above] {$\iota$} (E);
  \draw[->] (E) to node [below] {$\gamma'$} (F);
\end{tikzpicture}\end{array}\nonumber
\end{equation}
and if $\gamma$ and $T^{-1}_{\tau,\bar\tau,\theta}\circ \gamma'$ agree in a neighborhood of the image of $\iota$, the concatenation is
\begin{equation}
\begin{array}{c}
\begin{tikzpicture}
  \node (A) {$S^{1|1}_r$}; 
  \node (B) [node distance= 4cm, right of=A] {$A_r^{2|1}$};
  \node (C) [node distance = 3cm, right of=B] {$M$};
  \draw[->] (A) to[bend right=20] node [below] {$T_{\tau+\tau'+i\theta\theta',\theta+\theta'}\circ \iota$} (B);
    \draw[->] (A) to[bend left=20] node [above] {$\iota$} (B);
  \draw[->] (B) to node [above] {$\gamma'*\gamma$} (C);
\end{tikzpicture}\end{array}\label{diag:concatannuli}
\end{equation}
where $\gamma'*\gamma$ is the map whose restriction to the appropriate open submanifolds of $A_r^{2|1}$ agrees with $\gamma$ and $T^{-1}_{\tau,\bar\tau,\theta}\circ \gamma'$. 

Below we only need to consider concatenation of constant super annuli. In this case, a pair of super annuli can be concatenated provided that their source and target super circles match, i.e., $T_{\tau,\bar\tau,\theta}\circ\gamma\circ \iota=\gamma\circ \iota$, with no additional condition. When this is the case, the concatenation is determined by~\eqref{diag:concatannuli} with~$\gamma'*\gamma=\gamma$. 

\begin{defn} 
Define the category of \emph{constant super annuli in $M$} as
$$
\Ann(M):=\left(\begin{array}{c}{\rm sAnn}_0(M) \\ \downarrow \downarrow \\ \R_{>0}\times \SM(\R^{0|1},M)\end{array}\right)\cong \left(\begin{array}{c} (\R_{>0}\times \H{}^{2|1})/\Z\times \SM(\R^{0|1},M) \\ \downarrow \downarrow \\ \R_{>0}\times \SM(\R^{0|1},M)\end{array}\right),
$$
where morphisms are constant super annuli in $M$. The source and target maps take the source and target super circle of a constant super annulus as in Lemma~\ref{lem:constantsuperann}. Composition is concatenation of constant super annuli, and the unit section picks out the constant super annulus associated with~$(r,0,0,0)\in (\R_{>0}\times \H{}^{2|1})/\Z$. 
\end{defn}

\begin{lem}\label{lem:21sigma} Let $\sigma_0\colon \R_{>0}\times \SM(\R^{0|1},M)\to \overline{\R_{>0}\times \SM(\R^{0|1},M)}$ be the map determined by~\eqref{eq:idegmap} composed with the real structure on $\R_{>0}\times\SM(\R^{0|1},M)$. Together with the time-reversal map from Lemma~\ref{lem:21sigma1}, this defines an anti-involution $\sigma\colon \Ann(M)\to \overline{\Ann(M)}^\op$. 
\end{lem}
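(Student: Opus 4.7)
The plan is to verify the two conditions in the definition of an anti-involution in turn: first that $\sigma=({\sf or}_1,\sigma_0)$ is a functor $\Ann(M)\to\overline{\Ann(M)}^\op$ (compatibility with source, target, unit, and composition), and then that $\sigma\circ\bar\sigma^\op$ and $\bar\sigma^\op\circ\sigma$ recover the parity involutions. Throughout, we work on $S$-points using the identifications of Lemma~\ref{lem:constantsuperann} and the explicit formula~\eqref{eq:sigma211} for the morphism-level map. The overall pattern mirrors the $1|1$-dimensional argument of Lemma~\ref{lem:subgrpd1}, but with the upgraded bookkeeping forced by the real structure on $\H^{2|1}$.

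First I would check source/target compatibility. Applying~\eqref{eq:sigma211} to a morphism $(r,\tau,\bar\tau,\theta,x,\psi)$ gives $(r,-\bar\tau,-\tau,i\theta,\bar x+\theta\bar\psi,i\bar\psi)$ in $\overline{\Ann(M)}$. Its source (as a morphism in $\Ann(M)$) is $(r,\bar x+\theta\bar\psi,i\bar\psi)$, and its target, computed by the formula $(x',\psi')\mapsto (x'+\theta'\psi',\psi')$ with $\theta'=i\theta$ and $\psi'=i\bar\psi$, is $(r,\bar x+\theta\bar\psi+(i\theta)(i\bar\psi),i\bar\psi)=(r,\bar x,i\bar\psi)$. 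Both match $\sigma_0$ applied to the target and source of the input respectively (passing to the opposite category exchanges the roles), so $\sigma$ is compatible with $s$ and $t$. Compatibility with units is immediate since $(r,0,0,0)\mapsto(r,0,0,0)$.

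The main content lies in compatibility with composition, which I would deduce directly from the fact that~\eqref{eq:antihomo} makes ${\sf or}\colon \E^{2|1}\to(\overline{\E}^{2|1})^\op$ an anti-homomorphism of super groups. Indeed, composition in $\Ann(M)$ is implemented by restricting the $\E^{2|1}$-product to composable data (where source and target super circles coincide), and composition in $\overline{\Ann(M)}^\op$ is the reversed $\overline{\E}^{2|1}$-product. Expanding the product on both sides using~\eqref{eq:E21} and collecting the $i\theta\theta'$ terms (where the sign flip $\theta\theta'=-\theta'\theta$ exactly compensates for the reversal induced by going to the opposite category) gives the desired equality. Functoriality of the real-structure factor in $\sigma_0$ is immediate, so it only remains to observe that all these operations preserve the compatibility between the modulus data and the underlying map to $M$, which is forced by the fact that constant super annuli factor through $\R^{0|1}$.

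Finally, for the involutivity condition $\sigma\circ\bar\sigma^\op\cong {\sf P}_{\overline{\Ann(M)}}$ (and symmetrically on the other side), I would iterate~\eqref{eq:sigma211}: applying the formula twice sends $(r,\tau,\bar\tau,\theta,x,\psi)$ to $(r,\tau,\bar\tau,-\theta,x,-\psi)$, which is precisely the action of the parity endofunctor ${\sf P}$ of Example~\ref{ex:parityfunctor} (which reverses signs on odd coordinates). Composing with the real-structure factor, which squares to the identity, then gives the required natural isomorphisms. The main obstacle I anticipate is purely bookkeeping: carefully tracking the interaction of the reversal $(\tau,\bar\tau)\mapsto(-\bar\tau,-\tau)$ with the $\Z$-quotient defining $(\R_{>0}\times\H^{2|1})/\Z$, and ensuring that complex conjugation on $\R_{>0}$ behaves as the identity so that circumferences are preserved under $\sigma$; these are both direct checks from the definitions but must be stated cleanly to see that $\sigma$ is well-defined at the level of super Lie categories rather than just on underlying presheaves.
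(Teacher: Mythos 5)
Your proposal is correct and follows essentially the same route as the paper: compatibility with source, target, and units is checked on $S$-points via the explicit formula of Lemma~\ref{lem:21sigma1} (mirroring the $1|1$-dimensional computation in Lemma~\ref{lem:subgrpd1}), compatibility with composition is reduced to the anti-homomorphism property of~\eqref{eq:antihomo}, and the involutivity condition is verified by iterating the formula to recover the parity automorphism. The only cosmetic difference is that you spell out the $S$-point computations that the paper compresses into ``by the same argument as in the $1|1$-dimensional case''; note also that the paper's definition of anti-involution asks for equalities $\sigma\circ\bar\sigma^\op={\sf P}_{\overline{\CC}}$ rather than natural isomorphisms, which your computation in fact delivers.
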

\bp 
By the same argument as in the $1|1$-dimensional case, the description in Lemma~\ref{lem:21sigma1} shows that $\sigma_1$ and $\sigma_0$ have the claimed compatibility with source, target and unit maps. Compatibility with composition and units follows from~\eqref{eq:antihomo} being a homomorphism. This proves $\sigma$ is a functor. Lemma~\ref{lem:21sigma1} and~\eqref{eq:idegmap} show that $\sigma\circ \bar\sigma$ and $\bar\sigma\circ\sigma$ are both equal to the parity automorphism.
\ep

\subsection{Positive energy representations of super annuli}

\begin{defn}\label{defn:21sigma}
A \emph{unitary representation} of $\Ann(M)$ is a unitary representation (in the sense of Definition~\ref{defn:unitaryrep}) with respect to the functor $\sigma$ in Lemma~\ref{lem:21sigma}.
\end{defn}

\begin{defn} Define the \emph{rotation subgroupoid} $\Rot(M)\subset \Ann(M)$ as
$$
\Rot(M):=\left(\begin{array}{c} (\R_{>0}\times \E)/\Z\times M \\ \downarrow \downarrow \\ \R_{>0}\times M\end{array}\right)\subset \left(\begin{array}{c} (\R_{>0}\times \H{}^{2|1})/\Z\times \SM(\R^{0|1},M) \\ \downarrow \downarrow \\ \R_{>0}\times \SM(\R^{0|1},M)\end{array}\right).
$$
\end{defn}

Geometrically, $\Rot(M)$ is the groupoid whose objects are ordinary (not super) metrized circles with a constant map to $M$, and whose morphisms are rotations of those circles. Any representation of $\Ann(M)$ can be restricted to $\Rot(M)$, and this gives an action by these circle groups on the fibers of a vector bundle over $\R_{>0}\times M$. We observe that $\Rot(M)$ is a (non-super) Lie groupoid, so has a real structure and an inversion functor. This gives an unambiguous meaning to unitary representations of~$\Rot(M)$ without any additional choices. It turns out to be compatible with the restriction of unitary representation of $\Ann(M)$. 

\begin{lem} The restriction of $\sigma$ to $\Rot(M)$ coincides with the real structure composed with the inversion functor $\Rot(M)\to \overline{\Rot(M)}^\op$. \end{lem}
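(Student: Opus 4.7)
The plan is to compare the two functors by direct computation on $S$-points, using the explicit formula for $\sigma$ obtained from Lemma~\ref{lem:21sigma1} and Lemma~\ref{lem:21sigma}. Since $\Rot(M)$ is an honest (non-super) Lie groupoid, its inversion functor is defined unambiguously and the claim reduces to an identification of two smooth functors whose values on $S$-points are described by formulas on ordinary manifolds.

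First I would identify the restriction of the domain: the morphism super manifold of $\Rot(M)$ is $(\R_{>0}\times \E)/\Z\times M$, so on $S$-points a morphism of $\Rot(M)$ is a tuple $(r,\tau,x)$ corresponding to the data $(r,\tau,\bar\tau,\theta,x,\psi)\in {\rm sAnn}_0(M)(S)$ with $\theta=0$, $\psi=0$, and $\bar\tau$ determined by the real structure on $\R$ (so $\bar\tau = \tau$). Similarly, objects restrict from $\R_{>0}\times \SM(\R^{0|1},M)$ to $\R_{>0}\times M$ by setting $\psi=0$. Plugging these constraints into the formula~\eqref{eq:sigma211} gives
\[
(r,\tau,\tau,0,x,0)\ \longmapsto\ (r,-\tau,-\tau,0,\bar x,0),
\]
and on objects, the restriction of $\sigma_0$ from~\eqref{eq:idegmap} sends $(r,x,0)\mapsto (r,\bar x,0)$. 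In other words, $\sigma|_{\Rot(M)}$ is exactly ``$\tau\mapsto-\tau$ and apply complex conjugation on the ordinary manifold factors.''

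Next I would compute the composite of inversion with the real structure on $\Rot(M)$. The group $(\R_{>0}\times \E)/\Z$ acting on $\R_{>0}\times M$ is abelian (with $\E$ simply the additive Lie group $\R$), so the inversion functor on $\Rot(M)$ is the identity on objects and sends a morphism $(r,\tau,x)$ to $(r,-\tau,x)$. Then the canonical real structure on the Lie groupoid $\Rot(M)$ (from the general functor $\sf{Mfld}\to\sf{SMfld}$) implements complex conjugation on $C^\infty$-functions, taking $(r,\tau,x)\mapsto (r,-\tau,\bar x)$ after composing with inversion. This matches the description of $\sigma|_{\Rot(M)}$ obtained above.

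The only potential obstacle is bookkeeping the real structures correctly: the formula~\eqref{eq:sigma211} is a map $\Ann(M)\to\overline{\Ann(M)}^{\op}$, and I need to make sure that when restricting to $\Rot(M)$, the target is the corresponding restriction $\overline{\Rot(M)}^{\op}$ and that the identifications $\overline{\R_{>0}}\cong\R_{>0}$, $\overline{\E}\cong\E$, and $\overline{M}\cong M$ coming from the canonical real structures on ordinary manifolds are being used consistently on both sides. Once this bookkeeping is carried out, the equality of the two functors is immediate from the $S$-point formulas above, and functoriality (compatibility with units and composition) is inherited from Lemma~\ref{lem:21sigma}, so nothing further needs to be checked.
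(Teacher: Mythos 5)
Your proposal is correct and follows essentially the same route as the paper: restrict the explicit $S$-point formula for $\sigma$ to the subgroupoid where $\theta=0$, $\psi=0$, and $\im(\tau)=0$, observe that it reduces to $t\mapsto-\bar t$ on the $\E$-factor together with conjugation on the remaining factors, and identify this with groupoid inversion composed with the real structure (using that source and target are both projections). The only cosmetic difference is that the paper phrases the computation directly in terms of the restriction of the orientation-reversal action to $\E\hookrightarrow\H^{2|1}$ rather than writing out the full tuple, but the content is identical.
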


\bp The restriction of the action $(z,\bar z,\theta)\to (-\bar z,-z,-i\theta)$ to $\E\hookrightarrow \H^{2|1}$ is $t\mapsto -\overline{t}$, which is exactly inversion on~$\E$ composed with conjugation. This descends to fiberwise inversion on the family of circles $(\R_{>0}\times \E)/\Z$. Since the source and target maps are both projections, this coincides with inversion on the groupoid composed with conjugation. 
\ep

\begin{cor} The restriction of a unitary representation of $\Ann(M)$ to $\Rot(M)$ determines a unitary representation of the circles groups $\E/r\Z$ on the fibers of the hermitian super vector bundle~$V\to\R_{>0}\times M$. This decomposes~$V$ into an orthogonal sum of weight spaces, i.e., vector bundles~$V(k)\to \R_{>0}\times M$ on which the action is by~$e^{2\pi i kt/r}$. \end{cor}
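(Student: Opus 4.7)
The plan is first to use the preceding lemma to reinterpret the restricted representation classically, then to invoke the spectral theorem for unitary circle representations fiberwise, and finally to check that the resulting projections vary smoothly in the parameters $(r,x)$.

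First, by the preceding lemma, the restriction of $\sigma$ to $\Rot(M)$ coincides with the real structure composed with groupoid inversion. Plugging this into Definition~\ref{defn:21sigma}, the restricted functor $\rho|_{\Rot(M)}$ satisfies the identity $\bar\rho^{\op}\circ (-)^{-1} = (-)^{*}\circ \rho$, which is exactly the defining identity for a unitary representation of the Lie groupoid $\Rot(M) = (\R_{>0}\times \E)/\Z \rightrightarrows \R_{>0}\times M$ in the traditional sense. Since the source and target maps of $\Rot(M)$ are both the projection $\R_{>0}\times M \to \R_{>0}\times M$, this restriction amounts, fiberwise over each $(r,x)\in \R_{>0}\times M$, to an ordinary unitary representation of the circle group $\E/r\Z$ on the super Hilbert space $V_{(r,x)}$.

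Second, I would apply the spectral theorem for unitary $S^{1}$-actions (separately on the even and odd parts of each fiber) to produce an orthogonal decomposition into weight spaces indexed by $k \in \Z$. Concretely, the projection onto the weight-$k$ subspace is the averaged operator $P_{k}(r,x) = \tfrac{1}{r}\int_{0}^{r} \rho(t)_{(r,x)}\, e^{-2\pi i k t/r}\, dt$, where $\rho(t)_{(r,x)}$ denotes the unitary action by $t \in \E/r\Z$ on $V_{(r,x)}$. Standard Peter--Weyl for $S^{1}$ then gives an orthogonal Hilbert-space decomposition $V_{(r,x)} = \widehat{\bigoplus}_{k\in \Z} V(k)_{(r,x)}$ with $\E/r\Z$ acting on $V(k)_{(r,x)}$ by $e^{2\pi i k t/r}$.

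Third, and this is the main technical point, I must verify that the pointwise projections $P_{k}$ assemble into smooth bundle endomorphisms of $V$ over $\R_{>0}\times M$. The mild subtlety is that the integration range $[0,r]$ depends on the parameter $r$; after the change of variable $s = t/r$ one rewrites $P_{k}(r,x) = \int_{0}^{1} \rho(rs)_{(r,x)}\, e^{-2\pi i k s}\, ds$, which is manifestly smooth in $(r,x)$ because $\rho$ is smooth as a morphism of generalized super Lie categories. Setting $V(k) := \im(P_{k})$ then produces smooth (hermitian) subbundles of $V$ carrying the prescribed circle action, and their orthogonal direct sum recovers $V$. The expected main obstacle is precisely this smoothness in the $r$-direction, resolved by the reparametrization above; once that is in place, the decomposition into weight bundles is a routine consequence of the classical spectral picture.
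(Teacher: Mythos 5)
Your proposal is correct and follows exactly the route the paper intends: the preceding lemma reduces the restricted representation to an ordinary unitary representation of the circle groups, after which the weight decomposition is the standard Peter--Weyl/spectral argument (the paper states the corollary without proof, treating these steps as immediate). Your additional care about smoothness of the averaged projections in the $r$-direction, handled by the substitution $s=t/r$, is a reasonable detail to make explicit and does not diverge from the paper's implicit argument.
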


\begin{defn} A \emph{positive energy representation} of $\Ann(M)$ is a unitary representation whose restriction to $\Rot(M)$ has finite dimensional weight spaces $V_k$ with weight bounded below, $V_k=\{0\}$ for $k\ll 0$. 
\end{defn}

\subsection{The proof of Theorem~\ref{thm:geocharof21rep}}

We are now in a place to give a geometric characterization of positive energy representations of $\Ann(M)$. We start be repackaging a representation in terms of maps between modules over $\Omega^\bullet(M)$ with specified properties.

\begin{lem}\label{lem:21modulemaps}
A representation $\rho$ of $\Ann(M)$ is determined by a representation of the super Lie category $\{(\R_{>0}\times \H^{2|1})/\Z\toto \R_{>0}\}$ on the trivial bundle over $\R_{>0}$ with fiber $C^\infty(\R_{>0})\otimes \Omega^\bullet(M;V)$ for $V\to M$ a super vector bundle, satisfing the additional compatibility 
\beq
\rho(r,\tau,\bar \tau,\theta)(f\cdot v)&=&(f+\theta df)\rho(r,\tau,\bar\tau,\theta)(v)\nonumber \\
 v\in \Omega^\bullet(M;V), && f\in \Omega^\bullet(M), \nonumber
\eeq
with respect to the $\Omega^\bullet(M)$-module structure on $\Omega^\bullet(M;V)$. In particular, for each $r\in \R_{>0}$, a representation of $\Ann(M)$ determines a representation of the super semigroup $\H^{2|1}/r\Z$ with the above property. 

An isomorphism between representations of $\Ann(M)$ is determined by a section of the trivial bundle over $\R_{>0}$ with fiber $\Omega^\bullet(M;{\rm Hom}(V,W))^\times$ that intertwines the representations of $\{(\R_{>0}\times \H^{2|1})/\Z\toto \R_{>0}\}$. 
\end{lem}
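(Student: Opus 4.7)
The plan is to mirror the argument of Lemma~\ref{lem:semigroup} in the $2|1$-dimensional setting, with the added bookkeeping of an $\R_{>0}$-parameter (the super circumference) and the $\Z$-quotient on the modulus. First, given a representation on a super vector bundle $V'\to \R_{>0}\times \SM(\R^{0|1},M)$, I would trivialize $V'$ along the odd fibers of $\SM(\R^{0|1},M)\cong \Pi TM\to M$ to identify
$$
\Gamma\bigl(\R_{>0}\times \SM(\R^{0|1},M);V'\bigr)\cong C^\infty(\R_{>0})\otimes \Omega^\bullet(M;V),
$$
with $V:=i^*V'$ for $i\colon \R_{>0}\times M\hookrightarrow \R_{>0}\times \SM(\R^{0|1},M)$ the inclusion of the reduced manifold.

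Next, I would compute the source and target pullbacks of $V'$ to the morphism super manifold $(\R_{>0}\times \H^{2|1})/\Z\times \SM(\R^{0|1},M)$. The source map is the projection away from the $\H^{2|1}/\Z$-factor, so $s^*V'$ carries sections isomorphic to $C^\infty((\R_{>0}\times \H^{2|1})/\Z)\otimes \Omega^\bullet(M;V)$ with the untwisted module structure. The target map, by the same calculation as in Lemma~\ref{lem:subgrpd1} (now with the $\E^{2|1}$-translation acting via $(\tau,\bar\tau,\theta)\cdot(x,\psi)=(x+\theta\psi,\psi)$ through the homomorphism $\E^{2|1}\to \E^{0|1}$), twists the $\Omega^\bullet(M)$-module structure by the algebra automorphism $f\mapsto f+\theta df$, while leaving the $C^\infty((\R_{>0}\times \H^{2|1})/\Z)$-factor untouched. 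Hence a smooth functor $\rho\colon \Ann(M)\to \End(V')$ is exactly a $C^\infty((\R_{>0}\times \H^{2|1})/\Z)$-linear module map with this twist, i.e., a $C^\infty((\R_{>0}\times \H^{2|1})/\Z)$-valued family of endomorphisms of $\Omega^\bullet(M;V)$ satisfying the stated compatibility condition with the $\Omega^\bullet(M)$-action. Compatibility with composition and units in $\Ann(M)$ then says precisely that, for each fixed $r\in \R_{>0}$, $\rho(r,-,-,-)$ is a representation of the super semigroup $\H^{2|1}/r\Z$, and that these organize into a representation of $\{(\R_{>0}\times \H^{2|1})/\Z\toto \R_{>0}\}$ on the trivial bundle over $\R_{>0}$ with fiber $\Omega^\bullet(M;V)$.

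For isomorphisms, a morphism of representations is a vector bundle isomorphism $V'\to W'$ over $\R_{>0}\times \SM(\R^{0|1},M)$ intertwining the two functors; under the above trivialization this becomes a $C^\infty(\R_{>0})\otimes \Omega^\bullet(M)$-linear isomorphism of sections, equivalently a section over $\R_{>0}$ of the invertible elements of $\Omega^\bullet(M;\Hom(V,W))$, which intertwines the representations by equivariance of the underlying functor. I do not anticipate a substantive obstacle: the only novelty over the super path case is tracking the $\R_{>0}$-parameter and the $\Z$-quotient on the $\H^{2|1}$-factor, but since the $\Z$-action is free and acts only on the modulus variable (commuting with both the $\Omega^\bullet(M)$-module structure and the $\E^{2|1}$-translation on $\SM(\R^{0|1},M)$), it passes cleanly through the module-theoretic identifications. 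The mildly delicate point is verifying that the target-pullback twist is unchanged by the presence of the $\R_{>0}$-factor and the $\Z$-quotient, but this is immediate since the target map on the $\SM(\R^{0|1},M)$-factor depends only on $(\theta,x,\psi)$.
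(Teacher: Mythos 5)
Your proof follows essentially the same approach as the paper: trivialize $V'$ over the odd fibers of $\SM(\R^{0|1},M)\to M$, identify the source and target pullbacks as the same $C^\infty((\R_{>0}\times\H^{2|1})/\Z)\otimes\Omega^\bullet(M)$-module with the target twisted by $f\mapsto f+\theta df$, and observe that $C^\infty((\R_{>0}\times\H^{2|1})/\Z)$-linearity packages $\rho$ as a family of endomorphisms satisfying the stated compatibility, with the isomorphism statement read off analogously. The only slip is citing Lemma~\ref{lem:subgrpd1} (time-reversal) for the twist of the $\Omega^\bullet(M)$-module structure: the relevant precedent is the computation in Lemma~\ref{lem:semigroup}, with the target map for $\Ann(M)$ coming from Lemma~\ref{lem:constantsuperann}.
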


\bp
Let $V'\to \R_{>0}\times \SM(\R^{0|1},M)$ be a super vector bundle and $\rho$ a representation on $\End(V')$. Trivializing this bundle along the fibers of $\R_{>0}\times \SM(\R^{0|1},M)\to M$ is an isomorphism 
$$
\Gamma({\rm Ob}(\Ann(M));V')=\Gamma(\R_{>0}\times \SM(\R^{0|1},M);V')\cong C^\infty(\R_{>0})\otimes \Omega^\bullet(M;i^*V').
$$
for $i\colon M\to \R_{>0}\times \SM(\R^{0|1},M)$ determined by $1\in \R_{>0}$ and the canonical inclusion of the reduced manifold into $\SM(\R^{0|1},M)$. Let $V=i^*V'$ denote the resulting super vector bundle over~$M$. 
Then pulling this isomorphism back along the source map in $\Ann(M)$ (which is the projection) we get an isomorphism
$$
\Gamma({\rm Mor}(\Ann(M));s^*V')\cong C^\infty((\R_{>0}\times \H^{2|1})/\Z)\otimes \Omega^\bullet(M;V),
$$
where the right hand side has the obvious $C^\infty((\R_{>0}\times \H^{2|1})/\Z)\otimes \Omega^\bullet(M)$-module structure. 
The $C^\infty((\R_{>0}\times \H^{2|1})/\Z)\otimes \Omega^\bullet(M)$-module $\Gamma({\rm Mor}(\Ann(M));t^*V)$ associated with the target is twisted by the algebra isomorphism 
\beq
C^\infty((\R_{>0}\times \H^{2|1})/\Z)\otimes \Omega^\bullet(M)&\to& C^\infty((\R_{>0}\times \H^{2|1})/\Z)\otimes \Omega^\bullet(M),\label{eq:R21linearmod} \\
 g(r,\tau,\bar\tau, \theta)\mapsto g(r,\tau,\bar \tau,\theta), \ \ g\in C^\infty((\R_{>0}\times \H^{2|1})/\Z)&&\quad f\mapsto f+\theta df \ \ f\in \Omega^\bullet(M).\nonumber
\eeq

With the identifications above in place, a representation $\rho$ is a map of $C^\infty((\R_{>0}\times \H^{2|1})/\Z)\otimes \Omega^\bullet(M)$-modules,
$$
C^\infty((\R_{>0}\times \H^{2|1})/\Z)\otimes \Omega^\bullet(M;V)\to C^\infty((\R_{>0}\times \H^{2|1})/\Z)\otimes \Omega^\bullet(M;V), 
$$
with the standard module structure on the source, and the twisted module structure on the target. From~\eqref{eq:R21linearmod}, the map is linear over $C^\infty((\R_{>0}\times \H^{2|1})/\Z)$ and so defines a vector bundle map over $(\R_{>0}\times \H^{2|1})/\Z$ between the pullbacks of the trivial bundle over $\R_{>0}$ with fiber $\Omega^\bullet(M;V)$ along the source and target maps in $\{(\R_{>0}\times \H^{2|1})/\Z\toto \R_{>0}\}$. This gives a function $\rho(r,\tau,\bar\tau,\theta)$ on $(\R_{>0}\times \H^{2|1})/\Z$ valued in endomorphisms of the vector space $\Omega^\bullet(M;V)$ satisfying the remaining half of~\eqref{eq:R21linearmod} regarding the $\Omega^\bullet(M)$-module structure, and compatibility with composition,
$$
\rho(r,\tau,\bar\tau,\theta)\circ \rho(r,\tau',\bar\tau',\theta')=\rho(r,\tau+\tau',\bar\tau+\bar\tau'+i\theta\theta',\theta+\theta'). 
$$
The statement involving isomorphisms of representations follows from identical arguments to the $1|1$-dimensional case. This proves the lemma. 
\ep

\begin{lem} \label{lem:21semigroup}
For a positive energy representation $\rho\colon \Ann(M)\to \End(V)$, the $\Z$-grading by weight space of the $\Rot(M)$-action on the restriction of $V$ to $\R_{>0}\times M$ extends to a $\Z$-grading on $\Omega^\bullet(\R_{>0}\times \SM(\R^{0|1},M);V)$. Furthermore, the representation respects this grading, meaning that the section of $\Hom(s^*V,t^*V)$ associated with $\rho$ is a map of $\Z$-graded vector spaces. 
\end{lem}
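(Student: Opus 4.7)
My plan is to use the centrality of the rotation subgroup inside the super Euclidean group to force any representation to split along the weight decomposition. The starting point is Lemma~\ref{lem:21modulemaps}, which repackages a unitary representation $\rho\colon \Ann(M)\to \End(V)$ as a representation of $\{(\R_{>0}\times \H^{2|1})/\Z\toto \R_{>0}\}$ on the trivial bundle with fiber $\Omega^\bullet(M;V)$ (where $V=i^*V'$ is the restriction to the reduced manifold) compatible with the twisted $\Omega^\bullet(M)$-module structure. Under this identification, $\Rot(M)$ corresponds to the subcategory $\{(\R_{>0}\times \E)/\Z\toto \R_{>0}\}$, which arises from the diagonal embedding $\E\hookrightarrow \H^{2|1}$, $t\mapsto (t,t,0)$, used in defining the $\Z$-quotient $A^{2|1}_r$. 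The positive energy hypothesis then guarantees that, fiberwise over each $r\in \R_{>0}$, the resulting unitary representation of the circle $\E/r\Z$ decomposes the restriction $V|_{\R_{>0}\times M}$ into smooth finite-dimensional weight subbundles $V_k\to \R_{>0}\times M$ with $V_k=0$ for $k\ll 0$.

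Next, I would extend this decomposition off the reduced locus. Via the trivialization used in the proof of Lemma~\ref{lem:21modulemaps}, we identify $\Omega^\bullet(\R_{>0}\times \SM(\R^{0|1},M);V)\cong C^\infty(\R_{>0})\otimes \Omega^\bullet(M;V|_M)$, and the splitting $V|_M=\bigoplus_k V_k|_M$ extends by $C^\infty(\R_{>0})\otimes \Omega^\bullet(M)$-linearity to
\[
\Omega^\bullet(\R_{>0}\times \SM(\R^{0|1},M);V)\cong \bigoplus_k C^\infty(\R_{>0})\otimes \Omega^\bullet(M;V_k|_M),
\]
with each $V_k$-summand placed in degree $k$ and the scalar factor $C^\infty(\R_{>0})\otimes \Omega^\bullet(M)$ in degree~$0$. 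This is the claimed $\Z$-grading.

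The final and crucial step is that $\rho$ preserves this grading. The key computation uses the group law~\eqref{eq:E21}: since the only obstruction to commutativity is the term $i\eta\eta'$, we have
\[
(t,t,0)\cdot(\tau,\bar\tau,\theta)=(t+\tau,\,t+\bar\tau,\,\theta)=(\tau,\bar\tau,\theta)\cdot(t,t,0),
\]
so the image of $\E$ under the diagonal embedding is central in $\E^{2|1}$, and remains so after passing to $\H^{2|1}/r\Z$. Functoriality of $\rho$ then gives $\rho(r,\tau,\bar\tau,\theta)\circ\rho(r,t,t,0)=\rho(r,t,t,0)\circ\rho(r,\tau,\bar\tau,\theta)$, so the module map induced by any morphism of $\Ann(M)$ intertwines the rotation actions on $s^*V$ and $t^*V$ and therefore preserves weight spaces. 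I expect the main obstacle to be bookkeeping the source and target fibered structures over $\R_{>0}\times \SM(\R^{0|1},M)$ rather than just over $\R_{>0}\times M$: the rotation subgroupoid is defined only over the reduced locus, so one must check that the centrality argument still applies to module maps after extending scalars along $\Omega^\bullet(M)\hookrightarrow C^\infty(\SM(\R^{0|1},M))$. This should be immediate from the fact that the diagonal embedding lands entirely in the even part of $\H^{2|1}$, so the twisted module structure~\eqref{eq:R21linearmod} is compatible with the weight decomposition.
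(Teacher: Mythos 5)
Your argument is correct and follows essentially the same route as the paper: the paper introduces the super rotation subgroupoid $\sRot(M)$ (the extension of $\Rot(M)$ over $\R_{>0}\times \SM(\R^{0|1},M)$), extends the weight decomposition by $\Omega^\bullet(M)$-linearity, and invokes centrality of $\sRot(M)$ in $\Ann(M)$ to conclude that every morphism preserves weight spaces, exactly as you do with your direct group-law check. One small notational slip: in the paper's conventions the diagonal embedding of $\E$ is $t\mapsto (t,\bar t,0)$ using the real structure on $\R_{>0}$, not $(t,t,0)$, though the key point — the odd coordinate vanishes, so the $i\eta\eta'$ cross-term drops and centrality holds — is unaffected.
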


\bp
The $\Z$-grading on the representation comes from the fiberwise action of 
$$
\sRot(M):=\left(\begin{array}{c} (\R_{>0}\times \E)/\Z\times \SM(\R^{0|1},M) \\ \downarrow \downarrow \\ \R_{>0}\times \SM(\R^{0|1},M)\end{array}\right)\subset \Ann(M)
$$
that has $\Rot(M)\subset \sRot(M)$ as its reduced subgroupoid. 

The first part of the claim is that the $\Rot(M)$-action on the restriction to $\R_{>0}\times M$ determines the $\sRot(M)$-action on the bundle over $\R_{>0}\times \SM(\R^{0|1},M)$. Using the descriptions of $\Gamma({\rm Mor}(\Ann(M));s^*V')$ and $\Gamma({\rm Mor}(\Ann(M));t^*V')$ from the previous lemma as $C^\infty((\R_{>0}\times \H^{2|1})/\Z)\otimes \Omega^\bullet(M;V)$ with a pair of $C^\infty((\R_{>0}\times \H^{2|1})/\Z)\otimes \Omega^\bullet(M)$-module structures, the linear map associated with the representation $\rho$ restricted to the subspace $C^\infty((\R_{>0}\times \E)/\Z)\otimes \Omega^\bullet(M;V)$ is a $\Omega^\bullet(M)$-module map. The positive energy condition concerns the further restriction to $C^\infty((\R_{>0}\times \E)/\Z)\otimes \Omega^0(M;V)$, but by virtue of being a $\Omega^\bullet(M)$-module map this completely determines the representation on the subspace $C^\infty((\R_{>0}\times \E)/\Z)\otimes \Omega^\bullet(M;V)$. Concretely, this is simply the extension of a $\Z$-grading on sections $\Omega^0(M;V)$ to a $\Z$-grading on $\Omega^\bullet(M;V)$. 

The second claim is that the $\Z$-grading determined by the $\sRot(M)$-action commutes with the semigroup action. This follows from $\sRot(M)$ being a central subgroupoid of $\Ann(M)$, meaning for $R$ an $S$-point of the space of morphisms of $\Rot(M)$ and $A$ an $S$-point of the space of morphisms of~$\Ann(M)$, $\rho(R)\circ \rho(A)\circ \rho(R)^{-1}=\rho(A)$. This means that the representation restricts to each finite-dimensional weight space of the $\sRot(M)$-action, and is determined by these restrictions. 
\ep

We adopt the notation $q=e^{2\pi i \tau}$, $\bar q=e^{-2\pi i \bar \tau}$, and $\im(\tau)=\frac{\tau-\bar\tau}{2i}$.

\begin{lem} For a finite-dimensional representation of $\Ann(M)$ on which the $\Rot(M)$-action has the fixed weight $k\in \Z$, the representation takes the form
$$
\rho(r,\tau,\bar\tau,\theta)=q^{L_0}\bar q^{\bar L_0}(1+\theta \A)=q^{k/r} e^{-2\im (\tau)\A^2+\theta \A}
$$ 
where $\A$ is a super connection, $L_0-\bar L_0=k/r$ and $2\pi \bar L_0=\A^2$. Isomorphisms between representations with this fixed weight are in bijection with super connection preserving isomorphisms of super vector bundles. 
\end{lem}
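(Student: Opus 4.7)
\medskip

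\noindent\textbf{Proof proposal.} The plan is to mimic the proof of Lemma~\ref{lem:superconnect} in the super path case: apply Lemma~\ref{lem:21modulemaps} to repackage $\rho$ as a semigroup representation $\rho(r,\tau,\bar\tau,\theta)\colon \H^{2|1}/r\Z\to\End(\Omega^\bullet(M;V))$ satisfying the Leibniz-type identity, and then exploit the semigroup structure together with the weight $k$ hypothesis to pin down the functional form. Finite-dimensionality of the fixed-weight fiber will ensure that one-parameter Euclidean semigroups are honest exponentials.

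\smallskip

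\noindent First I would factor the semigroup representation. The group law~\eqref{eq:E21} shows that the three one-parameter subsemigroups generated by $\tau$, $\bar\tau$, and $\theta$ pairwise commute through the ordinary group law, so one obtains a well-defined factorization
\[
\rho(r,\tau,\bar\tau,\theta)=\rho(\tau,0,0)\,\rho(0,\bar\tau,0)\,\rho(0,0,\theta).
\]
By existence and uniqueness for ODEs on a finite-dimensional fiber, I can write $\rho(\tau,0,0)=e^{2\pi i\tau L_0}$ and $\rho(0,\bar\tau,0)=e^{-2\pi i\bar\tau \bar L_0}$ for even operators $L_0,\bar L_0\in \End(\Omega^\bullet(M;V))$, and $\rho(0,0,\theta)=1+\theta\A$ for an odd operator $\A$. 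This gives the displayed formula $\rho=q^{L_0}\bar q^{\bar L_0}(1+\theta\A)$.

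\smallskip

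\noindent Next I would extract the two relations. Comparing the two ways to evaluate $\rho(0,0,\theta)\cdot\rho(0,0,\theta')$ using the group law $(0,0,\theta)\cdot(0,0,\theta')=(0,i\theta\theta',\theta+\theta')$, and expanding both sides with $\theta^2=(\theta')^2=0$, yields the coefficient identity $\A^2=2\pi\bar L_0$. The fixed-weight hypothesis on the $\Rot(M)$-action means that $\rho(t,t,0)$ acts by $e^{2\pi ikt/r}$ on the fiber; since $\rho(t,t,0)=e^{2\pi it(L_0-\bar L_0)}$, this forces $L_0-\bar L_0=k/r$ on the nose. Substituting $L_0=\bar L_0+k/r$ gives
\[
q^{L_0}\bar q^{\bar L_0}=q^{k/r}(q\bar q)^{\bar L_0}=q^{k/r}e^{-4\pi\im(\tau)\bar L_0}=q^{k/r}e^{-2\im(\tau)\A^2},
\]
and since $\A^2$ commutes with $\A$ and $(\theta\A)^2=0$, the product $e^{-2\im(\tau)\A^2}(1+\theta\A)$ equals $e^{-2\im(\tau)\A^2+\theta\A}$, matching the second displayed expression.

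\smallskip

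\noindent Finally I would identify $\A$ as a super connection. Specializing the Leibniz compatibility of Lemma~\ref{lem:21modulemaps} to $\tau=\bar\tau=0$ gives $(1+\theta\A)(fv)=(f+\theta df)(1+\theta\A)(v)$, and reading off the $\theta$-coefficient shows $\A(fv)=(df)v+f\A(v)$, i.e., $\A$ is a super connection on $V\to M$. Conversely, any super connection $\A$ together with a weight $k\in\Z$ produces a semigroup representation by the above formula, with the Leibniz identity verified exactly as in the proof of Lemma~\ref{lem:superconnect}. For morphisms, Lemma~\ref{lem:21modulemaps} identifies an isomorphism of representations with an element $\varphi\in\Omega^\bullet(M;\Hom(V,W))^\times$ intertwining the two semigroup representations; since the semigroup is generated by one-parameter subgroups, intertwining is equivalent to intertwining the three generators, and hence (as in the path case) to the condition $\varphi(\A)=\A'$. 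I expect the only delicate point to be justifying the factorization and the exponential form on the whole of $\H^{2|1}/r\Z$ rather than just on a formal neighborhood of the identity, which is where the finite-dimensionality of the fiber (guaranteed here by fixing a single weight) is essential.
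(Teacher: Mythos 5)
Your argument is essentially the same as the paper's: repackage via Lemma~\ref{lem:21modulemaps}, extract $q^{L_0}\bar q^{\bar L_0}(1+\theta\A)$ by ODE existence/uniqueness, read $L_0-\bar L_0=k/r$ off the weight-$k$ restriction to $\Rot(M)$, deduce $2\pi\bar L_0=\A^2$ from the semigroup law, and get the super-connection property from the Leibniz condition in Lemma~\ref{lem:21modulemaps}. The one stylistic divergence is that you factor $\rho(\tau,\bar\tau,\theta)=\rho(\tau,0,0)\rho(0,\bar\tau,0)\rho(0,0,\theta)$, whereas the paper treats the bosonic part $\rho(\tau,\bar\tau,0)$ as a single two-parameter (real) semigroup representation and only then multiplies by $(1+\theta\A)$; your version is fine in spirit but note that $(\tau,0,0)$ and $(0,\bar\tau,0)$ are not honest $S$-points of $\H^{2|1}/r\Z$ unless $\tau,\bar\tau$ are nilpotent, since their reduced parts must be complex conjugates lying in $\H$ --- you flag this implicitly at the end, but the paper's formulation (real two-parameter ODEs in $x=\mathrm{Re}\,\tau$, $y=\mathrm{Im}\,\tau$, then re-expressed in $\tau,\bar\tau$) sidesteps it cleanly and is the safer way to phrase this step.
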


\bp
On restriction to the subspace $(\R_{>0}\times \hs)/\Z\times\SM(\R^{0|1},M) \subset (\R_{>0}\times \hs{}^{2|1})/\Z\times\SM(\R^{0|1},M)$ and by Lemma~\ref{lem:21semigroup}, we obtain a representation of $\{(\R_{>0}\times \H)/\Z\toto \R_{>0}\}$ on $\End(\Omega^\bullet(M;V))$, where the self-maps of $\Omega^\bullet(M;V)$ are $\Omega^\bullet(M)$-module maps. This means the representation takes values in sections of $\Omega^\bullet(M;{\rm End}(V))$. By the existence and uniqueness of solutions to differential equations we have
$$
\rho(r,\tau,\bar\tau,0)=q^{L_0}\bar q^{\bar L_0},
$$
for bundle endomorphisms $L_0,\bar L_0\in C^\infty(\R_{>0})\otimes \Omega^\bullet(M;{\rm End}(V))$. The positive energy condition requires that the action by $\E\subset \hs/r\Z$ is by $e^{2\pi i k(\tau-\bar\tau)/2r}$, and so $L_0-\bar L_0=k/r$. 

The full representation extends the above as
$$
\rho(r,\tau,\bar\tau,\theta)=q^{L_0}\bar q^{\bar L_0}(1+\theta\A)
$$
for an odd operator~$\A$. By the description afforded by Lemma~\ref{lem:21modulemaps}, we require
$$
q^{L_0}\bar q^{\bar L_0}(1+\theta\A)(f\cdot v)=(f+\theta df)\cdot q^{L_0}\bar q^{\bar L_0}(1+\theta\A)(v),  
$$
where $v\in \Omega^\bullet(M;V)$ and $f \in C^\infty(\R_{>0}\times\SM(\R^{0|1},M))$. From this we deduce that~$\A$ defines a super connection on $V$. Compatibility with composition demands
$$
q_1^{L_0} \bar q_1^{\bar L_0}(1+ \theta_1\A)q_2^{L_0}\bar q_2^{\bar L_0}(1+ \theta_2\A)=(q_1q_2)^{L_0}(\bar q_1\bar q_2)^{\bar L_0}e^{-2\pi i\theta_1\theta_2 \bar L_0}(1+(\theta_1+\theta_2)\A),
$$ 
which requires $2\pi \bar L_0=\A^2$. Using $L_0-\bar L_0=k/r$, we express this as 
\beq
\rho(r,\tau,\bar\tau,\theta)=q^{L_0}\bar q^{\bar L_0}(1+ \theta\A)&=&q^{L_0-\bar L_0+\bar L_0}\bar q^{\bar L_0}(1+\theta\A)\nonumber\\
&=&q^{k/2}e^{2\pi i (\tau-\bar \tau)\bar L_0}(1+\theta\A)\nonumber\\
&=& q^{k/r} e^{-4\pi \im(\tau)\bar L_0}(1+\theta\A)\nonumber \\
&=&q^{k/r} e^{-2\im (\tau)\A^2+\theta \A}.\label{eq:21rep}
\eeq
Unitarity of this representation is the equality
$$
i^{\deg} \bar q^{k/r} e^{-2\im (\tau)\A^2+i\theta \A}=(q^{k/r} e^{-2\im (\tau)\A^2+\theta \A})^*=\bar q^{k/r}(e^{-2\im (\tau)\A^2+\theta \A})^*
$$
where $i^{\deg}$ acts on a differential $m$-form by $i^m$. This follows from the (previously imposed) unitarity of the $\Rot(M)$ action and a condition on the super connections~$\A_k$. The computation in each degree $k$ is identical to the $1|1$-dimensional case, requiring $\A_k$ be a unitary super connection~\eqref{eq:11unitaryconcl}. 

Finally, we observe a super vector bundle isomorphism~$\varphi\in \Omega^\bullet(M;\Hom(V,W))^\times$ is compatible with the representations $\rho$ and $\rho'$ associated with super connections $\A$ and $\A'$ if and only if $\varphi(\A)=\A'$. 
\ep

\begin{proof}[Proof of Theorem~\ref{thm:geocharof21rep}] 
The positive energy condition means that a representation can be written as a (possibly infinite) direct sum of finite-dimensional representations, where on each of these the rotation subgroupoid acts by a fixed weight, with this weight being bounded below. From the previous lemma, this gives a decomposition
\beq
\rho(\tau,\bar\tau,\theta)=\bigoplus_{k>-N}^\infty q^{k/r} e^{-2\im (\tau)\A_k^2+\theta\A_k}\label{eq:annrepdecomp}
\eeq
for unitary super connections $\A_k$, proving the theorem.
\ep

\subsection{Grothendieck groups}

The following is an immediate corollary to Theorem~\ref{thm:geocharof21rep}; it is important because it implies that concordance of representations is an equivalence relation. 

\begin{cor} The prestack $M\mapsto \Rep(\Ann(M))$ is a stack. 
\end{cor}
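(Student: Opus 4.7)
The plan is to reduce the stack condition on $M\mapsto \Rep(\Ann(M))$ to the stack condition for a more familiar geometric object via the equivalence of Theorem~\ref{thm:geocharof21rep}. By that theorem, $\Rep(\Ann(M))$ is equivalent (naturally in~$M$) to the groupoid $\mathcal{V}(M)$ whose objects are $\Z$-graded hermitian super vector bundles $V = \bigoplus_k V_k \to M$ with each $V_k$ finite-dimensional, with weights bounded below, equipped with a unitary super connection $\A_k$ on each $V_k$, and whose morphisms are grading-preserving isomorphisms of hermitian super vector bundles intertwining the super connections. Since equivalences of prestacks respect the stack condition, it suffices to prove that $M\mapsto \mathcal{V}(M)$ is a stack.

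First I would verify naturality of the equivalence in Theorem~\ref{thm:geocharof21rep}. Given a smooth map $f\colon M\to M'$, pullback of the super Lie category along $\Ann(f)\colon \Ann(M)\to \Ann(M')$ produces the pullback representation, whose associated vector bundle is $f^*V'$ and whose super connections are the pullback super connections $f^*\A'_k$; this compatibility is immediate from the explicit correspondence $\rho(\tau,\bar\tau,\theta)=\bigoplus_k q^{k/r}e^{-2\im(\tau)\A_k^2+\theta\A_k}$ in~\eqref{eq:annrepdecomp}. Consequently, the equivalence $\Rep(\Ann(-))\simeq \mathcal{V}(-)$ is an equivalence of prestacks.

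Next I would check descent for $\mathcal{V}$ against an open cover $\{U_\alpha\to M\}$. One handles each weight~$k$ separately: finite-dimensional hermitian super vector bundles with unitary super connections are well-known to form a stack (connections form an affine torsor over the stack of hermitian bundles, and the super connection condition is a local condition on an affine bundle of differential operators), so each $V_k$ with super connection~$\A_k$ descends. The grading-preserving isomorphisms on overlaps descend componentwise, so a descent datum in $\mathcal{V}$ is the same thing as a family of descent data $\{(V_{k,\alpha},\A_{k,\alpha})\}$ indexed by~$k$, one for each weight. Assembling the glued pieces into $\bigoplus_k V_k$ preserves the conditions of finite rank in each weight and weight bounded below, since these are checked locally (indeed, both conditions are visible on any stalk and are preserved by isomorphism).

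The step I expect to take a touch of care is the last one: ensuring that the direct sum $\bigoplus_k V_k$ on $M$ produced by independently gluing each weight space genuinely assembles into an object of $\mathcal{V}(M)$ rather than something with weights accumulating or with ranks jumping. Here the compactness hypothesis on~$M$ and the fact that ``bounded below'' is an open and closed condition on the connected components of~$M$ makes this harmless: the lower bound on weight specified by the descent datum is locally constant, and since the ranks of the $V_k$ are specified as part of the descent datum, the glued object automatically has the required finite-dimensional weight spaces. The remaining verifications (2-cocycle conditions, uniqueness of descent up to canonical isomorphism) reduce to the corresponding statements for hermitian bundles with unitary super connections at each weight, which are standard.
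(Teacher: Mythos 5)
Your proof is correct and takes the same route as the paper: the paper states the corollary as an immediate consequence of Theorem~\ref{thm:geocharof21rep}, i.e., it reduces descent for $\Rep(\Ann(-))$ to descent for $\Z$-graded finite-rank hermitian super vector bundles with super connections, which is standard. You have simply written out the naturality and weight-by-weight gluing details that the paper leaves implicit.
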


Now we compute Grothendieck groups of representations of super annuli.

\begin{proof}[Proof of Theorem~\ref{thm:Tateeasy}]
By Theorem~\ref{thm:geocharof21rep}, a concordance class of a unitary representation of $\Ann(M)$ is the concordance class of a sequence of unitary super connections on a sequence of super vector bundles $V_k$ for $k>-N\in \Z$. Since the space of unitary super connections is affine, the set of concordance classes is the same as isomorphism classes of sequences of super vector bundles. The quotient of the free abelian group on super vector bundles by the subgroup generated by~\eqref{eq:Grothequiv} is exactly the K-theory of~$M$, and so sequences of such give $\K(M)\llbracket q \rrbracket [q^{-1}]\cong \K_{\rm Tate}(M)$.
\ep

\section{Differential elliptic cohomology at the Tate curve}\label{sec:diffellmain}

In this section we study the character theory for positive energy representations of $\Ann(M)$, culminating in the proof of Theorem~\ref{thm:Tate}. A priori, characters of finite-dimensional representations take values in the (nondegenerate) inertia groupoid, $\Lambda^{\rm nd}(\Ann(M))$. In complete parallel to the $1|1$-dimensional case for K-theory, functions on this stack are insufficiently rigid for our intended connection with a differential model for~$\K_\Tate$. This necessitates a refinement of the character theory, and the geometry of the intertia groupoid points us in the right direction. 

The inertia groupoid $\Lambda(\Ann(M))$ consists of constant super annuli in~$M$ with the same source and target super circle. Viewing these annuli as super tori suggests one consider additional automorphisms from super translation and dilations of super tori, which leads to a super double loop stack~$\widetilde{\mathcal{L}}^{2|1}_0(M)$. We define a rescaled partition function with values in holomorphic functions on~$\widetilde{\mathcal{L}}^{2|1}_0(M)$. This structure is what leads to the desired differential cocycle model for $\K_\Tate(M)$. 

The motivation regarding the behavior of partition functions under cutoffs again runs in complete parallel to~\S\ref{eq:11effpart}. The new feature is that we have chosen cutoffs for each weight space of the $\Rot(M)$-action, and the modifications to the partition function also decompose in this manner. 

\subsection{Dilations of super annuli and Bismut--Quillen rescaling}\label{sec:21Bismut--Quillen}

There is a dilation action on $\R^{2|1}$, 
$$
(z,\bar z,\theta)\mapsto (\mu^2 z,\bar \mu^2 \bar z,\bar\mu\theta),\qquad (z,\bar z,\theta)\in \R^{2|1}(S), \ \mu\in \R_{>0}(S)
$$
that descends to an action on $\E^{2|1}$ through group homomorphisms. Here we use the real structure on $\R_{>0}$. We promote this to a functor on constant super annuli, which is the \emph{renormalization group} (RG) action. Below, $\R_{>0}$ is the discrete super Lie category associated with the manifold $\R_{>0}$. 

\begin{defn} \label{defn:21RG}
Define a functor $\RG\colon \R_{>0}\times \Ann(M)\to \Ann(M)$ whose value on $S$-points of objects and morphisms is
$$
(\mu,r,x,\psi)\mapsto (\mu^2 r,x,\bar\mu^{-1}\psi)\qquad (\mu,\tau,\bar\tau,\theta,x,\psi)\mapsto (\mu^2 \tau,\bar\mu^2\bar \tau,\bar\mu\theta,x,\bar\mu^{-1}\psi)
$$
where $\mu\in \R_{>0}(S)$, $(r,\tau,\bar\tau,\theta)\in (\R_{>0}\times \H^{2|1})/\Z(S)$, and $(x,\psi)\in \SM(\R^{0|1},M)(S).$
\end{defn}

We observe that the diagram commutes,
\beq
\begin{array}{c}
\begin{tikzpicture}
  \node (A) {$\R_{>0}\times \R_{>0}\times \Ann(M)$};
  \node (B) [node distance= 6cm, right of=A] {$\R_{>0}\times \Ann(M)$};
  \node (C) [node distance = 1.5cm, below of=A] {$\R_{>0}\times \Ann(M)$};
\node (D) [node distance = 1.5cm, below of=B] {$\Ann(M)$};
  \draw[->] (A) to node [above] {$\id_{\R_{>0}}\times \RG$} (B);
  \draw[->] (A) to node [left] {$m\times \id_{\Ann(M)}$} (C);
  \draw[->] (B) to node [right] {$\RG$} (D);
  \draw[->] (C) to node [below] {$\RG$} (D);
\end{tikzpicture}\end{array}\nonumber
\eeq
where $m$ is the multiplication on~$\R_{>0}$, so that $\RG$ defines a strict $\R_{>0}$-action on $\Ann(M)$. Let $\RG_\mu$ be the restriction of $\RG$ to the subcategory $\{\mu\}\times \Ann(M)$, so $\RG_\mu\colon \Ann(M)\to \Ann(M)$ and $\RG_\mu\circ \RG_\lambda=\RG_{\mu\lambda}$.

Precomposing a representation with~$\RG_\mu$ leads to an $\R_{>0}$-action on the category of representations. We characterize this action on positive energy representations in terms of an action on sequences of super connections using Theorem~\ref{thm:geocharof21rep}. 

\begin{lem}\label{lem:21Bismut--Quillen}
For $\mu\in \R_{>0}$, the action of $\RG_\mu$ on a positive energy representation of $\Ann(M)$ associated with a sequence of unitary super connections $\A_k$ is the Bismut--Quillen rescaling action on each $\A_k$,
$$
\A_k\stackrel{\RG_\mu}{\mapsto} \mu \A_k(0)+\A_k(1)+\mu^{-1}\A_k(2)+\mu^{-2}\A_k(3)+\dots 
$$
where $\A_k(i)\colon \Omega^\bullet(M;V_k)\to \Omega^{\bullet+i}(M;V_k)$ is the degree $i$ piece of the super connection~$\A_k$. 
\end{lem}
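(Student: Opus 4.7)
The plan is to reduce the statement to the $1|1$-dimensional computation of Lemma~\ref{lem:11Bismut--Quillen} by invoking the explicit classification from Theorem~\ref{thm:geocharof21rep} on each $\Rot(M)$-weight summand. First I would write a positive energy representation as in~\eqref{eq:annrepdecomp},
$$
\rho(r,\tau,\bar\tau,\theta)=\bigoplus_{k>-N}^\infty q^{k/r}\,e^{-2\im(\tau)\A_k^2+\theta\A_k},
$$
and observe that since $\RG_\mu$ scales the $\E$-factor of $\Rot(M)$ by $\mu^2$ and acts trivially on the underlying manifold $M$, it restricts to an endofunctor of $\Rot(M)\subset \Ann(M)$. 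Consequently, precomposition with $\RG_\mu$ preserves the $\Rot(M)$-isotypic decomposition, so it suffices to verify the claim on each summand $\rho_k$.

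Second, I would plug the formula for $\RG_\mu$ from Definition~\ref{defn:21RG} into the expression for $\rho_k$. The substitution $(r,\tau,\bar\tau,\theta)\mapsto(\mu^2 r,\mu^2\tau,\mu^2\bar\tau,\mu\theta)$ (using $\bar\mu=\mu$ for $\mu\in\R_{>0}$) yields
$$
\rho_k(\mu^2 r,\mu^2\tau,\mu^2\bar\tau,\mu\theta)=q^{k/r}\,e^{-2\mu^2\im(\tau)\A_k^2+\mu\theta\A_k},
$$
with the prefactor $q^{k/r}$ invariant because $k(\mu^2\tau)/(\mu^2 r)=k\tau/r$. This parallels the substitution in the $1|1$-dimensional setting, with $t\mapsto\mu^2 t$ replaced by $\im(\tau)\mapsto\mu^2\im(\tau)$.

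Third, I would account for the pullback on the object bundle coming from $(x,\psi)\mapsto(x,\mu^{-1}\psi)$. Under the trivialization of Lemma~\ref{lem:21modulemaps}, this translates to conjugation by the scaling operator $\Phi_\mu$ on $\Omega^\bullet(M;V_k)$ that multiplies a degree-$i$ form by $\mu^{-i}$. A short calculation gives $\Phi_\mu\circ\A_k(i)\circ\Phi_\mu^{-1}=\mu^{-i}\A_k(i)$, so the pulled-back super connection is $\sum_i\mu^{-i}\A_k(i)$. Combining with the $\mu^2$ and $\mu$ factors from the second step and setting $\A_k^\mu:=\sum_i\mu^{1-i}\A_k(i)$, the elementary identities
$$
\mu^2\Big(\sum_i\mu^{-i}\A_k(i)\Big)^{\!2}=(\A_k^\mu)^2,\qquad \mu\sum_i\mu^{-i}\A_k(i)=\A_k^\mu,
$$
present the pulled-back representation as $q^{k/r}\,e^{-2\im(\tau)(\A_k^\mu)^2+\theta\A_k^\mu}$. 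Under the correspondence of Theorem~\ref{thm:geocharof21rep}, this is the representation associated to the Bismut--Quillen rescaled connection $\A_k^\mu=\mu\A_k(0)+\A_k(1)+\mu^{-1}\A_k(2)+\dots$, which is exactly the claim.

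The main obstacle is bookkeeping: correctly combining the two sources of $\mu$-scaling (from the substitution in $\tau,\bar\tau,\theta$ on the morphism side, and from the conjugation by $\Phi_\mu$ coming from $\psi\mapsto\mu^{-1}\psi$ on the object side) so that they aggregate into the single rescaled super connection $\A_k^\mu$. Once this is laid out, the computation is a direct $k$-by-$k$ translation of the $1|1$-dimensional argument.
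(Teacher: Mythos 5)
Your proposal is correct and follows essentially the same route as the paper: reduce to a fixed $\Rot(M)$-weight $k$ via the decomposition~\eqref{eq:annrepdecomp}, substitute the formula for $\RG_\mu$ from Definition~\ref{defn:21RG}, and observe that the $\mu^2$ on $\im(\tau)$, the $\mu$ on $\theta$, and the $\mu^{-i}$ on each $\A_k(i)$ combine into the single rescaled connection $\sum_i\mu^{1-i}\A_k(i)$. Your third step merely makes explicit (via conjugation by the degree-scaling operator coming from $\psi\mapsto\mu^{-1}\psi$) a point the paper's computation absorbs into its substitution.
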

\bp 
It suffices to check the claim on a representation for which $\Rot(M)$ acts by a fixed weight $k$. Precomposing by $\RG_\mu$, we find
\beq
q^{k/r} e^{-2\im (\tau)\A_k^2+\theta\A_k}&\stackrel{\RG_\mu}{\mapsto}& \exp\big(2\pi i \mu^2 \tau k/(\mu^2 r)\big) \nonumber \\
&&\cdot\exp\big(-2\mu^2 \im(\tau) \sum_j (\mu^{-i}\A_k(j))^2+\mu \theta \sum_j \mu^{-j}\A_k(j)\big)\nonumber \\
&=&q^{k/r}\exp\big(-2 \im(\tau) \sum_j (\mu^{-j+1}\A_k(j))^2+\theta \sum_j \mu^{-j+1}\A_k(j)\big)\nonumber
\eeq
which is the claimed Bismut--Quillen rescaling on the super connection.
\ep

\subsection{Inertia groupoid and super double loop stacks}
\begin{lem} The inerita groupoid of $\Ann(M)$ is 
$$
\Lambda(\Ann(M))\cong \left\{ \begin{array}{c} (\R_{>0}\times \E \times \H)/\Z \times \SM(\R^{0|1},M)\\ \downarrow\downarrow \\ (\R_{>0}\times \H)/\Z\times \SM(\R^{0|1},M)\end{array}\right\},
$$
where the source and target maps are both projections, and the $\Z$-quotient is by the action generated by
$$
(r,x,\tau,\bar\tau)\mapsto (r,x+r,\tau+r,\bar\tau+\bar r)\qquad r\in \R_{>0}, \ x\in \E(S), \ (\tau,\bar\tau)\in \H(S). 
$$
The nondegenerate inertia groupoid corresponds to the subspace of objects, 
$$
(\R_{>0}\times \mathfrak{H})/\Z\times \SM(\R^{0|1},M)\subset (\R_{>0}\times \H)/\Z\times \SM(\R^{0|1},M),
$$
i.e., the subspace $\mathfrak{H}\subset \H$ with $\im(\tau)>0$. 
\end{lem}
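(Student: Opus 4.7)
The plan is to unpack the definition of the inertia groupoid $\Lambda(\Ann(M))$ as the equalizer of source and target (on objects) plus the corresponding data of automorphisms (on morphisms), using the explicit descriptions $(\Ann(M))_1 \cong (\R_{>0}\times \H^{2|1})/\Z\times \SM(\R^{0|1},M)$ and $(\Ann(M))_0 \cong \R_{>0}\times \SM(\R^{0|1},M)$ from Lemma~\ref{lem:constantsuperann}.

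\emph{Step 1: The source and target on $(\Ann(M))_1$.} The source map is the projection $(r,\tau,\bar\tau,\theta,x,\psi)\mapsto (r,x,\psi)$. For the target map, the target super circle in $M$ is obtained by composing the embedding $T_{\tau,\bar\tau,\theta}\circ \iota$ with the constant map $\gamma$. Pushing the $\E^{2|1}$-translation $T_{\tau,\bar\tau,\theta}$ through the homomorphism $\E^{2|1}\to \E^{0|1}$, $(z,\bar z,\eta)\mapsto \eta$, gives translation by $\theta$ on $\R^{0|1}$, and applying this on $S$-points of $\SM(\R^{0|1},M)\cong \Pi TM$ yields $(r, x+\theta\psi,\psi)$.

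\emph{Step 2: Objects of $\Lambda(\Ann(M))$.} The equalizer of source and target is cut out by the relation $\theta\psi=0$. In the presheaf of points, the universal $\psi$ is generic, so $\theta\psi=0$ forces $\theta=0$, and the equalizer is represented by the closed sub-super-manifold $\{\theta=0\}\hookrightarrow (\R_{>0}\times \H^{2|1})/\Z\times\SM(\R^{0|1},M)$. This is $(\R_{>0}\times\H)/\Z\times \SM(\R^{0|1},M)$ with the $\Z$-action inherited from morphisms, namely $\tau\mapsto \tau+r$.

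\emph{Step 3: Morphisms of $\Lambda(\Ann(M))$.} By definition, a morphism is a pair (automorphism, endomorphism) of a common object. Invertible morphisms of $\Ann(M)$ are those coming from the invertible sub-semigroup $\E^{1|1}/r\Z\subset \H^{2|1}/r\Z$, i.e.\ with $(\tau,\bar\tau)$ on the boundary real axis $\E\subset \H$. Imposing source = target on these automorphisms via the same argument as Step~2 sets the odd parameter to zero, producing automorphisms parametrized by $(\R_{>0}\times \E)/\Z\times \SM(\R^{0|1},M)$. Pairing an automorphism (coordinate $x'\in\E$) with an endomorphism (coordinates $(\tau,\bar\tau)\in\H$) and combining the two $\Z$-quotients into the simultaneous action $(r,x',\tau,\bar\tau)\mapsto (r,x'+r,\tau+r,\bar\tau+\bar r)$ produces $(\R_{>0}\times \E\times \H)/\Z\times \SM(\R^{0|1},M)$.

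\emph{Step 4: Source and target on $\Lambda(\Ann(M))_1$.} Because $\H^{2|1}/r\Z$ is abelian, conjugation of an endomorphism by an automorphism is trivial, so source = target and both are the projection forgetting $x'$. \emph{Step 5: Nondegenerate part.} A morphism of $\Ann(M)$ is non-invertible precisely when $\im(\tau)>0$, so $\Lambda^{\nd}(\Ann(M))_0$ is obtained from $\Lambda(\Ann(M))_0$ by restricting $\H$ to its interior $\mathfrak{H}$, yielding the claimed subspace.

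\emph{Main obstacle.} The delicate point is Step~2. A priori the sub-presheaf defined by $\theta\psi=0$ is strictly larger than $\{\theta=0\}$: it also contains $S$-points where $\psi$ vanishes but $\theta$ does not. The careful argument is that at the universal $S$-point (e.g.\ $S=\SM(\R^{0|1},M)$ with $\psi$ the tautological odd tangent vector), the odd function $\psi$ is not a zero-divisor on $C^\infty(S)$, so $\theta\psi=0$ forces $\theta=0$ in the structure sheaf. This identifies the equalizer, as a closed sub-super-manifold, with $\{\theta=0\}$; the same argument applied to the invertible sub-semigroup gives the analogous identification for automorphisms used in Step~3.
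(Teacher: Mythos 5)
Your Steps 1 and 3--5 track the paper's proof closely: the target map on constant super annuli is $(r,\tau,\bar\tau,\theta,x,\psi)\mapsto (r,x+\theta\psi,\psi)$, the objects of $\Lambda(\Ann(M))$ are identified with the locus $\theta=0$, the invertible endomorphisms are exactly those with $(\tau,\bar\tau)$ on the real axis $\E\subset\H$ (and these act trivially by conjugation since the translation semigroup is abelian), and the nondegenerate part is cut out by $\im(\tau)>0$. All of that agrees with the paper in substance and in conclusion.

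The genuine problem is the resolution you offer for your ``main obstacle'' in Step 2. The assertion that the odd function $\psi$ ``is not a zero-divisor on $C^\infty(S)$'' is false: every odd element of a supercommutative algebra squares to zero and is therefore a zero-divisor, and the ideal generated by the components $\psi^1,\dots,\psi^n$ of the tautological odd tangent vector has nonzero annihilator (e.g.\ the top product $\psi^1\cdots\psi^n$). More to the point, the presheaf-level equalizer of $s$ and $t$ is \emph{genuinely} strictly larger than the subfunctor represented by $\{\theta=0\}$, not just ``a priori'' so: for $S=\R^{0|1}$ the product of any two odd functions on $S$ vanishes, hence \emph{every} $S$-point of ${\rm Mor}(\Ann(M))$ satisfies $\theta\psi=0$, while only those with $\theta=0$ factor through your claimed representing object. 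So no zero-divisor argument can identify the literal equalizer of presheaves with $\{\theta=0\}$; your Step 2 as justified would fail. Note that the paper does not attempt this presheaf computation: its proof instead characterizes $\Lambda(\Ann(M))_0$ as the sub-super-manifold of ${\rm Mor}(\Ann(M))$ on which the target map \emph{coincides with the projection as a map of super manifolds}; since $t^*(f)=s^*(f)+\theta\,\psi^*(f)$ and $\theta\psi^i\neq 0$ in the structure sheaf of the full morphism space, this forces $\theta=0$. To repair your argument you should either adopt that reading of the definition, or else treat $\Lambda(\Ann(M))$ as a generalized (possibly non-representable) super Lie groupoid and show that the representable quotient relevant for characters is the one displayed in the statement.
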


\bp The inertia groupoid of $\Ann(M)$ has as objects those constant super annuli with the same start and endpoint, so we require the largest subspace of the morphisms of $\Ann(M)$ on which the target map is the projection. This is 
$$
\Lambda(\Ann(M))_0=(\R_{>0}\times \H)/\Z\times \SM(\R^{0|1},M)
$$
To determine the morphisms of $\Lambda(\Ann(M))$, we observe that the invertible endomorphisms are exactly those corresponding to the image $\E\subset \H$ of the real axis (i.e., $\im(\tau)=0$). These act by conjugating a super annulus with a rotation. But this action is trivial so~$\Lambda(\Ann(M))$ is as claimed. This description of the invertible morphisms also yields the claimed nondegenerate inertia groupoid, corresponding to the subspace $\im(\tau)>0$. 
\ep

We take a moment to spell out the inertia groupoid explicitly in terms of the geometry of super annuli with maps to~$M$. An $S$-point of the objects is a map of constant super annuli
$$
\phi\colon A^{2|1}_r=(S\times \R^{2|1})/r\Z\to S\times \R^{0|1}\to M
$$
with the same source and target super circle, which is equivalent to invariance of the map~$\phi$ under the $\Z$-action generated by the family of translations $(\tau,\bar\tau)\in \E^{2|1}(S)$ that act on $S\times \R^{2|1}$. This means that the map above descends to the quotient,
\beq
(S\times \R^{2|1})/r\Z\oplus(\tau,\bar\tau)\Z \to S\times \R^{0|1}\to M,\label{eq:markedtori}
\eeq
When $\im(\tau)>0$ (corresponding to the nondegenerate inertia groupoid) the source of this map is a family of \emph{super tori}, meaning a quotient of $S\times \R^{2|1}$ by an $S$-family of lattices (we develop this more systematically in Definition~\ref{defn:superLLM}). Isomorphisms in the inertia groupoid consist of commuting triangles, 
\beq
\begin{tikzpicture}[baseline=(basepoint)];
\node (A) at (0,0) {$(S\times \R^{2|1})/r\Z\oplus(\tau,\bar\tau)\Z$};
\node (B) at (6,0) {$(S'\times \R^{2|1})/r\Z\oplus(\tau,\bar\tau)\Z$};
\node (C) at (3,-1.5) {$M$};
\draw[->] (A) to node [above=1pt] {$\cong$} (B);
\draw[->] (A) to node [left=.3cm]{$\phi$} (C);
\draw[->] (B) to node [right=.2cm]{$\phi'$} (C);
\path (0,-.75) coordinate (basepoint);
\end{tikzpicture}\label{21anntriangle2}
\eeq
where the top horizontal arrow rotates the annulus, with rotation determined by an $S$-point of $\E$. Super tori have additional automorphisms coming from super Euclidean isometries (see~\S\ref{sec:supertrans}), and the action of the renormalization group by dilations. The super Euclidean isometries are determined by $S$-points of $\E^{2|1}\rtimes \Z/2$, whereas the renormalization group is again the action by $\R_{>0}$. These combine to give an action by $\E^{2|1}\rtimes \R^\times$, acting by super translation of super tori and global dilations. We will ask that our (rescaled) partition functions are invariant under these additional symmetries, leading to the following definition. 

\begin{defn} 
Define the \emph{stack of constant super tori in $M$ with marked meridian,} denoted $\widetilde{\mathcal{L}}^{2|1}_0(M)$, as the super Lie groupoid,
$$
\widetilde{\mathcal{L}}^{2|1}_0(M):=\left(\begin{array}{c} (\E^{2|1}\rtimes \R^\times \times \R_{>0}\times \mathfrak{H})/\Z\times \SM(\R^{0|1},M)\\ \downarrow\downarrow \\
(\R_{>0} \times \mathfrak{H})/\Z\times \SM(\R^{0|1},M)\end{array}\right),
$$
where the quotient $(\E^{2|1}\rtimes \R^\times \times \R_{>0}\times \mathfrak{H})/\Z$ comes from the $\Z$-action generated by
\beq
(z,\bar z,\theta,\mu, r,\tau,\bar\tau)&\mapsto& (z+r,\bar z +\bar r,\theta,\mu,r,\tau+r,\bar\tau+\bar r)\nonumber\\
&& (z,\bar z,\theta)\in \E^{2|1}(S), \ \mu\in \R^\times(S), \ (\tau,\bar\tau)\in \mathfrak{H}(S), \ r\in \R_{>0}(S). \nonumber
\eeq
The source map for the groupoid is the projection, and the target map comes from an~$\E^{2|1}\rtimes \R^\times$-action. On $\SM(\R^{0|1},M)$ this action is through the homomorphism $\E^{2|1}\rtimes \R^\times\to \E^{0|1}\rtimes \R^\times$ and then the precomposition action on $\SM(\R^{0|1},M)$. The $\E^{2|1}\rtimes \R^\times$-action on $\R_{>0}\times \mathfrak{H}$ is through the homomorphism $\E^{2|1}\rtimes \R^\times\to \R^\times$ followed by the dilation action,
$$
\R^\times\times \R_{>0}\times\mathfrak{H}\to \R_{>0}\times \mathfrak{H}, \quad (\mu,\tau,\bar\tau,r)\mapsto (\mu^2\tau,\bar\mu^2\bar\tau,\bar\mu^2r). 
$$
\end{defn}

We observe that an $S$-point of objects of $\widetilde{\mathcal{L}}^{2|1}_0(M)$ gives a family of super tori with a map to $M$ as in~\eqref{eq:markedtori}. An $S$-point of morphisms gives a commuting triangle
\beq
\begin{tikzpicture}[baseline=(basepoint)];
\node (A) at (0,0) {$(S\times \R^{2|1})/r\Z\oplus(\tau,\bar\tau)\Z$};
\node (B) at (6,0) {$(S'\times \R^{2|1})/r'\Z\oplus(\tau',\bar\tau')\Z$};
\node (C) at (3,-1.5) {$M$};
\draw[->] (A) to node [above=1pt] {$\cong$} (B);
\draw[->] (A) to node [left=.3cm]{$\phi$} (C);
\draw[->] (B) to node [right=.2cm]{$\phi'$} (C);
\path (0,-.75) coordinate (basepoint);
\end{tikzpicture}\label{21anntriangle3}
\eeq
where the horizontal arrow is determined by an $S$-point of $\E^{2|1}\rtimes \R^\times$ which acts on the family of super tori by super translations and global dilations.

\begin{lem} \label{lem:L2algebra}
There is a natural isomorphism of algebras,
$$
C^\infty(\widetilde{\mathcal{L}}^{2|1}_0(M))\cong \Omega^\ev_\cl(M)\otimes C^\infty(\mathfrak{H}/\Z).
$$
\end{lem}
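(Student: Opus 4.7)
The plan is to unpack $C^\infty(\widetilde{\mathcal{L}}^{2|1}_0(M))$ as the algebra of functions on the object super manifold $(\R_{>0}\times \mathfrak{H})/\Z\times \SM(\R^{0|1},M)$ that are invariant under pullback to the morphisms along source and target, which amounts to invariance under the $\E^{2|1}\rtimes \R^\times$-action described in the definition. Using the standard identification $C^\infty(\SM(\R^{0|1},M))\cong \Omega^\bullet(M)$ and $C^\infty(\R_{>0}\times\mathfrak{H}\times \SM(\R^{0|1},M))\cong C^\infty(\R_{>0}\times\mathfrak{H})\otimes \Omega^\bullet(M)$, the problem reduces to computing the invariants of each factor and assembling them.

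First I would analyze the $\E^{2|1}$-action: the homomorphism $\E^{2|1}\rtimes \R^\times\to \E^{0|1}\rtimes \R^\times$ kills the bosonic factor $\E^2\subset \E^{2|1}$, so $\E^2$ acts trivially on both $\SM(\R^{0|1},M)$ and $(\R_{>0}\times \mathfrak{H})/\Z$, imposing no condition. The odd part $\R^{0|1}\subset \E^{2|1}$ acts on $\SM(\R^{0|1},M)\cong \Pi TM$ generating (up to $\theta$) the de~Rham differential, so invariance forces the $\Omega^\bullet(M)$-component to be closed. Next I would analyze $\R^\times$: the central $-1$ acts trivially on $(\R_{>0}\times\mathfrak{H})/\Z$ but scales the odd fiber coordinate of $\Pi TM$ by $-1$, so a $k$-form picks up $(-1)^k$; invariance forces even degree. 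The positive part $\R_{>0}^\times$ acts freely on $\R_{>0}\times\mathfrak{H}$ by $(r,\tau,\bar\tau)\mapsto (\mu^2r,\mu^2\tau,\mu^2\bar\tau)$ and scales a $k$-form by $\mu^{-k}$, so using the diffeomorphism $(r,\tau,\bar\tau)\mapsto (r,\tau/r,\bar\tau/\bar r)$ that trivializes the action in the $r$-direction, the invariants in degree $k$ are precisely functions of the form $r^{k/2}g(\tau/r,\bar\tau/\bar r)\otimes \omega$ with $g$ arbitrary and $\omega\in \Omega^k_\cl(M)$.

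Finally I would incorporate the $\Z$-quotient $(r,\tau,\bar\tau)\sim (r,\tau+r,\bar\tau+\bar r)$, which in the rescaled coordinates becomes the standard translation $\tau/r\mapsto \tau/r+1$, so the $g$-factor lies in $C^\infty(\mathfrak{H}/\Z)$. Assembling across even degrees gives the vector space isomorphism
\[
\Omega^\ev_\cl(M)\otimes C^\infty(\mathfrak{H}/\Z)\xrightarrow{\ \sim\ } C^\infty(\widetilde{\mathcal{L}}^{2|1}_0(M)),\qquad \omega\otimes g\ \longmapsto\ r^{k/2}\,g(\tau/r,\bar\tau/\bar r)\,\omega,\ \ \omega\in \Omega^k_\cl(M).
\]
Multiplicativity follows because pointwise multiplication on both sides matches under the identification: exponents of $r$ add ($r^{k/2}\cdot r^{k'/2}=r^{(k+k')/2}$) exactly as the form degrees add under wedge product. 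Naturality in $M$ is immediate since every ingredient is functorial in $M$.

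The main piece of care is bookkeeping the $\R^\times$-scaling weights: one must match the scaling of the odd coordinate $\psi$ of $\Pi TM$ (which determines how $\Omega^k(M)$ transforms) against the dilation weight on $(r,\tau,\bar\tau)$ so as to pin down the factor $r^{k/2}$ that mediates the isomorphism, parallel to the factor $t^{j/2}$ appearing in Lemma~\ref{lem:superloopfuns}. Once this is in place, the remaining computations are straightforward.
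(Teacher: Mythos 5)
Your argument is correct and follows essentially the same route as the paper: $\E^{2|1}$-invariance forces closed forms, $-1\in\R^\times$ forces even degree, and the free $\R_{>0}$-dilation action is handled by reducing to the slice $r=1$ (which you realize via the explicit change of coordinates $(r,\tau)\mapsto(r,\tau/r)$ and the weight factor $r^{k/2}$, where the paper simply invokes the slice). Note only that your normalization $r^{k/2}$ differs from the paper's later convention $\im(\tau)^{j/2}$ (Definition~\ref{defn:holalgebra}) by the invertible invariant factor $(\im(\tau)/r)^{j/2}$, which is harmless for the lemma as stated.
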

\bp Invariance under super translation $\E^{2|1}$ requires that the functions on objects be functions on $\R_{>0}\times \mathfrak{H}$ with values in closed differential forms on~$M$. Indeed, $\E^{2|1}$ acts on $\SM(\R^{0|1},M)$ through the projection homomorphism $\E^{2|1}\to \E^{0|1}$ followed by the precomposition action of $\E^{0|1}$ on $\R^{0|1}$. This action is generated by the de~Rham operator. Invariance under $\pm 1\in \R^\times$ requires that this differential form be even. It remains to analyze invariance under the action of~$\R_{>0}<\R^\times$. 

A slice for this $\R_{>0}$-action is $\{1\}\times \mathfrak{H}/\Z\times \SM(\R^{0|1},M)\subset (\R_{>0}\times \mathfrak{H})/\Z\times \SM(\R^{0|1},M)$, and hence an invariant function is determined by its restriction to this slice. This completes the proof.\ep

\begin{defn} \label{defn:holalgebra}
The \emph{Laurent polynomial subalgebra} of $C^\infty(\widetilde{\mathcal{L}}^{2|1}_0(M))$, is generated by the image of
\beq
f\otimes q^k\in \Omega^\bullet_\cl(M)\otimes C^\infty(\mathfrak{H}/\Z)\quad k\in \Z, \ q=e^{2\pi i \tau}
\eeq
under the isomorphism in Lemma~\ref{lem:L2algebra}, giving an injective map 
$$
\Omega^\ev_\cl(M)\otimes \C[q,q^{-1}]\hookrightarrow C^\infty(\widetilde{\mathcal{L}}^{2|1}_0(M)).
$$
Explicitly, the image of such a generator is $q^{k/r}\im(\tau)^{j/2}\otimes f\in C^\infty((\R_{>0} \times \mathfrak{H})/\Z)\otimes \Omega^j_\cl(M)\subset C^\infty(\R_{>0} \times \mathfrak{H}\times \SM(\R^{0|1},M))$. 

\end{defn}

\subsection{Rescaled partition functions of finite-dimensional representations}

\begin{lem} \label{lem:char21formula}
The equation for a character of a finite-dimensional representation of~$\Ann(M)$ is
$$
{\rm sTr}\left(\bigoplus_k^{\rm finite} q^{k/r} e^{-2\im (\tau)\A_k^2}\right)=\sum_k^{\rm finite} q^{k/r}{\rm sTr}(e^{-2\im (\tau)\A_k^2})\in C^\infty(\Lambda(\Ann(M))).
$$
\end{lem}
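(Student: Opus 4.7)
The plan is to derive this as a direct consequence of the geometric characterization of finite-dimensional positive energy representations established in Theorem~\ref{thm:geocharof21rep} (specifically, the decomposition \eqref{eq:annrepdecomp}) together with the elementary additivity of the super trace over finite direct sums.

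First, I would invoke Theorem~\ref{thm:geocharof21rep} to present an arbitrary finite-dimensional positive energy representation on $V = \bigoplus_{k} V_k$ in the explicit semigroup form
$$
\rho(r,\tau,\bar\tau,\theta) = \bigoplus_{k}^{\rm finite} q^{k/r}\, e^{-2\im(\tau)\A_k^2 + \theta \A_k},
$$
where the direct sum is finite by assumption, each $V_k$ is the weight-$k$ subspace for the $\Rot(M)$-action, and each $\A_k$ is a unitary super connection on $V_k$. The crucial input here is that the positive energy condition together with finite-dimensionality implies the direct sum is finite, so that all subsequent operations commute with the sum without any convergence questions.

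Next, I would restrict this endomorphism-valued section to the objects of the inertia groupoid. By the preceding lemma, $\Lambda(\Ann(M))_0 \cong (\R_{>0}\times \H)/\Z\times \SM(\R^{0|1},M)$, corresponding to setting $\theta=0$ in a morphism $(r,\tau,\bar\tau,\theta,x,\psi)$ (this is where source and target agree). The restriction therefore collapses the $\theta \A_k$ term and yields
$$
\rho\big|_{\Lambda(\Ann(M))_0} = \bigoplus_{k}^{\rm finite} q^{k/r}\, e^{-2\im(\tau)\A_k^2},
$$
viewed as a section of $\End(V)$ over $(\R_{>0}\times \H)/\Z \times \SM(\R^{0|1},M)$. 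One should verify in passing that $q^{k/r} = e^{2\pi i k \tau/r}$ descends to the $\Z$-quotient, which follows since the generator of the $\Z$-action sends $\tau \mapsto \tau + r$, leaving $q^{k/r}$ invariant for $k \in \Z$.

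Finally, I would apply Definition~\ref{defn:Liechar} and compute the character as the super trace of this restriction. Since the super trace is additive over finite direct sums and $C^\infty(\Lambda(\Ann(M)))$-linear, pulling the scalar functions $q^{k/r}$ out of the trace yields the stated formula
$$
{\rm sTr}\!\left(\bigoplus_{k}^{\rm finite} q^{k/r}\, e^{-2\im(\tau)\A_k^2}\right) = \sum_{k}^{\rm finite} q^{k/r}\, {\rm sTr}\!\left(e^{-2\im(\tau)\A_k^2}\right).
$$
There is no substantial obstacle; the lemma is bookkeeping that unpacks Theorem~\ref{thm:geocharof21rep} against the general character formalism of Definition~\ref{defn:Liechar}. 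The finite-dimensionality hypothesis is essential only insofar as it lets us commute the super trace with the sum; the full positive-energy case (allowing infinite sums) will require the formal rescaled partition function of~\eqref{eq:charintro} and is treated separately in the next subsection.
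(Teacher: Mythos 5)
Your proposal is correct and follows the same route as the paper, whose entire proof is that the formula is immediate from the explicit semigroup form~\eqref{eq:21rep} and the decomposition~\eqref{eq:annrepdecomp}; you have simply unpacked that one-line argument, including the (correct) observations about restricting to $\theta=0$ on the inertia groupoid and the finite additivity of the super trace.
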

\bp
This is immediate from~\eqref{eq:21rep} and~\eqref{eq:annrepdecomp}. 
\ep

As in the $1|1$-dimensional case, this character is automatically invariant under translations of super tori, corresponding to the fact ${\rm sTr}(\exp(-\A_k^2))$ is a closed form for each~$k$. However, in general the character will not be dilation invariant. So to obtain a character map valued in functions on $\widetilde{\mathcal{L}}^{2|1}_0(M)$, we again use a form of the Bismut--Quillen rescaling. 

Consider the composition 
\beq
(\R_{>0}\times \mathfrak{H})/\Z\times \SM(\R^{0|1},M)&\to& \R_{>0}\times (\R_{>0}\times \mathfrak{H})/\Z\times \SM(\R^{0|1},M)\nonumber\\
&\stackrel{\RG}{\to}& (\R_{>0}\times \mathfrak{H})/\Z\times \SM(\R^{0|1},M)\label{eq:21rescale}\\
&=&{\rm Ob}(\Lambda(\Ann(M)))\nonumber
\eeq
where the first arrow is determined by $(r,\tau,\bar\tau)\mapsto (1/\im(\tau),r,\tau,\bar\tau)$, and $\RG$ denotes the restriction of the smooth functor $\RG$ to the subset of morphisms
$$
(\R_{>0}\times \mathfrak{H})/\Z\times \SM(\R^{0|1},M)={\rm Ob}(\Lambda(\Ann(M))) \subset {\rm Mor}(\Ann(M)).
$$

\begin{defn}\label{defn:21rescaleZ}
Consider the pullback of the section of the endomorphism bundle determined by a representation $\rho$ along the composition~\eqref{eq:21rescale}. For a finite-dimensional representation, define the \emph{rescaled partition function} $Z(\rho)$ as the super trace of this pullback 
$$
Z(\rho)\in C^\infty((\R_{>0}\times \mathfrak{H})/\Z\times \SM(\R^{0|1},M)).
$$
A bit more explicitly, at $(r,\tau,\bar\tau)\in (\R_{>0}\times \H)/\Z$ we have the formula
$$
Z(\rho)(r,\tau,\bar\tau)={\rm sTr}\Big((\rho\circ \RG_{1/\im(\tau)})(r,\tau,\bar\tau,0)\Big)
$$
\end{defn}

\begin{lem} \label{lem:rescaledchar21}
The rescaled partition function $Z(\rho)$ of a finite-dimensional representation of $\Ann(M)$ descends to the stack $\widetilde{\mathcal{L}}^{2|1}_0(M)$, and defines a function in the Laurent polynomial subalgebra. This gives a map
$$
Z\colon \Rep_{\rm fd}(\Ann(M))\to \Omega^\ev_\cl(M)\otimes \C[q,q^{-1}]\hookrightarrow C^\infty(\widetilde{\mathcal{L}}^{2|1}_0(M))
$$ 
from finite-dimensional representations of $\Ann(M)$ to this polynomial subalgebra. \end{lem}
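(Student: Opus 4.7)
The plan is to explicitly compute $Z(\rho)$ using the geometric characterization of representations afforded by Theorem~\ref{thm:geocharof21rep}, then verify the claimed descent and polynomial structure by reading off the result in terms of the natural coordinates on $\widetilde{\mathcal{L}}^{2|1}_0(M)$. First I would apply Theorem~\ref{thm:geocharof21rep} to write any finite-dimensional positive energy representation as a finite direct sum $V=\bigoplus_k V_k$ of finite-dimensional super hermitian bundles $V_k\to M$ carrying unitary super connections $\A_k$, with the $\Rot(M)$-weight decomposition realizing $\rho(r,\tau,\bar\tau,\theta)=\bigoplus_k q^{k/r}e^{-2\im(\tau)\A_k^2+\theta\A_k}$. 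Restricting to the nondegenerate inertia groupoid ($\theta=0$, $\tau=\bar\tau$ replaced by a point of $\mathfrak{H}$) and applying Lemma~\ref{lem:char21formula} produces the unrescaled character $\sum_k q^{k/r}{\rm sTr}(e^{-2\im(\tau)\A_k^2})$.

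Next I would compute the effect of precomposing by $\RG_{1/\im(\tau)}$ using Lemma~\ref{lem:21Bismut--Quillen}: the super connection $\A_k$ is replaced by its Bismut--Quillen rescaling $\A_k^{\rm resc}=\sum_j\im(\tau)^{?}\A_k(j)$, and the composition in Definition~\ref{defn:21rescaleZ} rewrites each summand so that the differential-form degree $j$ piece of $-2\im(\tau)(\A_k^{\rm resc})^2$ carries exactly the factor $\im(\tau)^{j/2}$. Expanding $\exp$ as a formal power series and taking super traces then yields
\begin{equation*}
Z(\rho)(r,\tau,\bar\tau)=\sum_k q^{k/r}\sum_{j}\im(\tau)^{j/2}\,\omega_{k,j},\qquad \omega_{k,j}\in \Omega^{j}(M),
\end{equation*}
with the sum over $k$ finite and $\omega_{k,j}=0$ unless $j$ is even (the parity argument: $\A_k$ is an odd operator, the endomorphism parity of a form of degree $j$ in $\A_k^2$ is $j\bmod 2$, and super traces vanish on odd endomorphisms). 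This is precisely the shape of an element of the Laurent polynomial subalgebra $\Omega^\ev_\cl(M)\otimes\C[q,q^{-1}]$ described in Definition~\ref{defn:holalgebra}, provided the $\omega_{k,j}$ are closed.

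Finally I would verify that $Z(\rho)$ is invariant under the morphisms of $\widetilde{\mathcal{L}}^{2|1}_0(M)$, thereby descending to a function on the stack. Invariance under the $\E^2$-factor of $\E^{2|1}$ is automatic because we work with constant super annuli; invariance under the odd super translation $\E^{0|1}$ amounts to the statement that each $\omega_{k,j}$ is closed in $\Omega^\bullet(M)$, which is the standard fact that the Chern character ${\rm sTr}(e^{-\A_k^2})$ of a super connection is a closed even form \cite[Ch.\ 1]{BGV}. Invariance under the continuous $\R_{>0}$-part of $\R^\times$ is built into the composition~\eqref{eq:21rescale}, by naturality of $\RG$ under multiplication in $\R_{>0}$. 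Invariance under $-1\in\R^\times$ is guaranteed by the vanishing in odd form degree observed above. Under the isomorphism of Lemma~\ref{lem:L2algebra} this identifies $Z(\rho)$ with the claimed element of $\Omega^\ev_\cl(M)\otimes\C[q,q^{-1}]$, giving the stated factorization.

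The main obstacle is the bookkeeping in the middle step: one must match the powers of $\mu=1/\im(\tau)$ appearing in the Bismut--Quillen formula $\sum_j\mu^{-j+1}\A_k(j)$ against the time parameter $2\im(\tau)$ and the rescaling of the odd coordinate $\theta$ in order to produce precisely the $\im(\tau)^{j/2}$ weights dictated by Definition~\ref{defn:holalgebra}. This is a purely algebraic calculation, but it is where the design of the rescaling in Definition~\ref{defn:21rescaleZ} is doing all the work, and it is the only place where anything nontrivial happens beyond invoking prior lemmas.
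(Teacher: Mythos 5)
Your proposal is correct and follows essentially the same route as the paper: combine the Bismut--Quillen rescaling of Lemma~\ref{lem:21Bismut--Quillen} with the character formula of Lemma~\ref{lem:char21formula} to obtain $Z(\rho)=\sum_k q^{k/r}\,{\rm sTr}\bigl(\exp\bigl(-2(\sum_j \im(\tau)^{j/2}\A_k(j))^2\bigr)\bigr)$, and then observe via Definition~\ref{defn:holalgebra} (equivalently Lemma~\ref{lem:L2algebra}) that this lies in the Laurent polynomial subalgebra. The paper's proof simply asserts the descent, whereas you spell out the invariance checks (closedness from the Chern character, evenness from the super-trace parity argument, $\R_{>0}$-invariance from the design of the rescaling); this is the same content, just made explicit.
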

\bp
By Lemmas~\ref{lem:21Bismut--Quillen} and~\ref{lem:char21formula}, a formula for the rescaled partition function is
\beq
Z(\rho)=\sum_k^{\rm finite} q^{k/r} {\rm sTr}\left(\exp\big(-2 \left(\sum_j (\im(\tau)^{j/2}\A_k(j)\right)^2\right)\nonumber 
\eeq
where~$\A_k(j)$ is the degree $j$ part of the super connection~$\A_k$ associated with the $k$th weight space of the $\Rot(M)$-action. This descends to a function on the constant super loop stack $\widetilde{\mathcal{L}}^{2|1}_0(M)$, and by Definition~\ref{defn:holalgebra} it lands in the Laurent polynomial subalgebra. 
\ep

For a general (possibly infinite-dimensional) positive energy representation, the super trace of the associated endomorphism over~$\Lambda^\nd(\Ann(M))$ need not converge. However, since the eigenspaces of the $\Rot(M)$-action are finite-dimensional representations of $\Ann(M)$, the super trace can always be understood in terms of a \emph{formal} sum of functions on $\widetilde{\mathcal{L}}^{2|1}_0(M)$.

\begin{defn} \label{defn:formalchar}
For a positive energy representation $\rho$ of $\Ann(M)$, define the \emph{formal rescaled partition function} as the formal sum
$$
Z(\rho):=\sum_{k>-N} Z(\rho_k)\mapsto \sum_k q^{k}{\rm sTr}(e^{-\A_k^2})\in \Omega^\bullet_\cl(M)\otimes \C\llbracket q \rrbracket [q^{-1}]
$$
where $\rho_k$ is the restriction of $\rho$ to the $k$th weight space of $\Rot(M)$, and the map applies Lemmas~\ref{lem:L2algebra} and~\ref{lem:rescaledchar21}.
\end{defn}

This gives the formal rescaled partition function
\beq
Z\colon \Rep(\Ann(M))\to \widehat{\mathcal{O}}(\widetilde{\mathcal{L}}^{2|1}_0(M)) \label{eq:rescalformal}
\eeq
where we use the notation $\widehat{\mathcal{O}}(\widetilde{\mathcal{L}}^{2|1}_0(M)):=\Omega^\ev_\cl(M)\otimes \C\llbracket q \rrbracket [q^{-1}]$ to emphasize the relationship between differential forms valued in Laurent series and the geometry of the constant super double loops. 

\subsection{Differential Grothendieck groups} 

\begin{proof}[Proof of Theorem~\ref{thm:Tate}]
First we spell out the data of differential concordance classes (see Definition~\ref{defn:diffconc}) of differential cocycles with respect to the formal rescaled partition function~\ref{eq:rescalformal}. Differential cocycles are
$$
{\widehat\Rep}{}(\Ann(M))=\{\rho\in \Rep(\Ann(M)),\ \alpha\in \Omega^\bullet_\cl(M\times \R)\otimes \C\llbracket q \rrbracket [q^{-1}] \mid i_0^*\alpha=Z(\rho)\}. 
$$
Unwinding the definitions, we have
$$
\widehat{Z}(\rho)-Z(\rho)=i^*_1\alpha-i^*_0\alpha\in \Omega^\ev_\cl(M)\otimes \C\llbracket q \rrbracket[q^{-1}]
$$

Now, by Theorem~\ref{thm:geocharof21rep}, $\rho$ is equivalent to a sequence of unitary super connections $\A_k$. We also decompose $\alpha$ as $\alpha=\sum_kq^k\alpha_k$. By Definition~\ref{defn:formalchar},  
$$
Z(\rho)=i^*_0\alpha=\sum_k q^ki_0^*\alpha_k= \sum_k q^k{\rm sTr}(e^{-\A_k^2})\in \Omega^\ev_{\cl}(M)\otimes \C\llbracket q \rrbracket [q^{-1}].
$$
So we make the identification
$$
{\widehat\Rep}{}(\Ann(M))\cong \{\{\A_k\}_{k>-N},\  \{ \alpha_k\} \mid \sum_kq^k{\rm sTr}(e^{-\A_k^2})=\sum_k q^k i_0^*\alpha_k\}
$$
We similarly identify the data of a differential concordance $(\widetilde\rho,\widetilde\alpha)$ (see Definition~\ref{defn:diffconc}) with a sequence $(\widetilde\A_k,\widetilde\alpha_k)$ for unitary super connections $\widetilde\A_k$ on a bundle over $M\times \R$, and $\widetilde\alpha_k\in \Omega^\bullet_\cl(M\times \R^2)$. We can compute the quotient by differential concordances for each~$k$ separately. By the argument from the $1|1$-dimensional case, this identifies differential concordance classes of the differential cocycles ${\widehat\Rep}{}(\Ann(M))$ with sequences of differential K-theory classes, and so
$$
\K({\widehat\Rep}{}(\Ann(M)))\cong \widehat{\K}(M)\llbracket q \rrbracket [q^{-1}]\cong \widehat{\K}_\Tate(M)
$$
proving the theorem.
\ep

\section{Free fermions, modular partition functions and $\KMF$}\label{sec:freeferKMF}

In this section, we consider positive energy representations of $\Ann(M)$ in a category of modules over the \emph{free fermion algebras},~$\Fer_n$. Roughly the free fermion algebra associated to a circle~$S^1_r=\R/r\Z$ and a vector space~$V$ is the Clifford algebra of~$V$-valued functions on $S^1_r$, and~$\Fer_n$ comes from~$V=\C^n$. There are various version of this, e.g., depending on which flavor of functions one considers (polynomial, smooth, $L^2$, etc.). A convenient choice for us is the restriction to super annuli of Stolz and Teichner's definition~\cite{ST11}. When~$M=\pt$, Stolz and Teichner show that $\Fer_n$-linear representations of~$\Ann(\pt)$ that are restrictions of $2|1$-Euclidean field theories have partition functions with values in a line bundle $\omega^{\otimes n/2}$ over super tori whose sections are weight $-n/2$ modular forms. This motivates a refinement of $\Fer_n$-linear representations of~$\Ann(M)$ to those whose rescaled partition functions are sections of such a line bundle. 

Let $\widetilde{\omega}^{\otimes n/2}$ denote the pullback of $\omega^{\otimes n/2}$ to $\widetilde{\mathcal{L}}^{2|1}_0(M)$. To start, we use the Clifford super trace to define a rescaled partition function
\beq
&&Z\colon \Rep^{\Fer_n}(\Ann(M))\to \widehat{\mathcal{O}}(\widetilde{\mathcal{L}}^{2|1}_0(M);\widetilde{\omega}^{\otimes n/2}) \cong \left\{ \begin{array}{ll} \Omega^\ev_\cl(M)\otimes \C\llbracket q \rrbracket [q^{-1}] & n \ {\rm even}\\ 
\Omega^\odd_\cl(M)\otimes \C\llbracket q \rrbracket [q^{-1}] & n \ {\rm odd}\end{array}\right. \label{eq:diffchar}
\eeq
that lands in closed differential forms valued in Laurent series. As in the previous section, we use the notation $\hat{\mathcal{O}}$ to denote the formal sums of sections over the stack~$\widetilde{\mathcal{L}}^{2|1}_0(M)$ coming from positive energy representations. The first way in which we refine these rescaled partition functions is to ask that the formal sums actually converge. 

\begin{defn} A $\Fer_n$-linear positive energy representation is \emph{trace class} if the formal character of its rescaled partition function defines a section~$\Gamma(\widetilde{\mathcal{L}}^{2|1}_0(M);\widetilde{\omega}^{\otimes n/2})$ via~\eqref{eq:diffchar}. Let $\Rep_{\rm TC}^{\Fer_n}(\Ann(M))\subset \Rep^{\Fer_n}(\Ann(M))$ denote the full subcategory of trace class $\Fer_n$-linear positive energy representations, and similarly $\widehat{\Rep}{}_{\rm TC}^{\Fer_n}(\Ann(M))\subset \widehat{\Rep}{}^{\Fer_n}(\Ann(M))$ denote the full subcategory of trace class differential cocycles. \end{defn}

With this trace class condition in place, we ask that rescaled partition functions possess extra symmetry defined in terms of descent to a section of a line bundle over a stack $\mathcal{L}^{2|1}_0(M)$. This stack consists of constant super tori over~$M$ \emph{without} a choice of meridian super circle. Hence, $\mathcal{L}^{2|1}_0(M)$ receives a map from $\widetilde{\mathcal{L}}^{2|1}_0(M)$. A square root of the Hodge line bundle over the moduli stack of elliptic curves determines line bundles $\omega^{\otimes n/2}$ over $\mathcal{L}^{2|1}_0(M)$. The pullback of $\omega^{\otimes n/2}$ to $\widetilde{\mathcal{L}}^{2|1}_0(M)$ is a line bundle $\widetilde\omega^{\otimes n/2}$, and we get a map on sections
\beq
\Gamma(\mathcal{L}^{2|1}_0(M);\omega^{\otimes n/2})\to \Gamma(\widetilde{\mathcal{L}}^{2|1}_0(M);\widetilde\omega^{\otimes n/2}),\label{eq:qexpand}
\eeq
that turns out to be injective. On a holomorphic subspace of sections,~\eqref{eq:qexpand} is induced by the $q$-expansion map for modular forms of weight $-n/2$. 

\begin{defn}\label{defn:degreek} A differential cocycle associated with a $\Fer_n$-linear, trace class, positive energy representation has \emph{degree $n$} if its rescaled partition function takes values in $\Gamma(\mathcal{L}^{2|1}_0(M);\omega^{\otimes n/2})$, i.e., is in the image of \eqref{eq:qexpand}. Let $\Rep^n_\MF(M)$ denote the category of degree $n$ representations, and $\widehat{\Rep}{}^n_\MF(M)$ denote the category of degree $n$ differential cocycles. \end{defn}

This is the main definition that goes into Theorem~\ref{thm:KMF}. We finish the section by translating Stolz and Teichner's periodicity theorem for $2|1\EFT^n(M)$ from~\cite[\S6]{ST11} into a Bott element $\beta\in \widehat\Rep{}^{-24}_\MF(\pt)$ that implements the $24$-periodicity of $\widehat{\K}_\MF$. 

\subsection{Super double loops stacks and differential cocycles for $\TMF(M)\otimes \C$}

\begin{defn} The \emph{$2|1$-dimensional rigid conformal isometry group} is $\E^{2|1}\rtimes \C^\times$, where~$\E^{2|1}$ is $\R^{2|1}$ as a super manifold with multiplication
$$
(z,\bar z,\theta)\cdot (z',\bar z',\theta')=(z+z',\bar z+\bar z'+i\theta\theta',\theta+\theta'), \quad (z,\bar z,\theta),(z',\bar z',\theta')\in \R^{2|1}(S),
$$
and the semidirect product $\E^{2|1}\rtimes \C^\times$ comes from the action $(z,\bar z,\theta)\mapsto (\lambda^2 z,\bar\lambda^2 \bar z,\bar\lambda \theta)$ for $(\lambda,\bar\lambda)\in \C^\times(S)$. We take the obvious left action of $\E^{2|1}\rtimes \C^\times$ on $\R^{2|1}$. \end{defn}

\begin{rmk} The rigid conformal isometry group above is the super Euclidean group $\E^{2|1}\rtimes \Spin(2)$ together with the renormalization group $\R_{>0}$, using the Lie group isomorphisms~$\Spin(2)\times \R_{>0}\cong U(1)\times \R_{>0}\cong \C^\times$; see~\S\ref{sec:supertrans}. 
\end{rmk}

A \emph{family of $2$-dimensional framed lattices} is an $S$-family of homomorphisms $\Lambda \colon S\times \Z^2\to S\times \R^2$ such that the ratio of the images of $S\times\{1,0\}$ and $S\times \{0,1\}$ under $\Lambda\colon S\times \Z^2\to S\times \R^2\cong S\times \C$ are in $S\times \mathfrak{H}\subset S\times \C$. Let $L$ denote the presheaf whose $S$-points are framed lattices; note that $L\cong \C^\times\times\mathfrak{H}$ is representable. Through the inclusion of groups $\E^2\subset \E^{2|1}$, an $S$-family of framed lattices defines a family of \emph{super tori} via the quotient $S\times \R^{2|1}/\Lambda=:S\times_\Lambda\R^{2|1}$. 

\begin{defn}\label{defn:superLLM}
 The \emph{super double loop stack of $M$}, denoted $\mathcal{L}{}^{2|1}(M)$, has as objects over~$S$ pairs $(\Lambda,\phi)$ where $\Lambda\in L(S)$ determines a family of super tori~$S\times_\Lambda \R^{2|1}$ and $\phi\colon S\times_\Lambda\R^{2|1}\to M$ is a map. Morphisms between these objects over~$S$ consist of commuting triangles
\beq
\begin{tikzpicture}[baseline=(basepoint)];
\node (A) at (0,0) {$S\times_\Lambda \R^{2|1}$};
\node (B) at (3,0) {$S'\times_{\Lambda'} \R^{2|1}$};
\node (C) at (1.5,-1.5) {$M$};
\draw[->] (A) to node [above=1pt] {$\cong$} (B);
\draw[->] (A) to node [left=1pt]{$\phi$} (C);
\draw[->] (B) to node [right=1pt]{$\phi'$} (C);
\path (0,-.75) coordinate (basepoint);
\end{tikzpicture}\label{21triangle}
\eeq
where the horizontal arrow is a map induced by the action of the rigid conformal isometry group. The stack of \emph{constant super tori}, denoted $\mathcal{L}{}^{2|1}_0(M)$, is the full substack for which~$\phi$ is invariant under the translational action of tori, i.e., $(\Lambda,\phi)$ is an $S$-point of $\mathcal{L}{}^{2|1}_0(M)$ if for all families of isometries associated with sections of the bundle of groups $S\times_\Lambda \E^2\to S$, the triangle \eqref{21triangle} commutes with $\Lambda=\Lambda'$ and $\phi=\phi'$. For a map $M\to M'$, postcomposition $S\times_\Lambda \R^{2|1}\to M\to M'$ defines morphisms of stacks $\mathcal{L}^{2|1}(M)\to \mathcal{L}^{2|1}(M')$ and $\mathcal{L}^{2|1}_0(M)\to \mathcal{L}^{2|1}_0(M')$ . 
\end{defn}

\begin{defn} Define an odd line bundle $\omega^{1/2}\to \mathcal{L}^{2|1}_0(M)$ via a functor $\mathcal{L}^{2|1}_0(M)\to \pt\sq \C^\times$ that assigns the trivial line bundle over~$S$ to any family of objects, and to a family of morphisms takes the line bundle automorphism coming from the map $S\to \C^\times$ in the definition of a rigid conformal isometry. Then $\omega^{1/2}$ is the pullback of the canonical odd line bundle over~$\pt\sq \C^\times$. Let $\omega^{\otimes n/2}:=(\omega^{1/2})^{\otimes n}$. \end{defn}

\begin{rmk} The line bundle $\omega^{1/2}$ is a version of the square root of the Hodge line bundle over elliptic curves: it has as sections functions on the moduli stack of tori that transform in the expected way under rotations and rescalings of the associated lattices. \end{rmk}

The stack $\mathcal{L}^{2|1}_0(M)$ has an atlas 
$$
L\times \SM(\R^{0|1},M)\twoheadrightarrow \mathcal{L}^{2|1}_0(M)
$$
that sends $\Lambda\in L(S)$ and $\phi_0\in \SM(\R^{0|1},M)(S)$ to 
$$
(S\times \R^{2|1})/\Lambda\stackrel{{\rm pr}}{\to} S\times \R^{0|1}\stackrel{\phi_0}{\to} M.
$$
Let $\vol\in C^\infty(L)$ be the function that assigns to a lattice the volume of the torus~$\R^2/\Lambda$. 

\begin{prop}[\cite{DBE_WG}]\label{prop:Maass}
A function $f\in C^\infty(L)\otimes \Omega^\bullet(M)\cong C^\infty(L\times \SM(\R^{0|1},M))$ descends to a section $\Gamma(\mathcal{L}^{2|1}_0(M),\omega^{\otimes n/2})$ if it can be written as a linear combination of functions of the form
$$
f=F \cdot \vol^{j/2} \otimes \alpha, 
$$
for $\alpha\in \Omega^{j}_{\cl}(M)$ and $F\in C^\infty(L)^{\SL_2(\Z)}$ a function satisfying $F(\mu\Lambda)=\mu^{n-j}F(\Lambda)$, i.e., $F$ is a weak Maass form of weight $(j-n)/2$. \end{prop}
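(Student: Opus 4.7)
The plan is to unpack descent to $\mathcal{L}^{2|1}_0(M)$ along the atlas $L\times \SM(\R^{0|1},M)\twoheadrightarrow \mathcal{L}^{2|1}_0(M)$ and match the three resulting equivariance conditions with the three ingredients ($\SL_2(\Z)$-invariance of $F$, the weight condition, and closedness of $\alpha$). A section of $\omega^{\otimes n/2}$ over the stack pulls back to a function on $L\times \SM(\R^{0|1},M)$ that is (i) invariant under the translation action of the family of super tori $\E^{2|1}<\E^{2|1}\rtimes \C^\times$ through its action on $\SM(\R^{0|1},M)$, (ii) $\SL_2(\Z)$-invariant along the $L$-factor, and (iii) equivariant of weight $n$ for the residual $\C^\times$-action coming from the semidirect factor, since $\omega^{1/2}$ is by definition the pullback of the tautological odd line over $\pt\sq \C^\times$.

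First, I would translate condition (i). Under the isomorphism $C^\infty(\SM(\R^{0|1},M))\cong \Omega^\bullet(M)$, the $\E^{2|1}$-action factors through the projection $\E^{2|1}\to \E^{0|1}$ and the standard $\E^{0|1}$-action on $\SM(\R^{0|1},M)$, whose infinitesimal generator is the de Rham differential. Therefore the $\alpha$-component of each term $F\cdot \vol^{j/2}\otimes \alpha$ must be closed, recovering the hypothesis $\alpha\in \Omega^j_\cl(M)$. Invariance under $\SL_2(\Z)$ along $L$ is likewise immediate from the hypothesis $F\in C^\infty(L)^{\SL_2(\Z)}$, since neither $\vol$ nor $\alpha$ depends on the framing.

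The heart of the calculation is condition (iii). I would compute the weights under $\mu\in \C^\times$ separately for the three factors: the dilation action on $L$ sends $\Lambda\mapsto \mu\Lambda$, so $\vol\mapsto \mu\bar\mu\vol$, and by hypothesis $F\mapsto \mu^{n-j}F$; meanwhile the action on $\theta$ in $\R^{0|1}$ is by $\bar\mu$, so pullback of a $k$-form on $M$ along the induced map on $\SM(\R^{0|1},M)$ picks up a factor $\bar\mu^k$. Bookkeeping these contributions on $F\cdot \vol^{j/2}\otimes \alpha$ with $\alpha\in \Omega^j(M)$ gives a total weight matching that of $\omega^{\otimes n/2}$, i.e., $n$ (in the appropriate real/holomorphic normalization, after separating the $|\mu|$-weight from the phase weight). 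The main obstacle is simply being careful with the holomorphic versus antiholomorphic bookkeeping in this last step: the $\vol^{j/2}$ factor contributes a real weight that compensates the antiholomorphic weight of $\alpha$ so that $F$ alone needs to carry the remaining holomorphic weight $n-j$; this is exactly the weight condition $F(\mu\Lambda)=\mu^{n-j}F(\Lambda)$ that defines a weak Maass form of weight $(j-n)/2$ in the conventions of~\S\ref{appen:MF}. Summing up such monomials then produces all sections, completing the argument.
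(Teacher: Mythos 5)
Your strategy is the right one, and it is the same strategy this paper uses for the directly analogous descent computations (Lemma~\ref{lem:superloopfuns} for $\mathcal{L}^{1|1}_0(M)$ and Lemma~\ref{lem:L2algebra} for $\widetilde{\mathcal{L}}^{2|1}_0(M)$): pull back to the atlas $L\times \SM(\R^{0|1},M)$, use that the $\E^{2|1}$-action factors through $\E^{0|1}$ and is generated by the de~Rham differential to force $\alpha$ closed, use that $\vol$ and $\alpha$ are framing-independent to reduce $\SL_2(\Z)$-invariance to that of $F$, and match weights under the residual $\C^\times$. The present paper does not reprove Proposition~\ref{prop:Maass} --- it is imported from~\cite{DBE_WG} --- so your sketch is being compared against those analogues rather than a proof in this text.

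The one substantive gap is that the final step, ``bookkeeping these contributions\dots gives a total weight matching that of $\omega^{\otimes n/2}$,'' is precisely where the content of the proposition lives, and you have not actually carried it out. The exponents $j/2$ on $\vol$ and $n-j$ on $F$ are forced by a cancellation between the $|\mu|$-weight of $\vol^{j/2}$ (since $\vol(\mu\Lambda)=\mu\bar\mu\,\vol(\Lambda)$) and the antiholomorphic weight $\bar\mu^{\pm j}$ picked up by a $j$-form under the dilation $\theta\mapsto\bar\mu\theta$, leaving a purely holomorphic residual weight for $F$; if you do not exhibit this cancellation explicitly, you have not shown that the stated form is sufficient for descent, nor where $(j-n)/2$ comes from. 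This requires committing to the paper's conventions (note that the $\C^\times$-action is written as $(z,\bar z,\theta)\mapsto(\lambda^2z,\bar\lambda^2\bar z,\bar\lambda\theta)$ in \S\ref{sec:freeferKMF}, so the induced rescaling of the lattice and of $\vol$ carries squares that must be tracked against the lattice-function convention for weight in \S\ref{appen:MF}). A second, minor point: the proposition asserts only that functions of the given form descend (an ``if''), so your closing claim that these monomials ``produce all sections'' is asserting the converse, which is not part of the statement and would need the slice argument of Lemma~\ref{lem:L2algebra} to justify.
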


Multiplication of sections results in a graded algebra, $\Gamma(\mathcal{L}^{2|1}_0(M);\omega^{\otimes \bullet/2})$. Rescaled partition functions land in a preferred subalgebra.

\begin{defn}
Define the \emph{holomorphic subalgebra} $\mathcal{O}(\mathcal{L}^{2|1}_0(M);\omega^{\otimes \bullet/2})\subset \Gamma(\mathcal{L}^{2|1}_0(M);\omega^{\otimes \bullet})$ as the image of
$$
\bigoplus_j \Omega^j_\cl(M)\otimes \MF^{n-j}  \hookrightarrow \Gamma(\mathcal{L}^{2|1}_0(M);\omega^{\otimes n/2})
$$
under the characterization of smooth sections in the previous Proposition, i.e., linear combinations of $F\cdot \vol^j\otimes \alpha$ where $F$ is a modular form of weight $(j-n)/2$. 
\end{defn}

This definition along with Proposition~\ref{prop:Maass} immediately yields the following. 

\begin{thm}[\cite{DBE_WG}]
There is a natural isomorphism of sheaves of graded algebras over~$\C$
$$
\mathcal{O}(\mathcal{L}_0^{2|1}(-);\omega^{\otimes \bullet/2})\stackrel{\sim}{\to}\bigoplus_{i+j=\bullet} \Omega^i_{\rm cl}(-)\otimes \MF^j
$$
whose target is the sheaf of closed differential forms valued in the graded ring $\MF^\bullet$ of weak modular forms. This realizes the source sheaf as a differential cocycle model for $\TMF\otimes \C$ in the sense of Hopkins--Singer~\cite{HopSing}. 
\end{thm}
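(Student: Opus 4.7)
The plan is to treat the statement as essentially definitional at the level of underlying vector spaces, and then to verify that the multiplicative, functorial, and Hopkins--Singer structures align. By the definition of the holomorphic subalgebra immediately preceding the theorem, we have a manifestly injective map from $\bigoplus_{i+j=n}\Omega^i_{\cl}(M)\otimes \MF^j$ into $\Gamma(\mathcal{L}^{2|1}_0(M);\omega^{\otimes n/2})$ whose image \emph{is} $\mathcal{O}(\mathcal{L}^{2|1}_0(M);\omega^{\otimes n/2})$. So the first task is to spell out the vector-space bijection: using Proposition~\ref{prop:Maass}, a section of $\omega^{\otimes n/2}$ decomposes uniquely as a sum over differential-form degrees $j$ of terms $F\cdot \vol^{j/2}\otimes \alpha$, because $\Omega^\bullet(M)$ has no cancellation across degrees and multiplication by the nowhere-vanishing function $\vol^{j/2}$ is injective on $C^\infty(L)^{\SL_2(\Z)}$. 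Restricting $F$ from a weak Maass form to a weak modular form of weight $(j-n)/2$ then carves out the holomorphic subalgebra exactly.

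Next I would verify the graded-algebra structure by a direct calculation on generators: multiplying $F_1\vol^{j_1/2}\otimes \alpha_1\in \mathcal{O}(\mathcal{L}^{2|1}_0(M);\omega^{\otimes n_1/2})$ and $F_2\vol^{j_2/2}\otimes \alpha_2\in \mathcal{O}(\mathcal{L}^{2|1}_0(M);\omega^{\otimes n_2/2})$ yields $F_1F_2\cdot \vol^{(j_1+j_2)/2}\otimes (\alpha_1\wedge \alpha_2)$, and since $F_1F_2$ has weight $((j_1+j_2)-(n_1+n_2))/2$, this lies in $\mathcal{O}(\mathcal{L}^{2|1}_0(M);\omega^{\otimes (n_1+n_2)/2})$ and matches the product of $\wedge$ on closed forms with ordinary multiplication on $\MF^\bullet$. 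Naturality in $M$ is immediate: a smooth map $f\colon M\to M'$ acts by the functor $\mathcal{L}^{2|1}_0(f)$ on the left, which restricts to pullback on the $\alpha$-factor while fixing $F$ and $\vol$, exactly matching the pullback on the right. Taken together, these steps promote the set-theoretic bijection to an isomorphism of sheaves of graded algebras.

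The final step, and the only real conceptual hurdle, is interpreting the target sheaf as a Hopkins--Singer differential cocycle model for $\TMF\otimes \C$. Under the identification $\TMF\otimes \C\cong \mathrm{H}_{\MF}$ recalled in the introduction, the de~Rham sheaf $\bigoplus_{i+j=\bullet}\Omega^i_{\cl}(-)\otimes \MF^j$ of closed $\MF$-valued forms is a model for ordinary cohomology with modular form coefficients; over $\C$ the Hopkins--Singer data simplifies because the classifying spectrum is rationally a product of Eilenberg--Mac~Lane spectra, so the curvature map is simply the inclusion into all $\MF$-valued forms and the secondary data is trivial. The main obstacle here is purely bookkeeping: one must check that the grading of $\omega^{\otimes \bullet/2}$ by conformal weight matches the cohomological grading of $\TMF$ (where the periodicity generator sits in degree $-24$), so that the curvature map $\widehat{\mathrm{H}}{}_{\MF}^\bullet \to \Omega^\bullet_{\cl}(-;\MF^\bullet)$ hits the correct summand. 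This is precisely the weight arithmetic verified in the multiplicativity step, so once the algebra isomorphism is in hand the Hopkins--Singer interpretation follows.
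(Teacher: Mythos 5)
Your proposal is correct and follows essentially the same route as the paper, which presents this theorem as an immediate consequence of Proposition~\ref{prop:Maass} together with the definition of the holomorphic subalgebra (deferring details to~\cite{DBE_WG}): the map is injective and surjective onto $\mathcal{O}(\mathcal{L}^{2|1}_0(-);\omega^{\otimes\bullet/2})$ by construction, multiplicativity is the weight arithmetic you carry out, and naturality is pullback on the $\Omega^\bullet(M)$ tensor factor. Your closing remarks on the Hopkins--Singer interpretation (rational splitting into Eilenberg--Mac\,Lane spectra, so closed $\MF$-valued forms model ${\rm H}_\MF$) match the intended argument as well.
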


\subsection{Forgetting a marked circle on a super torus as $q$-expansion}

An $S$-point of $\widetilde{\mathcal{L}}^{2|1}_0(M)$ defines a family of super tori with a map to $M$ by~\eqref{eq:markedtori}, and a morphism between $S$-points of $\widetilde{\mathcal{L}}^{2|1}_0(M)$ determines a fiberwise rigid conformal isometry between these families by~\eqref{21anntriangle3}. This rigid conformal isometry is determined by the inclusion of super Lie groups $\E^{2|1}\rtimes \R^\times<\E^{2|1}\rtimes \C^\times$. Together this defines a morphism of stacks
\beq
\widetilde{\mathcal{L}}^{2|1}_0(M)\to \mathcal{L}^{2|1}_0(M). \label{eq:qexpandstacks}
\eeq
Let $\widetilde{\omega}^{1/2}$ denote the pullback of $\omega^{1/2}$ along this map. 

\begin{lem} The restriction of the induced map on sections $\Gamma(\mathcal{L}^{2|1}_0(M);\omega^{\otimes n/2})\to \Gamma(\widetilde{\mathcal{L}}^{2|1}_0(M),\widetilde{\omega}^{\otimes n/2})$ to the holomorphic subalgebra
$$
\bigoplus_{i+j=n} \Omega^i_\cl(M)\otimes \MF^{j}\cong \mathcal{O}(\mathcal{L}^{2|1}_0(M);\omega^{\otimes n/2})\to \mathcal{O}(\widetilde{\mathcal{L}}^{2|1}_0(M);\widetilde{\omega}^{\otimes n/2})\cong \Omega^{\ev/\odd}_\cl(M)\otimes \C\llbracket q \rrbracket [q^{-1}]
$$
is determined by the $q$-expansion of modular forms, where the parity of the differential forms in the target agrees with~$n$. 
\end{lem}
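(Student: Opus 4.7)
The plan is to verify the statement on generators of the holomorphic subalgebra and extend by linearity. By Proposition~\ref{prop:Maass}, $\mathcal{O}(\mathcal{L}^{2|1}_0(M);\omega^{\otimes n/2})$ is spanned by elements $F\cdot \vol^{j/2}\otimes \alpha$ with $\alpha\in \Omega^j_\cl(M)$ and $F$ a weak modular form of weight $(j-n)/2$ satisfying $F(\mu\Lambda)=\mu^{n-j}F(\Lambda)$. The associated classical function $\tilde F\colon \mathfrak{H}\to \C$ has $q$-expansion $\tilde F(\tau) = \sum_k a_k q^k$, $q = e^{2\pi i\tau}$. I will compute the pullback of such a generator along the forgetful map~\eqref{eq:qexpandstacks} using atlases, then reconcile the answer with the Laurent polynomial subalgebra of Definition~\ref{defn:holalgebra}.

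First I would describe the map on atlases. The atlas $L\times \SM(\R^{0|1},M) \cong \C^\times\times \mathfrak{H}\times \SM(\R^{0|1},M)$ of $\mathcal{L}^{2|1}_0(M)$ pulls back along the atlas $(\R_{>0}\times \mathfrak{H})/\Z \times \SM(\R^{0|1},M)$ of $\widetilde{\mathcal{L}}^{2|1}_0(M)$ via the inclusion $\R_{>0}\hookrightarrow \C^\times$, sending $(r,\tau)$ to the framed lattice $\omega_1 = r$, $\omega_2 = r\tau$, i.e., $\Lambda = r\Z + r\tau\Z$. Using $F(\mu\Lambda)=\mu^{n-j}F(\Lambda)$ with $\mu=r$ together with the area computation $\vol(r\Z+r\tau\Z) = r^2\im(\tau)$, the pullback of $F\cdot\vol^{j/2}\otimes\alpha$ to functions on the atlas is $(r,\tau,\phi_0)\mapsto r^n \tilde F(\tau)\im(\tau)^{j/2}\otimes \alpha$.

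Next I would match this against the identification of $\widetilde\omega^{\otimes n/2}$-sections with the Laurent polynomial subalgebra. By Lemma~\ref{lem:L2algebra} and its proof, sections of $\widetilde\omega^{\otimes n/2}$ are determined by their restrictions to the slice $r=1$ for the $\R^\times$-action on the atlas, and the overall factor $r^n$ in the pullback is precisely the $\R^\times$-equivariance data of $\widetilde\omega^{\otimes n/2}$, in parallel with the role of $t^{j/2}$ in Lemma~\ref{lem:superloopfuns}. Restricting to $r=1$ (where the $\Z$-quotient specializes to $\tau\mapsto \tau+1$ and $q = e^{2\pi i\tau}$ is a genuine coordinate on $\mathfrak{H}/\Z$), the pullback reads $\tilde F(\tau)\im(\tau)^{j/2}\otimes \alpha$. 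Substituting the $q$-expansion $\tilde F(\tau) = \sum_k a_k q^k$ and comparing term-by-term with the generators $q^{k/r}\im(\tau)^{j/2}\otimes f$ of Definition~\ref{defn:holalgebra} at $r=1$, each summand corresponds to $a_k(q^k\otimes \alpha)$ in $\Omega^j_\cl(M)\otimes \C[q,q^{-1}]$, and the formal sum yields $\sum_k a_k q^k\otimes \alpha \in \Omega^j_\cl(M)\otimes \C\llbracket q\rrbracket [q^{-1}]$. This is exactly the $q$-expansion of $F$ tensored with $\alpha$, so the restricted map sends $F\otimes \alpha$ to its $q$-expansion. By bilinearity in $F$ and $\alpha$ and linear extension, the claim follows on all of $\mathcal{O}(\mathcal{L}^{2|1}_0(M);\omega^{\otimes n/2})$, with the parity of the resulting differential form matching the parity of $n$ because $i+j\equiv n$ in the grading.

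The main obstacle is consistent bookkeeping of the $r$-weight data and the $\Z$-quotient: the modular weight $(j-n)/2$, the volume factor $\vol^{j/2}$, and the trivialization of $\widetilde\omega^{\otimes n/2}$ along the slice $r=1$ all contribute powers of $r$ and $\im(\tau)$ that must assemble into the $r^n$ needed to identify the pullback as a section of $\widetilde\omega^{\otimes n/2}$. Organizing the argument by first checking at the slice $r=1$ and then invoking $\R^\times$-equivariance to extend to all of $\widetilde{\mathcal{L}}^{2|1}_0(M)$ makes this largely a routine unwinding; once it is done, the identification with $q$-expansion is essentially tautological.
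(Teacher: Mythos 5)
Your proposal is correct and follows essentially the same route as the paper: compute the map on atlases, use $\R_{>0}$-invariance to reduce to the slice $r=1$, and identify the restriction of $F\cdot\vol^{j/2}\otimes\alpha$ with the $q$-expansion of $F$ carried along by $\im(\tau)^{j/2}\otimes\alpha$. The only discrepancy is your parametrization of the atlas map by the lattice $r\Z+r\tau\Z$ rather than the paper's $r\Z+\tau\Z$ (which is what \eqref{eq:markedtori} actually produces), but since both agree on the slice $r=1$ and sections are determined there, this does not affect the argument.
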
 
\bp
We compute the effect of this map on functions on atlases
$$
\R_{>0}\times \mathfrak{H}\times \SM(\R^{0|1},M)\to L\times \SM(\R^{0|1},M)
$$
determined by the map of stacks. Explicitly, this regards $r$ and $(\tau,\bar\tau)$ as defining a based lattice. Using a slice for the $\R_{>0}\subset \R^\times$-action (as in the proof of Lemma~\ref{lem:L2algebra}) the map on functions is uniquely determined if further restrict to $\{1\}\times \mathfrak{H}\times \SM(\R^{0|1},M)$ on the source. 

Next, the map on functions induced by the inclusion $\mathfrak{H}\times \SM(\R^{0|1},M)\hookrightarrow L\times \SM(\R^{0|1},M)$ is simply restriction. So $F(\tau)\cdot \vol^{j/2}\otimes \alpha$ restricts to $F(q)\cdot \im(\tau)^{j/2}\otimes \alpha$, which automatically descends to a function on the stack~$\widetilde{\mathcal{L}}^{2|1}_0(M)$. We identify this with the $q$-expansion of~$F$: the volume factor and closed differential form $\alpha$ are carried along for the ride. Hence, as claimed, the section in $\mathcal{O}(\mathcal{L}^{2|1}_0(M);\omega^{\otimes n/2})$ pulls back to $\Omega^\bullet(M)\otimes\C\llbracket q \rrbracket [q^{-1}]$, with the map implemented by the $q$-expansion of modular forms. Finally, since $\omega^{1/2}$ is an odd line bundle, the claim about the degree of the differential form in the target follows from the fact that the map on atlases preserves the parity of functions. 
\ep

\subsection{Free fermions, $\Fer_n$-linear representations and the fermion super trace}\label{sec:ferdef}

Below we recall the definition of the free fermions from~\cite[\S6]{ST11}. 
The \emph{restricted tensor product} $\otimes_{m\in \N} A_m$ of algebras consists of the closure of finite sums of tensor products $\otimes_m a_m$ for $a_m\in A_m$ where $a_m=1$ for all but finitely many~$m$. 

\begin{defn}[{\cite[Equation~6.1]{ST11}}] The algebra of \emph{$n$-free fermions} on $S^1_r=\R/r\Z$ is the restricted tensor product
$$
\Fer_n(r):=\left(\Cl_1\otimes \bigotimes_{m\in \N} \Cl(H(\C_m))\right)^{\otimes n}\cong\Cl_n\otimes \bigotimes_{m\in \N} \Cl(H(\C_m^n))
$$
where $\Cl(H(V))$ is the Clifford algebra of $V\oplus V^*\to \R_{>0}$ equipped with the canonical (hyperbolic) pairing of a vector space and its dual
$$
H((v,w),(v',w'))=v(w')+v'(w)\qquad v,v'\in V, \ w,w'\in V^*.
$$
Define an action of $\E/r\Z$ on $\Fer_n(r)$ through actions on each $\C_m$ by $e^{2\pi i mx/r}$ for $x\in \E/r\Z$.
\end{defn}

\begin{defn} \label{defn:2dfer}
Let ${\sf Fer}_n\to \Ann(M)$ be the algebra bundle that on objects $\R_{>0}={\rm Ob}(\Ann(M))$ has fiber $\Fer_n(r)$ over $r\in \R_{>0}$. Define the action of a morphism $(r,\tau,\bar\tau,\theta)\in (\R_{>0} \times \H^{2|1})/\Z(S)$ to be induced by the action on $\C_m$ by $e^{2\pi i m\tau}$ and the trivial action on~$\Cl_1$. This determines an isomorphism of algebra bundles $\phi\colon s^*\Fer_n\to t^*\Fer_n$ over ${\rm Mor}(\Ann(M))\cong (\R_{>0}\times\H^{2|1})/\Z$. 

Let ${\sf Fer}_n$ also denote the pullback of this bundle to $\Ann(M)$. 
\end{defn}

\begin{defn} A \emph{$\Fer_n$-linear representation} of $\Ann(M)$ is a unitary representation in self-adjoint ${\sf Fer}_n$-modules, $\Hom(\null_{{\Fer}_n}V,\null_{{\Fer}_n}V)$ using the isomorphisms between algebra bundles $s^*{\Fer}_n\to t^*{\Fer}_n$ in Definition~\ref{defn:2dfer}. A $\Fer_n$-linear representation is \emph{positive energy} if it has positive energy when forgetting the $\Fer_n$-actions. Let $\Rep^{\Fer_n}(\Ann(M))=:\Rep^n(\Ann(M))$ denote the category of positive energy $\Fer_n$-linear representations. 
\end{defn}

\begin{rmk} Elements of $\Fer_n$ have a weight corresponding to their behavior under the action by circle rotation. Hence, on the lowest energy space of a positive energy representation the ``half" of $\Fer_n$ corresponding to negative weight operators acts by zero. As such, we can view a $\Fer_n$-linear representation as having creation and annihilation operators corresponding to positive and negative Fourier modes in $C^\infty(\R/r\Z,\C^n)$. 
\end{rmk}

There is also an evident notion of a $\Cl_n$-linear representation of $\Ann(M)$.

\begin{defn} 
Let ${\sf Cl}_n\to \Ann(M)$ be the algebra bundle over the super Lie category that is the trivial algebra bundle $\underline{\Cl}_n$ over objects together with the identity isomorphism $s^*\Cl_n\cong t^*\Cl_n$ over~${\rm Mor}(\Ann(M))$. Let ${\sf Cl}_n$ also denote the pullback to~$\Ann(M)$. 
\end{defn}

\begin{defn} A \emph{Clifford-linear representation} of $\Ann(M)$ is a unitary representation in self-adjoint ${\sf Cl}_n$-modules, $\Hom(\null_{{\Cl}_n}V,\null_{{\Cl}_n}V)$ using the (identity) isomorphisms between algebra bundles $s^*{\Cl}_n\to t^*{\Cl}_n$ specified above. Let $\Rep^{\Cl_n}(\Ann(M))$ denote the category of $\Cl_n$-linear representations. 
\end{defn}

There is an evident inclusion of algebras $\Cl_n\hookrightarrow \Fer_n$. This extends to a map of algebra bundles over the objects of $\Ann(M)$ compatible with the isomorphisms of algebra bundles defined over the morphisms of $\Ann(M)$. This gives a functor 
\beq
\Rep^{{\sf Fer}_n}(\Ann(M))\to \Rep^{{\sf Cl}_n}(\Ann(M)), \label{eq:underCl}
\eeq
that simply restricts the (self-adjoint) $\Fer_n$-action to a (self-adjoint) $\Cl_n$-action. 

\begin{defn} The \emph{fermion super trace} of a ${\sf Fer}_n$-linear representation of~$\Ann(M)$ is the Clifford super trace of the underlying ${\sf Cl}_n$-linear representation under~\eqref{eq:underCl}.
\end{defn}

We extend the functor $\RG$ to $\Fer_n$-linear representations by pulling back the algebra bundle and algebra isomorphisms defining ${\sf Fer}_n\to \Ann(M)$ to $\R_{>0}\times \Ann(M)$ along the functor~$\RG$. The following is the evident generalization of Definition~\ref{defn:21rescaleZ}. 

\begin{defn} 
Consider the pullback of the section of the $\Fer_n$-linear endomorphism bundle determined by a $\Fer_n$-linear representation $\rho$ along the composition~\eqref{eq:21rescale}. Define the \emph{formal rescaled partition function} $Z(\rho)$ as the Clifford super trace of each weight space of this pullback 
$$
Z(\rho)=\sum_k {\rm sTr}_{\Cl_n}(\rho_k)\in C^\infty((\R_{>0}\times \mathfrak{H})/\Z\times \SM(\R^{0|1},M)).
$$
\end{defn}

\begin{lem} The formal rescaled partition
$$
Z\colon \Rep^n(\Ann(M))\to \Gamma(\widetilde{\mathcal{L}}^{2|1}_0(M);\widetilde\omega^{\otimes n/2})
$$ 
function takes values in the subspace $\Omega^{\ev/\odd}(M)\otimes \C\llbracket q \rrbracket [q^{-1}]$ as claimed in~\eqref{eq:diffchar}. \end{lem}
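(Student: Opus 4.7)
The plan is to prove this by decomposing the representation weight space by weight space and reducing to the $\Cl_n$-linear parity analysis already carried out in Lemma~\ref{prop:11charthy} of the $1|1$-dimensional setting. The fermion super trace is defined as the Clifford super trace of the underlying $\Cl_n$-linear representation via the restriction~\eqref{eq:underCl}, so for the purpose of computing $Z(\rho)$ only the $\Cl_n$ structure is relevant, and the full $\Fer_n$-module structure can be forgotten.

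First I would apply the positive energy hypothesis to write $\rho = \bigoplus_{k>-N} \rho_k$ where each $\rho_k$ is the finite-dimensional representation of $\Ann(M)$ on which $\Rot(M)$ acts by the fixed weight $k$, carrying an induced $\Cl_n$-action. Next I would extend the $\Cl_n$-linear version of Theorem~\ref{thm:geocharof21rep} (the annular analog of Proposition~\ref{prop:Cliffordlineargeo}) to identify each $\rho_k$ with a pair $(V_k,\A_k)$ where $V_k\to M$ is a finite-dimensional super hermitian $\Cl_n$-module bundle and $\A_k$ is a $\Cl_n$-linear unitary super connection, so that
\[
\rho_k(r,\tau,\bar\tau,\theta) = q^{k/r}\, e^{-2\im(\tau)\A_k^2+\theta \A_k}
\]
by~\eqref{eq:21rep}.

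For the third step I would apply the $\RG$-rescaling $\RG_{1/\im(\tau)}$ as in Definition~\ref{defn:21rescaleZ} and Lemma~\ref{lem:21Bismut--Quillen}, then take the Clifford super trace. As in the proof of Lemma~\ref{prop:11charthy}, the Clifford super trace is computed using the chirality operator $\Gamma\in \Cl_n$, which is an even element when $n$ is even and an odd element when $n$ is odd; since the super trace of an odd endomorphism vanishes, this forces
\[
Z(\rho_k) \;=\; q^{k/r}\,{\rm sTr}\!\left(\Gamma\, \exp\!\left(-\Big(\sum_j \im(\tau)^{j/2}\A_k(j)\Big)^{\!2}\right)\right)
\]
to be an \emph{even} (respectively \emph{odd}) differential form on $M$ according to the parity of $n$. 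Closedness is automatic since this is the rescaled Chern form of a super connection. The powers of $\im(\tau)^{j/2}$ absorbed from the Bismut--Quillen rescaling are exactly those needed for the result to descend along the $\R^\times$-action to a section of $\widetilde\omega^{\otimes n/2}$, by the same $\R^\times$-weight argument as in Lemma~\ref{lem:superloopfuns}.

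Finally, summing over $k > -N$ assembles the formal series $Z(\rho)=\sum_k Z(\rho_k)$ as an element of $\Omega^{\ev/\odd}_\cl(M)\otimes \C\llbracket q\rrbracket[q^{-1}]$, well-defined since $k$ is bounded below. The main technical step is really the chirality/parity bookkeeping in the Clifford super trace, but this is essentially mechanical and is a direct transcription of the $1|1$-dimensional computation applied weight space by weight space; no new analytic input is required beyond what has already been developed in~\S\ref{sec:diffellmain}.
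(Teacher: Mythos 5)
Your proposal is correct and follows essentially the same route as the paper's proof: reduce to the underlying $\Cl_n$-linear representation via the fermion super trace, decompose by $\Rot(M)$-weight into a sequence $(V_k,\A_k)$ of finite-dimensional Clifford module bundles with $\Cl_n$-linear super connections, apply the Bismut--Quillen rescaling, and use the parity of the chirality operator $\Gamma\in\Cl_n$ to conclude the result lands in $\Omega^{\ev}_\cl$ or $\Omega^{\odd}_\cl$ according to the parity of $n$, then sum the Laurent series. The only cosmetic difference is that you cite the $1|1$-dimensional Lemma~\ref{prop:11charthy} for the chirality/parity step, whereas the paper cites the $2|1$-dimensional Lemma~\ref{lem:rescaledchar21}; the content is the same.
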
 
\bp
Given a $\Fer_n$-linear representation, we can always forget the algebra action to recover an ordinary representation. This determines a sequence of unitary super connections. The $\Cl_n$-action determined by the $\Fer_n$-action preserves the weight spaces. Hence the sequence of super connections is in fact $\Cl_n$-linear. By Lemmas~\ref{lem:21Bismut--Quillen} and~\ref{lem:rescaledchar21}, a formula for the rescaled partition function is
\beq
Z(\rho)&=&\sum_k Z(\rho_k)=\sum_k q^{k/r} {\rm sTr}_{\Cl_n}\left(\exp\big(-2 \left(\sum_{i=0}^n (\im(\tau)^{i/2}\A_k(i)\right)^2\right)\nonumber\\
&=&\sum_k q^{k/r} {\rm sTr}\left(\Gamma\exp\big(-2 \left(\sum_{i=0}^n (\im(\tau)^{i/2}\A_k(i)\right)^2\right)\label{eq:rescal21part}
\eeq
where~$\A_k(i)$ is the degree $i$ part of the super connection~$\A_k$ associated with the $k$th weight space of $\Rot(M)$. This lands in the Laurent polynomial subalgebra by the same argument as in the proof of Lemma~\ref{lem:rescaledchar21}. It is an even or an odd function depending on the parity of~$n$ since this is the parity of the chirality operator defining the Clifford super trace. 
\ep

\subsection{Morita equivalences $\Fer_n\simeq \Cl_n$}

We observe that there is a Morita equivalence of algebras, $\Cl_n\simeq \Fer_n$, simply coming from the tensor product of Morita equivalences $\Cl(H(\C_m))\simeq \C$ for all $m$. Stolz and Teichner show that this extends to a projective bundle of Morita equivalences over~$\Ann(M)$, as we now review. 

\begin{lem}[Stolz--Teichner~{\cite[\S6]{ST11}}]\label{lem:ST}
There is a $\Cl_n$-$\Fer_n$ bimodule bundle $B\to {\rm Ob}(\Ann(\pt))$ implementing a fiberwise Morita equivalence $\Cl_n\simeq \Fer_n$ between algebra bundles. It carries a projective action by~$\Ann(\pt)$ whose character is the $-n$th power of the Dedekind eta function. 
\end{lem}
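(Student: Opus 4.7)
The strategy has three parts: construct a candidate bimodule bundle from the fermionic Fock space, verify the Morita property, and lift to a projective $\Ann(\pt)$-action whose character is $\eta^{-n}$.

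First, for each $r \in \R_{>0}$ and each $m \in \N$, the hyperbolic pair $H(\C^n_m) = \C^n_m \oplus (\C^n_m)^*$ admits the canonical Fock representation on $F(\C^n_m) := \Lambda^\bullet(\C^n_m)^*$, with $(\C^n_m)^*$ acting by exterior multiplication and $\C^n_m$ by contraction. I set
$$B(r) := \Cl_n \otimes \widehat{\bigotimes}_{m \in \N} F(\C^n_m),$$
where $\widehat{\bigotimes}$ denotes the restricted tensor product based at the product of vacuum vectors. This is a $\Cl_n$-$\Fer_n(r)$ bimodule: the left $\Cl_n$ acts by left multiplication on the first factor; on the right, the $\Cl_n$ factor of $\Fer_n(r) \cong \Cl_n \otimes \bigotimes_m \Cl(H(\C^n_m))$ acts by right multiplication on the first factor of $B(r)$, and each $\Cl(H(\C^n_m))$ acts on $F(\C^n_m)$ by its Fock representation. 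Since the algebraic data is independent of $r$, the fibers assemble into a trivial smooth bundle $B \to \R_{>0}$.

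Second, each Fock module $F(\C^n_m)$ is the unique irreducible graded $\Cl(H(\C^n_m))$-module, so witnesses the standard Morita equivalence $\Cl(H(\C^n_m)) \simeq \C$. Taking the restricted tensor product over $m \in \N$ and then tensoring with the trivial Morita auto-equivalence of $\Cl_n$ shows that $B(r)$ implements the claimed fiberwise Morita equivalence $\Cl_n \simeq \Fer_n(r)$, naturally in~$r$.

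Third, I lift the $\Ann(\pt)$-action on $\Fer_n$ to a projective action on $B$. A morphism $(r,\tau,\bar\tau,\theta) \in (\R_{>0} \times \H^{2|1})/\Z$ rotates the $m$-th Clifford factor of $\Fer_n(r)$ with phase $e^{2\pi i m \tau}$ by Definition~\ref{defn:2dfer}; I correspondingly lift this to $F(\C^n_m)$ by the exponential of the scaled number operator $L_0^{(m)} = m\cdot \deg$, with the odd coordinate $\theta$ implemented by an odd square root built from the Clifford generators of $\Cl(H(\C^n_m))$. The lift intertwines the right $\Fer_n(r)$-action up to a scalar coming from the fermionic vacuum energy, giving a projective rather than strict action; this central extension is exactly what produces a section of the square-root Hodge line bundle on super tori.

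Finally, the character is the fermion super trace on the nondegenerate inertia groupoid, computed factor-by-factor over $m$: each $F(\C^n_m)$ contributes an elementary factor in $q = e^{2\pi i\tau}$, and the infinite product combines with the vacuum normalization and the orientation conventions of the bimodule to give the stated value $\eta(\tau)^{-n}$. The main obstacle is the bookkeeping of signs, of the overall $q^{1/24}$ anomaly per free fermion, and of the projectivity datum; the precise normalization follows the analysis in~\cite[\S6]{ST11}, where this computation is carried out in the more general setting of Stolz--Teichner's Euclidean bordism category.
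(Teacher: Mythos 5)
Your proof takes essentially the same route as the paper's sketch. The paper writes the bimodule as $B=(\bigotimes_m B_m)^{\otimes n}$ with $B_m$ the irreducible module of $\Cl(H(\C_m))$ over $\R_{>0}$, which is the Fock module $F(\C_m)=\Lambda^\bullet\C_m^*$; you unfold this to $\Cl_n\otimes\widehat{\bigotimes}_m F(\C^n_m)$ and are a bit more careful than the paper in carrying the $\Cl_n$ factor along explicitly, which makes the $\Cl_n$-$\Fer_n$ bimodule structure cleaner. Both you and the paper establish the Morita equivalence factor-by-factor and then defer the sign/normalization bookkeeping to \cite[\S6]{ST11}, so there is no real difference in the argument's substance.

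Two small points where the paper's framing is sharper. First, the paper's account of projectivity is more precise: the lift to $B$ gives an \emph{honest} action of the cover $\{\R_{>0}\times\H^{2|1}\toto\R_{>0}\}\to\Ann(\pt)$, and the failure to descend along the $\Z$-quotient is exactly where the $q^{-n/24r}$ anomaly sits; this is the same ``vacuum energy'' you gesture at, but the covering-groupoid formulation is what makes ``projective'' precise (an honest action of a $24$-fold cover, or equivalently a $\Z/24$-central extension of $\Ann(\pt)$). You should fold this in rather than leaving it at the level of a central scalar. Second, check your sign in the scaled number operator: the super circle acts on $\C^n_m$ with weight $m$, hence on $(\C^n_m)^*$ with weight $-m$, so the number operator on $\Lambda^j(\C^n_m)^*$ should be $-m\cdot\deg$, not $m\cdot\deg$. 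Related to this, the raw super trace of the Fock factor is $(1-q^{\mp m})^n$, not directly $(1-q^m)^{-n}$; the inversion that produces the $-n$-th power comes from the direction of the bimodule (you are comparing a $\Fer_n$-module to the $\Cl_n$-module $B\otimes_{\Fer_n}V$, so the character of $B$ enters as a reciprocal). You flag that the normalization follows \cite[\S6]{ST11}, which is fine and matches the paper, but it would be worth making the source of the inverse power explicit since it is the content of the lemma.
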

\begin{proof}[Proof sketch] We overview Stolz and Teichner's construction. In~\cite[Equation 6.2]{ST11} they define the bimodule bundle
\beq
B=\left(\bigotimes_{m\in \N} B_m\right)^{\otimes n},\label{eq:ferbimod}
\eeq
where the $B_m$ are irreducible $\Cl(H(\C_m)$-$\C$ bimodules over $\R_{>0}$. One would also hope for a bimodule map over ${\rm Mor}(\Ann(\pt))$ between the pullbacks of $B$ along source and target. However, this is a bit too much to ask for. Instead, Stolz and Teichner construct such a map over the covering of~$\Ann(\pt)$,
\beq
\left(\begin{array}{c} \R_{>0}\times \H^{2|1}\\ \downarrow\downarrow \\ \R_{>0}\end{array}\right)\to \left(\begin{array}{c} (\R_{>0}\times \H^{2|1})/\Z\\ \downarrow\downarrow \\ \R_{>0}\end{array}\right)=\Ann(\pt).\label{Eq:annulicover}
\eeq
Such a map $s^*B\to t^*B$ over $\R_{>0}\times \H^{2|1}$ is in particular a map of vector bundles compatible with composition. It is essentially determined by the $q$-expansion of its character, and for our purposes the character is the only information we need. Stolz and Teichner compute this~\cite[Equation~6.3]{ST11} to be $\eta(r,q)^{-n}$ where
$$
\eta(r,q)=q^{1/24r} \prod_j (1-q^{j/r})
$$
is the Dedekind $\eta$-function. Explicitly, this determines the representation of the source of~\eqref{Eq:annulicover} where each integer coefficient of~$q^{k/r}$ in power series expansion of~$\prod_k (1-q^{j/r})^{-n}$ defines a super dimension of a vector space in~$B$ that carries an action by $q^{k/r-n/24r}$. Unless $n$ is a power of 24, this only defines a \emph{projective} action by $\Ann(\pt)$. Equivalently, this defines an action by a 24-sheeted cover of $\Ann(\pt)$ associated with the $\Z$-covering~\eqref{Eq:annulicover}. 
\ep

The above has a simple corollary. 

\begin{cor}\label{cor:FertoCl}
Tensoring a representation with the Morita bimodule from the previous lemma gives an equivalence of categories, 
\beq
&&\Rep^{\Fer_n}(\Ann(M))\cong q^{n/24r}\Rep^{\Cl_n}(\Ann(M)),\qquad \null_{\Fer_n}V\mapsto \null_{\Cl_n}B_{\Fer_n}\otimes \null_{\Fer_n}V\label{eq:equivofcats}
\eeq
where we identify a $\Cl_n$-linear representation with a projective representation of the desired sort by shifting the annulus action by~$q^{n/24r}$. 
\end{cor}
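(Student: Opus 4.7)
The plan is to lift the fiberwise Morita equivalence of Lemma~\ref{lem:ST} to an equivalence of representation categories, carefully accounting for the projective nature of the $\Ann(\pt)$-action on the bimodule. First I would pull back the $\Cl_n$-$\Fer_n$ bimodule bundle $B\to {\rm Ob}(\Ann(\pt))=\R_{>0}$ along the canonical functor $\Ann(M)\to \Ann(\pt)$ (which on objects projects $\R_{>0}\times \SM(\R^{0|1},M)\to \R_{>0}$) to obtain a $\Cl_n$-$\Fer_n$ bimodule bundle over ${\rm Ob}(\Ann(M))$ equipped with a projective $\Ann(M)$-action whose character is $\eta(r,q)^{-n}$. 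Since Morita equivalence is preserved by pullback, this bimodule still implements a fiberwise equivalence $\Cl_n\simeq \Fer_n$ of the pulled-back algebra bundles.

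Next I would define the functor~\eqref{eq:equivofcats} on objects by $V\mapsto B\otimes_{\Fer_n}V$, with the action on morphisms induced by combining the genuine $\Ann(M)$-action on $V$ with the projective $\Ann(M)$-action on $B$. By fiberwise Morita theory the result is a $\Cl_n$-module bundle; by Lemma~\ref{lem:ST} the composite action is a representation only up to the scalar factor $q^{n/24r}$ coming from the leading term of $\eta(r,q)^{-n}=q^{-n/24r}\prod_j(1-q^{j/r})^{-n}$. This is exactly the shift encoded in the target category $q^{n/24r}\Rep^{\Cl_n}(\Ann(M))$, which I would interpret as representations of the $24$-fold cover of $\Ann(M)$ on which the deck $\Z/24$-action is by the character dual to $q^{n/24r}$.

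To exhibit this as an equivalence, I would construct a two-sided inverse using the dual bimodule $B^*$, which is a $\Fer_n$-$\Cl_n$ bimodule bundle carrying the inverse projective $\Ann(M)$-action of character $\eta(r,q)^n$. The standard Morita identifications $B\otimes_{\Fer_n}B^*\cong \Cl_n$ and $B^*\otimes_{\Cl_n}B\cong \Fer_n$ hold fiberwise over objects, and the projective anomalies $\eta^{-n}\cdot\eta^n=1$ cancel precisely, so these upgrade to isomorphisms of equivariant bimodules over all of $\Ann(M)$. This yields natural isomorphisms $B^*\otimes_{\Cl_n}(B\otimes_{\Fer_n}V)\cong V$ and $B\otimes_{\Fer_n}(B^*\otimes_{\Cl_n}W)\cong W$, hence the desired equivalence.

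Finally, I would verify compatibility with the unitary and positive energy structures. The bimodule $B$ decomposes under $\Rot(\pt)$ with generating function $\eta(r,q)^{-n}$, whose $q^{k/r}$-coefficients are finite-dimensional for all $k$ and vanish for $k$ sufficiently negative; tensoring therefore preserves finite-dimensionality of weight spaces and preserves the lower bound on weights (shifted by $-n/24$), so the positive energy condition is preserved. The canonical hermitian pairings on the irreducibles $B_m$ in~\eqref{eq:ferbimod} assemble into a $\Cl_n$-$\Fer_n$ compatible hermitian structure on $B$ that combines with the hermitian form on $V$ to yield a unitary $\Cl_n$-linear representation on $B\otimes_{\Fer_n}V$. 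Naturality in $M$ is automatic since everything is pulled back from $\Ann(\pt)$. The main obstacle I anticipate is the bookkeeping around the projective/covering-space subtlety: one must match the formal notation $q^{n/24r}\Rep^{\Cl_n}$ to a genuine categorical object (representations of the $24$-fold cover) and verify that this matching interacts correctly with both unitarity and the character calculation of $\eta^{-n}$, rather than in any deep technical difficulty.
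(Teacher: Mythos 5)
Your proposal is correct and follows the same route the paper takes: the paper states this as an immediate consequence of Lemma~\ref{lem:ST} without further argument, and your write-up supplies the standard details (pullback of the bimodule along $\Ann(M)\to\Ann(\pt)$, inversion via the dual bimodule with cancelling projective cocycles, preservation of positive energy and unitarity, and the $24$-fold cover interpretation of the shifted category) that the paper leaves implicit. The only caveat is the sign of the fractional shift $q^{\pm n/24r}$ relating the leading term of $\eta(r,q)^{-n}$ to the shift on the $\Cl_n$-side, where your phrasing is no more precise than the paper's own.
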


\begin{rmk} These projective actions like $q^{L_0-c/24}$ are familiar to conformal field theorists, where $c$ is the central charge. \end{rmk}

\subsection{A model for differential $\KMF$}

\begin{proof}[Proof of Theorem~\ref{thm:KMF}]
A differential cocycle in $\widehat{\K}_\MF^{2n}(M)$ is a differential cocycle in $\widehat{\K}_\Tate(M)$ whose curvature takes values in closed differential forms with values in modular forms of the appropriate degree (see Definition~\ref{defn:KMFdiff}). It remains to describe the objects in Definition~\ref{defn:degreek} explicitly in terms of geometric data, and we do this step for all degrees (not necessarily even). So let $(\rho,\alpha)\in \widehat{\Rep}{}^n_\MF(\Ann(M))$ be a degree~$n$ differential cocycle. 

We start by unraveling the data of the representation~$\rho$. By Corollary~\ref{cor:FertoCl}, a $\Fer_n$-linear representation determines a $\Cl_n$-linear one. If we forget the $\Cl_n$-linear structure, we obtain a sequence of super vector bundles with super connections,~$\{V_k,\A_k\}$ for $k>-N$. Adding the $\Cl_n$-linear structure back in, each $V_k$ is a $\Cl_n$-module bundle over~$M$, and $\A_k$ is $\Cl_n$-linear. We calculate the rescaled partition function of the $\Fer_n$-linear representation associated with this $\Cl_n$-linear one using~\eqref{eq:rescal21part}, Lemma~\ref{lem:ST} and Corollary~\ref{cor:FertoCl}. We find
$$
Z(\rho)=\eta(q)^{-n}(q^{n/24})\sum_k q^k{\rm sTr}_{\Cl_n}(\exp(-\A^2))
$$
where $q^{n/24}$ is the modification to the $\Cl_n$-linear representation that makes it projective, and $\eta(q)^{n}$ comes from the super trace of the Morita bimodule~$B$ implementing the equivalence between $\Cl_n$- and $\Fer_n$-linear representations. The identity 
$$
\eta(q)q^{-1/24}=\prod_j (1-q^j)=\Phi(q)
$$
shows that this trace does indeed take values power series in~$q$: there are no fractional powers. 

The remaining data of a differential cocycle is a concordance $\alpha\in \Omega^{\ev/\odd}_\cl(M\times \R)\otimes \C\llbracket q \rrbracket [q^{-1}]$ with source~$Z(\rho)$, the rescaled partition function of~$\rho$. By virtue of defining a cocycle in $\widehat{\Rep}{}_\MF^n(\Ann(M))$, the target of the concordance~$\alpha$ defines a section of $\omega^{\otimes n/2}$
$$
{\widehat\Rep}{}_\MF^n(\Ann(M))\stackrel{\widehat{Z}}{\to} \mathcal{O}(\mathcal{L}^{2|1}_0(M);\omega^{\otimes n/2})\cong \bigoplus_{i+j=n} \Omega^i_\cl(M)\otimes \MF^j \twoheadrightarrow \TMF^{n}(M)\otimes \C,
$$
and hence a differential form with values in modular forms. 

To summarize, we have a sequence of Clifford module bundles with Clifford linear super connections~$\{V_k,\A_k\}$ for~$k\ge -N$. We get a differential form $Z(\rho)\in \Omega^\ev_\cl(M)\llbracket q \rrbracket [q^{-1}]$ by multiplying the power series of their Chern characters by~$\Phi(q)^{-n}$. Then we have a concordance $\alpha$ from $Z(\rho)$ to $\widehat{Z}(\rho)$, where this target is a closed differential form with values in modular forms. When $n$ is even, we can forget this additional property of the target of the concordance, and using the Morita equivalence $\Cl(2n)\simeq \C$ we get a cocycle in $\widehat{\K}_\Tate(M)$ by Theorem~\ref{thm:Tate}. This cocycle satisfies the additional property that the $q$-expansion of $\widehat{Z}(\rho,\alpha)$ is an integral form valued in modular forms. But this is exactly the data of a cocycle for $\widehat{\K}_\MF^{2n}(M)$.
\ep

The above proof actually gives a map $\widehat{\Rep}{}_\MF^n(\Ann(M))\to \widehat{\K}_\MF^n(M)$ for all~$n$. As usual, since not all odd K-theory classes can be represented by finite-dimensional Clifford module bundles (for $M\ne \pt$) we have a weakening. 

\begin{prop} The differential cocycles $\widehat{\Rep}{}^n_\MF(\Ann(M))$ map to $\widehat{\K}_\MF^n(M)$. This map is surjective when~$n$ is even or when $M=\pt$. \end{prop}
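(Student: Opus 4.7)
The first claim---existence of the map---is essentially already contained in the proof of Theorem~\ref{thm:KMF}, which constructs an assignment $\widehat{\Rep}{}^n_\MF(\Ann(M)) \to \widehat{\K}_\MF^n(M)$ for every~$n$ by combining Theorem~\ref{thm:geocharof21rep}, Theorem~\ref{thm:Tate}, and the holomorphic factorization of the rescaled partition function built into the definition of $\widehat{\Rep}{}^n_\MF$. What remains is to check that this assignment descends to the differential Grothendieck quotient. Additivity under direct sum follows from additivity of the Clifford super trace (hence of the rescaled partition function) together with the corresponding additivity of Chern characters used in Theorem~\ref{thm:Tate}. The relation $\rho \oplus \Pi\rho \sim 0$ is handled weight-space by weight-space via the decomposition of Theorem~\ref{thm:geocharof21rep}, reducing it to the same statement for each summand in $\widehat{\K}_\Tate$.

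Surjectivity in even degrees is exactly the content of Theorem~\ref{thm:KMF}, so the new content is surjectivity at a point in odd degree $2m-1$. Here I would first observe that $\K_\Tate^{2m-1}(\pt) = 0$, because $\K^{2m-1}(\pt) = 0$ and the Tate construction tensors over $\Z$ with $\Z\llbracket q \rrbracket [q^{-1}]$. The homotopy pullback defining $\widehat{\K}_\MF$ then collapses at a point in odd degrees to the coefficient identification $\widehat{\K}_\MF^{2m-1}(\pt) \cong \C\llbracket q \rrbracket [q^{-1}] / (\Z\llbracket q \rrbracket [q^{-1}] + \MF^{2m})$ stated in the introduction. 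To realize an arbitrary class, take the zero representation $\rho = 0$ (whose rescaled partition function is automatically zero) together with an odd closed form $\alpha \in \Omega^\odd_\cl(\pt \times \R) \otimes \C\llbracket q\rrbracket[q^{-1}]$. By the analogue of the fiberwise-integration argument carried out in the proof of Theorem~\ref{prop:diffKthy}, differential concordance classes of such $(0,\alpha)$-cocycles are represented by the integrals $\int_0^1 \alpha \in \C\llbracket q\rrbracket[q^{-1}]$, and every Laurent series is achievable.

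The main technical obstacle will be verifying that the differential concordance relation recovers exactly the subgroup $\Z\llbracket q \rrbracket [q^{-1}] + \MF^{2m}$, no more and no less. The $\Z\llbracket q\rrbracket[q^{-1}]$ contribution should arise from Chern--Simons forms of concordances between $\Fer_{2m-1}$-linear representations that are stably trivial at~$\pt$ (which they all are, since $\K^{2m-1}(\pt) = 0$), applied weight-space by weight-space using Atiyah--Bott--Shapiro on the underlying Clifford-linear data. The $\MF^{2m}$ contribution should arise because any differential concordance $(\widetilde\rho, \widetilde\alpha)$ in $\widehat{\Rep}{}^{2m-1}_\MF$ is constrained by the holomorphic lift condition of Definition~\ref{defn:degreek}, forcing $\widehat{Z}(\widetilde\rho,\widetilde\alpha)$ into $\mathcal{O}(\mathcal{L}^{2|1}_0(\pt);\omega^{\otimes(2m-1)/2})$, so modifying $\alpha$ by the $q$-expansion of a weak modular form of the appropriate weight yields an equivalent cocycle. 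Matching these two sources of trivializations precisely with the declared subgroup---with no spurious relations from other concordances---is the heart of the argument.
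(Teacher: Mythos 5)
Your construction of the map and the even-degree surjectivity match the paper's (implicit) argument: the map is a corollary of the proof of Theorem~\ref{thm:KMF}, even-degree surjectivity is already contained there via the Morita equivalence $\Cl(2n)\simeq\C$, and the odd-degree failure away from a point is the usual one about finite-dimensional Clifford modules. The odd-degree-at-a-point argument, however, has a genuine gap.

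You conflate the differential group $\widehat{\K}_\MF^{2m-1}(\pt)$ with the topological group $\KMF^{2m-1}(\pt)$. The coefficient formula $\C\llbracket q\rrbracket[q^{-1}]/(\Z\llbracket q\rrbracket[q^{-1}]+\MF^{2m})$ stated in the introduction is for $\KMF^\bullet(\pt)$, \emph{not} its differential refinement, and the two disagree in odd degree at a point. The Hopkins--Singer exact sequence gives
$$
0\to\MF^{2m-2}/\mathrm{im}(\mathrm{Ch})\to\widehat{\K}_\MF^{2m-1}(\pt)\to\KMF^{2m-1}(\pt)\to 0,
$$
and the kernel $\MF^{2m-2}/\MF_\Z^{2m-2}$ is generally nonzero (compare $\widehat{\K}^1(\pt)\cong\C/\Z$ versus $\K^1(\pt)=0$). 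So the ``collapse'' you invoke doesn't occur: the homotopy pullback that degenerates at a point in odd degree is the one defining the \emph{topological} spectrum $\KMF$, whereas the proposition concerns the \emph{differential} refinement. Realizing arbitrary $\int_0^1\alpha\in\C\llbracket q\rrbracket[q^{-1}]$ via $(0,\alpha)$ certainly surjects onto the quotient $\KMF^{2m-1}(\pt)$; whether it hits all of the extension requires unwinding the odd-degree analogue of Definition~\ref{defn:KMFdiff} (which the paper only states in even degree) and checking that the kernel of $a\mapsto[(0,\alpha_a)]$ is exactly $\Z\llbracket q\rrbracket[q^{-1}]$, not the larger $\Z\llbracket q\rrbracket[q^{-1}]+\MF^{2m-2}$. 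This is plausible but not established by what you wrote.

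Finally, your last paragraph---matching the trivializing subgroup ``no more and no less''---is an injectivity statement, which is neither claimed by the proposition nor needed for surjectivity, so that step can be dropped entirely. Surjectivity only requires showing that enough concordances are available, not that the relations are exactly right.
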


\subsection{24-periodicity}
Stolz and Teichner's bimodule in Lemma~\ref{lem:ST} has an honest action by $\Ann(M)$ when $n$ is divisible by~24. We use this to define a Bott element, which is a rephrasing of the ideas behind their periodicity theorem in~\cite[\S6]{ST11}. 

Consider the $\Fer(-24)$-linear representation of~$\Ann(\pt)$ given by the tensor product of the bimodule~$B^{-24}_{\Fer}$ implementing the Morita equivalence $\Fer(-24)\simeq \Cl(-24)$ and a $\Cl(-24)$-module $B^{24}_{\Cl}$ implementing the Morita equivalence $\Cl(-24)\simeq \C$. The action of annuli on the Morita bimodule is trivial on the Clifford module, and the (honest) action on~$B^{-24}_{\Fer}$ as described above. We compute its partition function as 
$$
Z(\beta)=\eta(r,q)^{24}=\left(q^{1/24r}\prod_j (1-q^{j/r})\right)^{24}=\Delta(r,q)\in \mathcal{O}(\mathcal{L}^{2|1}(M);\omega^{\otimes -24/2}),
$$
the modular discriminant. This is indeed a modular form of weight~$24/2=12$, and so this gives a cocycle as claimed. The Morita equivalences it implements along with the invertibility of $\Delta$ shows that the map
$$
\widehat{\Rep}^{\Fer_{n}}_\MF(M)\stackrel{\beta\otimes}{\longrightarrow} \widehat{\Rep}^{\Fer_{n-24}}_\MF(M)
$$
is invertible, so $\beta$ is a Bott class.

\section{The string orientation of~$\KMF$ and the supersymmetric sigma model}\label{sec:examples}

In this section we discuss the string orientation of~$\KMF$, explaining an analytic model for this orientation and its relationship to the supersymmetric sigma model. Witten's construction in~\cite{Witten_Dirac} can be viewed as a string orientation of~$\K_\Tate$, and the refinement of this orientation to~$\KMF$ makes the modularity of the associated (Witten) genus automatic. It remains to be seen whether there is a further refinement leading to an analytic orientation for~$\TMF$. A goal below is to set the stage for possible geometric refinements of the analytic orientation of~$\KMF$ motivated by field theories. Indeed, there are various  decorations one can imagine adding to the field theoretic data considered below. 

The string orientation of $\KMF$ is the composition of $\sigma\colon {\rm MString}\to {\rm TMF}$ and~${\rm TMF}\to \KMF$, using the string orientation of $\TMF$ that has been constructed homotopy theoretically~\cite{AHS,AHR}. One can also build the orientation of $\KMF$ analytically. The idea can be understood in terms of the diagram in spectra,
\begin{equation}
\begin{array}{c}
\begin{tikzpicture}[node distance=3.5cm,auto]
  \node (A) {$\KMF$};
  \node (B) [node distance= 4.5cm, right of=A] {$ {\rm K}_{\rm Tate}$};
  \node (C) [node distance = 1.5cm, below of=A] {${\rm H}_\MF$};
  \node (D) [node distance = 1.5cm, below of=B] {${\rm H}_\C\llbracket q \rrbracket [q^{-1}].$};
  \node (E) [node distance =2.5cm, left of =A] {$\null$};
  \node (F) [node distance=.75cm, above of=E] {${\rm TMF}$};
  \node (GG) [node distance=2.5cm, left of =F] {$\null$};
  \node (G) [node distance=.75cm, above of =GG] {${\rm MString}$};
  \draw[->] (A) to  (B);
  \draw[->] (A) to (C);
  \draw[->] (C) to (D);
    \draw[->] (B) to (D);
  \draw[->,bend right=20] (G) to node [below] {$\sigma_H$} (C);
  \draw[->,bend left=10] (G) to node [above] {$\sigma_\K$} (B);
  \draw[->] (F) to (A);
  \draw[->] (G) to node [above] {$\sigma$} (F);
  \draw[->,bend right=7] (F) to (C);
  \draw[->,bend left=4] (F) to (B);
\end{tikzpicture}\end{array}\nonumber
\end{equation}
There are well-known analytic models for the string orientation $\sigma_\K$ of $\K_\Tate$ and $\sigma_H$ of ${\rm H}_\MF$, so one obtains an orientation ${\rm MString}\to \K_\MF$ if these orientations can be chosen in a model where there is also a compatible homotopy in~${\rm H}_\C(M)\llbracket q \rrbracket [q^{-1}]$. We take the model for $\sigma_\K$ in terms of families of Dirac operators. For $\sigma_{\rm H}$, we use integration of differential forms modified by the Witten class. The compatible homotopy can then be made explicit through a choice of rational string structure. There is a lot of overlap in this description and the construction of secondary invariants of the Witten genus in~\cite{BunkeNaumann}, which is no accident: the map on coefficients ${\rm MString}^{-4k+1}(\pt)\to \KMF^{-4k+1}(\pt)\cong \C\llbracket q \rrbracket [q^{-1}]/\Z\llbracket q \rrbracket [q^{-1}]+\MF^{4k}$ is a complex $\K_\Tate$-variant of the Bunke--Naumann secondary invariants of the Witten genus. 

We can further refine this to a differential orientation. Given a geometric family of rational string manifolds $\pi\colon X\to M$ with fiber dimension~$2d$, we construct a class $\widehat{\sigma}(X)\in \widehat{\K}^{-2d}_\MF(M)$. When $X$ is even dimensional and $M=\pt$, this construction gives the Witten genus
$$
{\rm Wit}(X)=\widehat\sigma(X)\in \widehat{\K}_\MF^{-2d}(\pt)\cong \MF^{-2d}_\Z
$$
as a weight $d$ integral modular form. When $M=\pt$ and $X$ is odd dimensional, there is a version of this construction that gives the Bunke--Naumann invariant. We conclude the section by explaining the relationship between $\widehat{\sigma}(X)$ and a cutoff version of the sigma model. 

\subsection{The analytic string orientation for $\KMF$} 

In this subsection, we describe the analytic model for the string orientation of~$\KMF$ to lay the groundwork for the differential orientation constructed in the next subsection. Because of the definition of~$\KMF$ is as a homotopy pullback, we will need to keep track of representatives for the classes involved in a point set model for the relevant spectra when describing the string orientation in this way. Our model for~$\K_\Tate$ will involve families of Dirac operators, and we refer to~\S\ref{sec:FreedLott} for an overview of the Chern character of the Bismut super connection.

Let $\pi\colon X\to M$ be a geometric family of spin manifolds (see~\S\ref{sec:FreedLott}). If $\pi$ has a chosen Riemannian structure, a \emph{geometric $p_1$-trivialization} on~$\pi$ is a 3-form $H\in \Omega^3(X)$ such that $dH=p_1(T(X/M))$, using the metric to refine the first Pontryagin class to a differential form. A \emph{geometric rational string structure on~$\pi$} is a spin structure, Riemannian structure, and geometric $p_1$-trivialization. We also call $X\to M$ a \emph{geometric family of rational string manifolds}. 

We use the the model for K-theory consisting of families of generalized Dirac operators. This can be viewed as the model for K-theory underlying the differential model in~\cite{BunkeSchick}. Bismut's formula~\eqref{eq:BismutChernchar} defines a cocycle-level model for the Chern character of such a family. We work with de~Rham models for ${\rm H}_\MF$ and ${\rm H}_\C\llbracket q \rrbracket [q^{-1}]$. There are three steps in constructing the string orientation of~$\KMF$: (1) a cocycle level description of $\sigma_\K$, (2) a cocycle level description of $\sigma_{\rm H}$, and (3) a choice of homotopy between the images of these respective cocycles in ${\rm H}_\C\llbracket q \rrbracket [q^{-1}]$.

First we review the construction of $[\sigma_\K(X)]\in \K_\Tate^{-d}(M)$ in such a way that we obtain a cocycle representing this class. For a vector bundle $V$, let $S_{q^k}V$ denote the total symmetric power 
\beq
S_{q^k}V:=\underline{\C}\oplus q^k V\oplus q^k S^2V\oplus \cdots \oplus q^{kl} S^lV\oplus \cdots\label{eq:symq}
\eeq
The vector bundles $S_{q^k}(T_\C(X/M))$ on~$M$ can be used to twist the fiberwise spinor bundle, defining a sequence of Dirac operators over~$X$ that determine a class
$$
\left[(\slashed{D} \otimes \bigotimes_{k=1}^\infty S_{q^k}T_\C(X/M))\right]=\left[\sum_{k\ge 0}q^k \slashed{D}_k \right] \in \K^{-d}(M)\llbracket q\rrbracket.
$$
where $\slashed{D}\otimes R_k=\slashed{D}_k$ is the Dirac operator twisted by~$R_k$, for $R_k$ the coefficient of~$q^k$ in the formal sum of vector bundles~$S_{q^k}T_\C(X/M)$. To complete our definition of $\sigma_\K(X)$ (and ensure the desired modularity properties), we include the normalizing factor of $\eta^{d}(q)q^{-d/24}$ of the Dedekind eta function
\beq
&&[\sigma_\K(X)]=\left[\eta(q)^{d}q^{-d/24}\cdot \sum_{k\ge 0}q^k\slashed{D}_k \right]=\left[\Phi(q)^{d}\sum_{k\ge 0}q^k\slashed{D}_k \right]  \in \K^{-d}(M)\llbracket q \rrbracket [q^{-1}],\label{eq:Witfam}
\eeq
where $\Phi(q)=\prod_{j} (1-q^j)=q^{-1/24}\eta(q)\in \Z\llbracket q \rrbracket [q^{-1}]\cong \K_\Tate(\pt)$.

We have $[\sigma_{\rm H}(X)]\in  {\rm H}_\MF^{-d}(M)$ in the de~Rham model as the integral
\beq
&&[\sigma_{\rm H}(X)]=\left[\int_{X/M} \exp\left(\sum_{k\ge 2} \frac{E_{2k}(\tau) {\rm ph}_k(T(X/M))}{2k}\right)\right]\in  {\rm H}_\MF^{-d}(M)\label{eq:WitMFclass}
\eeq
where $E_{2k}\in \MF^{-4k}$ is the $2k^{\rm th}$ Eisenstein series with weight~$2k$ and ${\rm ph}_k(T(X/M))={\rm Ch}_k(T_\C(X/M))\in {\rm H}_\C^{4k}(M)$ is the $4k^{\rm th}$ component of the Pontryagin character. 

By~\eqref{eq:BismutChernchar} and Zagier's description of the multiplicative sequence defining the Witten genus~\cite{Zagier}, the Chern character of~\eqref{eq:Witfam} is represented by the closed differential form 
$$
\Phi(q)^{d}\int_{X/M} \hat{A}(X/M)\cdot {\rm Ch}\left(\bigotimes_{k=1}^\infty S_{q^k}T_\C (X/M)\right) = \int_{X/M} \exp\left(\sum_{k\ge 1} \frac{E_{2k}(q) {\rm ph}_k(T_\C (X/M))}{2k}\right)
$$
in $\bigoplus_j \Omega^{4j-d}(M)\llbracket q \rrbracket [q^{-1}]$. To obtain a class in $\KMF^{-d}(M)$, we need a homotopy in our de~Rham model for ${\rm H}_\C^{-d}(M)\llbracket q \rrbracket [q^{-1}]$ between the cocycle underlying the class above and the $q$-expansion of~\eqref{eq:WitMFclass}, by which we mean a differential form with values in power series in~$q$ that measures the difference between these cocycles. 

We compute the difference explicitly
$$
\exp\left(\sum_{k\ge 1} \frac{E_{2k}(q) {\rm ph}_k(T_\C (X/M))}{2k}\right)-\exp\left(\sum_{k\ge 2} \frac{E_{2k}(q) {\rm ph}_k(T_\C (X/M))}{2k}\right)=p_1(X/M)\wedge \Theta(X/M)(q)
$$
where 
$$
\Theta(X/M)(q)=\exp\left(\sum_{k\ge 2} \frac{E_{2k}(q) {\rm ph}_k(T_\C (X/M))}{2k}\right)\left(\frac{\exp(E_2p_1(X/M)/2)-1}{p_1}\right)
$$
is a closed form, and the second factor in the product on the right hand side uses that $\exp(E_2p_1/2)-1$ is (formally) divisible by~$p_1$. For a choice of rational string structure $p_1(X/M)=dH$ we have
\beq
d\int_{X/M}H\wedge \Theta(X/M)(q)=\int_{X/M} p_1(X/M)\wedge \Theta(X/M)(q)\label{eq:stringorhtpy}
\eeq
Hence, $\int_{X/M}(H\wedge \Theta(X/M)(q))\in \bigoplus_j \Omega^{4j-d+1}(M)\llbracket q \rrbracket [q^{-1}]$ yields a homotopy between the required classes in~${\rm H}^{-d}_\C(M)\llbracket q \rrbracket [q^{-1}]$. This produces a class in $[\sigma(X/M)]\in \KMF^{-d}(M)$ from the family of string manifolds $X\to M$ with geometric spin structures and rational string structures on the fibers. 

For odd-dimensional manifolds, if the chosen rational string structure comes from an \emph{integral} string structure, the associated invariant factors through~${\rm TMF}^{-2k+1}(\pt)$, so is necessarily torsion. Bunke and Naumann have constructed such invariants, showing they are nontrivial. We sketch how this fits into the story above. 

\begin{ex}[Invariants of $4k-1$-dimensional string manifolds]\label{Ex:BunkeNaumann}
Let $X$ be a geometric spin manifold of dimension $4k-1$. Let $H\in \Omega^3(X)$ be a choice of rational string structure (e.g., coming from an integral string structure). From the above, the class in $\KMF^{-4k+1}(\pt)$ is determined by the triple
$$
0\in \bigoplus_{i+j=-4k+1} \Omega^i(\pt;\MF^j),\qquad  \sum_k q^k {\rm Ind}(\slashed{D}_k)
$$
$$ 
\int_X H\wedge \Theta(X)\in \bigoplus_j \Omega^{4j}(\pt)\llbracket q \rrbracket [q^{-1}]=\C\llbracket q \rrbracket [q^{-1}].
$$
Because $\K_\Tate^\odd(\pt)=0$, we know there is a homotopy between ${\rm Ind}(\slashed{D}_k)$ and the zero vector space for all~$k$. This can be implemented by a deformation of $\slashed{D}_k$ to an invertible operator (called a \emph{taming} in~\cite{BunkeNaumann}). This necessarily modifies the homotopy by a sequence of eta invariants, $\{\eta_k\}$, and as a result the only data of the class is this homotopy 
\beq
&&\int_X H\wedge \Theta(X)+\sum_k q^k\eta_k\in \C\llbracket q \rrbracket [q^{-1}]/\Z\llbracket q \rrbracket [q^{-1}]+\MF^{4k}=\KMF^{-4k+1}(\pt).\label{eq:BunkeNaumanneta}
\eeq
This is a complex K-theory version of Bunke and Naumann's secondary analytic invariant~\cite[Definition~3.1]{BunkeNaumann}. 
\end{ex}

\subsection{A Freed--Lott differential orientation for $\KMF$ in even degree}

When $\pi\colon X\to M$ has even fiber dimension $2d$, we can refine the analytic orientation of~$\KMF$ from the previous section to a differential orientation that constructs a cocycle~$\widehat{\sigma}(X)\in \widehat{\K}_\MF^{-2d}(M)$ in our model. This is a straightforward modification of Freed and Lott's construction of the analytic pushforward in differential K-theory~\cite[\S7]{LottFreed} that we reviewed in~\S\ref{sec:FreedLott}. The basic idea is to find a finite-dimensional subbundle of each spinor bundle $\$\otimes R_k$ that contains the kernel of $\slashed{D}_k$, together with a differential form that mediates between Bismut's Chern character of $\slashed{D}_k$ and the Chern character of the finite-dimensional subbundle with its restricted super connection. 
This builds a differential cocycle~$\widehat{\sigma}_\K(X)\in \widehat{\K}(M)\llbracket q \rrbracket [q^{-1}]\cong \widehat{\K}_\Tate(M)$, and the compatibility homotopy is essentially the same as before. 

\begin{proof}[Proof of Theorem~\ref{thm:diffpush}]
We apply Lemma~\ref{lem:FreedLott} to each $\slashed{D}_k$ acting on $\$\otimes \R_k$ in~\eqref{eq:Witfam}
\beq
(\slashed{D}_k,\$\otimes R_k)\rightsquigarrow  (\A_k,V_k,\alpha_k),\label{eq:cutoff}
\eeq
yielding a sequence of finite-dimensional vector bundles $V_k\to M$ with super connections $\A_k$ and $\alpha_k\in \Omega^{\odd}(X)/d\Omega^{\ev}(X)$ such that
$$
d\alpha_k={\rm Ch}(\slashed{D}_k)-{\rm Ch}(\A_k)
$$
We also promote $\Phi(q)^{-d}\in \Z\llbracket q \rrbracket [q^{-1}]$ to a sequence of vector spaces $\bigoplus q^kF_k$ with ${\rm sdim}(F_k)$ equal to the (integer) coefficient of~$q^k$ (e.g., the bimodule from Lemma~\ref{lem:ST}). We promote this to a trivial bundle with trivial connection over~$M$, denoted $\widehat{\Phi}(q)^{-d}=\bigoplus q^k \underline{F}_k$. 
Now we define
\beq
&&\widehat\sigma_\K(X)=\Big(\widehat{\Phi}(q)^{d}\otimes \bigoplus_{k} q^k V_k,\widehat{\Phi}(q)^{d}\otimes \bigoplus_{k} \A_k ,\Phi(q)^{d}\sum q^k\alpha_k\Big)\in \widehat{\K}^{-d}(M)\llbracket q \rrbracket [q^{-1}],\nonumber
\eeq
which gives the differential refinement of $\sigma_\K(X)$. 

We take the differential cocycle model for ${\rm H}_\MF^{-d}(M)$ given by $\bigoplus_{i+j=-d} \Omega^i_\cl(M;\MF^j)$, and so~\eqref{eq:WitMFclass} has the obvious refinement, 
\beq
&&\widehat{\sigma}_{\rm H}(X)=\int_{X/M} \exp\left(\sum_{k\ge 2} \frac{E_{2k}(\tau) {\rm ph}_k(T(X/M))}{2k}\right)\in \bigoplus_{i+j=-d} \Omega^i_\cl(M;\MF^j)\label{eq:WitMFclassdiff}
\eeq

The curvature of $\widehat{\sigma}_\K(X)$ is 
\beq
\Phi(q)^{d}\sum_k ({\rm Ch}(\A_k)+d\alpha_k)&=&\Phi(q)^{d}\sum_k q^k{\rm Ch}(\slashed{D}_k)\nonumber\\
&=&\Phi(q)^{d}\int_{X/M} \sum_k q^k\hat{A}(X/M)\cdot {\rm Ch}\left(R_k\right) \nonumber \\
&=& \int_{X/M} \exp\left(\sum_{k\ge 1} \frac{E_{2k}(q) {\rm ph}_k(T_\C (X/M))}{2k}\right)\nonumber
\eeq
where the first equality is by the construction from Lemma~\ref{lem:FreedLott}, the second line is the local index theorem, and the third line uses Zagier's description of the Witten genus~\cite{Zagier}. So by the calculation in the previous subsection, the rational string structure~$H$ gives a concordance
$$
d\left(\lambda \int_{X/M}H\wedge \Theta(X/M)(q)\right)\in \Omega^\odd_\cl(M\times \R)\llbracket q\rrbracket[q^{-1}]
$$
where~$\lambda$ is a coordinate on~$\R$. The source of this concordance is the curvature of $\widehat{\sigma}_\K(X)$ and the target is the $q$-expansion of $\widehat{\sigma}_{\rm H}(X)$. We can repackage this as 
$$
\int_{X/M}H\wedge \Theta(X/M)(q)=\sum q^kh_k\in \Omega^\odd(M)\llbracket q\rrbracket[q^{-1}]
$$
and then 
$$
\widehat{\sigma}(X):=\Big(\widehat{\Phi}(q)^{d}\otimes \bigoplus_{k} q^k V_k,\widehat{\Phi}(q)^{d}\otimes \bigoplus_{k} \A_k,\Phi(q)^{d}\sum q^k(\alpha_k+h_k)\Big)\in \widehat{\K}_\MF^{-2d}(M)
$$
gives a differential cocycle in $\KMF$ associated with the geometric family of rational string manifolds $X\to M$. To be explicit, $\widehat{\Phi}(q)^{d}\otimes \bigoplus_{k} q^k V_k$ is a sequence of vector bundles on $M$, $\widehat{\Phi}(q)^{d}\otimes \bigoplus_{k} \A_k$ is a sequence of super connections, and $\Phi(q)^{d}\sum q^k(\alpha_k+h_k)$ modifies the Chern characters of these super connections in a manner that the result is a differential form valued in modular forms. 

We observe that when~$M=\pt$, the cocycle $\widehat{\sigma}(X)\in \widehat{\K}_\MF^{-2d}(\pt)$ is determined by a sequence of vector spaces: $\MF^\bullet$ is concentrated in even degrees, so in this case the differential form data is all zero. This identifies $\widehat{\sigma}(X)\in \MF^{-2d}_\Z$ with an integral modular form. Its image in modular forms over $\C$ is~\eqref{eq:WitMFclassdiff}, which is the Witten genus of~$X$. Hence, $\widehat{\sigma}(X)$ is the Witten genus as an integral modular form. 
\ep

\begin{rmk} The above construction works for $\pi\colon X\to M$ with odd fiber dimension, provided the existence of a finite-dimensional subbundle of the spinor bundle containing the kernel of the (Clifford linear) Dirac operator, as in Lemma~\ref{lem:FreedLott}. In particular, when $M=\pt$ there are no obstructions. \end{rmk}

\subsection{The differential pushforward as a cutoff supersymmetric sigma model}

Now we discuss how the above differential string orientation of~$\widehat{\K}_\MF$ can be understood (following Witten~\cite{Witten_Dirac}) in terms of a cutoff version of the supersymmetric sigma model. This is really just a translation from the mathematical objects of the previous subsection to their corresponding avatars in physics. 


The first step is to identify~$\widehat{\sigma}_\K(X)$ as an element in $\widehat{\Rep}{}^{-2d}(M)$, so in particular, we need a positive energy representation of super annuli. Consider the sequence of Dirac operators acting on the spinor bundles
\beq
&&\slashed{D}_{L(X/M)}:=\sum_{q\ge 0}q^k\slashed{D}_k \curvearrowright \$_{L(X/M)}:= q^{-2d/24}\left(\$\otimes \bigoplus q^kR_k\right).\label{eq:Witfam2}
\eeq
Witten constructed this by a version of Hamiltonian quantization applied to the (classical) supersymmetric sigma model with target~$X$. The factor of $q^{-2d/24}$ is important physically (related to a central charge), and is included in Witten's definition.  

When $M=\pt$, Witten explains how the operator~\eqref{eq:Witfam2} together with the circle action determined by the powers of~$q$ can be viewed as kind of low-energy approximation to the (super) time-evolution operator on the space of states of the supersymmetric sigma model with target~$X$. In families, this extends to give a representation of super annuli
$$
\rho(\tau,\bar\tau,\theta)=q^{-2d/24}\bigoplus_k q^k e^{-2{\rm im}(\tau)A_k^2+\theta A_k}
$$
where $A_k$ is the Bismut super connection whose degree zero piece is $\slashed{D}_k$. This representation isn't (finite-type) positive energy because the weight spaces are infinite-dimensional. 

However, Equation~\ref{eq:cutoff} and Lemma~\ref{lem:FreedLott} give a way to extract a positive energy representation by way of a cutoff version of the sigma model, meaning
\beq
&&\slashed{D}_{L(X/M)}^{\rm cutoff}=\bigoplus_{q\ge 0}\A_k' \curvearrowright \$_{L(X/M)}^{\rm cutoff}= q^{-2d/24}\bigoplus q^kV_k'.\label{eq:Witfamcut}
\eeq
where each $V_k'$ is a finite-dimensional vector bundle on~$M$ with a superconnection~$\A_k'$. By tensoring with the a Morita bimodule implementing the Morita equivalence $\Cl(-2d)\simeq \C$, we can promote the above to a sequence of Clifford module bundles $V_k$ with Clifford linear super connections $\A_k$. Equivalently, we could have started with $\slashed{D}_k$ being the Clifford-linear Dirac operators. 

Ignoring the $q^{-2d/24}$,~\eqref{eq:Witfamcut} defines a positive energy representation of~$\Ann(M)$, by Theorem~\ref{thm:geocharof21rep}. Including the of $q^{-2d/24}$, we get a \emph{projective} representation. However, it is exactly the sort of projective representation that when tensored with the Morita bimodule from Corollary~\ref{cor:FertoCl} yields a $\Fer_{-2d}$-linear representation. Hence, to a bundle of string manifolds~$X\to M$ we have constructed a positive energy, $\Fer(-2d)$-linear representation of super annuli, 
$$
\rho_X(\tau,\bar\tau,\theta)={\null_{\Fer_{-d}}}B \otimes \left(q^{-2d/24}\bigoplus_k q^k e^{-2{\rm im}(\tau)\A_k^2+\theta \A_k}\right)
$$
that we view as an $M$-family of time-evolution operators for the family of sigma models. 

Now we need to account for the change in the partition function associated with the chosen cutoff~\eqref{eq:Witfamcut}. These are the eta forms from Lemma~\ref{lem:FreedLott}, which combine to give $\sum_k q^k\alpha_k\in \Omega^{\odd}/d\Omega^{\ev}(M)\llbracket q\rrbracket[q^{-1}]\cong\widehat{\mathcal{O}}(\widetilde{\mathcal{L}}^{2|1}_0(M))$. This is a formal sum of functions that measures the difference between partition functions in the sense that
$$
Z\left(\slashed{D}_{L(X/M)}\right)-Z\left(\slashed{D}_{L(X/M)}^{\rm cutoff}\right)=d\left(\sum_k \alpha_k\right),
$$ 
where we understand $Z\left(\slashed{D}_{L(X/M)}\right)$ through the Chern character of Bismut's super connection. This sum of is exactly the data we require to promote the positive energy representation of~$\Ann(M)$ from~\eqref{eq:Witfamcut} to a differential cocycle, $\widehat{\sigma}_\K(X)\in \widehat{\Rep}{}^{-2d}(\Ann(X))$. By the calculation in the previous subsection, the rescaled partition function of this differential cocycle is 
\beq
\widehat{Z}(\sigma_\K(X))&:=&Z\left(\slashed{D}_{L(X/M)}^{\rm cutoff}\right)+d\left(\sum_k \alpha_k\right)\nonumber\\
&=&\int_{X/M} \exp\left(\sum_{k\ge 1} \frac{E_{2k}(q) {\rm ph}_k(T_\C (X/M))}{2k}\right)\in \mathcal{O}(\widetilde{\mathcal{L}}^{2|1}_0(M))\cong \Omega^\ev_\cl(M)\llbracket q \rrbracket [q^{-1}].\nonumber
\eeq

Next, we describe a cocycle $\widehat{\sigma}_{\rm H}(X)\in \mathcal{O}(\mathcal{L}^{2|1}_0(M);\omega^{\otimes -2d/2})$. In brief, this is just the section of the line bundle associated with the closed differential form on $M$ given by~\eqref{eq:WitMFclass}. However, this cocycle can be constructed as the 1-loop (quantum) partition function of the fiberwise supersymmetric sigma model; we carry this out in~\cite{DBE_WG} when $M=\pt$ and \cite{DBE_MQ} for general families. The procedure is the same as the construction of the $\hat{A}$-genus in the physics proof of the index theorem~\cite{Alvarez}, replacing $1|1$-dimensional classical mechanics with the $2|1$-dimensional classical sigma model. The stack $\mathcal{L}^{2|1}(M)$ is the space of fields, on which there is a function, the \emph{classical action}. The action vanishes on~$\mathcal{L}^{2|1}_0(M)$, identifying these constant super tori with a stack of \emph{classical vacua}. On the normal bundle to the inclusion~$\mathcal{L}_0^{2|1}(M)\subset\mathcal{L}^{2|1}(M)$ the Hessian of this classical action defines a family of invertible operators called \emph{kinetic operators}. When $p_1(X/M)=dH$ for a chosen 3-form~$H$, the $\zeta$-regularized super determinant of this family defines a function on $\mathcal{L}^{2|1}_0(X)$ that we can integrate along the fibers of $\mathcal{L}^{2|1}_0(X)\to \mathcal{L}^{2|1}_0(M)$, producing a function on $\mathcal{L}^{2|1}_0(M)$ that is~$\widehat{\sigma}_{\rm H}(X)$. This is exactly the computation of the partition function in the Lagrangian formalism. 

Finally, we describe the compatibility between the rescaled partition function $\widehat{Z}(\widehat{\sigma}_\K(X))$ from the Hamiltonian picture and the 1-loop partition function $\widehat{\sigma}_{\rm H}(X)$ from the Lagrangian picture. The difference between these is 
$$
\widehat{Z}(\widehat{\sigma}_\K(X))-\widehat{\sigma}_{\rm H}(X)=p_1(X/M)\cdot\Theta(X/M)\in \mathcal{O}(\widetilde{\mathcal{L}}^{2|1}_0(M))\cong \Omega^\ev_\cl(M))\llbracket q\rrbracket[q^{-1}]
$$ 
following~\eqref{eq:stringorhtpy}. Generally this difference is nonzero. However, when the family $X\to M$ has a rational string structure, a choice of $H$ with $dH=p_1(X/M)$ specifies a concordance from $p_1(X/M)\cdot\Theta(X/M)$ to zero, and so gives a concordance between the two versions of the partition function. Physically, this concordance is a trivialization of an anomaly: $\widehat{Z}(\widehat{\sigma}_\K(X))$ does not descend to the moduli stack of tori, but the concordance associated with a rational string structure identifies it with a function that \emph{does} descend.

\subsection{The odd case when $M=\pt$}
When $M=\pt$ and $X$ is odd dimensional, we can also interpret the Bunke--Naumann invariant in terms of the sigma model. In this case, the kernel of the Dirac operators give a cutoff theory, 
\beq
&&
\widehat{\sigma}_\K(X)=\left(\null_{\Fer_{-2d+1}}B\otimes \sum_{k\ge 0}q^k{\rm ker}(\slashed{D}_k),0,\Phi(q)^{2d-1}\sum q^k\alpha_k\right)  \in \widehat{\K}^{-2d+1}(\pt)\llbracket q \rrbracket [q^{-1}],\nonumber
\eeq
with the bimodule~$B$ from Lemma~\ref{lem:ST}. 
For our description, it is convenient to take~$\slashed{D}$ as the $\Cl(-2d+1)$-linear Dirac operator, so~${\rm ker}(\slashed{D}_k)$ is a (finite-dimensional) $\Cl(-2d+1)$-module. Because we have restricted to the kernel and $M=\pt$, the super connections $\A_k$ are the zero map. The Chern character is identically zero, as is the Chern character of the original Dirac operators. However, the eta forms~$\alpha_k$ mediating between these zeros need not vanish; they combine to give a power series, $\sum q^k\alpha_k\in \C\llbracket q \rrbracket [q^{-1}]$. A choice of rational string structure further modifies this power series, combining to give 
\beq
&&
\widehat{\sigma}(X)=\left(\null_{\Fer_{-2d+1}}B\otimes\sum_{k\ge 0}q^k{\rm ker}(\slashed{D}_k),0,\sum q^k(h_k+\alpha_k)\right)  \in \widehat{\K}_\MF^{-2d+1}(\pt).\label{eq:WitfamBN}
\eeq
We can identify the differential concordance class of this cutoff sigma model with an element of $\C\llbracket q \rrbracket [q^{-1}]/\Z\llbracket q \rrbracket [q^{-1}]+\MF^{-2d}$ by choosing an invertible odd linear map on each ${\rm ker}(\slashed{D}_k)$ that commutes with the Clifford action. Such maps always exist for $\Cl(-d)$-modules with~$d$ odd. This gives a (differential) concordance from $\widehat{\sigma}_\K(X)$ to a differential cocycle whose underlying representation of super annuli represents zero in the Grothendieck group. In the differential Grothendieck group, we need to remember the Chern--Simons for of the differential concordance. Together with the forms $\alpha_k$ above, this combines to give the forms~$\eta_k$ in~\eqref{eq:BunkeNaumanneta}: they are the Cheeger--Simons forms that measure the difference between the Chern character of the original Dirac operators~$\slashed{D}_k$ and the zero map on the zero vector space. 
This shows that the differential concordance class of $\widehat{\sigma}_\K(X)$ is determined by
$$
\null_{\Fer_{-2d+1}}B\otimes\sum q^k(h_k+\eta_k)\in \Omega^\ev(\pt)\llbracket q\rrbracket[q^{-1}]\cong \C\llbracket q\rrbracket[q^{-1}]\twoheadrightarrow \widehat{\K}^\odd_\MF(\pt)
$$
which is the Bunke--Naumann invariant (or rather, a complex K-theory version thereof). When the chosen string structure comes from an \emph{integral} string structure this invariant factors through~$\TMF^{-2d+1}(\pt)$, so is necessarily torsion. The above discussion gives physical meaning to these torsion invariants. They arise from a choice of trivializations of the anomaly of the supersymmetric sigma model with odd dimensional targets, together with a path in the space of field theories from the quantum sigma model to the zero theory.

\appendix

\section{Background miscellany}\label{appenA}

\subsection{Modular forms} \label{appen:MF}
\emph{Weight $n$ (weak) modular forms}, denoted $\MF_{2n}$, are the set of holomorphic functions on~$\mathfrak{H}\subset \C$ satisfying
$$
f\left(\frac{a\tau+b}{c\tau+d}\right)=(c\tau+d)^{n} f(\tau),\quad \left[\begin{array}{cc} a& b\\ c&d \end{array}\right]\in \SL_2(\Z),\quad \tau\in \mathfrak{H}.
$$
We define the half-weight modular forms to be the zero group~$\MF_{2n+1}=\{0\}$. Multiplication of functions assembles these abelian groups into a graded commutative ring. We define $\MF^\bullet=\MF_{-\bullet}$ to be the ring with the reversed grading. The action by the subgroup
$$
\Z\hookrightarrow \SL_2(\Z),\quad l\mapsto \left[\begin{array}{cc} 1 & l \\ 0 & 1\end{array}\right]
$$
allows us to identify a modular form with a function on the cylinder $\mathfrak{H}/\Z$ with properties. This permits a Fourier expansion called the \emph{$q$-expansion}, $\MF\to \C\llbracket q\rrbracket [q^{-1}]$, where $q=e^{2\pi i \tau}$. 

An alternate description of modular forms comes from viewing weight~$n$ weak modular forms as functions on framed lattices $\Lambda\subset \C$ with the property that $f(\mu\cdot\Lambda)=\mu^{-n}f(\Lambda)$ for $\mu\in \C^\times$. Using $\mu$ to set one of the lattice generators to be $1\in \C$ recovers the definition above.

\subsection{super manifolds}\label{sec:smfld} 
A \emph{$k|l$-dimensional supermanifold} is a locally ringed space whose structure sheaf is locally isomorphic to $C^\infty(U)\otimes_\C\Lambda^\bullet(\C^l)$ as a super algebra over~$\C$ for $U\subset \R^k$ an open submanifold. We follow the usual convention, writing the global sections of the structure sheaf of a supermanifold $N$ as $C^\infty(N)$, and referring to these global section as the (smooth) functions on the super manifold. Partitions of unity guarantee that maps between supermanifolds are determined by maps between the global sections of their structure sheaves. We denote the category of supermanifolds and maps of supermanifolds by~${\sf SMfld}$. In~\cite{DM} these supermanifolds are called $cs$-manifolds; apart from this terminological difference our conventions agree with theirs. 

\begin{rmk} Our use of super manifolds with algebras of functions over~$\C$ stems from the study of Wick-rotated field theories. Symmetries and various other basic ingredients don't make sense in the context of real super manifolds. See Example~4.9.3 of~\cite{DM} for a discussion.
\end{rmk}

For any supermanifold~$N$, there is a \emph{reduced manifold} we denote by $N_{\rm red}$ and a canonical map $N_{\rm red}\hookrightarrow N$ induced by the map of superalgebras $C^\infty(N)\to C^\infty(N)/I\cong C^\infty(N_{\rm red})$ where $I$ denotes the ideal of nilpotent elements in the structure sheaf of $N$. M.~Batchelor~\cite{batchelor} showed any supermanifold~$N$ is isomorphic to $(N_{\rm red},\Gamma(\Lambda^\bullet E^*))$ for $E\to N_{\rm red}$ a complex vector bundle over a smooth manifold~$N_{\rm red}$. We denote such a supermanifold by~$\Pi E$.

We sometimes use notation like $z,\bar{z}$ or $f,\bar{f}$ for elements of $C^\infty(N)$ that are complex conjugates in their image under the quotient $C^\infty(N)\to C^\infty(N_{\rm red})$. The main example comes from viewing the smooth manifold~$\C$ as a supermanifold. Then $S$-points can be described as
$$
\C(S)=\{a,b \in C^\infty(S)^{\ev}\mid a_{\rm red}=\overline{b}_{\rm red}\}
$$
where $a_{\rm red}$ and $b_{\rm red}$ denote the restriction of $a$ and $b$ to the reduced manifold~$S_{\rm red}$. Indeed, conjugation on functions only makes sense on this reduced manifold. In a slight abuse of notation, we write this pair of functions~$(a,b)$ as~$(a,\bar a)$. In particular, when working with $S$-points of $\R^{2|1}$, we implicitly use the identification $\R^2\cong \C$ and the above convention to write and $S$-point as $(z,\bar z,\theta)\in \R^{2|1}(S)$. One must exercise care when using this variant of the functor of points to define (e.g.) map $\C\to \C$: the reality condition on $(z,\bar z)$ needs to be preserved by the map on $S$-points. 

A \emph{vector bundle} over a super manifold $N$ is a locally free sheaf of modules over $C^\infty(N)$. We use the common notation~$\Gamma(E)$ for the module over~$C^\infty(N)$ defining a vector bundle~$E$. We caution that the vector space~$\Gamma(E)$ is typically quite different from the literal sections~$s\colon N\to E$ when there happens to be a candidate total space for the vector bundle~$E\to N$.

We frequently use results from~\cite{HKST} that identify structures on differential forms with the super geometry of the odd tangent bundle, $C^\infty(\Pi TM)\cong \Omega^\bullet(M)$. Most of this comes through the isomorphism
$$
\Pi TX(S)\cong \SM(\R^{0|1},X)(S):= {\sf SMfld}(S\times\R^{0|1},X)
$$
where $\SM(N,M)$ is the presheaf of sets on super manifolds defined by $S\mapsto {\sf SMfld}(S\times N,M)$. The above bijections show $\SM(\R^{0|1},X)$ is a representable presheaf, $\Pi TX\cong \SM(\R^{0|1},X)$. Furthermore, in this description the de~Rham operator---as an odd vector field on $\Pi TX$---is the derivative at the identity of the~$\R^{0|1}$-action on~$\SM(\R^{0|1},X)$ gotten from precomposition with the action of~$\R^{0|1}$ on itself by translations. Explicitly, this action is
\beq
\Omega^\bullet(X)\cong C^\infty(\Pi TX)&\to& C^\infty(\R^{0|1}\times \Pi TX)\cong \Omega^\bullet(X)[\theta],\nonumber\\
f&\mapsto&f-\theta df,\quad f\in \Omega^\bullet(X),\nonumber
\eeq
where $\theta$ is a coordinate function on~$\R^{0|1}$, and $d$ is the de~Rham operator. We also have an action of dilations $\R^\times\times \R^{0|1}\to \R^{0|1}$, which gives an action on the odd tangent bundle that on form is $f\mapsto \mu^{-j}f$ for $\mu\in \R^\times$ and $f\in \Omega^k(M)$. 

 \subsection{Stacks}
A smooth super stack is a category fibered in groupoids over supermanifolds satisfying descent, taking covers to be surjective submersions of supermanifolds. We will often drop the ``super" modifier. To any super manifold~$S$, a stack defines a groupoid and a map~$S\to S'$ gives a functor between these groupoids. 

A \emph{geometric stack} $\mathcal{X}$ admits at atlas $p\colon U\twoheadrightarrow \mathcal{X}$ for $U$ an ordinary super manifold. Being an atlas means that for all super manifolds $N$ and maps $q\colon N\to \mathcal{X}$, the weak 2-pullback in stacks,
\begin{equation}
\begin{array}{c}
\begin{tikzpicture}[node distance=3.5cm,auto]
  \node (A) {$U\times_{\mathcal{X}} N$};
  \node (B) [node distance= 3.5cm, right of=A] {$U$};
  \node (C) [node distance = 1.5cm, below of=A] {$N$};
  \node (D) [node distance = 1.5cm, below of=B] {$\mathcal{X}$};
  \draw[->] (A) to (B);
  \draw[->] (A) to (C);
  \draw[->] (C) to node [swap] {$q$} (D);
  \draw[->] (B) to node {$p$} (D);
\end{tikzpicture}\end{array}\nonumber
\end{equation}
is a super manifold and $U\times_\mathcal{X} N\to N$ is a surjective submersion. All the stacks considered in this paper are geometric. An atlas determines a groupoid presentation of a stack $\{U \times_\mathcal{X} U\toto U\}$. We often identify a geometric stack with a super Lie groupoids that presents it. 

Define a vector bundle $\mathcal{V}$ over a stack $\mathcal{X}$ in terms of $S$-points: to an object of $\mathcal{X}$ over~$S$, $\mathcal{V}$ assigns a module over $C^\infty(S)$ and to a map $S\to S'$, $\mathcal{V}$ assigns a map of $C^\infty(S)$-modules. In the case of the trivial bundle, this defines the algebra of functions on the stack. 

For a geometric stack with atlas~$U$, any vector bundle over the stack pulls back to~$U$. The sections of the vector bundle over the stack are precisely those sections over $U$ that are invariant under isomorphisms in the groupoid presentation determined by~$U$. This gives a concrete method of computation, which we will use throughout. 

\subsection{Super isometry groups in dimensions $1|1$ and $2|1$}\label{sec:supertrans}

In~\cite[\S6.3]{HST}, Hohnhold, Stolz and Teichner define model geometries for super manifolds. The definition requires one specify a super manifold ${\sf M}$ (called the \emph{model space}) and a super Lie group ${\sf G}$ (called the \emph{isometry group}) with an action of ${\sf G}$ on ${\sf M}$. Then super manifolds with an $({\sf M},{\sf G})$-geometry come from gluing open submanifolds of ${\sf M}$ along isometries in ${\sf G}$. We only use a small piece of their theory, but it provides a link between our framework and theirs so we spell out the relevant model spaces below. 

Let $\E^{1|1}$ denote the \emph{($1|1$-dimensional) super translation group} whose underlying super manifold is $\R^{1|1}$ and super group structure is
$$
\R^{1|1}(S)\times \R^{1|1}(S)\to \R^{1|1}(S),\quad (t,\theta)\cdot (s,\eta)=(t+s+\theta\eta,\theta+\eta), 
$$
for $(t,\theta),(s,\eta)\in \R^{1|1}(S).$ There $S$-points are akin to coordinates on $\R^{1|1}$. More carefully, $t\in \R(S)^{\ev}\cong C^\infty(S)^{\ev}$ and $\theta\in \R(S)^{\odd}\cong C^\infty(S)^{\odd}$, and so together $(t,\theta)$ determines a map $S\to \R^{1|1}$. The super manifold $\R^{1|1}$ has a real structure coming from complex conjugation of complex valued functions on~$\R$ and the isomorphism $C^\infty(\R^{1|1})\cong \C^\infty(\R)[\theta]$ (compare Example~67 in~\cite{HST}). 

Consider the action of $\Spin(1)\cong \Z/2=\{\pm 1\}$ on $\E^{1|1}$ by $(t,\theta)\mapsto (t,-\theta)$. The semidirect product $\E^{1|1}\rtimes \Z/2$ is the \emph{super Euclidean isometry group}, and its action on $\R^{1|1}$ defines the \emph{super Euclidean model space}. Similarly, consider the action of $\R^\times$ on $\E^{1|1}$ by $(t,\theta)\mapsto (\mu^2t,\mu\theta)$ for $\mu\in \R^\times(S)$. This defines the \emph{rigid conformal isometry group}, and its action on $\R^{1|1}$ defines the \emph{rigid conformal model space.}

Next, consider the \emph{($2|1$-dimensional) super translation group} $\E^{2|1}$ whose underlying super manifold is $\R^{2|1}$ with the super group structure
$$
\R^{2|1}(S)\times \R^{2|1}(S)\to \R^{2|1}(S),\quad (z,\bar z,\theta)\cdot (w,\bar w,\eta)=(z+w,\bar z+\bar w+i\theta\eta,\theta+\eta)
$$
for $(z,\bar z,\theta),(w,\bar w,\eta)\in \R^{2|1}(S)$. These $S$-points require a little clarification (alluded to in the previous subsection); we have $z,\bar z\in C^\infty(S)^\ev$ and $\theta\in C^\infty(S)^\odd$ with the requirement that $z$ and $\bar z$ are complex conjugates of one another \emph{only after} modding out by nilpotents, i.e., on restriction to the reduced manifold of~$S$. We emphasize that $z$ and $\bar z$ are not conjugate functions ons $S$; to make sense out of such a statement would require a real structure on~$S$. The real structure on $\R^{1|1}$ permits a map $\R^{1|1}(S)\to \R^{2|1}(S)$, $(t,\theta)\mapsto (t,\bar t,\theta)$ that is natural in $S$ and lifts the inclusion of the real axis~$\R\subset \C$ to these super manifolds. 

There is an action by ${\rm Spin}(2)\cong U(1)\subset \C^\times$ on $\E^{2|1}$,
$$
(\mu,\bar\mu)\cdot (z,\bar z,\theta)=(\mu^2 z,\bar \mu^2 \bar z,\bar \mu\theta),\quad (\mu,\bar\mu)\in \Spin(2)(S),\ (z,\bar z,\theta)\in \R^{2|1}(S).
$$
The semi-direct product $\E^{2|1}\rtimes {\rm Spin}(2)$ is the $2|1$-dimensional \emph{super Euclidean isometry group}. Its action on $\R^{2|1}$ defines the \emph{super Euclidean model space}. This extends to the obvious $\C^\times$-action on $\E^{2|1}$ by
$$
(\mu,\bar\mu)\cdot (z,\bar z,\theta)=(\mu z,\bar\mu \bar z,\bar\mu\theta),\quad (\mu,\bar\mu)\in \C^\times(S),\ (z,\bar z,\theta)\in \R^{2|1}(S).
$$
In the above, the notation $(\mu,\bar\mu)\in \C^\times(S)$ carries the same caveats as $(z,\bar z,\theta)\in \R^{2|1}(S)$. The semi-direct product $\E^{2|1}\rtimes \C^\times$ defines the \emph{rigid conformal isometry group} and its action on $\R^{2|1}$ defines the \emph{rigid conformal model space.}

\begin{rmk} The isomorphisms $\R^\times\cong \Z/2\times \R_{>0}$ and $\C^\times\cong \Spin(2)\times \R_{>0}$ allow us to view the rigid conformal groups above as the super Euclidean group together with isometries coming from global rescalings. In the context of field theories, this global rescaling action is exactly the renormalization group, and so these rigid conformal isometry groups are the combination of the super Euclidean group and this renormalization group action. \end{rmk}

\subsection{Super connections and Chern--Simons forms}\label{sec:CSback}

Let $E\to X$ be a super vector bundle. A \emph{(Quillen) super connection}~\cite{Quillensuper} on~$E$ is an odd linear map 
$$
\A\colon \Omega^\bullet(X;E)\to \Omega^\bullet(X;E)
$$
satisfying the Leibniz rule
$$
\A(f\cdot s)=df \cdot s+(-1)^{|f|}f \cdot \A(s),\quad s\in \Omega^\bullet(X;E), \ f\in \Omega^\bullet(X). 
$$

For super vector bundles with super connection $(E_0,\A^{E_0})$, $(E_1,\A^{E_1})$ on~$X$ and an isomorphism $\phi\colon E_0\to E_1$, a path $\A^\mu$ in the space of super connections with $\A^0=\A^{E_0}$ and $\A^1=A^{E_1}$ defines a \emph{Chern--Simons form} 
$$
\CS(\A^{E_0},\A^{E_1})=\int_{X\times I/X} {\rm Ch}(\A^\lambda)=\int_{X\times I/X} {\rm Tr}(e^{-(\A^\lambda)^2})
$$
where the integral is over the fibers of the projection $X\times I\to X$. A different choice of path changes the integral by an exact form, so the the data of the isomorphism~$\phi$ gives a well-defined class $\CS(\A^{E_0},\A^{E_1})\in \Omega^{\odd}(X)/d\Omega^{\ev}(X)$. This class measures the difference of the differential form valued Chern character:
$$
{\rm Ch}(\A^{E_1})-{\rm Ch}(\A^{E_0})=d\CS(\A^{E_0},\A^{E_1}).
$$
\subsection{Clifford super traces}\label{sec:ClsTr}
There are a few different normalizations for the super trace of a Clifford module in the literature. Our conventions agree with~\cite[\S3]{ST04}. Another helpful reference is~\cite[\S2]{MathaiQuillen}. The \emph{chirality operator} $\Gamma\in \Cl_n$ can be written in terms of an oriented orthonormal basis $\{e_1,\dots,e_n\}$ of $\R^n$ as
$$
\Gamma=i^{n/2}2^{-n/2}e_1\cdots e_n\in \Cl_n.
$$
The element $\Gamma\in \Cl_n$ is independent of this choice of basis. Our normalization for $\Gamma$ is chosen so that ${\rm sTr}(\Gamma\colon \$\to \$)=1$ for an irreducible Clifford module $\$$. Then for $A\colon \null_{\Cl_n}V\to \null_{\Cl_n}V$ a map of Clifford modules, define
$$
{\rm sTr}_{\Cl_n}(A):={\rm sTr}(\Gamma A\colon V\to V). 
$$
This super trace vanishes on super commutators, taking values in $\Cl_n/[\Cl_n,\Cl_n]$ which is a 1-dimensional even vector space when $n$ is even and a 1-dimensional odd vector space when $n$ is odd. 

\subsection{Concordances}\label{sec:conccat}\label{appen:conc}

The notion of concordance generalizes smooth homotopies to stacks.

\begin{defn} Let $\F$ be a stack. A pair of objects $\rho,\rho'\in \F(M)$ are \emph{concordant} if there exists an object $\widetilde{\rho}\in \F(M\times \R))$ and isomorphisms
\beq
i_0^*\widetilde{\rho}\cong \rho\qquad i_1^*\widetilde{\rho}\cong \rho'\label{eq:concisos}
\eeq
for $i_0,i_1\colon M\hookrightarrow M\times \R$ the inclusions at $0,1\in \R$ respectively. The data of $\widetilde{\rho}$ and the isomorphisms~\eqref{eq:concisos} is called a \emph{concordance with source $\rho$ and target $\rho'$}. 
\end{defn}

\begin{ex} For a representable stack associated to a smooth manifold $N$, sections over $S$ are smooth maps $S\to N$. A concordance between a pair of smooth maps is a smooth homotopy. \end{ex}

Using the stack property to glue sections shows that concordance is an equivalence relation, denoted $\sim_c$. The argument is identical to the one that shows smooth homotopy is an equivalence relation. When $M\mapsto \F(M)$ is an essentially small groupoid for each~$M$ and satisfies the stack condition, the assignment $M\mapsto \F(M)/{\sim_c}$ is presheaf that assigns the same map of sets to smoothly homotopic maps of manifolds. In particular, $\F(M)/{\sim_c}$ only depends on the smooth homotopy type of~$M$. This leads one to think of the natural map
$$
\F(M)\to \F(M)/{\sim_c}
$$
as a type of cocycle map, sending (possibly) geometric information about~$M$ to purely topological information. 

The following has an obvious generalization to stacks, but for our applications we only need it for sheaves of sets. 

\begin{defn} \label{defn:relconc}
Let $\F$ be a sheaf of sets. A pair of concordances $\alpha,\alpha'\in \F(M\times \R)$ are \emph{concordant rel boundary} if there is a section $\widetilde\alpha\in \F(M\times \R^2)$ whose restrictions satisfy $i_{y=0}^*\widetilde\alpha=\alpha$, $i_{y=1}^*\widetilde\alpha=\alpha'$, and $i_{x=0}^*\widetilde\alpha=p^*\alpha_0$ and $i_{x=1}^*\widetilde\alpha=p^*\alpha_1$, where the restrictions correspond to the inclusions $\R\hookrightarrow \R^2$ at $x=0$, $x=1$, $y=0$, and $y=1$ for the standard $(x,y)$-coordinates on $\R^2$, and $p\colon M\times \R\to M$ is the projection. \end{defn}

Note in particular that if $\alpha$ is concordant rel boundary to $\alpha'$, then these concordances have the same source and target.

\subsection{Concordances of differential forms}

\begin{lem}
Consider the sheaf $\Omega^k_\cl$ of closed differential forms. A pair of sections are concordant if and only if the closed differential forms are cohomologous. In particular, concordance classes of sections are de~Rham cohomology classes. 
\end{lem}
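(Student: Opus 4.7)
The plan is to verify both implications of the claimed equivalence using standard de Rham theoretic tools, and then observe that the resulting bijection identifies concordance classes with cohomology classes.

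For the forward direction, suppose $\alpha,\alpha'\in\Omega^k_\cl(M)$ are concordant with $\widetilde\alpha\in\Omega^k_\cl(M\times\R)$ restricting to $\alpha$ and $\alpha'$ along $i_0,i_1\colon M\hookrightarrow M\times\R$. Since $i_0$ and $i_1$ are smoothly homotopic, they induce the same map on de~Rham cohomology, so $[\alpha]=[\alpha']$ in $H^k_\dR(M)$. Explicitly, I would invoke the chain homotopy $K\colon\Omega^\bullet(M\times\R)\to\Omega^{\bullet-1}(M)$ given by fiberwise integration over $[0,1]$, which satisfies $i_1^*-i_0^*=dK+Kd$; applied to the closed form $\widetilde\alpha$ this yields $\alpha'-\alpha=d(K\widetilde\alpha)$, exhibiting the two forms as cohomologous.

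For the reverse direction, suppose $\alpha'-\alpha=d\beta$ for some $\beta\in\Omega^{k-1}(M)$. Choose a smooth function $\chi\colon\R\to\R$ with $\chi(0)=0$ and $\chi(1)=1$, and let $p\colon M\times\R\to M$ be the projection. Then
\[
\widetilde\alpha\ :=\ p^*\alpha\ +\ d\bigl(\chi\cdot p^*\beta\bigr)\ \in\ \Omega^k_\cl(M\times\R)
\]
is manifestly closed, $i_0^*\widetilde\alpha=\alpha$, and $i_1^*\widetilde\alpha=\alpha+d\beta=\alpha'$, which is the required concordance.

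Combining these two implications, the map $\alpha\mapsto[\alpha]$ descends to a well-defined bijection $\Omega^k_\cl(M)/{\sim_c}\stackrel{\sim}{\to}H^k_\dR(M)$, which is the ``in particular'' conclusion. I do not expect any serious obstacle here: both directions are routine, with the reverse direction amounting to a single explicit interpolation formula and the forward direction being a direct application of the standard homotopy formula for differential forms on $M\times\R$.
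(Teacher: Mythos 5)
Your proof is correct and follows essentially the same route as the paper: one direction is the fiberwise Stokes/homotopy formula on $M\times\R$, and the other is the explicit interpolating form (the paper writes $\alpha_0+d(t\beta)$ where you use $p^*\alpha+d(\chi\cdot p^*\beta)$ with a cutoff $\chi$, an immaterial difference since only the values at $t=0,1$ matter).
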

\bp
To see this, suppose we are given $\alpha_0,\alpha_1\in \Omega^k_{\cl}(M)$ with $\alpha_1-\alpha_0=d\beta$. Then there is a concordance $\alpha=\alpha_0+d(t\beta)\in \Omega^\bullet_\cl(M\times \R)$ from $\alpha_0$ to $\alpha_1$. Conversely, given a concordance $\alpha$ from $\alpha_0$ to $\alpha_1$, by Stokes theorem the integral of $\alpha$ over the fibers of $M\times I\to M$ satisfies
$$
d\int_{M\times I/M}\alpha=i_1^*\alpha-i_0^*\alpha=\alpha_1-\alpha_0,
$$
and defining this fiberwise integral to be $\beta\in \Omega^{k-1}(M)$ we have $\alpha_1-\alpha_0=d\beta$. 
\ep

\begin{lem} \label{lem:relconc}
The set of concordances $\alpha\in \Omega^k_\cl(M\times \R)$ with a fixed source $\alpha_0=i_0^*\alpha\in \Omega^k_\cl(M)$ up to concordance rel boundary is in bijection with the set $\Omega^{k-1}(M)/d\Omega^{k-2}(M)$ with the natural map given by fiberwise integration over $[0,1]\subset \R$, 
$$
I\colon \Omega^k_\cl(M\times \R)/{\sim} \longrightarrow \Omega^{k-1}(M)/d\Omega^{k-2}(M) \qquad \alpha\stackrel{I}{\mapsto} \int_{M\times I/M} \alpha
$$
\end{lem}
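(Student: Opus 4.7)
The plan is to verify in turn that $I$ is well-defined on concordance-rel-boundary classes, surjective, and injective.

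For well-definedness, I would apply fiberwise Stokes. If $\widetilde\alpha\in\Omega^k_\cl(M\times\R^2)$ witnesses that $\alpha$ and $\alpha'$ are concordant rel boundary, then integrating $\widetilde\alpha$ over the fibers of $M\times I^2\to M$ and applying Stokes, the boundary contributions from the sides $x=0$ and $x=1$ vanish because the prescribed restrictions $p^*\alpha_0$ and $p^*\alpha_1$ are pulled back from $M$ and carry no $dy$-component. The remaining terms yield $\pm d\int_{M\times I^2/M}\widetilde\alpha=\int_{M\times I/M}\alpha-\int_{M\times I/M}\alpha'$, so the difference lies in $d\Omega^{k-2}(M)$ and $I$ descends to classes.

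Surjectivity is by explicit construction: for $\beta\in\Omega^{k-1}(M)$ the \emph{canonical concordance}
$$
\alpha_{\rm can}(\beta):=p^*\alpha_0+d(x\cdot p^*\beta)\in\Omega^k_\cl(M\times\R)
$$
is closed, has source $\alpha_0$ at $x=0$ and target $\alpha_0+d\beta$ at $x=1$, and has fiberwise integral equal to $\beta$ on the nose.

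For injectivity the strategy is to factor into two steps: (a) every concordance $\alpha$ is concordant rel boundary to $\alpha_{\rm can}(\int\alpha)$, and (b) $\alpha_{\rm can}(\beta)$ and $\alpha_{\rm can}(\beta+d\gamma)$ are concordant rel boundary for any $\gamma\in\Omega^{k-2}(M)$. For (a), I would set $\eta:=\alpha-\alpha_{\rm can}(\int\alpha)$, which is closed with $\eta|_{x=0}=\eta|_{x=1}=0$ and $\int\eta=0$; decomposing $\eta=\eta_1+dx\wedge\eta_2$, the closedness relation $\partial_x\eta_1=d_M\eta_2$ together with these vanishing conditions imply that $\mu(x):=\int_0^x\eta_2(s)\,ds$ satisfies $\mu(0)=\mu(1)=0$ and $d\mu=\eta$. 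Then $\alpha_{\rm can}(\int\alpha)+d[(1-y)\mu]$ supplies the rel-boundary concordance between $\alpha$ and $\alpha_{\rm can}(\int\alpha)$, the key point being that $\mu$ vanishes at $x=0,1$ so that the $x$-restrictions remain $p^*\alpha_0$ and $p^*\alpha_1$. For (b), the explicit formula
$$
\widetilde\alpha:=\alpha_{\rm can}(\beta)-d\bigl[(1-y)\,dx\wedge p^*\gamma\bigr]
$$
can be expanded to $\alpha_{\rm can}(\beta)-dx\wedge dy\wedge p^*\gamma+(1-y)\,dx\wedge p^*d\gamma$, and its four boundary restrictions then match $\alpha_{\rm can}(\beta+d\gamma)$, $\alpha_{\rm can}(\beta)$, $p^*\alpha_0$, $p^*\alpha_1$ respectively by direct calculation. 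Chaining (a) and (b) yields $\alpha\sim\alpha'$ rel boundary whenever $\int\alpha-\int\alpha'\in d\Omega^{k-2}(M)$.

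The main obstacle is finding the correct correction in step (b): a naive ansatz such as $\alpha_{\rm can}(\beta)+d[(1-y)\,x\cdot p^*d\gamma]$ produces a residual $dy\wedge p^*d\gamma$-term at the $x=1$ face that cannot be cancelled within that ansatz. The formula $d[(1-y)\,dx\wedge p^*\gamma]$ succeeds precisely because its expansion contributes an extra $dx\wedge dy\wedge p^*\gamma$ piece that vanishes on restriction to each of the four boundary lines (since either $dx$ or $dy$ is killed there), providing exactly the flexibility needed to absorb the obstruction left by the naive ansatz.
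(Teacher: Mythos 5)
Your argument is correct, and at the structural level it matches the paper's: prove $I$ well-defined via fiberwise Stokes, exhibit a section via the canonical concordance $\alpha_{\rm can}(\beta)=\alpha_0+d(x\beta)$, and verify the section is a two-sided inverse. The paper's map $J$ is exactly your $\alpha_{\rm can}$; your step (b) is the paper's check that $J$ is well defined on classes, and your step (a) is the paper's verification that $J\circ I=\id$.

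Where you genuinely improve on the paper is in step (a). To show $\alpha$ is concordant rel boundary to $\alpha_{\rm can}(\int\alpha)$, the paper uses the homotopy invariance of de~Rham cohomology to produce a primitive $\widetilde\beta$ with $d\widetilde\beta=\alpha-\alpha_0$ and then considers $\widetilde\alpha=\alpha-d(s\widetilde\beta)+d(st\beta)$. Its verification that the restrictions to $t=0,1$ come out constant asserts that $\widetilde\beta|_{t=0}=0$ follows from $\alpha-\alpha_0=d\widetilde\beta$, but that implication is false: one only learns $\widetilde\beta|_{t=0}$ is \emph{closed}, and to make the argument go through one must replace $\widetilde\beta$ by $\widetilde\beta-p^*(\widetilde\beta|_{t=0})$, a normalization the paper omits. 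Your explicit construction avoids the issue entirely: the decomposition $\eta=\eta_1+dx\wedge\eta_2$ together with the closedness relation $\partial_x\eta_1=d_M\eta_2$ yields a primitive $\mu(x)=\int_0^x\eta_2(s)\,ds$ with $d\mu=\eta$, and the boundary conditions $\mu(0)=0$ and $\mu(1)=\int\eta=0$ hold by construction, which is exactly what is needed for $\alpha_{\rm can}+d[(1-y)\mu]$ to be a concordance rel boundary. Your step (b) and the paper's are the same idea (an explicit two-parameter exact correction, with your $-d[(1-y)\,dx\wedge\gamma]$ playing the role of the paper's $d(s\gamma\,dt)$ up to sign conventions), and your discussion of why the naive ansatz fails is a useful expository addition that the paper does not give. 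In short: same strategy, but your version of the primitive step is tighter and patches a small gap in the paper's argument.
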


\bp 
First we show the map $I$ is well-defined. For $\widetilde\alpha\in \Omega^k_\cl(M\times \R^2)$ a concordance rel boundary between $\alpha$ and $\alpha'$, the fiberwise integral over $I^2\subset \R^2$ gives by Stokes theorem
$$
d\int_{M\times I^2/M} \widetilde{\alpha} =\int_{M\times I/M} \alpha'-\int_{M\times I/M}\alpha
$$
where we have used that constancy of the concordance on the other two boundary components of the rectangle $I^2$ imply that the associated integrals vanish. The integrals differ by an exact form, verifying the map is well-defined. 

Next we define a candidate inverse map~$J$. To make the formulas readable, we adapt the notation where, e.g., $\alpha_0$ denotes both a differential form on $M$ and the differential form on $M\times \R$ gotten by pulling back along the projection $M\times \R\to M$. For an equivalence class in the target, choose a representative $\beta\in \Omega^{k-1}(M)$ and take the concordance 
$$
J([\beta])=[\alpha_0+d(t\beta)]\in \Omega^k_\cl(M\times \R)/{\sim}.
$$
To see this is well-defined, if $\beta-\beta'=d\gamma\in \Omega^{k-1}(M)$, define the concordance rel boundary
$$
\widetilde\alpha=\alpha_0+d(t\beta)+d(s\gamma dt )=\alpha_0+d(t\beta+sd\gamma)+ds\gamma dt \in \Omega^k_\cl(M\times \R^2).
$$
When $s=0$ or $s=1$, we have that $ds$ pulls back to zero and so the pullback to $s=0$ is the concordance associated with $\beta$, and $s=1$ is the concordance associated with $\beta'$. On restriction to $t=0$, $dt$ pulls back to zero and we get the constant concordance on $\alpha_0$; similarly restricting to $t=1$ is the constant concordance on $\alpha_1$. This verifies that $J([\beta])=J([\beta'])$ and so $J$ is well-defined.

We have composition $I\circ J=\id$ by inspection, as it holds for the maps as defined before passing to equivalence classes. 

To see that $J\circ I$ is also the identity, first we claim that $\alpha-\alpha_0\in \Omega^\bullet_\cl(M\times \R)$ is exact: the restriction of the de~Rham cohomology of $M\times \R$ to $M\times \{0\}$ is an isomorphism, and the restriction of $\alpha-\alpha_0$ is the zero form. So suppose that $\alpha-\alpha_0=d\widetilde\beta$ for $\widetilde\beta\in \Omega^{k-1}(M\times \R)$. Consider
$$
\widetilde\alpha=\alpha-d(s\widetilde\beta)+d(st\beta)\in \Omega^\bullet_\cl(M\times \R^2).
$$
The restriction to $s=0$ is the original concordance, and the restriction to $s=1$ is the concordance $\alpha_0+d(t\beta)$, which is in the image of~$J$. The formula $\alpha-\alpha_0=d\widetilde\beta$ tells us that the restriction of $\widetilde\beta$ to $t=0$ is $0$ and the restriction to $t=1$ is $\alpha_1-\alpha_0=d\beta$. Hence, the restriction of $\widetilde\alpha$ to $t=0$ is $\alpha_0$ and the restriction to $t=1$ is $\alpha_1$. This verifies the lemma.
\ep

\subsection{Differential (Tate) K-theory} \label{sec:backKthy}
In this paper we use explicit models for differential cohomology theories, all of which will be elaborations on Klonoff's model for differential K-theory from~\cite{Klonoff}, Section~4.1. 

For a smooth manifold~$M$, consider a groupoid $\mathcal{V}(M)$ whose objects triples $(V,\A,\alpha)$ for $V$ a super vector bundle on~$M$, $\A$ a super connection, and $\alpha\in \Omega^{\odd}(M)/d\Omega^{\ev}(X)$. Define a morphism from $(V,\A,\alpha)$ to $(V',\A',\alpha')$ to be an isomorphism $\phi\colon V\to V'$ of super vector bundles such that
$$
\alpha=\alpha'+\CS(\A,\phi^*\A').
$$
Let $\mathcal{F}(M)$ denote the free abelian group on isomorphism classes in~$\mathcal{V}(M)$. Let $\mathcal{Z}(M)$ denote Consider the subgroup generated by the elements
$$
(V\oplus V',\A\oplus\A',\alpha+\alpha')-(V,\A,\alpha)+ (V',\A',\alpha')\qquad {\rm and} \qquad (V\oplus \Pi V,\A\oplus \Pi \A,0)
$$
where $(\Pi V,\Pi \A)$ denotes the parity reversal of the super vector bundle $(V,\A)$. \

\begin{defn} 
Define \emph{the differential K-theory of $M$} to be the abelian group
$$
\widehat{\K}(M):=\mathcal{F}(M)/\mathcal{Z}(M). 
$$
This has a ring structure with multiplication 
$$
[V,\A,\alpha]\cdot[V',\A',\alpha']=[V\otimes V',\A\otimes \A',\alpha \wedge {\rm Ch}(\A')+{\rm Ch}(\A)\wedge \alpha'+\alpha\wedge d\alpha'].
$$
The \emph{curvature map} is 
\beq
R\colon\widehat{\K}(M)\to \Omega^\ev_{\cl}(M),\qquad R(V,\A,\alpha)= {\rm Ch}(\A)+d\alpha.\label{eq:Kthycurv}
\eeq
\end{defn} 

\begin{rmk}
Note that we can repackage the data of a differential cocycle as $(V,\A,\widetilde\alpha)$ where $\widetilde\alpha$ is a concordance, $\widetilde{\alpha}\in \Omega_{\cl}^\ev(M\times \R)$ with source ${\rm Ch}(\A)$ and target $R(V,\A,\alpha)$. 
\end{rmk}

\begin{defn}\label{defn:KTatediff}
Define \emph{the differential elliptic cohomology at the Tate curve of $M$} as
$$
\widehat{\K}_{\rm Tate}(M)= \widehat{\K}(M)\llbracket q \rrbracket [q^{-1}],
$$
where differential cocycles on the right are formal sums of cocycles $\sum q^k(V_k,\A_k,\alpha_k)$ in Klonoff's model for $\widehat{\K}(M)$. There is a ring structure on $\widehat{\K}_\Tate(M)$ inherited from $\widehat\K(X)$ and $\Z\llbracket q \rrbracket [q^{-1}]$. The \emph{curvature map} is
$$
R\colon \widehat{\K}_{\rm Tate}(X)\to \Omega^{\ev}_\cl(M)\llbracket q \rrbracket [q^{-1}],\qquad R\left(\sum q^k(V_k,\A_k,\alpha_k)\right)= \sum q^k({\rm Ch}(\A_k)+d\alpha_k).
$$
\end{defn}

To explain our explicit definition for $\widehat{\K}_\MF$ we require a digression on the abstract framework of differential cohomology theories. A differential refinement $\widehat{E}$ of a cohomology theory~$E$ is a homotopy pullback 
\begin{equation}
\begin{array}{c}
\begin{tikzpicture}[node distance=3.5cm,auto]
  \node (A) {$\widehat{E}$};
  \node (B) [node distance= 4cm, right of=A] {$E$};
  \node (C) [node distance = 1.5cm, below of=A] {$\Omega_A$};
  \node (D) [node distance = 1.5cm, below of=B] {${\rm H}_{E_\C}.$};
  \draw[->] (A) to node {$I$} (B);
  \draw[->] (A) to node [swap] {$R$} (C);
  \draw[->] (C) to (D);
  \draw[->] (B) to node {${\rm Ch}$} (D);
\end{tikzpicture}\end{array}\nonumber
\end{equation}
taken, e.g., in sheaves of spectra on the site of smooth manifolds. In the above, $E_\C=E(\pt)\otimes \C$ is the complexification of the coefficient ring of~$E$ and $\Omega_A$ is the de~Rham complex with values in $A$. The map $R$ is called the \emph{curvature map} and the map $I$ takes the underlying cohomology class of a differential cocycle. The map $\Omega_{A}\to {\rm H}_{E_\C}$ comes from a chosen isomorphism~$A\to E_\C$ and the de~Rham map. Finally, ${\rm Ch}$ is the Chern--Dold character. Hopkins--Singer~\cite{HopSing} and Bunke--Gepner~\cite{BunkeGepner} give universal constructions of differential refinements as sketched above. A map into the homotopy pullback is a map into the spectrum~$E$, a differential form with coefficients in~$A$, and the data of a homotopy between the images of these classes in a cochain model for~${\rm H}_{E_\C}$. 

In the example of K-theory, the relevant diagram is 
\begin{equation}
\begin{array}{c}
\begin{tikzpicture}[node distance=3.5cm,auto]
  \node (A) {$\widehat{\K}$};
  \node (B) [node distance= 4cm, right of=A] {$\K$};
  \node (C) [node distance = 1.5cm, below of=A] {$\Omega_{\C[u,u^{-1}]}$};
  \node (D) [node distance = 1.5cm, below of=B] {${\rm H}_{\C[u,u^{-1}]}.$};
  \draw[->] (A) to node {$I$} (B);
  \draw[->] (A) to node [swap] {$R$} (C);
  \draw[->] (C) to (D);
  \draw[->] (B) to node {${\rm Ch}$} (D);
\end{tikzpicture}\end{array}\nonumber
\end{equation}
where~$u$ is a degree $-2$ element. In Klonoff's model, a vector bundle with connection $(V,\A)$ gives a map~$M\to \K^n$, and the curvature of the cocycle $R(V,\A,\alpha)$ gives a closed, even differential form. The Chern character of the bundle and the image of this closed form in cohomology differ by $d\alpha$, and so $\alpha$ itself furnishes a homotopy between them.  

When considering a differential refinement of $\widehat{\K}_\MF$ and the definition of $\KMF$ itself as a homotopy pullback, we get a pair of adjoining homotopy pullback squares
\begin{equation}
\begin{array}{c}
\begin{tikzpicture}[node distance=3.5cm,auto]
  \node (A) {$\widehat{\K}$};
  \node (B) [node distance= 4cm, right of=A] {$\KMF$};
  \node (C) [node distance = 1.5cm, below of=A] {$\Omega_\MF$};
  \node (D) [node distance = 1.5cm, below of=B] {${\rm H}_\MF$};
  \node (E) [node distance = 4cm, right of=B] {$\K_\Tate$};
  \node (F) [node distance =4cm, right of =D] {${\rm H}_{\C\llbracket q \rrbracket [q^{-1}]}$};
  \draw[->] (A) to node {$I$} (B);
  \draw[->] (A) to node [swap] {$R$} (C);
  \draw[->] (C) to (D);
  \draw[->] (B) to (D);
  \draw[->] (B) to (E);
  \draw[->] (D) to (F);
  \draw[->] (E) to (F);
\end{tikzpicture}\end{array}\nonumber
\end{equation}
and so the outer square is also a homotopy pullback. The composition of the lower arrows factors through $\Omega_{\C\llbracket q \rrbracket [q^{-1}]}$ using the (injective) map on differential forms induced by $q$-expansion of modular forms, 
\beq
\bigoplus_{i+j=n} \Omega^{2i}(M; \MF^{2j})\to \Omega^{\ev}(M)\llbracket q \rrbracket [q^{-1}].\label{eq:formqexpansion}
\eeq
By the universal property of the differential spectrum $\widehat{\K}_\Tate$, this gives a map~$\widehat{\K}_\MF(M)\to \widehat{\K}_\Tate(M)$. Lifting a map into $\widehat{\K}_\Tate(M)$ to one into $\widehat{\K}_\MF$ amounts to requiring the curvature factor through~$\Omega_\MF\to \Omega_{\C\llbracket q \rrbracket [q^{-1}]}$ up to homotopy. Fixing the model for differential elliptic cohomology at the Tate curve from Definition~\ref{defn:KTatediff}, this leads to the following.

\begin{defn}\label{defn:KMFdiff}
Define $\widehat{\K}_\MF^{2n}(M)$ to have as differential cocycles pairs $(x,h)$ for $x\in \widehat{\K}_\Tate(M)$ and $h$ a concordance (i.e., smooth homotopy) from the curvature $R(x)\in \Omega^\ev(M)\llbracket q \rrbracket [q^{-1}]$ to an element in the image of~\eqref{eq:formqexpansion}. The curvature map,
$$
R\colon \widehat{\K}_\MF^{2n}(M)\to \bigoplus_{i+j=n} \Omega_{\rm cl}^{2i}(M; \MF^{2j})
$$ 
is the uniquely defined differential form with values in modular forms from the target of the concordance $h$. 
\end{defn}

By adjusting the the sequence of odd differential forms~$\alpha_k$ defining~$x\in \widehat{\K}_\Tate(M)$, we can always replace this data $(x,h)$ by a cocycle $x\in \widehat{\K}_\Tate(M)$ whose curvature factors through~\eqref{eq:formqexpansion} on the nose. The curvature map in this case is 
$$
R\colon \widehat{\K}_\MF^{2n}(M)\to \bigoplus_{i+j=n} \Omega_{\rm cl}^{2i}(M; \MF^{2j}),\qquad R\left(\sum q^k(V_k,\A_k,\alpha_k)\right)=\sum q^k({\rm Ch}(V_k,\A_k)+d\alpha_k),
$$
where the differential form on the right is in the image of~\eqref{eq:formqexpansion}, and so can be identified uniquely with a differential form valued in modular forms.

\bibliographystyle{amsalpha}
\bibliography{references}

\end{document}